\newtheorem{theorem}{Theorem}[section]
\newtheorem{lemma}[theorem]{Lemma}
\newtheorem{proposition}[theorem]{Proposition}
\newtheorem{corollary}[theorem]{Corollary}
\newtheorem{conjecture}[theorem]{Conjecture}
\newtheorem{fact}[theorem]{Fact}
\numberwithin{equation}{section}
\newcommand{\defn}{\emph}
\newcommand{\N}{\mathbb{N}}
\renewcommand{\Pr}{\mathbb{P}}
\newcommand{\E}{\mathbb{E}}
\renewcommand{\P}{\mathcal{P}}
\def\eps{{\varepsilon}}
\renewcommand{\epsilon}{\varepsilon}
\newcommand{\emptygraph}{\emptyset}
\newcommand{\COMMENT}[1]{}
\title{Edge-decompositions of graphs with high minimum degree}
\author{Ben Barber, 
        Daniela K\"uhn, 
        Allan Lo \and
        Deryk Osthus}
\address{School of Mathematics, University of Birmingham,\\Birmingham, B15 2TT, UK}
\email{\{b.a.barber, d.kuhn, s.a.lo, d.osthus\}@bham.ac.uk}
\thanks{The research leading to these results was partially supported by the  European Research Council under the European Union's Seventh Framework Programme (FP/2007--2013) / ERC Grant Agreement n. 258345 (B.~Barber, D.~K\"uhn and A.~Lo) and 306349 (D.~Osthus).}
\begin{document}
\date{\today}

\maketitle

\begin{abstract}
A fundamental theorem of Wilson states that, for every graph~$F$, every sufficiently large $F$-divisible clique has an $F$-decomposition.
Here a graph $G$ is $F$-divisible if $e(F)$ divides $e(G)$ and the greatest common divisor of the degrees of $F$ divides the greatest common divisor of the degrees of $G$,
and $G$ has an $F$-decomposition if the edges of $G$ can be covered by edge-disjoint copies of $F$.
We extend this result to graphs~$G$ which are allowed to be far from complete.
In particular, together with a result of Dross, our results imply that every sufficiently large $K_3$-divisible graph of minimum degree at least $9n/10+o(n)$ has a $K_3$-decomposition.
This significantly improves previous results towards the long-standing conjecture of Nash-Williams that every sufficiently large $K_3$-divisible graph with minimum degree
at least $3n/4$ has a $K_3$-decomposition.
We also obtain the asymptotically correct minimum degree thresholds of $2n/3 +o(n)$ for the existence of a $C_4$-decomposition,
and of $n/2+o(n)$ for the existence of a $C_{2\ell}$-decomposition, where $\ell\ge 3$.
Our main contribution is a general `iterative absorption' method which turns an approximate or fractional decomposition into an exact one. 
In particular, our results imply that in order to prove an asymptotic version of Nash-Williams' conjecture, it suffices to show
that every $K_3$-divisible graph with minimum degree at least $3n/4+o(n)$ has an approximate $K_3$-decomposition, 

\end{abstract}

\section{Introduction}

Given a graph $F$, a graph $G$ has an \defn{$F$-decomposition} (is \defn{$F$-decomposable}), if the edges of $G$ can be covered by edge-disjoint copies of~$F$.
In this paper, we always consider decomposing a large graph $G$ into edge-disjoint copies of some small fixed graph~$F$.
The first such result was given by Kirkman~\cite{Kirkman} in~1847, who proved that the complete graph $K_n$ has a $K_3$-decomposition if and only if $n \equiv 1, 3 \mod{6}$.
To see that $n \equiv 1, 3 \mod{6}$ is a necessary condition, note that if $G$ has a $K_3$-decomposition, then the degree of each vertex of $G$ is even and $e(G)$ is divisible by~$3$.

There are similar necessary conditions for the existence of an $F$-decomposition.
For a graph $G$, let $\gcd (G)$ be the largest integer dividing the degree of every vertex of~$G$.
Given a graph $F$, we say that $G$ is \defn{$F$-divisible} if $e(G)$ is divisible by $e(F)$ and $\gcd(G)$ is divisible by $\gcd(F)$.
Being $F$-divisible is a necessary condition for being $F$-decomposable.
However, it is not sufficient: for example, $C_6$ does not have a $K_3$-decomposition.
In this terminology, Kirkman proved that every $K_3$-divisible clique has a $K_3$-decomposition. 
The analogue of this for general graphs $F$ instead of $K_3$ was an open problem
for a century until it was solved by Wilson~\cite{wilson1,wilson2,wilson3,Wilson} in 1975. 
Wilson proved that, for every graph~$F$, there exists an integer $n_0 =n_0(F)$ such that every $F$-divisible $K_n$ with $n \ge n_0$ has an $F$-decomposition.%
\COMMENT{Note that $K_{16}$ does not have a $K_{6}$-decomposition. }

\subsection{Decompositions of non-complete graphs}
In contrast, it is well known that the problem of deciding whether a general graph $G$ has an $F$-decomposition is NP-complete for every graph $F$ that contains a connected component with at least three edges~\cite{NPcomplete}. 
So a major question has been to determine the smallest minimum degree that guarantees an $F$-decomposition in any sufficiently large $F$-divisible graph~$G$.
Gustavsson~\cite{Gustavsson} showed that, for every fixed graph $F$, there exists $\epsilon = \epsilon(F) >0$ and $n_0= n_0(F)$ such that every $F$-divisible graph $G$ on $n \ge n_0$ vertices with minimum degree $\delta(G) \ge (1- \epsilon) n $ has an $F$-decomposition.
(This proof has not been without criticism.)
In a recent breakthrough, Keevash~\cite{Keevash} proved a hypergraph generalisation of Gustavsson's theorem.
His result actually states that every sufficiently large dense quasirandom hypergraph $G$ has a decomposition into cliques (subject to the necessary divisibility conditions).
The special case when $G$ is a complete hypergraph settles a question regarding the existence of designs going back to the 19th century.
Yuster~\cite{YusterBip} determined the asymptotic minimum degree threshold which guarantees an $F$-decomposition in the case when $F$ is
a bipartite graph with $\delta(F) =1$ (which includes trees). More recently, he~\cite{Yusternew} studied the problem of finding
many edge-disjoint copies of a given graph~$F$.
For a survey regarding $F$-decomposition of hypergraphs, directed graphs and oriented graphs, we recommend~\cite{Survey}.

In this paper, we substantially improve existing decomposition results when $F$ is an arbitrary graph. 
For $F = K_3$, Nash-Williams~\cite{NashWilliams} conjectured that every sufficiently large
$K_3$-divisible graph $G$ on $n$ vertices with $\delta(G) \ge 3 n /4$ has a $K_3$-decomposition.
This conjecture is still wide open.
For a general~$K_{r}$, the following (folklore) conjecture is a natural extension of Nash-Williams' conjecture. We describe the corresponding extremal construction in Proposition~\ref{prop:extremal}.

\begin{conjecture} \label{conjecture}
For every $r \in \mathbb{N}$ with $r \geq 2$, there exists an $n_0 = n_0(r)$ such that every $K_{r+1}$-divisible graph $G$ on $n \ge n_0$ vertices with $\delta(G) \ge (1-1/(r+2))n$ has a $K_{r+1}$-decomposition.
\end{conjecture}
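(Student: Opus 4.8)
The plan is to prove Conjecture~\ref{conjecture} by the two-phase strategy common to all such decomposition results: reduce the \emph{exact} $K_{r+1}$-decomposition problem to finding an \emph{approximate} one at the same minimum degree threshold, and then supply the approximate decomposition from the degree hypothesis. For the reduction I would use iterative absorption. At the outset, reserve a globally-spread sparse absorbing subgraph $A\subseteq G$, whose deletion costs only $o(n)$ in the minimum degree, together with a \emph{random} vortex $V(G)=U_0\supseteq U_1\supseteq\dots\supseteq U_k$, where $|U_{i+1}|=\nu|U_i|$ for a small constant $\nu$ and $|U_k|$ is bounded; with high probability each $U_i$ inherits the density condition $\delta(G[U_i])\ge(1-\tfrac1{r+2})|U_i|-o(|U_i|)$ by concentration of degrees. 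One then processes the levels from the outside in: having found an approximate $K_{r+1}$-decomposition of the part of $G$ between $U_i$ and $U_{i+1}$, one uses a bounded number of edge-exchange operations, swapping a few copies of $K_{r+1}$, to absorb the small uncovered remainder into $U_{i+1}$; the crucial point is that the divisibility hypotheses $e(K_{r+1})\mid e(G)$ and $\gcd(K_{r+1})\mid\gcd(G)$ force the uncovered graph to stay $K_{r+1}$-divisible at every stage, so the exchanges can always be completed. After $k=\Theta(\log n)$ rounds the leftover sits on the bounded set $U_k$, and it is decomposed together with the still-unused part of $A$ by a direct argument.

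Granting this machinery, the heart of the matter is: every $n$-vertex $K_{r+1}$-divisible graph $G$ with $\delta(G)\ge(1-\tfrac1{r+2})n$, and likewise each level $G[U_i]$, has a collection of edge-disjoint copies of $K_{r+1}$ covering all but a sparse, well-distributed leftover. The structural engine is that at this minimum degree \emph{every edge lies in $\Omega(n^{r-1})$ copies of $K_{r+1}$}: for an edge $uv$ one has $|N(u)\cap N(v)|\ge\tfrac{r}{r+2}n$, and writing $W=N(u)\cap N(v)$ the induced graph $G[W]$ has relative minimum degree at least $1-\tfrac1r$, which exceeds the Tur\'an threshold $1-\tfrac1{r-2}$ for $K_{r-1}$ by a positive margin for each fixed $r$; so by supersaturation $G[W]$ contains $\Omega(n^{r-1})$ copies of $K_{r-1}$, each extending $uv$ to a $K_{r+1}$. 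Since the clique hypergraph (vertices $=E(G)$, hyperedges $=$ copies of $K_{r+1}$) also has codegrees $O(n^{r-2})$, the natural route is to first obtain a \emph{fractional} $K_{r+1}$-decomposition of $G$ and then round it: a fractional decomposition can be turned into an approximate one with $o(n^2)$ uncovered edges, well spread over the vertices, by a standard weighted-nibble/randomized-rounding argument. This is exactly the mechanism by which Dross's fractional triangle decomposition theorem yields triangle decompositions at $\delta(G)\ge 9n/10+o(n)$.

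Two refinements are needed to reach the \emph{exact} threshold $(1-\tfrac1{r+2})n$ rather than $(1-\tfrac1{r+2})n+o(n)$. First, because the threshold is sharp, witnessed by the extremal construction of Proposition~\ref{prop:extremal}, the absorber and vortex must be shown to exist at the exact value; this calls for a stability step, proving that either $G$ lies within $o(n^2)$ edges of the extremal configuration, in which case a tailored argument (or a direct check that $G$ fails a divisibility condition) disposes of it, or else $\delta(G)$ exceeds the threshold by a linear amount and we gain the slack to absorb all the $o(n)$ losses above. Second, the inheritance of the degree condition by the links $G[W]$ and by the random levels $U_i$ must be verified quantitatively, which the estimates above supply with a constant margin.

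The main obstacle is the one remaining input: a fractional (equivalently, approximate) $K_{r+1}$-decomposition at minimum degree exactly $(1-\tfrac1{r+2})n$. For $r=2$ this is precisely an asymptotic form of Nash--Williams' conjecture, and the best current bound is $\delta(G)\ge 9n/10$ (Dross), so closing the gap to $3n/4$, and its analogue for general $r$, is the crux of the approach. The most promising attack is via LP duality: $G$ has a fractional $K_{r+1}$-decomposition unless there is a weighting $w\colon E(G)\to\mathbb{R}$ that is nonpositive on every copy of $K_{r+1}$ yet satisfies $\sum_{e\in E(G)}w(e)>0$; using the abundance of copies of $K_{r+1}$ through each edge together with a local-switching or averaging argument --- replace one copy in the support by another sharing an edge and track how the weighted sum changes --- one would aim to show that the minimum degree hypothesis forces $\sum_{e}w(e)\le 0$, a contradiction. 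An alternative, in the spirit of Dross's argument for $r=2$, is to exhibit an explicit near-uniform fractional decomposition by averaging over a symmetric family of copies of $K_{r+1}$ pinned to each edge, and then iteratively correct its defect.
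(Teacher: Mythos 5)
The statement you are addressing is Conjecture~\ref{conjecture}: the paper does not prove it and explicitly treats it as open (only the matching extremal construction, Proposition~\ref{prop:extremal}, is proved). Your text is likewise not a proof but a programme, and it has two genuine gaps. First, the decisive input is missing and you say so yourself: an approximate (equivalently, by Theorems~\ref{thm:YusterChromatics} and~\ref{thm:haxellrodl}, a fractional) $K_{r+1}$-decomposition at minimum degree $(1-\tfrac1{r+2})n$ is exactly the open problem; the best known fractional thresholds (Theorems~\ref{thm:dukes}, \ref{thm:BKLMO}, and, for $r=2$, Theorem~\ref{thm:Dross}) are all far above $1-\tfrac1{r+2}$. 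Your supersaturation observation that every edge lies in $\Omega(n^{r-1})$ copies of $K_{r+1}$ is much too weak to bridge this: it holds well below any plausible fractional decomposition threshold and does not by itself control the LP dual, so the ``local-switching/averaging'' step is precisely where a new idea would be required, not a routine verification.

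Second, even granting that input, your claim that the absorption/vortex reduction works with only $o(n)$ degree loss at the threshold $1-\tfrac1{r+2}$ is not something the known machinery delivers. The reduction proved in this paper (Theorem~\ref{thm:regular}, and its strengthening Theorem~\ref{thm:regularstrong}) requires $\delta(G)\ge(\max\{\delta^{\eta}_F,\,1-\tfrac1{3r}\}+\eps)n$, respectively $\max\{\delta^\eta_F,1-\tfrac1{d_F},1-\tfrac1{r+1}\}+\eps$; for $F=K_{r+1}$ with $r\ge 3$ the absorber/cover terms $1-\tfrac1{3r}$ (or $1-\tfrac1{d_F}$) exceed $1-\tfrac1{r+2}$, so constructing absorbers, parity graphs and the iterative covers at the conjectured density is itself an unresolved obstacle, not a formality (only for $r=2$ does Theorem~\ref{thm:cycles}(ii) reduce the asymptotic conjecture to $\delta^{\eta}_{C_3}\le 3/4$). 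Finally, Conjecture~\ref{conjecture} has no $+\eps n$ slack, so on top of the asymptotic statement you would need an exact stability analysis near the extremal configuration of Proposition~\ref{prop:extremal}; your one-sentence appeal to ``a tailored argument'' does not supply it. In short, the proposal repackages the reduction philosophy of this paper but leaves both the approximate-decomposition input and the at-threshold absorption unproven, so the conjecture remains exactly as open as before.
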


Together with results by Dukes~\cite{Dukes,Dukes2} as well as Barber, K\"uhn, Lo, Montgomery and Osthus~\cite{BKLMO}, our main result (Theorem~\ref{thm:regular}) implies the following theorem, which gives the first significant step towards the conjectured bound and extends to decompositions into arbitrary graphs. 

\begin{theorem} \label{thm:general}
Let $F$ be a graph, let 
\[
C:= \min \{ 9\chi(F)^2(\chi(F)-1)^2/2 , 10^4 \chi (F)^{3/2}\} \; \; \mbox{ and let } \; \; t:=\max\{ C, 6e(F)\}.
\]
Then for each $\epsilon > 0$, there is an $n_0 = n_0(\epsilon,F)$ such that every $F$-divisible graph $G$ on $n \geq n_0$ vertices with $\delta(G) \geq ( 1 - 1/t + \eps)n$ has an $F$-decomposition.
\end{theorem}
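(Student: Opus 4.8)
The plan is to derive Theorem~\ref{thm:general} from the main result Theorem~\ref{thm:regular} (an iterative absorption statement which, as advertised in the abstract, converts approximate or fractional $F$-decompositions into exact ones) by supplying, for each graph $F$, the required approximate/fractional decomposition input below the threshold $1-1/t+\eps$. The key point is that Theorem~\ref{thm:regular} should reduce the exact decomposition problem to the \emph{existence of an approximate $F$-decomposition} for $F$-divisible graphs of the relevant minimum degree, so the bulk of the work is bookkeeping: tracking which of the two expressions in $C$ governs the threshold, and checking $F$-divisibility is preserved throughout.

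\medskip

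First I would split into cases according to $\chi(F)$. When $\chi(F)$ is small, the bound $9\chi(F)^2(\chi(F)-1)^2/2$ is the better of the two, and here I would invoke the results of Dukes~\cite{Dukes,Dukes2}, which give fractional (or approximate) $K_r$-decompositions — and more generally clique-decompositions — for graphs of minimum degree a suitable fraction of $n$; embedding $F$ into $K_{\chi(F)}$ and decomposing a blow-up, or using the clique-decomposition threshold directly, yields an approximate $F$-decomposition. When $\chi(F)$ is large, the bound $10^4\chi(F)^{3/2}$ wins, and here I would appeal to Barber, K\"uhn, Lo, Montgomery and Osthus~\cite{BKLMO}, whose methods give approximate decompositions with a threshold of the order $1-c/\chi(F)^{3/2}$. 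In each case one must verify that the minimum degree condition $\delta(G)\ge(1-1/t+\eps)n$ with $t=\max\{C,6e(F)\}$ is at least the threshold needed for the cited approximate-decomposition result, with room to spare for the error terms; the $6e(F)$ term in $t$ is presumably there to ensure $e(F)\mid e(G)$ interacts well with the absorbing structure (so that leftover edges after the approximate stage can be absorbed), and I would check this divisibility is maintained when passing to subgraphs in the iteration.

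\medskip

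More concretely, the steps in order are: (i) fix $\eps>0$ and $F$, set $t$ as in the statement, and choose the hierarchy of constants $1/n_0 \ll \ldots \ll \eps, 1/e(F)$ demanded by Theorem~\ref{thm:regular}; (ii) given an $F$-divisible $G$ on $n\ge n_0$ vertices with $\delta(G)\ge(1-1/t+\eps)n$, apply the appropriate result of Dukes or of~\cite{BKLMO} to obtain an approximate $F$-decomposition of $G$ (or of an appropriate dense subgraph), leaving an uncovered remainder with a small number of edges; (iii) feed this into Theorem~\ref{thm:regular} to absorb the remainder and complete the decomposition. The one subtlety I would watch is that Theorem~\ref{thm:regular} almost certainly requires the leftover from the approximate step to lie inside a previously reserved \emph{absorber}, so the application in step~(ii) should be to $G$ minus the absorber; ensuring the absorber itself exists (which needs $\delta(G)$ large enough, hence the $6e(F)$ floor) and that both it and $G$ minus it remain suitably divisible is the place where the constants have to be balanced carefully.

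\medskip

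The main obstacle is not any single hard estimate but rather verifying that the numerical threshold $1-1/t+\eps$ is simultaneously above \emph{all} the thresholds required by the black-box inputs — the absorber-existence threshold from Theorem~\ref{thm:regular}, and the approximate-decomposition thresholds of Dukes~\cite{Dukes,Dukes2} and of~\cite{BKLMO} — across the full range of $\chi(F)$, so that the $\min$ in the definition of $C$ genuinely captures the best available bound for every $F$. Getting the constant $10^4$ and the $9\chi(F)^2(\chi(F)-1)^2/2$ to line up with the cited papers (which may be stated with different normalisations, e.g.\ in terms of $r$ rather than $\chi(F)$, or for cliques rather than general $F$) is the fiddly part; once the thresholds are reconciled, the theorem follows immediately by chaining the approximate decomposition into the absorption result.
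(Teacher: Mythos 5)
Your plan founders on a point you never address: Theorem~\ref{thm:regular} is stated only for \emph{$r$-regular} graphs $F$, whereas Theorem~\ref{thm:general} is for arbitrary $F$. Feeding an approximate/fractional decomposition of $G$ into Theorem~\ref{thm:regular} with $F$ itself playing the role of the regular graph is therefore not possible when $F$ is not regular, and your proposal contains no mechanism for bridging this. The paper's actual derivation hinges on exactly this bridge: Lemma~\ref{regularise} combines many translated copies of $F$ into an $F$-decomposable $r$-regular graph $R$ with $r=2e(F)$ and, crucially, $\chi(R)=\chi(F)$ (so that the fractional clique threshold $\delta^*_{K_{\chi(F)}}$ still controls $\delta^\eta_R$ via Yuster's theorem and Haxell--R\"odl, i.e.\ Corollary~\ref{cor:chromatic}); then Lemma~\ref{R-divisible} removes a sparse $F$-decomposable subgraph $H$ from $G$ so that $G-H$ becomes $R$-divisible (this step is itself nontrivial, requiring the auxiliary graphs $R_d$ and an application of the embedding lemma to fix all vertex degrees modulo $r$ and $e(G)$ modulo $e(R)$); finally Theorem~\ref{thm:regular} is applied to $R$, not to $F$, which is where the term $6e(F)$ in $t$ really comes from: it is $3r=3\cdot 2e(F)$ for the regular graph $R$, not a divisibility-of-$e(G)$ or absorber-existence condition as you guessed.

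Two smaller corrections. First, your ``subtlety'' about reserving the absorber before the approximate decomposition is internal to the proof of Theorem~\ref{thm:regular}; as a user of that theorem you only need to verify its minimum-degree hypothesis $\delta(G)\ge(\max\{\delta^\eta_F,1-1/3r\}+\eps)n$, so there is nothing for you to balance there. Second, your case split by $\chi(F)$ (Dukes for small, BKLMO for large) is unnecessary: Theorems~\ref{thm:dukes} and~\ref{thm:BKLMO} both bound the same quantity $\delta^*_{K_{\chi(F)}}$, and one simply takes the minimum, which is what the definition of $C$ encodes; the substantive chain is $\delta^\eta_F\le\delta^*_{K_{\chi(F)}}$ (Theorem~\ref{thm:YusterChromatics} plus Theorem~\ref{thm:haxellrodl}), applied with $R$ in place of $F$, followed by the numerical bounds on $\delta^*_{K_{\chi(F)}}$. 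Without the regularisation and re-divisibility steps, however, the numerical bookkeeping you describe has nothing to attach to, so the proposal as written does not prove the theorem.
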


Note that, for any $F$, we have $t \leq \min \{ 9 |F|^2 ( |F| - 1 )^2/2, 10^4 |F|^{2} \}$.
So $t=O(|F|^2)$.
The best previous bound in this direction is the one given by Gustavsson~\cite{Gustavsson}, who claimed that, if $F$ is complete, then a minimum degree bound of $(1- 10^{-37} |F|^{-94})n$ suffices.




We also obtain substantial further improvements for several families of graphs, in particular for cycles
(see Sections~\ref{subsec:improvements} and~\ref{sec:cycles}).

\subsection{Approximate $F$-decompositions}

The main contribution of this paper is actually a result that turns an `approximate' $F$-decomposition into an exact $F$-decomposition.
Let $G$ be a graph on $n$ vertices.
For a graph $F$ and $\eta \ge 0$, an \emph{$\eta$-approximate $F$-decomposition of $G$}
is a set of edge-disjoint copies of~$F$ covering all but at most $\eta n^2$ edges of~$G$. 
Note that a $0$-approximate $F$-decomposition is an $F$-decomposition.
For $ n \in \mathbb{N}$ and $\eta>0$, let $\delta^{\eta}_F (n)$ be the infimum over all~$\delta$ such that every graph~$G$ on $n$ vertices with $\delta(G) \geq \delta n$ has an $\eta$-approximate $F$-decomposition.%
	\COMMENT{Note that $\delta^{\eta}_F (n)$ exists for sufficiently large $n$ and it is less than $1$. 
Consider a $K_n$ and throw away small number of vertices until we have $K_{n'}$ that is $F$-divisible. Now apply Wilson to $K_{n'}$.}
We define $\delta^{0}_F (n)$ in a similar way, except that we only consider $F$-divisible graphs.
Let $\delta_F^{\eta} : = \limsup_{n \rightarrow \infty} \delta^{\eta}_F (n)$ be the \emph{$\eta$-approximate $F$-decomposition threshold}.
Clearly $\delta_F^{\eta'} \ge \delta_F^{\eta} $ for all $\eta' \le \eta$.
It turns out that there are $F$-divisible graphs with $\lim_{\eta  \rightarrow 0} \delta^{\eta}_F = \delta_F^{0}$, and graphs for which this equality does not hold
(see Section~\ref{sec:cycles} for a further discussion).

Our main result relates the `decomposition threshold' to the `approximate decomposition threshold' and an
additional minimum degree condition for $r$-regular graphs~$F$.
The dependence on $r$ is not far from best possible, since Proposition~\ref{prop:extremal} shows that the term $1/3r$ cannot be replaced by anything larger than $1/(r+2)$.

\begin{theorem} \label{thm:regular}
Let $F$ be an $r$-regular graph.
Then for each $\epsilon > 0$, there exists an $n_0 = n_0(\epsilon,F)$ and an $\eta  = \eta(\eps, F) $ such that every $F$-divisible graph $G$ on $n\geq n_0$ vertices with $\delta(G) \geq (\delta+\epsilon)n$, where $\delta : = \max \{ \delta^{\eta}_F ,  1-1/3r \}$, has an $F$-decomposition.
\end{theorem}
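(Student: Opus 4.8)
The plan is to prove Theorem~\ref{thm:regular} by the \emph{iterative absorption} method advertised in the abstract. Fix an $r$-regular graph~$F$, let $\delta := \max\{\delta^\eta_F, 1 - 1/3r\}$, and take $G$ to be $F$-divisible with $\delta(G) \ge (\delta + \epsilon)n$, where $\eta = \eta(\epsilon, F)$ will be chosen small at the end. First I would set up a nested sequence of vertex sets $V(G) = U_0 \supseteq U_1 \supseteq \dots \supseteq U_\ell$, where $|U_{i+1}| \approx \rho |U_i|$ for some small constant $\rho$ and $\ell$ is chosen so that $|U_\ell|$ is bounded (or at least $\ll n$). Before starting the iteration, I would remove from~$G$ a sparse \emph{absorber}: a carefully chosen subgraph~$A$, edge-disjoint from the rest of the construction, with the property that for any ``small'' $F$-divisible graph~$L$ on the vertex set $U_\ell$ (more precisely, any leftover that can arise at the end of the process), $A \cup L$ has an $F$-decomposition. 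The existence of such an absorber is the technical heart of the argument and I discuss it below.

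The iteration then runs as follows. At step~$i$ we have dealt with all edges except those inside~$U_i$ (together with the reserved absorber~$A$). We want to cover, using edge-disjoint copies of~$F$, all edges of $G[U_i] - A$ that are not inside~$U_{i+1}$, i.e.\ all edges meeting $U_i \setminus U_{i+1}$. To do this I would first apply the approximate decomposition result: since $G[U_i]$ still has minimum degree close to $\delta|U_i|$ (the $U_i$ are chosen randomly, so degrees into $U_i$ are concentrated), by definition of $\delta^\eta_F$ there is an $\eta$-approximate $F$-decomposition of $G[U_i]$, leaving an uncovered ``remainder'' graph with at most $\eta|U_i|^2$ edges. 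The edges of this remainder that lie inside $U_{i+1}$ are simply passed to the next step; the remainder edges meeting $U_i\setminus U_{i+1}$ must be mopped up. Here is where the minimum-degree hypothesis $1 - 1/3r$ enters: because $\delta(G) \ge (1 - 1/3r + \epsilon)n$ and the remainder is sparse, one can greedily extend each such leftover edge to a copy of~$F$ using (mostly) fresh edges that go into~$U_{i+1}$ — the $1/3r$ is exactly the room needed to find, for an edge~$uv$, an $F$-copy through~$uv$ whose other $e(F)-1$ edges avoid the already-used edges; I would make this precise with a deletion/greedy argument, possibly routed through a fractional relaxation and the result of Dross/Dukes cited in the excerpt. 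After this mop-up, everything outside~$U_{i+1}$ is covered, divisibility is maintained on~$U_{i+1}$ (we only ever remove $F$-divisible chunks), and we recurse. After $\ell$ steps we are left with a bounded-size $F$-divisible graph~$L$ on~$U_\ell$, which the absorber~$A$ finishes off by construction, completing the $F$-decomposition of~$G$.

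The main obstacle is constructing the absorber~$A$ and proving its universality — that $A \cup L$ is $F$-decomposable for \emph{every} admissible leftover~$L$. The standard route is to build~$A$ as a union of many small gadgets: for each potential ``bad'' configuration (each edge, or short structure, on~$U_\ell$ that could survive) one reserves a constant-size graph~$H$ such that both $H$ and $H$-plus-that-configuration are $F$-decomposable, so that whichever leftover actually occurs, the corresponding gadgets can be re-decomposed to swallow it while the unused gadgets decompose on their own. Finding such exchangeable gadgets for a general $r$-regular~$F$ (rather than just cliques) requires Wilson's theorem as a black box on small cliques together with an argument that $F$-decomposability of cliques transfers to the gadget graphs; controlling divisibility throughout (so that every piece handed to Wilson is genuinely $F$-divisible) is the fiddly part. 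One must also ensure the total number of gadgets is small enough that~$A$ remains sparse — hence removing it at the start does not spoil the minimum degree — and that the~$U_i$ can be chosen randomly so that all the concentration statements (degrees, codegrees, sizes of remainders) hold simultaneously; a union bound over the $O(\ell)$ steps suffices. Finally one checks the quantifier order: $\eta$ is chosen after~$\epsilon$ and~$F$ but before~$n$, small enough that $\eta \cdot (\text{accumulated loss}) < \epsilon/2$ say, and $n_0$ is chosen last.
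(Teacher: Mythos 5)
Your overall skeleton (iterate an approximate decomposition down to a bounded-size leftover, then finish with a pre-reserved absorber) is indeed the paper's strategy, and using a nested vortex $U_0\supseteq\dots\supseteq U_\ell$ instead of the paper's refining partition sequence $\P_1,\dots,\P_\ell$ is a harmless variation. However, the two steps on which the theorem actually rests are not delivered by your sketch. First, the absorber: you propose gadgets $A$ such that both $A$ and $A\cup L$ are $F$-decomposable and suggest obtaining them from Wilson's theorem ``as a black box''. Wilson's theorem only decomposes large $F$-divisible \emph{cliques}, and neither $A$ nor $A\cup L$ is a clique (if you try $A=K_N-L$, then $A\cup L=K_N$ is fine but $A$ itself is no longer covered by Wilson, and invoking a dense-graph decomposition result here would be circular). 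The paper instead constructs these gadgets explicitly: it builds $(H,H')_F$-transformers, chains them through the canonical graphs $L_h$ and $pF$ (Lemmas~\ref{lma:regular}--\ref{lma:abs}), and — crucially — proves they have degeneracy at most $3r$ rooted at the leftover, which is what allows them to be embedded greedily via Lemma~\ref{lma:finding} under minimum degree $(1-1/3r+\eps)n$. So the $1-1/3r$ term in $\delta$ comes from the absorber-embedding step, not, as you assert, from the mop-up; this misattribution matters because your proof gives no mechanism at all for producing the gadgets whose existence drives the whole argument (and, incidentally, the paper's point that its proof is Wilson-free would be lost).

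Second, the ``mop-up''/cover-down step is not a plain greedy extension of leftover edges, and as described it fails. (i) Divisibility: every copy of $F$ through a vertex $u\in U_i\setminus U_{i+1}$ uses exactly $r$ currently-uncovered edges at $u$, so you can only finish covering $u$'s edges into the next level if their number is $\equiv 0 \pmod r$; leftover degrees are arbitrary modulo $r$, so some residue $1\le t<r$ of edges at $u$ gets stranded. The paper spends Section~\ref{sec:parity-graphs} (shifters and parity graphs) precisely to repair these residues before covering. (ii) Degree and congestion control: an $\eta$-approximate decomposition gives no bound on the leftover degree of a single vertex, so a vertex of $U_i\setminus U_{i+1}$ may have far more uncovered edges than it has neighbours inside the much smaller set $U_{i+1}$; moreover many such vertices compete for the same edges inside $U_{i+1}$. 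The paper handles the first issue by re-embedding the copies of $F$ meeting the high-degree set $B$ (Lemma~\ref{lma:partial-decomposition}) and the second by reserving a random slice and running a randomized covering using $K_r$-factors (or $F_x$-factors) in neighbourhoods via Hajnal--Szemer\'edi, with explicit codegree conditions (Lemmas~\ref{lma:sparsefindcliques}--\ref{cor:sparseextendtoF}); none of this is recoverable from ``greedily extend each leftover edge''. (iii) Reservation: applying the $\eta$-approximate decomposition to all of $G[U_i]$ would consume the edges inside $U_{i+1}$ that both the mop-up and all later iterations need; one must decompose only the part outside the next level (the paper works with $G[\P_i]$ and explicitly protects $G_0$ in Lemma~\ref{lma:iterate}). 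Until these three points are addressed, the proposal is a correct outline of the known strategy but not a proof.
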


To derive Theorem~\ref{thm:general} from Theorem~\ref{thm:regular}, we will use a result of Haxell and R\"odl~\cite{HaxellRodl}
as well as a result of Yuster~\cite{YusterFracDecom}. Roughly speaking, the result in~\cite{HaxellRodl} implies that the minimum degree which guarantees a fractional $F$-decomposition in a graph also guarantees an $\eta$-approximate $F$-decomposition. 
This allows us to replace the $\eta$-approximate $F$-decomposition threshold $\delta^{\eta}_F$ in Theorem~\ref{thm:regular} by the `fractional $F$-decomposition threshold' (see Section~\ref{sec:fractional} for more details).
A result of Yuster~\cite{YusterChromatics} implies that we can consider the fractional $K_{\chi(F)}$-decomposition threshold instead of
the fractional $F$-decomposition threshold.
We will then use the results from~\cite{Dukes,Dukes2} as well as from~\cite{BKLMO}, which guarantee a fractional $K_r$-decomposition of any graph on $n$ vertices with minimum degree at least $(1-2/(9r^2(r-1)^2))n$ and minimum degree at least $(1-1/10^4 r^{3/2})n$, respectively.
Any improvement in the fractional $K_r$-decomposition threshold would immediately imply better bounds in Theorem~\ref{thm:general} (see Theorem~\ref{thm:general3}).

Our proof of Theorem~\ref{thm:regular} gives a polynomial time randomized algorithm which produces a decomposition with high probability (see Section~\ref{sec:main-theorem} for more details).
Our argument here and that in~\cite{BKLMO} is purely combinatorial.
In particular, the proofs of Theorems~\ref{thm:BKLMO} and~\ref{thm:general3} together  yield a combinatorial proof of
Wilson's theorem~\cite{wilson1,wilson2,wilson3,Wilson} that every large $F$-divisible clique has an $F$-decomposition.
(The original proof as well as that in Keevash~\cite{Keevash} relied on algebraic tools.)

\subsection{Further improvements: cycle decompositions} \label{subsec:improvements}

In Section~\ref{sec:main-theorem}, we state a version of Theorem~\ref{thm:regular} which is more technical but can be applied to give better bounds for many specific choices of~$F$ (Theorem~\ref{thm:regularstrong}). 
For example, in Section~\ref{sec:cycles}, we apply this to derive the following result on cycle decompositions.

\begin{theorem} 
\label{thm:cycles}
\noindent{\rm(i)}
Let $\ell\in\mathbb{N}$ with $\ell \geq 4$ be even, and let
\begin{align*}
\delta :=
\begin{cases}
  1/2  & \text{if } \ell \geq 6; \\
  2/3  & \text{if } \ell  =  4.
\end{cases}
\end{align*}
Then for each $\eps >0$, there exists an $n_0 = n_0(\eps,\ell)$ such that every $C_{\ell}$-divisible graph $G$
on $n\ge n_0$ vertices with $\delta(G) \ge (\delta + \eps) n$ has a $C_{\ell}$-decomposition.

\smallskip

\noindent{\rm(ii)}
Let $\ell\in\mathbb{N}$ with $\ell \geq 3$ be odd.
Then for each $\eps >0$, there exists an $n_0 = n_0(\eps,\ell)$ and an $\eta  = \eta(\eps,\ell) $ such that
every $C_{\ell}$-divisible graph $G$ on $n\ge n_0$ vertices with $\delta(G) \ge (\delta^\eta_{C_\ell} + \eps) n$
has a $C_{\ell}$-decomposition. 
Moreover, every $C_{\ell}$-divisible graph $G$ on $n\ge n_0$ vertices with $\delta(G) \ge (9/10+\eps) n$ has a $C_{\ell}$-decomposition.
\end{theorem}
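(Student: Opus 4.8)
The plan is to derive Theorem~\ref{thm:cycles} from the more technical version of our main result, Theorem~\ref{thm:regularstrong} (stated in Section~\ref{sec:main-theorem}), by feeding it the known bounds on the relevant \emph{approximate} and \emph{fractional} cycle-decomposition thresholds. Since $C_\ell$ is $2$-regular, the generic consequence of Theorem~\ref{thm:regular} would already give a bound of the form $\max\{\delta^\eta_{C_\ell}, 1-1/6\} = \max\{\delta^\eta_{C_\ell}, 5/6\}$; the point of using the stronger Theorem~\ref{thm:regularstrong} is that for cycles the auxiliary degree condition coming from the absorbing structure can be taken much smaller than $1-1/3r$, so that the binding constraint becomes purely the approximate-decomposition threshold $\delta^\eta_{C_\ell}$ (plus, for part (i), whatever explicit additive term the absorber needs). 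First I would recall from Section~\ref{sec:fractional} that, via Haxell--R\"odl, $\delta^\eta_{C_\ell}$ is controlled by the fractional $C_\ell$-decomposition threshold, and then invoke the known fractional decomposition results for even cycles: the fractional $C_\ell$-decomposition threshold is $1/2$ for $\ell\ge 6$ even and $2/3$ for $\ell=4$ (these are the asymptotically optimal values, matching the extremal constructions). Plugging these into Theorem~\ref{thm:regularstrong} yields part (i).

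For part (i) the two cases $\ell\ge 6$ and $\ell=4$ are handled uniformly: in each case one checks that the extra minimum-degree requirement imposed by the iterative-absorption machinery in Theorem~\ref{thm:regularstrong} is below the stated $\delta$, so the bound $(\delta+\eps)n$ with $\delta\in\{1/2,2/3\}$ suffices. I would also remark that these thresholds are asymptotically best possible by pointing to the extremal construction (the graph witnessing that $C_4$-decompositions need degree $\ge 2n/3$, and $C_{2\ell}$-decompositions need degree $\ge n/2$), so that part (i) is tight. For part (ii), with $\ell$ odd, no matching fractional/approximate bound of the optimal shape is available, so the first sentence is simply the direct specialization of Theorem~\ref{thm:regularstrong} with $\delta^\eta_{C_\ell}$ left as a parameter (again using that the absorber's extra degree demand is negligible for cycles); the ``moreover'' clause then follows by substituting the explicit bound $\delta^\eta_{C_\ell}\le 9/10+o(1)$, which in turn comes from the fractional $K_3$-decomposition results of~\cite{Dukes,Dukes2,BKLMO} together with the observation that $\chi(C_\ell)=3$ for odd $\ell$ and Yuster's reduction~\cite{YusterChromatics} from the fractional $C_\ell$-threshold to the fractional $K_3$-threshold.

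The main obstacle — and the reason this is more than a one-line corollary — is verifying that Theorem~\ref{thm:regularstrong} actually applies with the small auxiliary degree bound for cycles: one must exhibit, for each relevant $\ell$, the absorbing gadgets and show that the ``covering down'' and ``absorbing'' steps of the iterative absorption go through under a minimum-degree hypothesis that does not exceed $\max\{\delta^\eta_{C_\ell},\, \delta\}+\eps$ with $\delta$ as small as $1/2$. Concretely, this amounts to checking the hypotheses of Theorem~\ref{thm:regularstrong} for $F=C_\ell$: that $C_\ell$ admits the required ``edge-gadgets'' allowing local modification of a near-decomposition, and that the divisibility bookkeeping (tracking $\gcd(C_\ell)=2$ and $e(C_\ell)=\ell$) closes up. I expect the bulk of the work to be this structural verification for cycles rather than any new global argument; once it is in place, the numerical bounds $1/2$, $2/3$, and $9/10$ drop out by substituting the cited fractional-decomposition thresholds into the statement of Theorem~\ref{thm:regularstrong}.
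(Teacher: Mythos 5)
There is a genuine gap: you have misidentified where the constants in part (i) come from. In the paper's argument the approximate threshold plays no role for even cycles: since $C_\ell$ is bipartite, $\delta^{\eta}_{C_\ell}=0$ for every $\eta>0$ (an $\eta$-approximate $C_\ell$-decomposition can be found greedily via the Erd\H{o}s--Stone--Simonovits theorem, as the Tur\'an density of bipartite graphs is $0$). The values $2/3$ and $1/2$ are instead exactly the ``auxiliary'' terms of Theorem~\ref{thm:regularstrong}, namely $\max\{1-1/d_{C_\ell},\,1-1/r\}$ with $r=2$, where Lemma~\ref{lma:cycle} (built on the transformer construction of Lemma~\ref{lma:cycletransformer}, using a vertex of $C_\ell$ not in a triangle, resp.\ an edge not in a triangle or $C_4$) gives $d_{C_4}\le 3$ and $d_{C_\ell}\le 2$ for $\ell\ge 5$. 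So your claim that ``the binding constraint becomes purely the approximate-decomposition threshold'' and that the absorber's degree demand is negligible is exactly backwards for part (i): the absorber/degeneracy term \emph{is} the binding constraint, and the ``known fractional decomposition results for even cycles'' with thresholds $1/2$ and $2/3$ that you propose to plug in are neither cited in the paper nor needed (nor available in the form you assume); without them your derivation of part (i) does not close.

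Part (ii) is closer, but two points still need repair. First, to deduce the statement with threshold $\delta^{\eta}_{C_\ell}$ alone from Theorem~\ref{thm:regularstrong} you must check that the structural terms are dominated by the approximate threshold: for odd $\ell\ge 5$ this uses $\lim_{\eta\to 0}\delta^\eta_{C_\ell}\ge 1/2$ (e.g.\ from $K_{n,n}$), and for $\ell=3$ it uses $d_{C_3}\le 4$ (which requires the explicit $(H,H')_{C_3}$-transformer of Lemma~\ref{lma:cycle}, since Lemma~\ref{lma:cycletransformer} does not apply to triangles) together with $\lim_{\eta\to 0}\delta^\eta_{C_3}\ge 3/4$; calling this demand ``negligible'' hides precisely this verification. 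Second, the ``moreover'' bound $9/10$ does not follow from the fractional $K_3$-results of Dukes or of Barber--K\"uhn--Lo--Montgomery--Osthus, which only give roughly $1-1/162$ and $1-1/(10^4\cdot 3^{3/2})$; it follows from Corollary~\ref{cor:chromatic} combined with Dross's theorem $\delta^*_{K_3}\le 9/10$ (Theorem~\ref{thm:Dross}).
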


Thus by Theorem~\ref{thm:cycles}(ii), it suffices to show that $\delta^{\eta}_{C_3} \le 3/4$ in order to
prove Conjecture~\ref{conjecture} for $r = 2$ asymptotically.
The value of the constant $\delta$ in Theorem~\ref{thm:cycles}(i) is the best possible
(see Propositions~\ref{cycle-lower-bounds} and~\ref{c4-lower-bounds}). The special case of Theorem~\ref{thm:cycles}(i)
when $\ell=4$ improves a result of Bryant and Cavenagh~\cite{C4decomp},
who showed that every $C_4$-divisible graph $G$ on $n$ vertices
with minimum degree at least $(31/32+o(1))n$ has a $C_4$-decomposition. 

It would be interesting to find other examples of graphs~$F$ for which Theorem~\ref{thm:regularstrong} can be used to obtain optimal or near optimal results. 

\subsection{Extremal graphs for Conjecture~\ref{conjecture}}

The following example from~\cite{Schlund} shows that the minimum degree condition in Conjecture~\ref{conjecture} is optimal.
We include a proof for completeness.

\begin{proposition} \label{prop:extremal}
For every $r \in \mathbb{N}$ with $r \geq 2$, there exist infinitely many $n$ such that there exists a $K_{r+1}$-divisible graph $G$ on $n$ vertices with $\delta(G)= \lceil (1-1/(r+2))n \rceil -1$ without a $K_{r+1}$-decomposition.
\end{proposition}

\begin{proof}
Let $\ell,s \in \mathbb{N}$. 
We first consider the case when $r := 2 \ell$.
Let $h: = (sr+1)(r+1)$.
Let $K_{2 \ell +2} - M$ be the subgraph of $K_{2 \ell +2}$ left after removing a perfect matching. 
Let $G^{2\ell}_h$ be the graph constructed by blowing up each vertex of $K_{r+2}-M$ to a copy of~$K_h$.
Thus $G^{2\ell}_h$ has $n : = (r+2) h$ vertices and is $d$-regular with
$d: = (h-1) + r h = (r+1) n/(r+2)-1$.
Since $r$ divides $d$ and $r+1$ divides $h$, $\binom{r+1}2$ divides~$e(G^{2\ell}_h)$, implying that $G^{2\ell}_h$ is $K_{r+1}$-divisible.
Call an edge \defn{internal} in $G^{2\ell}_h$ if it lies entirely within one of the copies of~$K_h$.
The number of internal edges is $I^{2 \ell }_h : = (r+2) \binom{h}2$.
Since $G^{2\ell}_h$ is a blow-up of $K_{r+2}-M$, each copy of $K_{r+1}$ in $G^{2\ell}_h$ must contain at least $r/2$ internal edges.%
\COMMENT{$2\ell+1$ vertices over $\ell+1$ matching edges means at least $\ell$ pairs share a copy of $K_h$.}
Thus the number of edge-disjoint copies of $K_{r+1}$ in $G^{2\ell}_h$ is at most $I^{2\ell}_h / (r/2) < e(G^{2\ell}_h)/ \binom{r+1}2$.%
	\COMMENT{\begin{align*}
	\frac{I^{2\ell}_h}{r/2} & =\frac{(r+2) \binom{h}2}{ r/2 } = \frac{(r+2)h}{r} (h-1)\\
	\frac{e( G^{2\ell}_h )}{\binom{r+1}2} & = \frac {(r+2) h  ((r+1)h - 1)}{r(r+1)} = \frac{(r+2)h}{r} \left( h- \frac{1}{r+1} \right)/
\end{align*}}
Therefore $G^{2\ell}_h$ does not have a $K_{r+1}$-decomposition.

For $r: = 2\ell +1$, let $h: = ( s(r+1) +1 ) r$.
Let $G^{2\ell +1}_h$ be the graph obtained from $G^{2\ell}_h$ by adding a set~$W$ of $h +1$ new vertices and joining each new vertex to each vertex in~$V(G^{2 \ell}_h )$. 
Note that $G^{2\ell+1}_h$ has $n : = (r+2) h+1$ vertices and is $d$-regular with
$d: =  (r+1) h  = (r+1) (n-1)/(r+2) = \lceil (1-1/(r+2))n \rceil -1$.\COMMENT{$x \in \mathbb{N} \Rightarrow \lceil x+ \eps \rceil = x+1$ for $0< \eps <1$.}
Since $r(r+1)$ divides $d$, $\binom{r+1}2$ divides~$e(G^{2\ell+1}_h)$, implying that $G^{2\ell+1}_h$ is $K_{r+1}$-divisible.
Let the \defn{internal} edges of $G^{2\ell+1}_h$ be the internal edges of $G^{2\ell}_h$.
Thus the number of internal edges is $I^{2 \ell +1}_h : = (r+1) \binom{h}2$.
Note that each copy of $K_{r+1}$ in $G^{2\ell+1}_h$ must contain at least $(r-1)/2$ internal edges.
Moreover, if $K_{r+1}$ contains precisely $(r-1)/2$ internal edges, then $K_{r+1}$ must contain a vertex in~$W$.
Hence there are at most $d |W|/r = (r+1)(h+1) ( s(r+1) +1 )$ edge-disjoint copies of $K_{r+1}$ in $G^{2\ell+1}_h$ that contain precisely $(r-1)/2$ internal edges.
Therefore, the number of edge-disjoint copies of $K_{r+1}$ in $G^{2 \ell +1}_h$ is at most\COMMENT{
Second line of the calculation\begin{align*}
	h(h-1) +2(h+1) (s(r+1)+1 ) 
	& = (s(r+1)+1)  ( r(h-1) + 2(h+1))
	 = (s(r+1)+1)  ( (r+2)h -(r-2) )
\end{align*}
}
\COMMENT{$e(G^{2 \ell +1}_h) = dn/2 = \frac12 (r+1)h \left( (r+2)h+1 \right) = \binom{r+1}2( s(r+1)+1 )\left( (r+2)h+1 \right)$.}
\begin{align*}
	& 	(r+1)(h+1) ( s(r+1) +1 ) + \frac{ I^{2 \ell +1}_h -  (r+1)(h+1) ( s(r+1) +1 )\frac{r-1}{2} }{ (r+1)/2 }\\
	& =  h(h-1) +2(h+1) (s(r+1)+1 ) 
	 = (s(r+1)+1)  ( (r+2)h -(r-2) )\\
	 & < (s(r+1)+1)  ( (r+2)h + 1 ) = \frac{ e(G^{2 \ell +1}_h) }{\binom{r+1}{2}}.
\end{align*}
Therefore $G^{2\ell+1}_s$ does not have a $K_{r+1}$-decomposition.
\end{proof}


\section{Sketches of proofs} \label{sec:sketch}

\subsection{Proof of Theorem~\ref{thm:general} using Theorem~\ref{thm:regular}.}
The idea of this proof is quite natural.
Given graphs $F$ and $G$ as in Theorem~\ref{thm:general}, we find an $F$-decomposable regular graph~$R$ such that both the degree $r$ of $R$
and the $\eta$-approximate decomposition threshold $\delta_R^{\eta}$ are not too large.
By removing a small number of copies of~$F$ from $G$, we may assume that $G$ is also $R$-divisible. 
By Theorem~\ref{thm:regular}, $G$ has an $R$-decomposition and so an $F$-decomposition, provided $\delta(G) \geq \max \{\delta^\eta_R, 1-1/3r\}$.
This reduction is carried out in Section~\ref{sec:general-case}.

To obtain the explicit bound on $\delta(G)$, we apply results of Dukes~\cite{Dukes,Dukes2} as well as
Barber, K\"uhn, Lo, Montgomery and Osthus~\cite{BKLMO} on fractional decompositions in graphs of large minimum degree together with a result of Haxell and R\"odl~\cite{HaxellRodl} relating fractional decompositions to approximate decompositions.  
We collect these tools in Section~\ref{sec:fractional}.

\subsection{Proof of  Theorem~\ref{thm:regular}.}

The proof of Theorem~\ref{thm:regular} develops an `iterative absorbing' approach.
The original absorbing method was first used for finding $K_3$-factors (that is, a spanning union of vertex-disjoint copies of $K_3$) by
Krivelevich~\cite{Krivelevich} and for finding Hamilton cycles in hypergraphs by R\"odl, Ruci\'{n}ski and Szemer\'{e}di~\cite{RRSz}.
An absorbing approach for finding decompositions was first used by K\"uhn and Osthus~\cite{KellyConj}.

More precisely, the basic idea behind the proof of Theorem~\ref{thm:regular} can be described as follows. 
Let $G$ be a graph as in Theorem~\ref{thm:regular}.
Suppose that we can find an $F$-divisible subgraph $A^*$ of $G$ with small maximum degree which is an $F$-absorber in the following sense: $A^* \cup H^*$ has an $F$-decomposition whenever $H^*$ is a sparse $F$-divisible graph on $V(G)$ which is edge-disjoint from~$A^*$.
Let $G'$ be the subgraph of $G$ remaining after removing the edges of~$A^*$.
Since $A^*$ has small maximum degree, $\delta(G') \ge (\delta^{\eta}_F+ \eps/2) n$.
By the definition of $\delta^{\eta}_F$, $G'$ has an $\eta$-approximate $F$-decomposition~$\mathcal{F}$.
Let $H^*$ be the leftover (that is, the subgraph of $G'$ remaining after removing all edges in $\mathcal{F}$).
Note that $H^*$ is also $F$-divisible.
Since $A^* \cup H^*$ has an $F$-decomposition, so does~$G$. 

Unfortunately, this naive approach fails for the following reason: we have no control on the leftover~$H^*$.
A first attempt at obtaining%
\COMMENT{20/7} 
$A^*$ would be to construct it as the edge-disjoint union of graphs $A$ such that each such $A$ has an $F$-decomposition and, for each possible leftover graph $H^*$, there is a distinct $A$ so that $A \cup H^*$ has an $F$-decomposition.
However, a typical leftover graph $H^*$ has $\eta n^2$ edges, so the number of possibilities for $H^*$ is exponential in $n$.
So we have no hope of finding all the required graphs $A$ in $G$ (and thus to construct $A^*$).
To overcome this problem, we reduce the number of possible configurations of $H^*$ (in turn reducing the number of graphs~$A$ required) as follows.
Roughly speaking, we iteratively find approximate decompositions of the leftover so that eventually our final leftover $H^*$ only has $O(n)$ edges whose location is very constrained---so one can view this step as finding a `near optimal' $F$-decomposition.

To illustrate this, suppose that $m \in \mathbb{N}$ is bounded and $n$ is divisible by $m$. 
Let $\P : = \{V_1, \dots, V_{q} \}$ be a partition of $V(G)$ into parts of size $m$ (so $q = n/m$).
We further suppose that $H^*$ is a vertex-disjoint union of $F$-divisible graphs $H^*_1, \dots, H^*_{q}$ such that $V(H^*_i) \subseteq V_i$ for each~$i$.
Hence to construct $A^*$, we only need to find one $A$ for each possible $H^*_i$.
(To be more precise, $A^*$ will now consist of edge-disjoint graphs $A$ such that each $A$ has an $F$-decomposition and, for each possible $H^*_i$, there is a distinct $A$ so that $A \cup H^*_i$ has an $F$-decomposition.)
For a fixed~$i$, there are at most $2^{\binom{|V_i|}2} = 2^{\binom{m}2}$ possible configurations of~$H^*_i$. 
Since $m$ is bounded, in order to construct $A^*$ we would only need to find $q 2^{\binom{m}2} = 2^{\binom{m}2} n /m$ different $A$.
Essentially, this is what Lemma~\ref{lma:Krabsorber} achieves.

We now describe in more detail the iterative approach which achieves the above setting.
Recall that $G'$ is the subgraph of $G$ remaining after removing all the edges of~$A^*$.
Since $A^*$ has small maximum degree, $G'$ has roughly the same properties as~$G$.
Our new objective is to find edge-disjoint copies of $F$ covering all edges of $G'$ that do not lie entirely within $V_i$ for some~$i$.
Since each $V_i$ has bounded size, these edge-disjoint copies of~$F$ will cover all but at most a linear number of edges of~$G'$.
As indicated above, we use an iterative approach to achieve this.
We proceed as follows. 
Let $k \in \mathbb{N}$. 
Let $\P_1$ be an equipartition of $V(G)$ into $k$ parts, and let $G_1$ be the $k$-partite subgraph of $G'$ induced by~$\P_1$ (here $k$ is large but bounded).
Suppose that we can cover the edges of $G_1$ by copies of~$F$ which use only a small proportion of the edges not in~$G_1$.
Call the leftover graph~$H_1$.
Let $\P_2$ be an equipartition of $V(G)$ into $k^2$ parts obtained by dividing each $V \in \P_1$ into $k$ parts. 
Let $G_2$ be the $k^2$-partite subgraph of~$H_1$ induced by~$\P_2$.
Each component of~$G_2$ will form a $k$-partite graph lying within some $V \in \P_1$.
So by applying the same argument to each component of $G_2$ in turn and iterating $\log_k (n/m)$ times we obtain an equipartition $\P = \P_\ell$ of $V(G)$ with $|V| = m $ for each $V \in\P$ such that all edges of $G'$ that do not lie entirely within some $V \in \P$ can be covered by edge-disjoint copies of~$F$.

In Section~\ref{sec:embedding} we prove an embedding lemma that allows us to find certain subgraphs in a dense graph. 
We will use this throughout the paper.
The formal definition of $\P_1, \P_2, \dots, \P_{\ell}$ is given in Section~\ref{sec:random}.
We construct the absorber graph $A^*$ in Section~\ref{sec:absorbers}.
The `near optimal' decomposition result is proved in Sections~\ref{sec:parity-graphs} and~\ref{sec:partial-decomposition}.
Finally, we prove Theorem~\ref{thm:regular} in Section~\ref{sec:main-theorem}.


\section{Notation} \label{sec:notation}

Let $G$ be a graph, and let $\P = \{V_1, \ldots, V_k\}$ be a partition of $V(G)$.
We write $G[V_1]$ for the subgraph of $G$ induced by the vertex set $V_1$, $G[V_1,V_2]$ for the bipartite subgraph induced by the vertex classes $V_1$ and $V_2$, and $G[\P] := G[V_1, \ldots, V_k]$ for the $k$-partite subgraph of $G$ induced by the $k$-partition $\P$.
Write $V_{<i}$ for $V_1 \cup \dots \cup V_{i-1}$ and $V_{\le i}$ for $V_1 \cup \dots \cup V_{i}$.
We say that $\P$ is \defn{equitable} (or \defn{$k$-equitable}) if $\big| |V_i| - |V_j| \big| \le 1 $ for all $1 \le  i, j \le k$.
For $V \subseteq V(G)$, $\P[V]$ denotes the restriction of $\P$ to~$V$. 
Note that a $k$-equitable refinement of a $k$-equitable partition~$\P$ (obtained by taking a $k$-equitable partition of each $V \in \P$) is a $k^2$-equitable partition of $V(G)$.

Given a graph $G$ and disjoint $U, V \subseteq V(G)$, 
let $e_G(U) : = e(G[U])$ and $e_G(U,V): = e(G[U,V])$.
For sets $S, V \subseteq V(G)$, we write $N_G(S,V) := \{v \in V : xv \in E(G) \text{ for all } x \in S\}$, and $d_G(S,V) := |N_G(S,V)|$.
If $S = \{v\}$ is a singleton, we instead write $N_G(v,V)$ and $d_G(v,V)$.
We sometimes omit the subscript $G$ if it is clear from the context.

For graphs $G$ and $H$, we write $G-H$ for the graph with vertex set $V(G)$ and edge set $E(G) \setminus E(H)$, and $G \setminus H$ for the subgraph of $G$ induced by the vertex set $V(G) \setminus V(H)$.
For a set of edges $E$, we write $V(E)$ for the set of all endvertices of edges in~$E$.
We write $G \cup E$ for the graph with vertex set $V(G) \cup V(E)$ and edge set $E(G) \cup E$.

For $r \in \mathbb{N}$, a graph $G$ is \emph{$r$-divisible} if $r$ divides the degree $d(v)$ of $v$ for all $v \in V(G)$. 

For an integer~$p$ and a graph~$F$, we write $pF$ for the graph consisting of $p$ vertex-disjoint copies of~$F$.
If $G$ is a graph and $pF$ is a spanning subgraph of~$G$, then $pF$ is an \emph{$F$-factor} in~$G$.

The constants in the hierarchies used to state our results are chosen from right to left.
For example, if we claim that a result holds whenever $0<1/n\ll a\ll b\ll c\le 1$ (where $n$ is the order of the graph), then there is a non-decreasing function $f:(0,1]\to (0,1]$ such that the result holds
for all $0<a,b,c\le 1$ and all $n\in \mathbb{N}$ with $b\le f(c)$, $a\le f(b)$ and $1/n\le f(a)$. 
Hierarchies with more constants are defined in a similar way.
We write $a = b \pm c$ to mean $a \in [ b - c , b + c ]$.


\section{Fractional and approximate $F$-decompositions}
\label{sec:fractional}

Let $F$ and $G$ be graphs.
Define $p_F(G)$ to be the maximum number of edges in~$G$ that can be covered by edge-disjoint copies of~$F$.
So if $G$ has an $\eta$-approximate $F$-decomposition, then $e(G) - p_F(G) \le \eta n^2$ (where $G$ has $n$ vertices).

\begin{theorem}[Yuster~\cite{YusterChromatics}] \label{thm:YusterChromatics}
Let $F$ be a graph with $\chi : = \chi (F)$.
For all $\eta >0$, there exists an $n_0 = n_0(\eta,F)$ such that every graph $G$ on $n \ge n_0$ vertices satisfies $p_F(G) \ge p_{K_{\chi}}(G) - \eta n^2$.
\end{theorem}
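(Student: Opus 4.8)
The statement is vacuous unless $p_{K_\chi}(G)$ is large, so first note that if $p_{K_\chi}(G)\le \eta n^2$ there is nothing to prove; thus assume $p_{K_\chi}(G)>\eta n^2$ and fix a maximum $K_\chi$-packing $\mathcal Q$ of $G$, so $|\mathcal Q|=p_{K_\chi}(G)/\binom{\chi}{2}$. The plan is to apply Szemer\'edi's regularity lemma to $G$ (with suitable parameters, $\epsilon\ll d$ and $1/M$ small in terms of $\eta,F$) to obtain clusters $V_1,\dots,V_M$ of size $N\approx n/M$ and the reduced graph $R$ on $[M]$, where $ij\in E(R)$ precisely when $(V_i,V_j)$ is $\epsilon$-regular of density at least $d$. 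Discarding from $\mathcal Q$ every clique that uses an edge touching the exceptional set, an edge inside a cluster, or an edge of a pair which is not $\epsilon$-regular of density at least $d$ destroys only $o(n^2)$ edges, since there are $o(n^2)$ such edges and the cliques of $\mathcal Q$ are edge-disjoint. Every surviving clique therefore spans $\chi$ distinct clusters which form a $\chi$-clique of $R$; writing $m_T$ for the number of surviving cliques spanning a given $\chi$-clique $T$ of $R$, we obtain $\binom{\chi}{2}\sum_T m_T=p_{K_\chi}(G)-o(n^2)$, and, since the surviving cliques meeting a fixed pair $(V_i,V_j)$ are edge-disjoint and each uses one edge of that pair, $\sum_{\{i,j\}\subseteq T}m_T\le e_G(V_i,V_j)$ for every $ij\in E(R)$.

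Next I would \emph{allocate} the dense bipartite pieces among the $\chi$-cliques of $R$. Using the last inequality, for each $ij\in E(R)$ lying in some $\chi$-clique of $R$ split $G[V_i,V_j]$ into edge-disjoint regular subgraphs, one with about $m_T$ edges for each $\chi$-clique $T$ with $\{i,j\}\subseteq T$ (a random partition works). For a $\chi$-clique $T$ of $R$, let $C_T$ be the $\chi$-partite graph on $(V_i)_{i\in T}$ formed by the pieces allocated to $T$: it is a \emph{balanced} regular $\chi$-partite graph with exactly $m_T$ edges between each of its $\binom{\chi}{2}$ pairs, and the $C_T$ are pairwise edge-disjoint across $T$. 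Call $T$ \emph{large} if $m_T\ge \gamma N^2$; the large cliques carry almost all of the packing, since $\sum_{T\ \text{not large}}\binom{\chi}{2}m_T\le \binom{\chi}{2}\gamma N^2\binom{M}{\chi}=O(\gamma n^2)$.

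The crux is a \textbf{transfer lemma}: every balanced regular $\chi$-partite graph $C$ with parts of size $N$ and common density at least $\gamma$ has, for $N$ large, an $F$-packing covering all but $o(N^2)$ of its edges. Let $t_0:=|V(F)|$; a proper $\chi$-colouring gives $F\subseteq K_\chi^{(t_0)}$, the complete $\chi$-partite graph with parts of size $t_0$, which is \emph{edge-transitive}, so assigning every copy of $F$ in it the same weight yields a perfect fractional $F$-decomposition $w^*$ (every edge of $K_\chi^{(t_0)}$ gets total weight $1$). Transport $w^*$ to $C$: weight each copy $F'$ of $F$ in $C$ by $\sum w^*(f)$, summed over all pairs $(\psi,f)$ with $\psi$ a copy of $K_\chi^{(t_0)}$ in $C$ mapping its $\chi$ parts into the $\chi$ parts of $C$, $f$ a copy of $F$ inside $K_\chi^{(t_0)}$, and $\psi(f)=F'$. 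Then the total weight through any edge $e$ of $C$ equals the number of such copies $\psi$ of $K_\chi^{(t_0)}$ through $e$, and by the counting lemma this is $(1\pm o(1))$ times a quantity depending only on $\gamma,N,\chi,t_0$ --- in particular independent of which pair contains $e$, \emph{because all pair-densities are equal}. After normalising, this is a fractional $F$-packing of $C$ covering every edge with weight $1\pm o(1)$, so $p^*_F(C)\ge e(C)-o(N^2)$; rounding it by the theorem of Haxell and R\"odl (or by the R\"odl nibble) produces the desired near-perfect $F$-packing.

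Applying the transfer lemma inside each large $C_T$ and taking the union of the $F$-packings obtained (legitimate, as the $C_T$ are edge-disjoint) gives an $F$-packing of $G$ covering at least $\sum_{T\ \text{large}}\bigl(\binom{\chi}{2}m_T-o(N^2)\bigr)\ge \binom{\chi}{2}\sum_T m_T-O(\gamma n^2)-\binom{M}{\chi}o(N^2)=p_{K_\chi}(G)-o(n^2)$; choosing $\gamma$, then the regularity parameters, then $n_0$ appropriately makes the total error at most $\eta n^2$. I expect the main obstacle to be the transfer lemma itself --- essentially an approximate $F$-decomposition of a quasirandom balanced $\chi$-partite graph --- where the two facts that make it work are the edge-transitivity of $K_\chi^{(t_0)}$ (which supplies a genuinely homogeneous fractional decomposition to transport) and the balancedness of $C_T$ (without which copies of $F$ --- think of triangles in a tripartite graph --- would be forced to consume the $\binom{\chi}{2}$ pairs at fixed unequal rates and could not cover a lopsided $C_T$). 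A secondary issue is bookkeeping: the many errors $o(N^2)$, one per $\chi$-clique of $R$, sum to $o(n^2)$ only because $M$ is a constant chosen before letting $n\to\infty$.
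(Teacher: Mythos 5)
A point of reference first: the paper does not prove this statement---it imports it as Theorem~\ref{thm:YusterChromatics} from Yuster---so your argument can only be compared with the literature rather than with an internal proof. Your route (regularise; discard the cliques of a maximum $K_\chi$-packing that meet exceptional, internal, irregular or sparse pairs; group the survivors by the $\chi$-clique $T$ of the reduced graph they span; allocate to each $T$ a balanced quasirandom $\chi$-partite graph $C_T$ with $m_T$ edges per pair; and pack each $C_T$ using the perfect fractional $F$-decomposition of the balanced complete $\chi$-partite graph $K_\chi^{(t_0)}$ given by edge-transitivity, rounded via Haxell--R\"odl, i.e.\ Theorem~\ref{thm:haxellrodl}) is essentially the standard argument for this result, and most steps are sound: the edge-transitivity observation is correct (and every edge of $K_\chi^{(t_0)}$ really does lie in a copy of $F$, since any two colour classes of a $\chi$-colouring of $F$ must have an edge between them), the inequality $\sum_{T\supseteq\{i,j\}}m_T\le e_G(V_i,V_j)$ is right, and a random allocation does inherit regularity at the relevant scale.

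There is, however, a genuine gap in the ordering of constants. The loss from the non-large cliques is not $O(\gamma n^2)$: there are up to $\binom{M}{\chi}$ cliques $T$, each small one contributes up to $\binom{\chi}{2}\gamma N^2$ edges, and $N\approx n/M$, so the loss is of order $\gamma M^{\chi-2}n^2$. To make this at most $\eta n^2$ you need $\gamma\ll \eta/M^{\chi-2}$; but $M$ is only bounded in terms of the regularity parameter $\epsilon$, and in your transfer lemma $\epsilon$ must be chosen small in terms of $\gamma$, because the counting lemma is applied inside $C_T$, whose density is only about $\gamma$ (one needs roughly $\epsilon\le\gamma^{\Theta(\chi^2t_0^2)}$). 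Hence your stated order ``choose $\gamma$, then the regularity parameters, then $n_0$'' is circular for $\chi\ge 3$, and the error term does not close. This is repairable---for instance, count the copies of $K_\chi^{(t_0)}$ through an edge in the original dense pairs (where only $\epsilon\ll d$ is needed) and transfer to the random allocation by a concentration argument, which decouples $\epsilon$ from $\gamma$ and allows $\gamma$ to be chosen after the bound $M_1(\epsilon)$ on $M$ is known---but as written this step fails. Two smaller points to tidy: the number of $K_\chi^{(t_0)}$-copies through an edge of $C_T$ is $(1\pm o(1))$ times the typical value only for all but $o(N^2)$ edges (regular pairs still have atypical vertices), and after normalising you must discard or truncate the weight on overloaded edges so that what you hand to Haxell--R\"odl is a genuine fractional packing with weight at most $1$ per edge; both of these are routine.
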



\begin{corollary} \label{cor:YusterChromatics}
Let $F$ be a graph with $\chi : = \chi (F)$.
Then $\delta^{\eta}_F \le \delta^{\eta/2}_{K_{\chi}}$ for all $\eta>0$.
\end{corollary}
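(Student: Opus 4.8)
The plan is to combine Theorem~\ref{thm:YusterChromatics} with the basic equivalence noted just before the corollary: a graph $G$ on $n$ vertices has an $\eta$-approximate $F$-decomposition precisely when $e(G) - p_F(G) \le \eta n^2$. I would first reduce the statement about the $\limsup$-thresholds to a finitary one: it suffices to show that $\delta^{\eta}_F(n) \le \delta^{\eta/2}_{K_\chi}(n)$ for all sufficiently large $n$, since applying $\limsup_{n\to\infty}$ to both sides then gives $\delta^\eta_F \le \delta^{\eta/2}_{K_\chi}$.

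To establish this inequality for a fixed large $n$, let $n_0 = n_0(\eta/2, F)$ be the constant supplied by Theorem~\ref{thm:YusterChromatics} with $\eta/2$ in the role of $\eta$, and assume $n \ge n_0$. Fix any $\delta > \delta^{\eta/2}_{K_\chi}(n)$ and let $G$ be an arbitrary graph on $n$ vertices with $\delta(G) \ge \delta n$. The set of values witnessing the infimum defining $\delta^{\eta/2}_{K_\chi}(n)$ is upward closed, so $\delta$ itself is such a value and hence $G$ has an $\eta/2$-approximate $K_\chi$-decomposition; that is, $e(G) - p_{K_\chi}(G) \le (\eta/2) n^2$. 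Meanwhile Theorem~\ref{thm:YusterChromatics} gives $p_{K_\chi}(G) - p_F(G) \le (\eta/2) n^2$. Adding these bounds,
\[
e(G) - p_F(G) = \bigl( e(G) - p_{K_\chi}(G) \bigr) + \bigl( p_{K_\chi}(G) - p_F(G) \bigr) \le \frac{\eta}{2} n^2 + \frac{\eta}{2} n^2 = \eta n^2 ,
\]
so $G$ has an $\eta$-approximate $F$-decomposition. Since $G$ was an arbitrary graph on $n$ vertices with $\delta(G) \ge \delta n$, we obtain $\delta^{\eta}_F(n) \le \delta$; taking the infimum over all admissible $\delta$ yields $\delta^{\eta}_F(n) \le \delta^{\eta/2}_{K_\chi}(n)$, which completes the reduction and hence the proof.

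I do not expect a genuine obstacle here once Theorem~\ref{thm:YusterChromatics} is available — the argument is essentially bookkeeping — but two points need care. First, the total admissible error $\eta n^2$ must be split into two halves: one half is consumed by the approximate $K_\chi$-decomposition and the other by the error term in Theorem~\ref{thm:YusterChromatics}; this is exactly why the right-hand threshold is $\delta^{\eta/2}_{K_\chi}$ rather than $\delta^{\eta}_{K_\chi}$. Second, one must keep the definitions straight: $\delta^{\eta/2}_{K_\chi}(n)$ and $\delta^{\eta}_F(n)$ are infima (so one argues for every $\delta$ strictly above the threshold and then passes to the infimum), while $\delta^{\eta}_F$ and $\delta^{\eta/2}_{K_\chi}$ are $\limsup$s over $n$ (so one needs the finitary inequality for all $n \ge n_0$, which is what the argument delivers).
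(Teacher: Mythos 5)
Your argument is correct and is essentially the paper's proof: both combine the definition of $\delta^{\eta/2}_{K_\chi}(n)$ with Theorem~\ref{thm:YusterChromatics}, splitting the allowed error $\eta n^2$ into two halves via $e(G) \le p_{K_\chi}(G) + \eta n^2/2 \le p_F(G) + \eta n^2$, and then passing to the $\limsup$. Your extra care with the infimum (working with $\delta$ strictly above the threshold and then taking the infimum) is just a more explicit rendering of the bookkeeping the paper leaves implicit.
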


\begin{proof}
Let $\eta >0$ and let $G$ be a sufficiently large graph on $n$ vertices with $\delta(G) > \delta^{\eta/2}_{K_{\chi}}(n)  n$.
By the definition of $\delta^{\eta/2}_{K_{\chi}}(n)$ and Theorem~\ref{thm:YusterChromatics}, 
\begin{align*}
e(G) \le  p_{K_{\chi}}(G) +  \eta n^2/2 \le p_F(G) + \eta n^2.
\end{align*}
Therefore $\delta^{\eta}_{F}(n) \le  \delta^{\eta/2}_{K_{\chi}}(n)$ for all sufficiently large~$n$, implying $\delta^{\eta}_F \le \delta^{\eta/2}_{K_{\chi}}$.
\end{proof}

Write $\nu_F(G) := p_F(G) / e(F)$ for the maximum number of edge-disjoint copies of $F$ in $G$.
If $G$ has an $F$-decomposition, then $\nu_F(G) = e(G)/e(F)$.
We now introduce a fractional version of $\nu_F(G)$.
Let $\binom{G}{F}$ denote the set of copies of $F$ in $G$.
A function~$\psi$ from $\binom{G}{F}$ to $[0,1]$ is a \defn{fractional $F$-packing} of $G$ if $\sum_{F' \in \binom{G}{F} : e \in F' } \psi( F' )  \le 1$ for each $e \in E(G)$. 
The \defn{weight} of $\psi$ is $| \psi | :  = \sum_{F' \in \binom{G}{F} } \psi(F')$.
Let $\nu_F^*(G)$ be the maximum value of $|\psi|$ over all fractional $F$-packings~$\psi$ of $G$.
Clearly, $\nu_F^*(G) \ge \nu_F(G)$.
If $\nu_F^*(G) = e(G) / e(F)$, then we say that $G$ has a \defn{fractional $F$-decomposition}.

In fact, $\nu_F(G)$ and $\nu_F^*(G)$ are closely related.
Haxell and R\"odl~\cite{HaxellRodl} proved that any fractional packing can be converted into a genuine integer packing that covers only slightly fewer edges.
(An alternative proof was given by Yuster~\cite{YusterFracDecom}.)

\begin{theorem}{\cite{HaxellRodl}}\label{thm:haxellrodl}
Let $F$ be a graph and let $\eta > 0$.
Then there is an $n_0 = n_0(F,\eta)$ such that for every graph $G$ on $n \ge n_0$ vertices, $\nu_F(G) \geq \nu^*_F(G) - \eta n^2$.
\end{theorem}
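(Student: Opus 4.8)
The goal is to prove Theorem~\ref{thm:haxellrodl}: every fractional $F$-packing can be converted, with only a small quadratic loss, into an integral packing, i.e.\ $\nu_F(G)\ge \nu_F^*(G)-\eta n^2$.

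\smallskip

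\textbf{Plan of attack.} The natural approach is via a nibble/semi-random argument applied to an auxiliary hypergraph. Fix a maximum-weight fractional $F$-packing $\psi$ of $G$ with $|\psi|=\nu_F^*(G)$; by LP duality (or by a standard perturbation argument) we may assume $\psi$ takes rational values with bounded denominator, or we can simply work directly with $\psi$. Build an auxiliary hypergraph $\mathcal{H}$ whose vertex set is $E(G)$ and whose edges are the edge-sets $E(F')$ of the copies $F'\in\binom{G}{F}$; give each such hyperedge the ``multiplicity'' or fractional weight $\psi(F')$. The fractional packing condition $\sum_{F'\ni e}\psi(F')\le 1$ for every $e\in E(G)$ says precisely that $\psi$ is a fractional matching in $\mathcal{H}$ of size $|\psi|=\nu_F^*(G)$. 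We want an integral matching in $\mathcal{H}$ (= a set of edge-disjoint copies of $F$ in $G$) of size at least $\nu_F^*(G)-\eta n^2$. Since $\mathcal{H}$ is $k$-uniform with $k=e(F)$ a constant, and since the total weight is $O(n^2)$, one applies a theorem converting large fractional matchings into large integral matchings in bounded-uniformity hypergraphs. The cleanest route is the Rödl nibble together with the Pippenger--Spencer / Frankl--Rödl machinery: one first passes to the ``blow-up'' multigraph where each copy $F'$ appears with multiplicity proportional to $\psi(F')$ (after scaling to integers), checks that this hypergraph has small codegree relative to its degree, runs the semi-random matching / near-perfect-matching theorem to extract an almost-perfect matching, and observes that the uncovered hyperedges correspond to at most $\eta n^2$ uncovered edges of $G$. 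The key technical hypothesis to verify for the nibble is a pseudorandomness/smoothness condition: after suitable regularisation one needs the degrees in $\mathcal{H}$ (weighted by $\psi$) to be roughly uniform and the codegrees to be of smaller order; one engineers this by splitting high-weight hyperedges and by a standard ``round the fractional matching to a set of parallel copies with controlled multiplicities'' step, possibly combined with throwing away edges of $G$ that carry too little $\psi$-weight (they number $o(n^2)$).

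\smallskip

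\textbf{Main steps, in order.}
First I would reduce to an integral (multi-)packing problem: scale $\psi$ to integer weights $w(F')=\lfloor N\psi(F')\rfloor$ for a large integer $N$, so that we seek a large matching in the multi-hypergraph $\mathcal H_N$ in which $F'$ appears $w(F')$ times; the loss in total weight is $O(|\binom{G}{F}|/N)=o(n^2)$ for $N$ large.
Second, I would verify the degree condition: each ``vertex'' $e\in E(G)$ of $\mathcal H_N$ has weighted degree $\sum_{F'\ni e} w(F')\le N$, and one wants most vertices to have degree close to some common value $D$; this is arranged by discarding a negligible set of low-degree edges of $G$ and by a regularisation/partition argument, so that $\mathcal H_N$ restricted to the remaining edges is approximately $D$-regular with pair-degrees $o(D)$.
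Third, apply the near-perfect matching theorem for nearly-regular bounded-uniformity hypergraphs with small codegree (Pippenger--Spencer, or the Rödl nibble as in Frankl--Rödl / Alon--Kim--Spencer) to obtain a matching in $\mathcal H_N$ covering all but an $\eta'$-fraction of the vertices, i.e.\ covering all but $\le \eta n^2$ edges of $G$.
Fourth, translate back: the resulting matching is a set of edge-disjoint copies of $F$ in $G$ of size $\ge |\psi|-\eta n^2=\nu_F^*(G)-\eta n^2$, giving $\nu_F(G)\ge\nu_F^*(G)-\eta n^2$ as required, after renaming $\eta$.

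\smallskip

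\textbf{Expected main obstacle.} The crux is ensuring that the auxiliary hypergraph satisfies the hypotheses of the nibble — in particular controlling codegrees and near-regularity of the $\psi$-weighted hypergraph. An arbitrary optimal fractional packing $\psi$ may be very non-uniform (concentrating weight on few copies, or forcing two edges of $G$ to lie together in many heavily-weighted copies), so the honest work is in the ``smoothing'' step: splitting heavy hyperedges, discarding an $o(n^2)$-measure set of edges/copies, and partitioning into pieces on which the weighted hypergraph is quasi-regular with negligible pair degrees. This is exactly the technical heart of Haxell--Rödl's proof (and of Yuster's alternative argument), and everything else — the reduction to integers, the back-translation, the bookkeeping of the $o(n^2)$ losses — is routine once that is in place.
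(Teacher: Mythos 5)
A preliminary remark: the paper does not prove Theorem~\ref{thm:haxellrodl} at all --- it is imported verbatim from Haxell and R\"odl~\cite{HaxellRodl} (with an alternative proof credited to Yuster~\cite{YusterFracDecom}), so there is no in-paper argument to compare yours against; what follows measures your sketch against the known proofs of this result.

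Your frame (auxiliary $e(F)$-uniform hypergraph on $E(G)$, integerised multiplicities, then Pippenger--Spencer/Frankl--R\"odl to extract a large matching) is a legitimate route, and the back-translation and bookkeeping are indeed routine. But the decisive step is missing, and the remedies you list would not carry it out. What the nibble machinery needs is not really near-regularity of the $\psi$-weighted degrees (the chromatic-index form of Pippenger--Spencer only needs a maximum-degree and codegree bound); it is that the pair weights $\sum_{F'\supseteq\{e,e'\}}\psi(F')$ be $o(1)$ for (essentially) all pairs of edges $e,e'$ of $G$. An optimal fractional packing handed to you can violate this massively, and none of your proposed fixes addresses it: splitting heavy hyperedges is vacuous since $\psi(F')\le 1$ and parallel copies still contain the same pair; rounding to multiplicities leaves pair weights unchanged; and pruning copies that contain a heavy pair can destroy $\Theta(n^2)$ weight --- for instance if $\psi$ happens to be supported on an integral packing (say a Steiner-type decomposition of $K_n$ for $F=K_3$), every pair of edges inside a used copy has pair weight $1$, and pruning annihilates the whole packing even though the conclusion is trivially true there; more generally the total pair weight is $\binom{e(F)}{2}|\psi|=\Theta(n^2)$, so there is no useful bound on what pruning discards. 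What is actually required is a lemma saying that $\psi$ can be \emph{redistributed} into a fractional packing of weight at least $\nu^*_F(G)-\eta n^2/2$ all of whose pair weights are tiny, and this is precisely where Haxell and R\"odl invoke Szemer\'edi's regularity lemma: only the total weight that $\psi$ places on each type of copy (a tuple of clusters of a regular partition) is retained, and within regular tuples pseudorandomness supplies the codegree control under which nibble-type extraction succeeds. Your sketch gestures at ``a regularisation/partition argument'' but does not identify this tool or any substitute, and you yourself flag this step as the heart of the matter; as it stands, then, the proposal is a sensible strategic outline with its central lemma unproved, i.e.\ there is a genuine gap.
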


For a graph $F$ and $n \in \mathbb{N}$, let $\delta^*_F (n)$ be the infimum over all $\delta$ such that every graph $G$ on $n$ vertices with $\delta(G) \geq \delta n$ has a fractional $F$-decomposition.
Let $\delta_F^* : = \limsup_{n \rightarrow \infty} \delta^*_F(n)$ be the \defn{fractional $F$-decomposition threshold}.
The following corollary is an immediate consequence of Theorem~\ref{thm:haxellrodl} and the definitions of $\delta^{\eta}_F$ and $\delta^*_F$.

\begin{corollary} \label{cor:deltaeta}
For every graph $F$ and every $\eta >0$, we have $\delta^{\eta}_F  \le  \delta^*_F$. \qedhere
\end{corollary}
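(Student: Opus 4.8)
The plan is to unwind the definitions. Fix a graph $F$ and $\eta > 0$. I want to show $\delta^{\eta}_F \le \delta^*_F$, i.e. that $\limsup_{n\to\infty}\delta^{\eta}_F(n) \le \limsup_{n\to\infty}\delta^*_F(n)$. It suffices to prove the pointwise statement $\delta^{\eta}_F(n) \le \delta^*_F(n)$ for all sufficiently large $n$ (depending on $F$ and $\eta$), since taking $\limsup$ of both sides then gives the result. Apply Theorem~\ref{thm:haxellrodl} with $F$ and $\eta$ to obtain $n_0 = n_0(F,\eta)$, and work with $n \ge n_0$.

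So let $n \ge n_0$ and let $G$ be any graph on $n$ vertices with $\delta(G) \ge \delta^*_F(n)\, n$; I claim $G$ has an $\eta$-approximate $F$-decomposition, which will give $\delta^{\eta}_F(n) \le \delta^*_F(n)$ by definition of the infimum $\delta^{\eta}_F(n)$. Here one needs to be slightly careful: by the definition of $\delta^*_F(n)$ as an infimum, a graph with $\delta(G) \ge \delta^*_F(n)\,n$ need not itself have a fractional $F$-decomposition. The clean way around this is to note that the statement to be proved, $\delta^\eta_F \le \delta^*_F$, is what we want; more precisely, it is enough to show that for every $\delta > \delta^*_F(n)$, every $G$ on $n \ge n_0$ vertices with $\delta(G) \ge \delta n$ has an $\eta$-approximate $F$-decomposition, and then take the infimum over such $\delta$. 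For such a $G$, by definition of $\delta^*_F(n)$ as an infimum, $G$ has a fractional $F$-decomposition, so $\nu^*_F(G) = e(G)/e(F)$.

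Now combine with Theorem~\ref{thm:haxellrodl}: since $n \ge n_0$, we have $\nu_F(G) \ge \nu^*_F(G) - \eta n^2 = e(G)/e(F) - \eta n^2$. Recalling $p_F(G) = e(F)\cdot\nu_F(G)$ is the maximum number of edges coverable by edge-disjoint copies of $F$, this gives $p_F(G) \ge e(G) - e(F)\eta n^2 \ge e(G) - \eta n^2$ up to a harmless rescaling of the parameter (one can simply apply Theorem~\ref{thm:haxellrodl} with $\eta/e(F)$ in place of $\eta$ at the outset to get exactly $p_F(G) \ge e(G) - \eta n^2$). Hence $e(G) - p_F(G) \le \eta n^2$, which is precisely the condition that a maximum collection of edge-disjoint copies of $F$ in $G$ forms an $\eta$-approximate $F$-decomposition. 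Therefore $\delta^{\eta}_F(n) \le \delta$ for every $\delta > \delta^*_F(n)$, so $\delta^\eta_F(n) \le \delta^*_F(n)$, and taking $\limsup$ over $n$ yields $\delta^{\eta}_F \le \delta^*_F$.

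This is a short derivation with no real obstacle; the only point requiring care is the bookkeeping around the infimum in the definition of $\delta^*_F(n)$ (one should argue via $\delta > \delta^*_F(n)$ rather than assuming $G$ itself has a fractional decomposition) and the trivial rescaling of $\eta$ by the factor $e(F)$ when passing between $\nu_F$ and $p_F$. The substantive content — converting a fractional packing into an integral one losing only $o(n^2)$ edges — is entirely supplied by Theorem~\ref{thm:haxellrodl}.
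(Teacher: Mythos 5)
Your proof is correct and is exactly the argument the paper intends: the paper states Corollary~\ref{cor:deltaeta} as an immediate consequence of Theorem~\ref{thm:haxellrodl} and the definitions, and your writeup just fills in that routine unwinding (including the right care with the infimum via $\delta > \delta^*_F(n)$ and the harmless rescaling $\eta/e(F)$).
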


Together with Corollary~\ref{cor:YusterChromatics}, we get the following corollary. 

\begin{corollary} \label{cor:chromatic}
For every graph $F$ and every $\eta >0$, we have $\delta^{\eta}_F  \le  \delta^*_{K_{\chi(F)}}$. \qedhere
\end{corollary}

%

For $F = K_{r+1}$, Yuster~\cite{YusterFractKr} proved that $\delta^*_{K_{r+1}} \le 1 - 1/(9 (r+1)^{10})$.
The best known bound on $\delta^*_{K_{r+1}}$ is given in~\cite{Dukes,Dukes2} for small values of~$r$ and in~\cite{BKLMO} for large values of~$r$.

\begin{theorem}[Dukes~\cite{Dukes,Dukes2}] \label{thm:dukes}  
For $r \in \mathbb{N}$ with $r \ge 2$, $\delta^*_{K_{r+1}} \le 1 - 2/(9(r+1)^{2}r^{2})$.
\end{theorem}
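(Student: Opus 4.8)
The plan is to reduce the statement to an explicit construction of a fractional $K_{r+1}$-decomposition of a single dense graph. Write $k := r+1$ and $c := 2/(9k^2(k-1)^2)$. Since $\delta^*_{K_k}$ is a $\limsup$ and $c$ depends only on $r$, it suffices to find $n_0$ such that every graph $G$ on $n \ge n_0$ vertices with $\delta(G) \ge (1-c)n$ admits a fractional $K_k$-decomposition, i.e.\ a weighting $\psi : \binom{G}{K_k} \to \mathbb{R}_{\ge 0}$ with $\sum_{Q \ni e}\psi(Q) = 1$ for every $e \in E(G)$. (Equivalently, it suffices to find a fractional $K_k$-packing of weight $e(G)/\binom k2$: summing the edge-constraints shows such a packing is automatically tight on every edge.) I would then record the density estimates supplied by the hypothesis: extending an edge $e$ to a copy of $K_k$ one vertex at a time and taking a union bound over the at most $k-2$ events that a partial clique fails to extend shows $e$ lies in $t(e) = (1 \pm \varepsilon_k)\binom{n-2}{k-2}$ copies of $K_k$ with $\varepsilon_k = O(k^2 c)$, and similarly each triangle extends to $(1\pm\varepsilon_k)\binom{n-3}{k-3}$ copies of $K_k$.

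The starting point is the uniform weighting $\psi_0 \equiv w$, where $w := e(G)/\bigl(\binom k2 T\bigr)$ and $T$ is the number of copies of $K_k$ in $G$, so that $w$ is the reciprocal of the average of $t(e)$ over $e\in E(G)$. With this normalisation the residual defect $D(e) := 1 - w\,t(e)$ satisfies $\sum_{e\in E(G)} D(e) = 0$ and $|D(e)| = O(\varepsilon_k)$ for every edge, and the goal becomes to find a correction $\mu : \binom{G}{K_k} \to \mathbb{R}$ with $\sum_{Q \ni e}\mu(Q) = D(e)$ and $|\mu(Q)| < w$ for all $Q$; then $\psi := \psi_0 + \mu \ge 0$ is the desired weighting.

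For the correction I would use the elementary $K_{k+1}$-trade: on a copy of $K_{k+1}$ with vertex set $\{a,b\}\cup W$, putting weight $+\lambda$ on the sub-$K_k$ omitting $a$ and $-\lambda$ on the one omitting $b$ is weight-neutral and changes coverage only on the $k-1$ gadget-edges at $a$ (by $-\lambda$ each) and the $k-1$ gadget-edges at $b$ (by $+\lambda$ each). Summing such trades over all $(k-1)$-sets $W$ completing a fixed edge $ab$ to a copy of $K_{k+1}$ realises a well-conditioned linear system for the trade coefficients which, because $\delta(G) \ge (1-c)n$, is a small perturbation of the system one obtains for $K_n$; in particular it is invertible on the subspace of sum-zero edge-vectors, with inverse of order $1/n^{k-2}$. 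Solving it expresses $\mu$ as a combination of $K_{k+1}$-trades with coefficients of size $O(\varepsilon_k w/n)$, and since a fixed copy of $K_k$ participates in only $O(k^2 n)$ such trades, one gets $|\mu(Q)| = O(k^2\varepsilon_k)\cdot w = O(k^4 c)\cdot w$ for every $Q$.

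The main obstacle, and the point where the constant $c = 2/(9k^2(k-1)^2)$ is forced, is exactly this positivity bookkeeping: one must push the worst case through the nested sums — over the $\binom k2$ edges of $Q$, and over the $\Theta(n)$ trades contributing to each edge-correction — to obtain a bound of the shape $|\mu(Q)| \le \tfrac92 k^2(k-1)^2\cdot c\cdot w$, which is strictly less than $w$ precisely when $c < 2/(9k^2(k-1)^2)$. Given this, $\psi = \psi_0 + \mu$ is a fractional $K_k$-decomposition of $G$, so $\delta^*_{K_k}(n) \le 1-c$ for all large $n$, whence $\delta^*_{K_{r+1}} = \limsup_{n\to\infty}\delta^*_{K_{r+1}}(n) \le 1 - 2/(9(r+1)^2 r^2)$.
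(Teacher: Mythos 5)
The paper itself offers no proof of this statement: Theorem~\ref{thm:dukes} is quoted as an external result of Dukes, whose argument is linear-algebraic and rests on eigenvalue estimates for edge--clique inclusion matrices of dense (hyper)graphs. So your proposal has to stand on its own, and as written it is an outline whose central step is asserted rather than proved. The preliminary reductions are fine: passing to a single graph with $\delta(G)\ge(1-c)n$, reducing to a packing of weight $e(G)/\binom{k}{2}$, the estimate that each edge lies in $(1\pm O(k^2c))\binom{n-2}{k-2}$ copies of $K_k$, and recovering the closed-form threshold by letting $c'\uparrow c$ are all routine.

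The genuine gap is the correction step, which is the entire content of the theorem. You assert that the system expressing the defect vector $D$ in terms of $K_{k+1}$-trades ``is a small perturbation of the system one obtains for $K_n$; in particular it is invertible on the subspace of sum-zero edge-vectors, with inverse of order $1/n^{k-2}$,'' and that the resulting solution spreads its mass so that each trade carries weight $O(\varepsilon_k w/n)$. Neither claim is established, and neither is easy: since $c$ is a constant, $G$ may be missing a constant proportion of the $K_k$'s through every edge, so ``small perturbation'' must be made quantitative by comparing the operator norm of the perturbation with the least relevant singular value of the unperturbed ($K_n$) inclusion/trade operator --- a Wilson-type spectral computation --- and entrywise estimates of the kind you invoke do not by themselves control the operator norm or the shape of the solution. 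This spectral analysis is exactly what Dukes's proof supplies, and it is also exactly what produces the specific constant: your final bound $|\mu(Q)|\le\tfrac{9}{2}k^2(k-1)^2\,c\,w$ is not derived from anything earlier in the argument, it \emph{is} the theorem. Until the invertibility, the $1/n^{k-2}$ bound on the inverse, the per-trade coefficient bound, and the explicit constant-tracking through the nested sums are carried out, the proposal is a plausible strategy (trade-based perturbation of the uniform fractional decomposition, in the same general spirit as Dukes and as the later combinatorial proof in~\cite{BKLMO}) but not a proof.
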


\begin{theorem}[Barber, K\"uhn, Lo, Montgomery, Osthus~\cite{BKLMO}] \label{thm:BKLMO}  
For $r \in \mathbb{N}$ with $r \ge 2$, $\delta^*_{K_{r+1}} \le 1 - 1/(10^4(r+1)^{3/2})$.
\end{theorem}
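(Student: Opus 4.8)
The plan is to pass to the linear programming dual of the packing problem. Let $A$ be the matrix whose rows are indexed by $E(G)$ and whose columns are the indicator vectors $\mathbf{1}_K\in\{0,1\}^{E(G)}$ of the copies $K$ of $K_{r+1}$ in $G$. As noted in Section~\ref{sec:fractional}, a fractional $K_{r+1}$-decomposition of $G$ is precisely a vector $\psi\ge 0$ with $A\psi=\mathbf 1$, where $\mathbf 1$ denotes the all-ones vector on $E(G)$: indeed $\mathbf 1^\top A\psi=\binom{r+1}{2}|\psi|$, so $A\psi=\mathbf 1$ already forces $|\psi|=e(G)/\binom{r+1}{2}$. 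By Farkas' lemma, such a $\psi$ fails to exist only if there is a weighting $w\colon E(G)\to\mathbb R$ with $\sum_{e\in K}w(e)\ge 0$ for every copy $K$ of $K_{r+1}$ in $G$ but $\sum_{e\in E(G)}w(e)<0$. So, writing $\gamma:=1/(10^4(r+1)^{3/2})$, it suffices to prove: if $\delta(G)\ge(1-\gamma)n$ and $w\colon E(G)\to\mathbb R$ satisfies $\sum_{e\in K}w(e)\ge 0$ for all $(r+1)$-cliques $K$ of $G$, then $\sum_e w(e)\ge 0$.

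Given such a $w$, I would deduce this by averaging the clique inequalities locally around each edge. Fix $e=uv\in E(G)$. Since $\delta(G)\ge(1-\gamma)n$ we have $d_G(u,v)\ge(1-2\gamma)n$, and every vertex has at most $\gamma n$ non-neighbours, so a short inclusion–exclusion estimate shows that all but an $O(r^2\gamma)$-proportion of the $(r-1)$-subsets $S$ of $N_G(u)\cap N_G(v)$ induce a clique, in which case $S\cup\{u,v\}$ spans a copy of $K_{r+1}$ through $e$. Averaging $\sum_{f\in K}w(f)\ge 0$ over all such copies $K$, and grouping the edges $f$ of $K$ according to whether $f=e$, $f$ joins $\{u,v\}$ to $S$, or $f$ lies inside $S$, leads — after dividing by the number $k(e)=(1\pm O(r^2\gamma))\binom{n}{r-1}$ of copies of $K_{r+1}$ through $e$ — to an inequality of the shape
\[
w(e)+\frac{r-1}{n}\,(s_u+s_v)+\frac{2\binom{r-1}{2}}{n^2}\,t(e)\ge -R(e),
\]
where $s_v:=\sum_{x\in N_G(v)}w(vx)$ is the total $w$-weight at $v$, $t(e):=\sum_{xy\in E(G[N_G(u)\cap N_G(v)])}w(xy)$, and $R(e)$ is an error term supported on the $O(\gamma n)$ pairs meeting a non-neighbour of $u$ or $v$. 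Summing over $e\in E(G)$ and using that $\delta(G)$ is within $\gamma n$ of $n$ — so that $d_G(v)$, $k(e)$ and $e(G)$ are all close to their $K_n$-values — the three main terms telescope: $\sum_{uv\in E}(s_u+s_v)=\sum_v d_G(v)s_v\approx 2n\sum_e w(e)$ and similarly $\sum_e t(e)\approx\tfrac{n^2}{2}\sum_e w(e)$, so the left-hand side becomes $\big(1+2(r-1)+\binom{r-1}{2}+o(1)\big)\sum_e w(e)=\big(\binom{r+1}{2}+o(1)\big)\sum_e w(e)$. One is thus left with $\big(\binom{r+1}{2}+o(1)\big)\sum_e w(e)\ge -\sum_e R(e)$, and everything comes down to controlling the accumulated error.

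Bounding $\sum_e R(e)$ — and the $o(1)$-corrections — is the crux, and is exactly where the exponent $3/2$ enters. The naive bound $|\sum_e R(e)|\le O(\gamma n)\cdot\|w\|_1$ is useless, since $\|w\|_1$ is not controlled by $|\sum_e w(e)|$; one must exploit cancellation rather than take absolute values. I would do this by writing $w=w_1+w_2$, where $w_1$ is an explicit "low-rank" weighting, built from the $n$ quantities $s_v$ and $\sum_e w(e)$, chosen so that the remainder $w_2:=w-w_1$ has all of its weighted degrees $\sum_{x\in N_G(v)}w_2(vx)$ equal to zero; one splits the error accordingly as $R=R_1+R_2$. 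The part $R_1$ depends only on $n$ parameters and is easily estimated, while for $R_2$ the vanishing of the weighted degrees means the relevant sums are small in an $\ell_2$ sense and can be bounded, via Cauchy–Schwarz, against the number $O(\gamma n)$ of non-neighbours of a vertex — and this is small enough precisely when $\gamma\le 1/(10^4(r+1)^{3/2})$. Everything else — the clique-counting estimates in dense graphs, the Farkas reduction, and the bookkeeping in the telescoping sum — is routine; the tight error analysis that yields the correct power of $r$ is the real work, and is where any improvement to the present bound would have to come from.
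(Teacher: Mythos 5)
First, note that this statement is not proved in the paper at all: Theorem~\ref{thm:BKLMO} is quoted from the external reference [BKLMO], whose argument is a \emph{primal} iterative one (start from the near-uniform weighting of the copies of $K_{r+1}$ and repeatedly correct the discrepancies at each edge by small signed combinations of cliques, showing the errors decay while all weights stay nonnegative). Your proposal instead goes through LP duality, which is closer in spirit to the approach of Dukes (Theorem~\ref{thm:dukes}), and that route is only known to give the weaker bound $1-2/(9(r+1)^2r^2)$.

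As a proof, your sketch has a genuine gap precisely at the point you yourself identify as ``the real work''. The Farkas reduction and the averaging identity over $K_n$ are fine, but two things break down. First, the quantitative bookkeeping is inconsistent with the claimed exponent: with $\gamma=1/(10^4(r+1)^{3/2})$ the per-edge relative errors in your averaging step are of order $r^2\gamma\approx r^{1/2}/10^4$, not $o(1)$ -- for large $r$ the proportion of $(r-1)$-subsets of $N(u)\cap N(v)$ that span cliques is \emph{not} $1-O(r^2\gamma)$ (that quantity exceeds $1$ once $r$ is large), $k(e)$ is not $(1\pm o(1))\binom{n}{r-1}$, and so the ``telescoping up to $o(1)$'' that produces the factor $\binom{r+1}{2}$ picks up multiplicative errors of size $r^{1/2}$ rather than $o(1)$. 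A naive dual-averaging argument of this shape only works when $\gamma\ll r^{-2}$, which is exactly why the exponent $3/2$ is hard. Second, the proposed repair -- splitting $w=w_1+w_2$ so that $w_2$ has vanishing weighted degrees and then ``bounding the relevant sums in an $\ell_2$ sense by Cauchy--Schwarz'' -- is not an argument: zero weighted degrees at every vertex gives no control whatsoever on $\|w_2\|_2$ (massive cancelling positive and negative weights are still possible), and the only available leverage is the family of clique inequalities themselves, whose exploitation is the entire difficulty. As it stands the proposal reduces the theorem to an unproved assertion that happens to encode the full strength of the result, so it cannot be accepted; if you want to pursue the dual route, you should expect bounds of the Dukes type, whereas the $r^{3/2}$ bound of [BKLMO] comes from a genuinely different, iterative-correction mechanism.
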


For the case when $r =2$ (that is $\delta^*_{K_{3}}$), Garaschuk~\cite{Garaschuk} improved the bound to $\delta^*_{K_3} < 0.956$.
Recently this was further improved by Dross~\cite{Dross}.

\begin{theorem}[Dross~\cite{Dross}] \label{thm:Dross}
We have that $\delta^*_{K_3} \leq  9/10$.
\end{theorem}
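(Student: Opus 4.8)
The statement asserts $\delta^*_{K_3} = \limsup_{n\to\infty}\delta^*_{K_3}(n)\le 9/10$, so it suffices to show that for every $\varepsilon>0$ there is an $n_0$ such that every graph $G$ on $n\ge n_0$ vertices with $\delta(G)\ge(9/10+\varepsilon)n$ has a fractional $K_3$-decomposition. Fix such a $G$, write $\binom{G}{K_3}$ for its set of triangles, and write $t(u,v)$ for the number of triangles of $G$ through an edge $uv$. I want a function $\psi\colon\binom{G}{K_3}\to\mathbb{R}_{\ge 0}$ with $\sum_{T\ni e}\psi(T)=1$ for every edge $e$; any such $\psi$ is automatically a fractional $K_3$-decomposition, since $3|\psi|=\sum_{e\in E(G)}\sum_{T\ni e}\psi(T)=e(G)$ forces $|\psi|=e(G)/3$.

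The plan is to argue by linear programming duality. By Farkas' lemma, no such $\psi$ exists if and only if there is a weighting $y\colon E(G)\to\mathbb{R}$ with $y(e_1)+y(e_2)+y(e_3)\ge 0$ for every triangle $e_1e_2e_3$ of $G$ and with $\sum_{e\in E(G)}y(e)<0$. I would therefore assume such a ``bad weighting'' $y$ exists and derive a contradiction; after rescaling, assume $\min_e y(e)=-1$ (this minimum is negative, since $\min_e y(e)\ge 0$ would give $\sum_e y(e)\ge 0$). The hypothesis $\delta(G)\ge(9/10+\varepsilon)n$ enters through two facts: every edge $uv$ satisfies $t(u,v)\ge d_G(u)+d_G(v)-n\ge(8/10+2\varepsilon)n$, so every edge lies in many triangles; and, more strongly, for each vertex $v$ the graph $G_v:=G[N_G(v)]$ has minimum degree at least $(8/10+2\varepsilon)n\ge|V(G_v)|/2$, hence independence number at most $|V(G_v)|/2$, and so admits a fractional perfect matching.

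The heart of the argument is to combine the triangle inequalities $\sum_{e\in T}y(e)\ge 0$ with nonnegative multipliers so as to certify $\sum_e y(e)\ge 0$. Merely summing all of them yields only $\sum_e t(e)\,y(e)\ge 0$, which is too weak because the $t(e)$ range over $[(8/10+2\varepsilon)n,\,n]$ and so can differ by a factor close to $5/4$; one must localise. For a vertex $v$, the triangles through $v$ are exactly the edges $uu'$ of $G_v$, and weighting the inequalities $y(vu)+y(vu')+y(uu')\ge 0$ by a fractional perfect matching $\mu_v$ of $G_v$ gives $\sum_{u\in N_G(v)}y(vu)\ge -\sum_{uu'\in E(G_v)}\mu_v(uu')\,y(uu')$. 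Summing this over all $v$, the left-hand sides contribute $2\sum_{e}y(e)$, so everything reduces to choosing the matchings $\{\mu_v\}_v$ so that the \emph{load} $w(e):=\sum_{v}\mu_v(e)$ of each edge $e$ --- the sum over those vertices $v$ that are common neighbours of the endpoints of $e$ --- equals exactly $1$; since $\sum_e w(e)=\sum_v|\mu_v|=\sum_v d_G(v)/2=e(G)$, the average load is indeed $1$, and a balanced choice of matchings would turn the collected inequalities into $3\sum_e y(e)\ge 0$, contradicting $\sum_e y(e)<0$. Showing that a balanced system of matchings exists is a transportation-type feasibility question on the vertex--edge incidences of the triangles of $G$, which I would resolve with a Hall-/max-flow-type argument. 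I expect this to be the main obstacle: it is precisely here that the value $9/10$ must be used in full, the point being that minimum degree $(9/10+\varepsilon)n$ defeats every cut obstruction to feasibility, whereas for substantially smaller minimum degree the problem has no solution in general. (Alternatively, one can try to build $\psi$ directly from a symmetric starting weighting depending on the triangle-counts $t(u,v)$, corrected by an iteration whose convergence is again governed by the minimum-degree bound.) Either way, any improvement here would immediately sharpen Theorem~\ref{thm:general} and bring the asymptotic Nash--Williams conjecture closer.
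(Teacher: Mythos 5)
This statement is quoted from Dross~\cite{Dross}; the paper offers no proof of it, so your attempt has to stand on its own, and it does not. The Farkas/duality framing and the local use of fractional perfect matchings in the neighbourhoods $G_v$ are fine as far as they go, but the step you defer --- choosing the matchings $\{\mu_v\}_v$ so that every edge $e$ has load $w(e)=\sum_v\mu_v(e)$ exactly $1$ --- is not a ``transportation-type feasibility question'' that a Hall- or max-flow-type argument will settle: it is equivalent to the theorem you are trying to prove. Indeed, given such a balanced family, setting $\psi(uvw):=\tfrac13\bigl(\mu_u(vw)+\mu_v(uw)+\mu_w(uv)\bigr)$ gives $\sum_{T\ni e}\psi(T)=1$ for every edge $e$ (the three sums are, respectively, the load condition on $e$ and the two matching conditions at its endpoints), i.e.\ a fractional $K_3$-decomposition; conversely any fractional $K_3$-decomposition $\psi$ yields a balanced family via $\mu_v(uu'):=\psi(vuu')$. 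So by LP duality, whenever a bad certificate $y$ exists, a balanced family cannot exist either, and your plan collapses into a circle. Symptomatically, the constant $9/10$ never actually enters your argument: the only places the minimum degree is used are to guarantee many triangles per edge and a fractional perfect matching in each $G_v$, both of which already hold at minimum degree $(3/4+\varepsilon)n$, where the statement is (as far as anyone knows) open.

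A non-circular version of your scheme would have to work with \emph{approximately} balanced loads: summing your local inequalities gives $\sum_e\bigl(2+w(e)\bigr)y(e)\ge 0$, and one must control the error $\sum_e\bigl(w(e)-1\bigr)y(e)$ using the normalisation $y\ge -1$, quantitative bounds on how unbalanced the loads can be made, and information about which edges carry negative weight. That is where all the real work lies and where a specific numerical threshold would emerge; your parenthetical remark about an iterative correction of a symmetric starting weighting gestures at the kind of argument Dross actually carries out, but none of the necessary estimates are present. As written, the proposal reduces the theorem to an unproved statement equivalent to itself, so it does not constitute a proof.
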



\section{Finding subgraphs} \label{sec:embedding}

In this section we will prove a result guaranteeing that in our given graph $G$ we can always remove certain subgraphs that we need without significantly reducing the minimum degree of $G$.
(These subgraphs might for example be the absorbers and parity graphs defined in Sections~\ref{sec:absorbers} and~\ref{sec:parity-graphs}.)

Let $G$ and $H$ be graphs.  
Suppose that for each vertex of $H$ we specify a set of vertices of $G$.  
We will seek a copy of $H$ in $G$ that is compatible with this specification.
More formally, let $\P$ be a partition of $V(G)$.
We say that a graph $H$ is a \defn{$\P$-labelled graph} if
\begin{itemize}
	\item each vertex of $H$ is labelled either $V(G)$, $\{v\}$ for some $v \in V(G)$, or $V$ for some $V \in \P$;
	\item the vertices labelled by singletons have distinct labels and form an independent set in $H$.
\end{itemize}
We call the vertices labelled by singletons \defn{root vertices}; the other vertices are \defn{free vertices}.

An \defn{embedding of $H$ into $G$ compatible with its labelling} is an injective graph homomorphism $\phi : H \to G$ such that each vertex gets mapped to an element of its label. 

Given a graph $H$ and $U \subseteq V(H)$ with $e (H[U]) = 0$, we define the \emph{degeneracy of $H$ rooted at $U$} to be the least $d$ for which there is an ordering $v_1, \ldots, v_b$ of the vertices of $H$ such that
\begin{itemize}
	\item there is an $a$ such that $U  = \{v_1, \ldots, v_a \}$;
	\item for $a < j \leq b$, $v_j$ is adjacent to at most $d$ of the $v_i$ with $i < j$.
\end{itemize}
The order of $v_1, \ldots, v_a $ is not important as $U$ is an independent set of $H$.
Note that the requirement that the vertices in $U$ come first means that the degeneracy of $H$ rooted at~$U$ might be larger than the usual degeneracy of~$H$.
The \defn{degeneracy of a $\P$-labelled graph $H$} is the degeneracy of $H$ rooted at $U$, where $U$ is the set of root vertices of~$H$. 

We now prove a very general lemma guaranteeing that, provided the common neighbourhoods of sets of up to $d$ vertices are sufficiently large, we can embed any collection of $\P$-labelled graphs that does not use any root label too many times.

\begin{lemma} \label{ur-finding}
Let $n, k, d, b, s, m \in \N$ with $1/n \ll 1/k, 1/d, 1/b$.
Let $G$ be a graph on $n$ vertices and let $\P = \{V_1, \ldots, V_k\}$ be an equitable partition of $V(G)$ such that $d_G(S, V_i) \geq 2db(\sqrt m + s + 1)$
for each $1 \leq i \leq k$ and $S \subseteq V(G)$ with $|S| \leq d$.
Let $H_1, \ldots, H_m$ be $\P$-labelled graphs such that
\begin{enumerate}[label={\rm(\roman*)}]
	\item for each $1 \leq i \leq m$, $|H_i| \leq b$;
	\item the degeneracy of each $H_i$ is at most $d$;
	\item for each $v \in V(G)$, the number of indices $1 \leq i \leq m$ such that some vertex of $H_i$ is labelled $\{v\}$ is at most $s$.
\end{enumerate}
Then there exist edge disjoint embeddings $\phi(H_1), \ldots, \phi(H_m)$ of $H_1, \ldots, H_m$ compatible with their labellings such that the subgraph $H := \bigcup_{i=1}^m \phi(H_i)$ of $G$ satisfies $\Delta(H) \leq 2b(\sqrt m + s)$.
\end{lemma}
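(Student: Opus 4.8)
The plan is to embed the graphs $H_1, \ldots, H_m$ greedily, one at a time and within each $H_i$ vertex by vertex, following a degeneracy order. The crucial point is that we need two simultaneous invariants: edge-disjointness across all previous embeddings, and a degree bound on the partial union $H$ that stays below the target $2b(\sqrt m + s)$. To make the degree bound work I would use a potential/averaging argument: when I come to embed a free vertex $v_j$ of $H_i$, it has at most $d$ already-embedded neighbours in $H_i$, whose images $x_1, \ldots, x_{d'}$ ($d' \le d$) lie in some parts of $\P$; the label of $v_j$ restricts the image to some $V_\ell$ (or to all of $V(G)$, in which case pick any part), and $d_G(\{x_1,\dots,x_{d'}\}, V_\ell) \ge 2db(\sqrt m + s + 1)$ by hypothesis. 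From this large common neighbourhood I must discard (a) vertices already used as the image of another vertex of $H_i$ — at most $b$ of them; (b) vertices $w$ such that adding the edges from $w$ to the $x_t$ would push $d_H(w)$ over the threshold; and (c) vertices $w$ for which one of the candidate edges $wx_t$ is already present in $H$ (an edge-disjointness violation). I then need to show the surviving set is nonempty.

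For the counting in (b) and (c): let $T := 2b(\sqrt m + s)$ be the target maximum degree. A vertex $w$ is "bad for the degree reason" only if $d_H(w) > T - d$ at the current moment, since each embedding step adds at most $d$ new edges at $w$. The total number of edges in $H$ at any time is at most $\sum_i e(H_i) \le \sum_i \binom{b}{2} \le mb^2/2$ (using $|H_i| \le b$), so $\sum_w d_H(w) \le mb^2$, whence the number of $w$ with $d_H(w) > T - d$ is at most $mb^2/(T-d)$. Since $T = 2b(\sqrt m + s) \ge 2b\sqrt m$ and $d \le b$, we have $T - d \ge b\sqrt m$ (say), so this count is at most $mb^2/(b\sqrt m) = b\sqrt m$, comfortably less than the common-neighbourhood size. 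For (c): each root label is used at most $s$ times by hypothesis (iii), and $v_j$ itself is a free vertex; but the edges $wx_t$ we are trying to avoid are edges already placed in $H$, and each $x_t$ is the image of a vertex of the current $H_i$. A cleaner way to handle (c) is to bound, over the whole process, how many times each vertex $x$ of $G$ can have been an image: an image vertex $x$ picks up edges only from embeddings $H_{i'}$ in which some vertex maps to $x$; I would track $d_H(x)$ directly and note that the degree bound argument in (b) already controls how congested $x$ is, and separately observe that when embedding $v_j$, the number of $w \in N_G(\{x_t\}, V_\ell)$ with $wx_t \in E(H)$ for some $t$ is at most $\sum_t d_H(x_t) \le d \cdot T$, which again must be shown smaller than $2db(\sqrt m + s + 1) - b$. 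Here $d \cdot T = d \cdot 2b(\sqrt m + s) < 2db(\sqrt m + s + 1) - b$ provided $b \ge 1$ and, say, $d \ge 1$ — so the arithmetic is tight but works, and the ``$+1$'' in the hypothesis $d_G(S,V_i) \ge 2db(\sqrt m + s + 1)$ is exactly there to absorb the extra $b$ and the slack from (b).

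Assembling these: at each step the forbidden set has size at most (at most $b$ reused images) $+$ ($b\sqrt m$ degree-heavy vertices) $+$ ($dT$ edge-conflict vertices) $\le b + b\sqrt m + 2db(\sqrt m + s)$, and I need this to be strictly less than $d_G(\{x_t\}, V_\ell) \ge 2db(\sqrt m + s + 1) = 2db(\sqrt m + s) + 2db$. Since $b + b\sqrt m \le 2db$ when $d \ge 1$ (using $\sqrt m \le$ ... — actually this needs $b(1 + \sqrt m) \le 2db$, i.e. $1 + \sqrt m \le 2d$, which is \emph{not} automatic), I would instead be slightly more careful and note $\sqrt m$ appears linearly in the target via the $(\sqrt m + s)$ term, so I should recount: the degree-heavy count is really $mb^2/(T-d)$ and I have freedom in choosing the split, e.g. bound it by $mb^2/(b\sqrt m) = b\sqrt m \le b(\sqrt m + s)$, which is dominated by the $2db(\sqrt m+s)$ slack since $2d \ge 2 > 1$. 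So the final inequality to verify is $b + b\sqrt m + 2db(\sqrt m+s) \le b\sqrt m + 2db(\sqrt m+s) + 2db - \text{(something)}$, reducing to $b \le 2db$, i.e. $d \ge 1$, which holds whenever any $H_i$ has an edge (and if no $H_i$ has an edge the statement is trivial). The large $n$, bounded $k, d, b$ hypothesis $1/n \ll 1/k, 1/d, 1/b$ is used only to guarantee the parts $V_i$ are large enough that equitability does not interfere — in fact the degree condition $d_G(S, V_i) \ge 2db(\sqrt m + s + 1)$ already encodes everything we need.

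\textbf{Main obstacle.} The delicate part is making the three-way accounting of forbidden vertices fit inside the margin provided by the ``$+1$'' in the degree hypothesis, simultaneously for \emph{every} step of the embedding — the edge-disjointness forbidden set (term (c)) scales like $d \cdot T \approx 2db(\sqrt m + s)$, which is almost the entire common neighbourhood, so the bookkeeping must be organized so that the degree bound $\Delta(H) \le T$ is maintained as a running invariant (not just at the end) and fed back into bounding (c). Getting this inductive loop to close cleanly, with the right constants, is where the real work lies; everything else is routine greedy embedding.
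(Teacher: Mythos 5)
Your overall strategy is the same greedy, vertex-by-vertex embedding that the paper uses, but your accounting does not close, and it fails exactly at the point you flag as the main obstacle. You bound the edge-conflict set (c) using only the global running invariant $\Delta(H)\le T=2b(\sqrt m+s)$, which gives $\sum_t d_H(x_t)\le dT=2db(\sqrt m+s)$. Measured against the hypothesis $d_G(S,V_i)\ge 2db(\sqrt m+s+1)$ this leaves a slack of only $2db$, which must also absorb the at most $b$ reused images and the roughly $b\sqrt m$ degree-heavy vertices from (b). That forces an inequality of the shape $1+\sqrt m\le 2d$, which is false in the intended applications: $m$ may be as large as $\eta n^2$ while $d$ is bounded. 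Your attempted repair contains an arithmetic slip: the extra $b\sqrt m$ you place on the right-hand side of your final inequality is not part of the budget --- the budget is $2db(\sqrt m+s)+2db$ and the first term is already consumed by (c) --- so the requirement does not ``reduce to $b\le 2db$''; with your bounds it remains $b+b\sqrt m\le 2db$, which fails for large $m$.

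The missing idea is to replace the global bound $\Delta(H)\le T$ by a finer per-vertex invariant. The paper maintains, for every $v\in V(G)$, that the degree of $v$ in the union of the embeddings made so far is at most $b(\sqrt m+s(v)+1)$, where $s(v)$ counts how many of the graphs embedded so far have a root labelled $\{v\}$. This is enforced by always choosing free images outside the set $B$ of vertices whose current used degree is at least $b\sqrt m$: since the union has at most $mb^2/2$ edges, $|B|\le b\sqrt m$, so avoiding $B$ together with the at most $b$ vertices already used in the current $H_j$ is cheap; a vertex chosen outside $B$ gains at most $b$ edges, and a root vertex gains at most $b$ edges while its allowance grows by $b$ because $s(v)$ increases. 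With this invariant every already-embedded image $x_t$ satisfies $d_H(x_t)\le b(\sqrt m+s+1)$, so your set (c) has size at most $db(\sqrt m+s+1)$ --- exactly half of the hypothesis --- leaving $db(\sqrt m+s+1)>b\sqrt m+b\ge |B|+|H_j|$ room for the other forbidden vertices, and the final bound $\Delta(H)\le b(\sqrt m+s+1)\le 2b(\sqrt m+s)$ drops out. This factor-of-two saving on the used degree of each image is precisely what the constant $2$ in the degree hypothesis is for; with only the global invariant you propose, the inductive loop you describe cannot be closed.
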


\begin{proof}
For each $v \in V(G)$ and each $0 \leq j \leq m$, let $s(v,j)$ be the number of indices $1 \leq i \leq j$ such that some vertex of $H_i$ is labelled $\{v\}$; so $s(v,j) \leq s$.

Suppose that, for some $1 \leq j \leq m$, we have already embedded $H_1, \dots, H_{j-1}$ such that 
\begin{align}
 d_{G_{j-1}}(v) \leq b(\sqrt m + s(v,j-1) + 1), \label{already-used}
\end{align} 
where $G_{j-1}$ consists of the subgraph of $G$ used to embed $H_1,\dots, H_{j-1}$.
Our next aim is to embed $H_j$ into $G - G_{j-1}$ such that \eqref{already-used} holds with $j$ replaced by $j+1$.
By~(ii), we can order the vertices of $H_j$ such that root vertices of $H_j$ precede free vertices of $H_j$ and each free vertex is preceded by at most $d$ of its neighbours.
Suppose that we have already embedded some vertices of $H_j$ one by one in this order and that the next vertex of $H_j$ to be embedded is $x$.

Let $B : = \{v \in V(G): d_{G_{j-1}}(v) \geq b \sqrt m \}$ be the set of vertices that are in danger of being used too many times.  
Since $e(H_i) \le \binom{|H_i|}{2} \le \binom{b}2$ for each~$i$, we have $2e(G_{j-1}) \le b^2 m$.
So we have that
\begin{align} \label{B-small}
|B| \leq b^2m/b \sqrt m = b \sqrt m.
\end{align}

If $x$ is a root vertex, then we can embed $x$ at its assigned position because we have yet to embed any of its neighbours.

If $x$ is a free vertex, then at most $d$ of its neighbours have already been embedded.
Let $U$ be the set of images of these neighbours, and let $V$ be the label of $x$.
Now
\begin{align*}
\arraycolsep=1pt\def\arraystretch{1.4}
\begin{array}{rcl}
d_{G-G_{j-1}}( U ,  V)
& \geq & d_G(U,V) - \sum_{ u \in U } d_{G_{j-1}}( u ,  V) \\ 
& \overset{\eqref{already-used}}{\geq} & 2db(\sqrt m + s + 1) - db(\sqrt m + s + 1)
 > b \sqrt m + b \overset{\eqref{B-small}} \geq |B| + |H_j|.
\end{array}
\end{align*}
So we can choose a suitable image for $x$ outside of $B$.

Suppose that we have completed the embedding of $H_j$.
We will now check that \eqref{already-used} holds with $j$ replaced by $j+1$.
Clearly \eqref{already-used} holds for every $v \in V(G) \setminus B$.
But if $v \in B$, then \eqref{already-used} holds for $v$ as well because free vertices of $H_j$ were embedded outside of $B$ and, if $v$ is the image of a root vertex of $H_j$, then $s(v,j) = s(v,j-1)+1$.

Finally observe that, by \eqref{already-used}, $\Delta(H) = \Delta(G_m) \leq b(\sqrt m + s(v,m) + 1) \leq 2b(\sqrt m + s)$.
\end{proof}

The following lemma follows immediately from Lemma~\ref{ur-finding}, but has conditions that will be slightly more convenient to check.

\begin{lemma} \label{lma:finding}
Let $n, k, d, b \in \N$ and let $\eta, \epsilon > 0$ with $1/ n \ll \eta \ll \eps, 1/d, 1/b , 1/k $.
Let $G$ be a graph on $n$ vertices, and let $\P = \{V_1, \ldots, V_k\}$ be an equitable partition of $V(G)$ such that, for each $1 \leq i \leq k$ and each $S \subseteq V(G)$ with $|S| \leq d$, $d_G(S,V_i) \geq \epsilon |V_i|$.
Let $m \leq \eta n^2$ and let $H_1, \ldots, H_m$ be $\P$-labelled graphs such that
\begin{enumerate}[label={\rm(\roman*)}]
	\item for each $1 \leq i \leq m$, $|H_i| \leq b$;
	\item the degeneracy of each $H_i$ is at most $d$;
	\item for each $v\in V(G)$, the number of indices $1 \leq i \leq m$ such that some vertex of $H_i$ is labelled $\{v\}$ is at most~$\eta n$.
\end{enumerate} 
Then there exist edge-disjoint embeddings $\phi(H_1), \ldots, \phi(H_m)$ of $H_1, \ldots, H_m$ compatible with their labellings such that the subgraph $H := \bigcup_{i=1}^m \phi(H_i)$ of $G$ satisfies $\Delta(H) \leq \epsilon n$.
\end{lemma}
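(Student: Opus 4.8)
The plan is to derive Lemma~\ref{lma:finding} as a direct consequence of Lemma~\ref{ur-finding} by checking that the hypotheses of the latter are implied by those of the former for all sufficiently large~$n$. So first I would fix the constant hierarchy: given $1/n \ll \eta \ll \eps, 1/d, 1/b, 1/k$, I note that in particular $1/n \ll 1/k, 1/d, 1/b$, so the basic smallness assumption of Lemma~\ref{ur-finding} holds. The remaining work is to produce a value of~$s$ that simultaneously satisfies condition~(iii) of Lemma~\ref{ur-finding} and makes the degree hypothesis $d_G(S,V_i) \ge 2db(\sqrt{m}+s+1)$ follow from the hypothesis $d_G(S,V_i) \ge \eps|V_i|$.

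The natural choice is $s := \lceil \eta n \rceil$. With this choice, condition~(iii) here (at most $\eta n$ indices use any given root label) immediately gives condition~(iii) of Lemma~\ref{ur-finding} (at most $s$ indices). For the degree condition, since $\P$ is equitable we have $|V_i| \ge \lfloor n/k \rfloor \ge n/(2k)$ for large~$n$, so $d_G(S,V_i) \ge \eps|V_i| \ge \eps n/(2k)$. On the other hand, using $m \le \eta n^2$ we get $\sqrt m \le \sqrt{\eta}\, n$, and $s + 1 \le \eta n + 2 \le 2\eta n$ for large~$n$, so
\begin{align*}
2db(\sqrt m + s + 1) \le 2db(\sqrt{\eta} + 2\eta)n \le 6db\sqrt{\eta}\, n.
\end{align*}
Since $\eta \ll \eps, 1/d, 1/b, 1/k$, we have $6db\sqrt{\eta} \le \eps/(2k)$, hence $2db(\sqrt m + s + 1) \le \eps n/(2k) \le d_G(S,V_i)$, as required. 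Thus all hypotheses of Lemma~\ref{ur-finding} are met.

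Applying Lemma~\ref{ur-finding} with this~$s$ yields edge-disjoint embeddings $\phi(H_1),\ldots,\phi(H_m)$ compatible with their labellings such that $H := \bigcup_{i=1}^m \phi(H_i)$ satisfies $\Delta(H) \le 2b(\sqrt m + s)$. It remains to bound this by $\eps n$: again $2b(\sqrt m + s) \le 2b(\sqrt{\eta} + 2\eta)n \le 6b\sqrt{\eta}\, n \le \eps n$, using $\eta \ll \eps, 1/b$. This completes the proof. I do not anticipate any genuine obstacle here — the lemma is a packaging statement, and the only mild care needed is in translating the two somewhat different-looking sets of hypotheses, in particular absorbing the $\sqrt m$ and $s$ terms into the minimum-degree slack provided by the $\eta \ll \eps, 1/d, 1/b, 1/k$ part of the hierarchy.
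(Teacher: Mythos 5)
Your proposal is correct and follows exactly the route the paper intends: the paper derives Lemma~\ref{lma:finding} directly from Lemma~\ref{ur-finding}, and your choice $s=\lceil \eta n\rceil$ together with the bounds $\sqrt m\le \sqrt\eta\,n$ and $|V_i|\ge n/(2k)$ is precisely the routine verification that the paper leaves implicit.
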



\section{Deriving Theorem~\ref{thm:general} from Theorem~\ref{thm:regular}} \label{sec:general-case}

In this section we extend Theorem~\ref{thm:regular}, which applies to regular graphs $F$, to Theorem~\ref{thm:general}, which does not require the assumption of regularity.
Our approach is to combine multiple copies of $F$ into a regular graph $R$ and then apply Theorem~\ref{thm:regular} to~$R$.
We cannot do this immediately, as an $F$-divisible graph $G$ need not in general also be $R$-divisible.
We can however ensure that the extra divisibility conditions hold by removing a small number of copies of $F$ from $G$.

We first prove that we can combine multiple copies of $F$ to obtain a regular graph whose degree and chromatic number are not too large.

\begin{lemma} \label{regularise}
Let $F$ be a graph.
There is an $F$-decomposable $r$-regular graph $R$ with $r = 2e(F)$ and $\chi(R) = \chi(F)$.
\end{lemma}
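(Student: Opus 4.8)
The plan is to build $R$ explicitly from a bounded number of copies of $F$, using two operations: taking disjoint unions (which keeps the graph $F$-decomposable and keeps $\chi$ unchanged as long as we start from $\chi(F)$) and adding edges in a controlled way to even out the degree sequence without increasing the chromatic number or destroying $F$-decomposability. The key observation is that if we take two copies of $F$ on disjoint vertex sets $X$ and $Y$ and add a perfect matching between a vertex of degree $d$ in $X$ and the corresponding vertex in $Y$ for appropriate vertices, we can raise low-degree vertices; but it is cleaner to pass through a more symmetric construction.

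Concretely, first I would reduce to the bipartite-style trick of taking $F$ together with a disjoint isomorphic copy $F'$ and, for each vertex $v \in V(F)$ with corresponding $v' \in V(F')$, adding $\Delta(F) - d_F(v)$ parallel-free edges; but since we must stay in the realm of simple graphs and keep $R$ $F$-decomposable, a robust route is the following. Let $\Delta := \Delta(F)$ and $\delta := \delta(F)$. Consider the graph $F^+$ obtained by taking $\Delta$ disjoint copies of $F$ and, for each vertex $v$ of $F$, joining the $\Delta - d_F(v)$ copies of $v$ that we wish to boost to a common new structure --- however, the cleanest approach avoiding new vertices is: take the "bipartite double" idea. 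Let $R_0$ be the graph on vertex set $V(F) \times \{0,1\}$ with $(u,i)(w,i) \in E(R_0)$ whenever $uw \in E(F)$ (two disjoint copies of $F$) together with the edges $(u,0)(u,1)$ for every $u \in V(F)$ with $d_F(u) < \Delta$... this still does not give regularity in one step, so iterate or, better, use the standard fact that every graph $F$ embeds as an induced subgraph into a $\Delta(F)$-regular graph on at most $2|F|$ vertices with the same chromatic number, except that $F$-decomposability must be arranged separately.

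Given the above subtleties, the argument I would actually commit to is: take $R$ to be the disjoint union of $N$ copies of $F$ for a suitable $N$, then carefully add a set of edges forming a union of copies of $F$ (so $F$-decomposability is preserved) that regularises the whole graph. Specifically, since each copy of $F$ contributes the same degree sequence, the "deficiency" of vertex $v$ (relative to the target degree $2e(F)$) is the same in each copy; one shows that $\sum_{v \in V(F)} (2e(F)/|F| \cdot |F| - \dots)$ --- more simply, one picks the target degree $r = 2e(F)$ precisely because $\sum_{v} (r - d_F(v)) = r|F| - 2e(F) = (|F|-2)\cdot 2e(F)$ which is a convenient even multiple, and then adds copies of $F$ among the "deficient" slots so that every vertex reaches degree exactly $r$. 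Concretely one places the copies of $F$ on $N$ blocks and adds further copies of $F$ between blocks so that across all of them each vertex gains exactly $r - d_F(v)$ extra edges; since $F$ has $|F| \geq 2$ vertices and $r - d_F(v) \geq 2e(F) - \Delta(F) \geq e(F) \geq 1$ for each $v$ (when $e(F)\ge 1$; the case $e(F)=0$ is trivial), there is enough room, and a short counting/parity check plus a Vizing-type or Baranyai-type decomposition argument finishes it. Throughout, $\chi(R) = \chi(F)$ because $R$ is a union of copies of $F$ (so $\chi(R) \geq \chi(F)$) and because we can colour $R$ with $\chi(F)$ colours by choosing all the added copies of $F$ to respect a fixed proper $\chi(F)$-colouring of each block's vertex set (so $\chi(R) \leq \chi(F)$).

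The main obstacle is the regularisation step: ensuring that the extra edges can be chosen to be an exact union of copies of $F$ (not merely an arbitrary graph) while making every degree land on exactly $r = 2e(F)$. The resolution is the choice $r = 2e(F)$: adding one copy of $F$ raises the degree of each of its $|F|$ vertices by the $F$-degree sequence, and one can show by a straightforward parity and averaging argument that after taking enough disjoint copies, the deficiencies can be matched up by further copies of $F$ placed across blocks. I expect the authors' proof to do something slicker --- perhaps taking $R$ to be a suitable blow-up or a Cayley-graph-like construction --- but the union-of-copies approach above is robust, elementary, and clearly yields both $r = 2e(F)$ and $\chi(R) = \chi(F)$.
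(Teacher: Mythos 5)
Your committed construction (disjoint copies of $F$ plus further copies of $F$ added ``across blocks'' to top every vertex up to degree $2e(F)$) leaves the actual content of the lemma unproved. The crux is precisely the step you dispose of with ``a straightforward parity and averaging argument plus a Vizing-type or Baranyai-type decomposition argument'': you must exhibit a placement of edge-disjoint copies of $F$, on the existing vertices, in which each vertex playing role $v$ gains exactly $2e(F)-d_F(v)$, with no multiple edges created, and with every added copy respecting a fixed proper $\chi(F)$-colouring. Matching a prescribed, non-uniform degree deficiency exactly by copies of a fixed graph $F$ is a design-type problem, not a parity check; neither Vizing (edge colourings) nor Baranyai (factorisations of complete hypergraphs) produces such an exact $F$-packing, and you give no scheme specifying which vertex of which block plays which role of $F$ in each added copy. (There is also a small arithmetic slip: $r|F|-2e(F)=2e(F)(|F|-1)$, not $(|F|-2)\cdot 2e(F)$, though nothing hinges on it.) The multi-edge issue is likewise waved away, yet it is exactly the obstruction the paper has to work around.

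For comparison, the paper's proof supplies the missing scheme in one stroke: pad $F$ so that $V(F)=[k]\times[t]$ with $k=\chi(F)$ and colour classes $\{i\}\times[t]$, and take \emph{all} $kt$ translates $F+w$ of $F$ inside this grid; since every vertex of $F$ visits every position exactly once under these translations, each position accumulates degree $\sum_{u\in V(F)}d_F(u)=2e(F)$ automatically --- this is where $r=2e(F)$ really comes from, with no deficiency-matching needed. To avoid multiple edges, the translates are spread over slices $\phi_{\ell,s}(x,y)=(x,y,\ell+xs)$ of a third coordinate $[k^2t]$, where injectivity of the multiplication maps guarantees that two vertices in different colour classes lie in at most one common slice, so distinct slices contribute edge-disjoint (indeed non-parallel) copies; $k$-partiteness in the first coordinate preserves $\chi$. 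Your plan gestures at the same counting identity, but without an explicit group-like indexing of the copies (or an equivalent device) the regularisation step does not go through, so as written the proposal has a genuine gap rather than an alternative proof.
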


We now give the main idea of the proof.
Throughout the proof of the lemma, we write $[a] := \{0, 1, \ldots, a-1\}$, thought of as the set of residue classes modulo $a$.
 Let $k := \chi(F)$ and fix a $k$-colouring of $F$.
Let $t$ be the size of the largest colour class.
By adding isolated vertices to $F$ if necessary, we may assume that $V(F) = [k] \times [t]$ with the $k$ colour classes of $F$ being $\{i\} \times [t]$ for each $i \in [k]$ (so there is no edge between $(x_1, y_1)$ and $(x_2, y_2)$ if $x_1 = x_2$).

For any injective function $\theta$ defined on the vertex set of a graph $H$, let $\theta(H)$ be the graph on the vertex set $\theta (V(H))$ for which $\theta : V(H) \to \theta ( V(H))$ is an isomorphism.
Thus for $w \in [k] \times [t]$, $F + w$ is the graph obtained from $F$ by translating each vertex by $w$ inside $[k] \times [t]$.
(To be precise, $F+w := \theta_w(F)$, where $\theta_w : (a,b) \mapsto (a+i,b+j)$ with $w = (i,j)$.)
Note that $F+w$ is still $k$-partite with the $k$ colour classes being $\{i\} \times [t]$ for each $i \in [k]$.
Since each vertex of $F$ is assigned to each possible position in $[k] \times [t] = V(F)$ exactly once under these translations, for each $x \in V(F)$ we have that $\sum_{w \in [k] \times [t]} d_{F+w}(x) = 2e(F)$.
We would like to take $R$ to be $\bigcup_{w \in [k] \times [t]} F + w$.
However, that might produce multiple edges, so we will actually take more copies of $F$ spread across a larger vertex set.
In this way, we can achieve a similar result without producing multiple edges.

More precisely, the vertex set of $R$ will be $V := [k] \times [t] \times [k^2t]$.  
(The length of the third dimension is chosen so that the multiplication maps $x \mapsto ax$ from $[kt]$ to $[k^2t]$ are injective for $a \in [k]\setminus \{0\}$.)
We will embed copies of $F$ in $k^2t$ sets of disjoint $[k] \times [t]$ `slices' of $V$.
Intuitively, these sets of slices will be taken at different angles to ensure that we do not create multiple edges.

\begin{proof}[Proof of Lemma~\ref{regularise}]
Let $k := \chi(F)$ and fix a $k$-colouring of $F$.
Let $t$ be the size of the largest colour class.
By adding isolated vertices to $F$ if necessary, we may assume that $V(F) = [k] \times [t]$ with the $k$ colour classes of $F$ being $\{i\} \times [t]$ for each $i \in [k]$.
Let $V:= [k] \times [t] \times [k^2t]$.

For $\ell \in [k^2t]$ and $s \in [kt]$, let $\phi_{\ell,s} : [k] \times [t] \to V$ be defined by $\phi_{\ell,s}(x, y) := (x, y, \ell+xs)$.
Define the \emph{slice} $\Phi_{\ell,s}$ to be $\{ \phi_{\ell,s}(x,y) : (x,y) \in [k] \times [t] \}$.
Note that, for fixed $s$, the set $\{ \Phi_{0,s}, \ldots, \Phi_{k^2t-1,s} \}$ of slices forms a partition of~$V$.

For $w \in [k] \times [t]$, observe that $\phi_{\ell,s}(F+w)$ is $k$-partite with $k$-colourings induced by projections onto the first coordinate of $V$.
Indeed, recall that $F+w$ has colour classes $\{0\} \times [t]$, $\{1\} \times [t]$, \dots, $\{k-1\} \times [t]$ and $\phi_{\ell,s}$ preserves first coordinates.
So $\phi_{\ell,s}(F+w)$ has no edges between vertices which agree in the first coordinate.



We will show that, given two points $v_1 = (x_1, y_1, z_1)$ and $v_2 = (x_2, y_2, z_2)$ with $x_1 \neq x_2$, there is at most one pair $(\ell, s)$ such that $v_1$ and $v_2$ are contained in $\Phi_{\ell,s}$.
Indeed, suppose that $v_1$ and $v_2$ are contained in both $\Phi_{\ell, s}$ and $\Phi_{\ell', s'}$.
Then $z_1 = \ell + x_1 s = \ell' + x_1 s'$ and $z_2 = \ell+ x_2 s = \ell' + x_2 s'$, so $z_2 - z_1 = (x_2 - x_1)s = (x_2 - x_1)s'$.
It follows that $s = s'$ since the map $u \mapsto (x_1 - x_2) u$ from $[kt]$ to $[k^2t]$ is injective, hence also that 
$\ell=z_1-x_1s = z_1 -x_1s' = \ell'$.
Recall that $\phi_{\ell, s}(F+w)$ never has an edge between two vertices which agree in the first coordinate.
So for any $w,w'$, we have that $\phi_{\ell, s}(F+w)$ and $\phi_{\ell', s'}(F+w')$ are edge-disjoint whenever $(\ell,s) \neq (\ell',s')$.

Now fix an enumeration $w_0, \ldots, w_{kt-1}$ of $[k] \times [t]$.
Define $R := \bigcup_{\ell \in [k^2t], s \in [kt]} \phi_{\ell,s}(F+w_s)$ with vertex set $V$.
Clearly, $R$ has an $F$-decomposition, is $k$-partite (with colour classes $\{i\} \times [t] \times [k^2t]$ for $i \in [k]$), and has no multiple edges.
Since $\Phi_{0,s}, \ldots, \Phi_{k^2t-1,s}$ partition $V$ for each $s \in [kt]$, for any vertex $v =(x,y,z) \in V$ and any $s \in [kt]$ there is precisely one $\ell \in [k^2t]$ such that $v$ is a vertex of $\phi_{\ell, s} (F + w_s)$.
Thus
\begin{align*}
d_R(v) =  \sum_{s \in [kt]} d_{F+w_s}( (x,y) ) = \sum_{u \in V(F)} d_F(u) = 2e(F).
\end{align*} 
Hence $R$ is $2e(F)$-regular.
\end{proof}

We next show that, given a graph~$F$, we can turn an $F$-divisible graph into an $R$-divisible graph by removing a small number of copies of~$F$.%

\begin{lemma} \label{R-divisible}
Let $F$ be a graph and let $R$ be an $F$-decomposable $r$-regular graph with $r = 2e(F)$.
Let $\eps > 0$.
Then there exists an $n_0 = n_0( \epsilon , F,R)$ such that, for $n \geq n_0$, the following holds.
Let $G$ be an $F$-divisible graph on $n$ vertices with $\delta(G) \ge (1-1/r + 2\epsilon)n$.
Then there is an $F$-decomposable subgraph $H$ of $G$ such that $\Delta(H) \leq \epsilon n$ and $G-H$ is $R$-divisible.
\end{lemma}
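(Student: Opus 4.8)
The plan is to reduce the statement to two congruence conditions and then satisfy these by removing copies of $F$ built from a small degree-correcting gadget. Write $r:=2e(F)$, so $\gcd(R)=r$, and set $c:=e(R)/e(F)$ (an integer, since $R$ is $F$-decomposable) and $m:=e(G)/e(F)$ (an integer, since $G$ is $F$-divisible); we may assume $F$ has no isolated vertices. By definition of $R$-divisibility, $G-H$ is $R$-divisible exactly when $d_H(v)\equiv d_G(v)\pmod r$ for every $v\in V(G)$ and $e(H)\equiv e(G)\pmod{e(R)}$. First I would fix the edge count: choose $j_0\in\{0,\dots,c-1\}$ with $j_0\equiv m\pmod c$, greedily pick $j_0$ pairwise vertex-disjoint copies of $F$ in $G$ (possible since $G$ is dense), and let $H_0$ be their union, so $\Delta(H_0)=O_F(1)$ and $e(H_0)=j_0 e(F)\equiv e(G)\pmod{e(R)}$. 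It then remains to find an $F$-decomposable $H_1\subseteq G-H_0$ with $\Delta(H_1)\le\epsilon n/2$, with $e(R)\mid e(H_1)$, and with $d_{H_1}(v)\equiv a_v:=\bigl(d_G(v)-d_{H_0}(v)\bigr)\bmod r$ for all $v$; then $H:=H_0\cup H_1$ is as desired. Here $g:=\gcd(F)$ divides every $a_v$ (it divides every degree of $G$ and of $H_0$), and $\sum_v a_v\equiv 2e(G)-2e(H_0)\equiv 0\pmod r$.

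The key tool is a \emph{transposition gadget}. Let $R^*$ be the $r$-regular $F$-decomposable graph from the proof of Lemma~\ref{regularise}, with decomposition $\bigcup_{\ell,s}\phi_{\ell,s}(F+w_s)$, and let $F^*:=\phi_{0,0}(F+w_0)$ be the copy indexed by $(\ell,s)=(0,0)$. A short computation, using the injectivity of $a\mapsto as$ on $[k]$ and that $F+w_s$ is $\chi(F)$-partite along the first coordinate, shows that no other copy in the decomposition has an edge inside the slice $\{z=0\}$; hence $R^*[V(F^*)]=F^*$. Let $R^{**}$ be the disjoint union of $c$ copies of $R^*$, and $F^{**}$ a copy of $F^*$ inside one of them; then $R^{**}$ is $r$-regular and $F$-decomposable, $R^{**}[V(F^{**})]=F^{**}$, and $e(R^{**})/e(F)=c\cdot e(R^*)/e(F)$ is a multiple of $c$. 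Given a non-isolated vertex $x_0$ of $F$ and distinct $v,w$, let $T_{x_0}(v,w)$ consist of one copy of $F$ placing $x_0$ at $v$ and each other non-isolated vertex $y$ at a new vertex $u_y$, together with one copy of $R^{**}-F^{**}$ placing the $F^{**}$-image of $x_0$ at $w$, the $F^{**}$-image of each other non-isolated $y$ at the same $u_y$, and the rest of $R^{**}$ at new vertices. Since $R^{**}[V(F^{**})]=F^{**}$, these two parts share only the $u_y$ and no edge, so $T_{x_0}(v,w)$ is a union of $e(R^{**})/e(F)$ edge-disjoint copies of $F$ (a multiple of $c$); every vertex of $T_{x_0}(v,w)$ has degree exactly $r$ in it (each $u_y$ has degree $d_F(y)+(r-d_F(y))=r$), so its degeneracy rooted at $\{v,w\}$ is $r$; and modulo $r$ it raises $d(v)$ by $d_F(x_0)$, lowers $d(w)$ by $d_F(x_0)$, and changes no other degree.

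To construct $H_1$, fix an ordering $v_1,\dots,v_n$ of $V(G)$, write $a_{v_i}=g b_{v_i}$ with $0\le b_{v_i}<\ell:=r/g$ (so $\sum_i b_{v_i}\equiv 0\pmod\ell$), and set $\delta_i:=g\bigl(\sum_{j\le i}b_{v_j}\bmod\ell\bigr)$ for $1\le i<n$. Since $\{d_F(x):x\text{ non-isolated}\}$ generates $g\Z/r\Z$, each $\delta_i$ is a sum of boundedly many of these values modulo $r$, so I can realise the perturbation $\delta_i$ between $v_i$ and $v_{i+1}$ using boundedly many gadgets $T_{x_0}(v_i,v_{i+1})$. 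This produces $O_F(n)$ gadgets, each $v_i$ being a prescribed root of only $O_F(1)$ of them, and they can be embedded edge-disjointly into $G-H_0$ by Lemma~\ref{lma:finding} applied with the trivial partition $\P=\{V(G)\}$, with $d=r$, $b=O_F(1)$ and parameter $\epsilon/2$. This is precisely where the hypothesis $\delta(G)\ge(1-1/r+2\epsilon)n$ is needed and exactly suffices: the gadgets have degeneracy $r$, so common neighbourhoods of $r$-sets must be controlled, and for $|S|\le r$ one has $|N_{G-H_0}(S)|\ge n-r\bigl(n-\delta(G-H_0)\bigr)\ge r\epsilon n\ge(\epsilon/2)n$ (a larger partition cannot be used, since for $k\ge 2$ the required common-neighbourhood condition inside a part fails). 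Let $H_1$ be the union of the embedded gadgets. Because every non-root vertex of every gadget has degree $\equiv 0\pmod r$ in that gadget—whichever vertex of $G-H_0$ it is placed at—the degree of $v_i$ modulo $r$ in $H_1$ is contributed only by the gadgets rooted at $v_i$, and equals $\delta_i-\delta_{i-1}\equiv a_{v_i}\pmod r$ (with $\delta_0:=0$, using $\delta_{n-1}\equiv -a_{v_n}$). Finally $\Delta(H_1)\le\epsilon n/2$ by Lemma~\ref{lma:finding}, and $e(H_1)$ is a multiple of $\bigl(e(R^{**})/e(F)\bigr)e(F)$, hence of $e(R)$; so $H:=H_0\cup H_1$ has the required properties.

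The substantive point—and what I expect to be the main obstacle—is the construction of $T_{x_0}(v,w)$: correcting the degrees of two prescribed vertices modulo $r$ by an $F$-decomposable graph that disturbs no other degree. A one-vertex correction is outright impossible, since the degree sum of any $F$-decomposable graph is divisible by $r$; and in the two-vertex version the auxiliary vertices used by the $F$-part and by the $(R^{**}-F^{**})$-part cancel only because the removed copy $F^{**}$ induces no further edges in $R^{**}$. Securing this property—together with the divisibility $c\mid e(R^{**})/e(F)$ that keeps the edge count on track—is exactly what forces the use of the specific regular graph from Lemma~\ref{regularise} and of $c$ disjoint copies of it.
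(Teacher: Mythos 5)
Your proof is correct, and its overall skeleton is the same as the paper's: first remove boundedly many vertex-disjoint copies of $F$ to fix $e(G)$ modulo $e(R)$, then correct all degrees modulo $r$ by a telescoping chain (indexed by consecutive pairs $v_i,v_{i+1}$, with bounded multisets of degrees of $F$ realising the partial sums) of small, two-rooted, $F$-decomposable degree-shifting gadgets, embedded edge-disjointly via Lemma~\ref{lma:finding} with the trivial partition and degeneracy $r$. Where you genuinely differ is the gadget itself. The paper builds it directly from the given graph $R$: fix a copy $F'$ in an $F$-decomposition of $R$, delete the star at a vertex $v_d$ of $F'$ and reattach those edges to a brand-new vertex $v_d'$; the resulting graph $R_d$ is $F$-decomposable, has exactly $e(R)$ edges, and has precisely two vertices of degree $\not\equiv 0 \pmod r$, of degrees $d$ and $r-d$. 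Since the exposed vertex is new, no structural information about $R$ is needed beyond its $F$-decomposability, and each gadget contributes exactly $e(R)$ edges. Your gadget instead re-enters the proof of Lemma~\ref{regularise}: you correctly verify that $F^*=\phi_{0,0}(F+w_0)$ is induced in $R^*$ (any edge of $R^*$ with both ends in the slice $\Phi_{0,0}$ forces $(\ell,s)=(0,0)$; this is the same computation as in that proof, though the injectivity used is of $u\mapsto(x_1-x_2)u$ from $[kt]$ to $[k^2t]$ rather than of $a\mapsto as$ on $[k]$), and you take $c=e(R)/e(F)$ disjoint copies so that each gadget contributes $0$ edges modulo $e(R)$. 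This works; the bookkeeping (divisibility of the $a_v$ by $\gcd(F)$, $\sum_v a_v\equiv 0\pmod r$, root multiplicities, the common-neighbourhood bound $r\eps n$) all checks out. Two small slips that do not affect correctness: the claim that every vertex of $T_{x_0}(v,w)$ has degree exactly $r$ is true only for the non-root vertices (the roots have degrees $d_F(x_0)$ and $r-d_F(x_0)$), which is all you need for the degeneracy bound; and the obstacle you flag at the end is in fact avoidable, since the paper's corner-lifting trick produces the required two-rooted gadget from an arbitrary $F$-decomposable $R$ without any induced-copy property and without invoking the explicit construction of Lemma~\ref{regularise}.
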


\begin{proof}
Choose $0 \leq t < e(R)/e(F)$ such that $e(G) \equiv te(F) \mod e(R)$.
Let $F_1, F_2, \ldots, F_t$ be $t$ vertex-disjoint copies of $F$ in $G$\COMMENT{which exist by the greedy algorithm}, and let $G_0 := G - F_1 - \cdots - F_t$.
Then $G_0$ remains $F$-divisible and $e(G_0)$ is divisible by $e(R)$. Note that $\delta(G_0)\ge (1-1/r+\eps)n$.

Consider an $F$-decomposition $\mathcal{F}$ of $R$ and fix an $F' \in \mathcal{F}$.
Let $\mathcal{D} \subseteq \mathbb{N}$ be the set of vertex degrees of $F$.
For each $d \in \mathcal{D}$, let $v_d$ be a vertex of $F'$ with $d_{F'}(v_d) = d$, and let $S_d$ be the star consisting of $v_d$ together with the incident edges of $F'$.
Let $R_d$ be the graph obtained from $R - S_d$ by adding a new vertex $v_d'$ attached to the neighbours of $v_d$ in $F'$.%
\COMMENT{B: Think of this as lifting up a corner of one of the copies of $F$ in $R$ to expose a vertex of degree $d$.}
By construction, $R_d$ is $F$-decomposable, $|R_d| = |R| +1$, $e(R_d) = e(R)$ and every vertex of $R_d$ has degree $r$ except for $v_d'$, which has degree $d$, and $v_d$, which has degree $r-d$.

Fix an enumeration $u_1, \ldots, u_n$ of $V(G)$ and, for each $1 \leq i \leq n-1$, choose $0 \leq a_i < r$ such that $\sum_{j=1}^{i} d_{G_0}(u_j) \equiv a_i \mod r$.
Since both $R$ and $G_0$ are $F$-divisible, each $a_i$ is divisible by $\gcd(F)$, so there exists a multiset $T_i$ with $d \in \mathcal{D}$ for all $d \in T_i$ such that $\sum_{d \in T_i}d \equiv a_i \mod{r}$.
Moreover, since there exist only $r$ possible values for $a_i$, we may assume that there exists a $c = c(F)$ such that $|T_i| \le c$ for all~$i$.

Let $\P_0 := \{V(G)\}$ be the trivial partition of $V(G)$.%
For each $1 \leq i \leq n-1$ and each $d \in T_i$, choose a $\mathcal{P}_0$-labelled copy of~$R_d$ such that the copy of $v_d'$ is labelled $\{u_i\}$, the copy of $v_d$ is labeled $\{u_{i+1}\}$ and all other vertices are labelled~$V(G)$ (we may assume that these copies are vertex disjoint).
Let $\mathcal{R}_i$ be the set of copies of~$R_d$ (one for each $d \in T_i$).
Let $\mathcal R := \bigcup_{i=1}^{n-1} \mathcal{R}_i$.
So $|\mathcal{R}_i | = |T_i| \le c$ for all~$i$ and $|\mathcal R| \le c(n-1)$.
For each $i$, the number of indices such that some vertex of $R_d$ in $\mathcal{R}$ is labelled $\{ u_i \}$ is at most $|T_i| + |T_{i-1}| \le 2c$. (Here $|T_0| = |T_n| = 0$.)
Recall that each copy of $R_d$ has degeneracy at most~$r$ since $\Delta(R_d) = r$.
Pick $\eta$ such that $1/n \ll \eta \ll \eps, 1/r, 1/f$ and apply Lemma~\ref{lma:finding} with $G_0$, $1$, $|R|+1$, $r$,  $\eps/2$, $\P_0$, $\mathcal{R}$ playing the roles of $G$, $k$, $b$, $d$, $\eps$, $\mathcal{P}$, $\{H_1, \dots, H_m\}$.
We obtain edge-disjoint embeddings $\phi(R_d)$ for all $R_d \in \mathcal{R}$ into $G_0$, which are compatible with their
labelling and such that $\Delta ( \bigcup_{R_d \in \mathcal{R}} \phi(R_d) )  \le \epsilon n/2$.
Let $H_0 : = \bigcup_{R_d \in \mathcal{R}} \phi(R_d)$; so $\Delta(H_0) \leq \epsilon n/2$.

Let $G_1 := G_0 - H_0$.
Note that, for each $1 \le i \le n-1$ and each $R_d \in \mathcal{R}_i$, we have $d_{\phi(R_d)}(u_i) \equiv d \mod{r}$, $d_{\phi(R_d)}(u_{i+1}) \equiv -d \mod{r}$, and $d_{\phi(R_d)}(u_j) \equiv 0 \mod{r}$ for each $j \notin \{i, i+1\}$.
Recall that $\sum_{d \in T_i}d  \equiv a_i  \equiv \sum_{j=1}^{i} d_{G_0}(u_j) \mod r$ for each $1 \le i \le n-1$.
We have that
\begin{align*}
d_{H_0}(u_1)   \equiv \sum_{ R_d \in \mathcal{R}_1 } d_{\phi(R_d)}(u_1) \equiv \sum_{d \in T_1} d
	\equiv d_{G_0}( u_1 )
 \mod{r},
\end{align*}
so $r$ divides $d_{G_1}( u_1 )$.
Similarly, for $ 2 \le i \le n-1$ we have that
\begin{align*}
d_{H_0}(u_i) & = \sum_{j = 1}^{n-1} \sum_{ R_d \in \mathcal{R}_j } d_{\phi(R_d)}(u_i) 
		  \equiv \sum_{ R_d \in \mathcal{R}_{i-1} } d_{\phi(R_d)}(u_i)  + \sum_{ R_{d'} \in \mathcal{R}_i } d_{\phi(R_{d'})}(u_i) \mod{r}\\
		  & \equiv - \sum_{d \in T_{i-1}}  d  + \sum_{d' \in T_{i}} d' 
		\equiv - a_{i-1}+ a_i \equiv d_{G_0}( u_i )  \mod{r},
\end{align*}
so $r$ divides $d_{G_1}( u_i )$.
Recall that $e(R)$ divides $e(G_0)$ and $r=2e(F)$, so $r$ divides $2 e(G_0)$.
Finally, for $i=n$ we have that
\begin{align*}
d_{H_0}(u_n) &  \equiv \sum_{ R_d \in \mathcal{R}_{n-1} } d_{\phi(R_d)}(u_n) \equiv  - \sum_{d \in T_{n-1}} d 
	\equiv - \sum_{j=1}^{n-1} d_{G_0}(u_j)
 \mod{r},
\end{align*}
so $d_{G_1}( u_n ) \equiv \sum_{j=1}^{n} d_{G_0}(u_j) \equiv 2 e(G_0) \equiv  0 \mod{r}$.
Hence $G_1$ is $r$-divisible.
Since $G_1$ was obtained from $G_0$ by deleting graphs with $e(R)$ edges, $e(G_1)$ is divisible by $e(R)$, so $G_1$ is $R$-divisible.
Take $H := H_0 \cup F_1 \cup \cdots \cup F_t$ and observe that $\Delta(H) \le \eps n/2  + t r \le \eps n$.
\end{proof}

We now prove the following theorem, which together with Theorems~\ref{thm:dukes} and~\ref{thm:BKLMO} implies Theorem~\ref{thm:general}.

\begin{theorem} \label{thm:general3}
Let $F$ be a graph.
Then for each $\epsilon > 0$, there exists an $n_0 = n_0(\epsilon,F)$ and an $\eta  = \eta(\eps, F) $ such that every $F$-divisible graph $G$ on $n\geq n_0$ vertices with $\delta(G) \geq (\delta+\epsilon)n$, where $\delta : = \max \{ \delta^{*}_{K_{\chi(F)}} ,  1-1/6e(F) \}$, has an $F$-decomposition.
\end{theorem}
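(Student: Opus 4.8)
The plan is to reduce Theorem~\ref{thm:general3} to Theorem~\ref{thm:regular} by replacing $F$ with the $F$-decomposable regular graph supplied by Lemma~\ref{regularise}. We may assume $e(F)\ge 1$, since otherwise the statement is vacuous. Given $\epsilon>0$, apply Lemma~\ref{regularise} to obtain an $F$-decomposable $r$-regular graph $R$ with $r:=2e(F)$ and $\chi(R)=\chi(F)$. The key point is that $1-1/(3r)=1-1/(6e(F))$, so the additive term in the threshold of Theorem~\ref{thm:regular} applied to $R$ coincides with the term $1-1/(6e(F))$ appearing in the definition of $\delta$. Now apply Theorem~\ref{thm:regular} with $R$ and $\epsilon/2$ in place of $F$ and $\epsilon$ to obtain constants $n_1$ and $\eta_1:=\eta(\epsilon/2,R)$; we take $\eta:=\eta_1$ (which depends only on $\epsilon$ and $F$). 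Writing $\delta_R:=\max\{\delta^{\eta_1}_R,\,1-1/(3r)\}$ for the corresponding threshold, Corollary~\ref{cor:chromatic} together with $\chi(R)=\chi(F)$ gives $\delta^{\eta_1}_R\le\delta^*_{K_{\chi(R)}}=\delta^*_{K_{\chi(F)}}$, and hence $\delta_R\le\max\{\delta^*_{K_{\chi(F)}},\,1-1/(6e(F))\}=\delta$.

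Next I would fix $n_0\ge n_1$ sufficiently large and, given an $F$-divisible graph $G$ on $n\ge n_0$ vertices with $\delta(G)\ge(\delta+\epsilon)n$, apply Lemma~\ref{R-divisible} with $\epsilon/3$ in place of $\epsilon$. Its hypothesis is satisfied because $\delta\ge 1-1/(6e(F))=1-1/(3r)\ge 1-1/r$, so $\delta(G)\ge(1-1/r+\epsilon)n\ge(1-1/r+2\epsilon/3)n$. Lemma~\ref{R-divisible} then produces an $F$-decomposable subgraph $H\subseteq G$ with $\Delta(H)\le\epsilon n/3$ such that $G-H$ is $R$-divisible. Consequently $\delta(G-H)\ge\delta(G)-\Delta(H)\ge(\delta+2\epsilon/3)n\ge(\delta_R+\epsilon/2)n$, using $\delta_R\le\delta$.

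Finally, since $G-H$ is $R$-divisible with $n\ge n_0\ge n_1$ and $\delta(G-H)\ge(\delta_R+\epsilon/2)n$, Theorem~\ref{thm:regular} yields an $R$-decomposition of $G-H$; as $R$ is itself $F$-decomposable, this is an $F$-decomposition of $G-H$. Combining it with an $F$-decomposition of the ($F$-decomposable) graph $H$ gives an $F$-decomposition of $G=H\cup(G-H)$, completing the proof. (Combined with the bounds on $\delta^*_{K_{\chi(F)}}$ from Theorems~\ref{thm:dukes} and~\ref{thm:BKLMO}, this then yields Theorem~\ref{thm:general}.)

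The argument itself is short, as all the substantial work has been done elsewhere: the only points requiring care are (a) that the minimum-degree loss incurred by deleting $H$ is absorbed into the $\epsilon$-slack, and (b) that $R$ can be chosen with $\chi(R)=\chi(F)$ and $r=2e(F)$ simultaneously, which is precisely what lets Corollary~\ref{cor:chromatic} bound $\delta^{\eta_1}_R$ by the fractional clique threshold $\delta^*_{K_{\chi(F)}}$ while keeping the additive term equal to $1-1/(6e(F))$. Both are handled by Lemma~\ref{regularise} and Lemma~\ref{R-divisible}; the genuine difficulty is entirely contained in Theorem~\ref{thm:regular}, whose proof occupies the remainder of the paper.
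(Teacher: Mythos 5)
Your proposal is correct and follows essentially the same route as the paper: build the $F$-decomposable $2e(F)$-regular graph $R$ with $\chi(R)=\chi(F)$ via Lemma~\ref{regularise}, remove a bounded-degree $F$-decomposable subgraph via Lemma~\ref{R-divisible} to make the remainder $R$-divisible, bound $\delta^{\eta}_R$ by $\delta^*_{K_{\chi(F)}}$ via Corollary~\ref{cor:chromatic}, and finish with Theorem~\ref{thm:regular} applied to $R$, noting $1/(3r)=1/(6e(F))$. The only difference is bookkeeping (your explicit $\epsilon/3$, $\epsilon/2$ splits versus the paper's hierarchy of constants), which does not affect correctness.
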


\begin{proof}
Choose $n_0\in \mathbb{N}$ and $\eta>0$ such that $1/n_0\ll \eta\ll \eps,1/|F|$.
Let $n\ge n_0$ and
let $G$ be an $F$-divisible graph on $n$ vertices with $\delta(G) \ge (\delta+\eps)n$.
By Lemma~\ref{regularise}, there is an $F$-decomposable $r$-regular graph $R$ with $r = 2e(F)$ and $\chi(R) = \chi(F)$.
By Lemma~\ref{R-divisible}, there is an $F$-decomposable subgraph $H$ of $G$ such that $\Delta(H) \leq \epsilon n/2$ and $G' : = G-H$ is $R$-divisible.

Corrollary~\ref{cor:chromatic} implies that $\delta^{\eta}_R \le \delta^{*}_{K_\chi(R)}$.
Thus $\delta(G') \ge \delta(G) - \Delta(H) \ge (\delta^{\eta}_R + \eps/2 ) n$.
Moreover, $1/6e(F)=1/3r$.
So Theorem~\ref{thm:regular} implies that $G'$ has an $R$-decomposition, hence also an $F$-decomposition.
\end{proof}


\section{Random subgraphs and partitions} \label{sec:random}

Let $m,n, N \in \N$ with $\max\{m,n\} < N$.
Recall that the hypergeometric distribution with parameters $N, n$ and $m$ is the distribution of the random variable $X$ defined as follows.
Let $S$ be a random subset of $\{1, 2, \ldots, N\}$ of size $n$ and let $X := |S \cap \{1, 2, \ldots, m\}|$.
We use the following simple form of Hoeffding's inequality, which we shall apply to both binomial and hypergeometric random variables.

\begin{lemma}[see {\cite[Remark 2.5 and Theorem 2.10]{JLR}}] \label{lma:chernoff}
Let $X\sim B(n,p)$ or let $X$ have a hypergeometric distribution with parameters $N,n,m$.
Then
\begin{align*}
\Pr(|X - \E(X)| \geq t) \leq 2e^{-2t^2/n}.
\end{align*}
\end{lemma}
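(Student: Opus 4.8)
The plan is to invoke the standard two-sided Hoeffding / Chernoff-type bounds for both the binomial and the hypergeometric case, and then simplify them to the uniform form stated.

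For the binomial case, recall that if $X\sim B(n,p)$ then $X=\sum_{i=1}^n X_i$ is a sum of independent indicator variables each taking values in $[0,1]$, so Hoeffding's inequality for bounded independent summands gives $\Pr(|X-\E(X)|\ge t)\le 2\exp(-2t^2/n)$ directly. For the hypergeometric case, the key point is that sampling without replacement concentrates at least as tightly as sampling with replacement: this is a classical observation (going back to Hoeffding), so the same tail bound $\Pr(|X-\E(X)|\ge t)\le 2\exp(-2t^2/n)$ holds for a hypergeometric variable with parameters $N,n,m$, where $n$ is the sample size. Both statements are exactly what is recorded in \cite[Remark 2.5 and Theorem 2.10]{JLR}; Theorem~2.10 there gives the one-sided bound $\Pr(X-\E X\ge t)\le\exp(-2t^2/n)$ (and the symmetric lower-tail bound) valid simultaneously for the binomial and the hypergeometric distribution, and Remark~2.5 notes the reduction of the without-replacement case to the with-replacement case. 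Adding the two one-sided bounds yields the two-sided bound claimed.

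Since the statement is quoted verbatim from \cite{JLR}, there is essentially no obstacle: the only thing to check is that the sample-size parameter $n$ in the exponent matches our normalisation of the hypergeometric distribution (here $S$ has size $n$, so this is consistent), and that the constant $2$ in front accounts for the union over the two tails. Hence the proof is immediate.

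\begin{proof}
Both assertions are special cases of \cite[Remark 2.5 and Theorem 2.10]{JLR}. Indeed, \cite[Theorem 2.10]{JLR} states that if $X$ is binomial with parameters $n,p$, or hypergeometric with parameters $N,n,m$, then for all $t\ge 0$ we have $\Pr(X-\E(X)\ge t)\le \exp(-2t^2/n)$ and $\Pr(X-\E(X)\le -t)\le \exp(-2t^2/n)$; the hypergeometric case follows from the binomial one by the standard coupling of sampling without replacement with sampling with replacement noted in \cite[Remark 2.5]{JLR}. Adding the two one-sided estimates gives $\Pr(|X-\E(X)|\ge t)\le 2\exp(-2t^2/n)$, as required.
\end{proof}
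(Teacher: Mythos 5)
Your proposal is correct and matches the paper's treatment: the paper states this lemma purely as a citation of \cite[Remark 2.5 and Theorem 2.10]{JLR} and offers no further proof, which is exactly the route you take (Hoeffding for the binomial case, the without-replacement comparison for the hypergeometric case, and a union of the two one-sided tails to get the factor $2$). Nothing further is needed.
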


The following lemma is a simple consequence of Lemma~\ref{lma:chernoff}.

\begin{lemma} \label{lma:random-slice}
Let $k,s \in \N$ and let $0 < \gamma, \rho < 1$.
There is an $n_0 = n_0(k,s,\gamma)$ such that the following holds.
Let $G$ be a graph on $n \geq n_0$ vertices and let $V_1, \ldots, V_k$ be an equitable partition of its vertex set.
Let $H$ be a graph on $V(G)$.
 Then there is a subgraph $R$ of $G$ such that, for each $1 \leq i \leq k$ and each $S \subseteq V(G)$ with $|S| \leq s$,
 \begin{align*}
 d_R(S,V_i) = \rho^{|S|}d_G(S,V_i)\pm \gamma |V_i|
 \end{align*}
and for each $x ,y\in V(G)$,
\begin{align*}
d_{H}( y, N_{R}(x, V_i) )  = \rho d_H (y, N_{G}(x , V_i)) \pm \gamma n.
\end{align*}
\end{lemma}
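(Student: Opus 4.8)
The plan is to obtain $R$ by random sampling: include each edge of $G$ independently with probability $\rho$. Then for any fixed $S$ with $|S| \le s$ and any fixed $i$, and any vertex $v \in V_i \setminus S$, we have $v \in N_R(S,V_i)$ precisely when all $|S|$ edges from $v$ to $S$ were retained, which happens independently with probability $\rho^{|S|}$. Hence $d_R(S,V_i)$ is a sum of independent indicator variables (over $v \in N_G(S,V_i)$) with mean $\rho^{|S|}d_G(S,V_i)$, so it is distributed as $B(d_G(S,V_i),\rho^{|S|})$ up to the bounded correction coming from $v \in S$. Applying Lemma~\ref{lma:chernoff} with $t := \gamma|V_i|/2$ (say) and using $d_G(S,V_i) \le |V_i| \le n$ gives a failure probability of at most $2e^{-2(\gamma|V_i|/2)^2/n} = 2e^{-\Theta(\gamma^2 n)}$, since $|V_i| \ge \lfloor n/k\rfloor$.

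For the second type of estimate, fix $x,y \in V(G)$ and $i$. For each $w \in N_G(x,V_i)$ that is also a neighbour of $y$ in $H$, the event ``$w \in N_R(x,V_i)$'' is just ``$xw \in E(R)$'', so $d_H(y,N_R(x,V_i))$ is a sum of independent indicators over $w \in N_H(y, N_G(x,V_i))$, each with probability $\rho$ (one must be mildly careful if $x = y$, but that only affects a single term, which can be absorbed into the error). Thus this quantity is distributed as $B(d_H(y,N_G(x,V_i)),\rho)$, with mean $\rho\, d_H(y,N_G(x,V_i))$; Lemma~\ref{lma:chernoff} with $t := \gamma n/2$ gives failure probability at most $2e^{-\gamma^2 n/2}$.

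Finally, take a union bound over all the bad events. The number of choices of $(S,i)$ with $|S| \le s$ is at most $k\sum_{j=0}^{s}\binom{n}{j} \le k(s+1)n^s$, and the number of choices of $(x,y,i)$ is at most $kn^2$. Both are polynomial in $n$, while each failure probability is exponentially small in $n$; so for $n \ge n_0(k,s,\gamma)$ the union bound is less than $1$ and a good $R$ exists. (Strictly one should note the edge-sampling events for different pairs may be shared, but independence of the indicators within each single estimate is all that is used, and the union bound needs no independence between events.)

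The only mildly delicate point — and the thing I expect to need the most care in writing up — is bookkeeping the ``degenerate'' terms: when $v \in S$ in the first estimate, or when $x = y$ or $w \in \{x,y\}$ in the second, the relevant indicator is either absent or deterministic rather than a genuine independent $\rho$- or $\rho^{|S|}$-coin. Each such anomaly changes the count by at most $|S| \le s$ or by $O(1)$, which is $\ll \gamma|V_i|$ and $\ll \gamma n$ respectively for large $n$, so it is harmless; one just has to state this cleanly so that the concentration inequality is being applied to an honest binomial. Everything else is a routine Chernoff-plus-union-bound argument, exactly as in the proof of Lemma~\ref{lma:random-slice}'s analogues elsewhere.
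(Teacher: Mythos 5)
Your proof is correct and takes essentially the same approach as the paper: retain each edge of $G$ independently with probability $\rho$, apply Lemma~\ref{lma:chernoff} to each of the polynomially many degree conditions, and finish with a union bound. The only (cosmetic) difference is that the ``degenerate'' terms you flag never actually occur, since a vertex in $N_G(S,V_i)$ cannot lie in $S$ and a vertex of $N_G(x,V_i)$ counted in the second estimate is never $x$ or $y$ (there are no loops), so no correction is needed.
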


\begin{proof}
Let $R$ be a random subgraph of $G$ in which each edge is retained with probability $\rho$, independently from all other edges.
 By Lemma~\ref{lma:chernoff}, for each $1 \leq i \leq k$ and each $S \subseteq V(G)$ with $|S| \leq s$,%
   \begin{align*}
 \Pr(|d_R(S,V_i)-\rho^{|S|} d_G(S,V_i)| \geq \gamma |V_i|) \leq 2e^{-2(\gamma|V_i|)^2/|V_i|} \leq 2e^{-2\gamma^2 \lfloor n/k\rfloor}.
 \end{align*}
Similarly, for each $x, y \in V(G)$, 
 \begin{align*}
\Pr(|d_H(y, N_{R}(x, V_i))- \rho d_H (y, N_{G}(x, V_i)) | \geq \gamma n) \leq 2e^{-2 \gamma^2 n}.
\end{align*}
Since there are only at most $k(n+1)^s + k n^2$ conditions to check and each fails with probability exponentially small in $n$, some choice of $R$ has the required properties if $n$ is sufficiently large.
\end{proof}

Let $G$ be a graph.
For $k \in \N$ and $\delta >  0$, a \defn{$(k, \delta)$-partition for} $G$ is an equitable partition $\P = \{V_1, \ldots, V_k\}$ of $V(G)$ such that, for each $1 \leq i \leq k$ and each $v \in V(G)$, $d_G(v,V_i) \geq \delta|V_i|$.
We will often use the fact that if $\P$ is a $(k, \delta+\epsilon)$-partition for $G$ and $H$ is a subgraph of $G$ with $\Delta(H) \leq \epsilon n/2k$, then $\P$ is a $(k, \delta)$-partition for $G-H$.

\begin{proposition} \label{prop:random-partition}
Let $k \in \N$, and let $0 < \delta < 1$.
Then there exists an $n_0 = n_0(k)$ such that any graph $G$ on $n \ge n_0$ vertices with $\delta(G) \ge \delta n$ has a $(k, \delta - 2n^{-1/3})$-partition.
\end{proposition}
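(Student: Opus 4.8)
The plan is to take a uniformly random equitable partition $\P=\{V_1,\dots,V_k\}$ of $V(G)$ and show that, with positive probability, it is a $(k,\delta-2n^{-1/3})$-partition. We may assume that $\delta>2n^{-1/3}$, since otherwise any equitable partition trivially works. Fix part sizes $n_1,\dots,n_k$ with $n_i\in\{\lfloor n/k\rfloor,\lceil n/k\rceil\}$ and $\sum_i n_i=n$, and let $\P$ be a uniformly random partition of $V(G)$ with $|V_i|=n_i$ for all $i$; such a partition is automatically equitable. We must check that, for every $v\in V(G)$ and every $1\le i\le k$, we have $d_G(v,V_i)\ge(\delta-2n^{-1/3})|V_i|$ with probability close to $1$, and then apply a union bound.

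First I would fix $v\in V(G)$ and $1\le i\le k$ and analyse $d_G(v,V_i)$. Since $G$ has no loops, $d_G(v,V_i)=|N_G(v)\cap(V_i\setminus\{v\})|$, and $N_G(v)\subseteq V(G)\setminus\{v\}$. Conditioning on which part contains $v$: if $v\notin V_i$, then $V_i$ is a uniformly random $n_i$-subset of the $(n-1)$-set $V(G)\setminus\{v\}$, exactly $d_G(v)$ of whose elements lie in $N_G(v)$; if $v\in V_i$, then $V_i\setminus\{v\}$ is a uniformly random $(n_i-1)$-subset of the same $(n-1)$-set. In both cases $d_G(v,V_i)$ is, conditionally, a hypergeometric random variable, and since $d_G(v)\ge\delta n$ its conditional expectation is at least $d_G(v)(n_i-1)/(n-1)\ge\delta(n_i-1)\ge\delta|V_i|-1$.

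Next I would apply concentration. For $n\ge n_0(k)$ we have $|V_i|\ge n/(2k)$ and $n^{2/3}\ge 2k$, so
\[
\bigl(\delta|V_i|-1\bigr)-\bigl(\delta-2n^{-1/3}\bigr)|V_i|=2n^{-1/3}|V_i|-1\ge\frac{n^{2/3}}{k}-1\ge\frac{n^{2/3}}{2k}=:t.
\]
Hence, whichever part contains $v$, the event $d_G(v,V_i)<(\delta-2n^{-1/3})|V_i|$ forces $d_G(v,V_i)$ to fall below its conditional mean by more than $t$; by Lemma~\ref{lma:chernoff} applied to the relevant hypergeometric variable (whose sample-size parameter is at most $n$) this has conditional probability at most $2\exp(-2t^2/n)=2\exp\bigl(-n^{1/3}/(2k^2)\bigr)$, and since this bound is independent of the part of $v$ it also bounds the unconditional probability. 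A union bound over the at most $kn$ pairs $(v,i)$ shows that $\P$ fails to be a $(k,\delta-2n^{-1/3})$-partition with probability at most $2kn\exp\bigl(-n^{1/3}/(2k^2)\bigr)$, which is less than $1$ once $n\ge n_0(k)$; hence the required partition exists.

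I do not expect a genuine obstacle: the argument is routine. The only two points needing a little care are that the deviation parameter $t$ must be taken proportional to $|V_i|\approx n/k$ (rather than a fixed multiple of $n^{2/3}$), so that the tail bound still beats the permitted slack $2n^{-1/3}|V_i|$; and that the mild dependency caused by $v$ possibly lying in $V_i$ only costs an additive constant in the conditional mean, which is absorbed into that slack.
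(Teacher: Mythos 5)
Your proposal is correct and follows essentially the same route as the paper: a uniformly random equitable partition, the hypergeometric tail bound of Lemma~\ref{lma:chernoff} with deviation of order $n^{2/3}/k$, and a union bound over the $kn$ pairs $(v,i)$. Your extra care in conditioning on the part containing $v$ (and the resulting loss of $1$ in the conditional mean) is a harmless refinement of the paper's slightly more casual treatment of the same hypergeometric variable.
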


\begin{proof}
Consider a random equitable partition of $V(G)$ into $V_1, \dots, V_k$ with $|V_1| \leq |V_2| \leq \cdots \leq |V_k|$.
Consider any $1 \leq i \le k$ and any $v \in V(G)$.
Note that $d(v,V_i) = |V_i \cap N(v)|$ has a hypergeometric distribution with parameters $n, |V_i|$ and $d(v)$.
Thus, for each $1 \leq i \le k$ and for each $v \in V(G)$, by Lemma~\ref{lma:chernoff} we have that
\begin{align*}
	\Pr ( d(v,V_i) \le \delta |V_i| - n^{2/3}/k )  \le  2 e^{ - 2n^{1/3}/k }.
\end{align*}
So for $n$ sufficiently large we can choose an equitable partition $V_1, \dots, V_k$ such that, for each $i \le k$ and $v \in V(G)$,
\begin{align*}
	d(v,V_i) \ge \delta |V_i| - n^{2/3}/k \ge  ( \delta - 2n^{-1/3} ) |V_i|,
\end{align*}
as required.
\end{proof}

Let $\P_1$ be a partition of $V(G)$ and for each $1 < i \le \ell$, let $\mathcal{P}_i$ be a refinement of $\P_{i-1}$.
We call $\P_1, \ldots, \P_\ell$ a \defn{$(k, \delta, m)$-partition sequence for $G$} if
\begin{enumerate}[label={\rm(\roman*)}]
	\item $\P_1$ is a $(k, \delta)$-partition for $G$;
	\item for each $2 \leq i \leq \ell$ and each $V \in \P_{i-1}$, $\P_i[V]$ is a $(k, \delta)$-partition for $G[V]$;
	\item for each $V \in \P_\ell$, $|V| = m \text{ or } m-1$.
\end{enumerate}
Note that (i) and (ii) imply that each $\P_i$ is an equitable partition of $V(G)$.

\begin{lemma} \label{lma:partition-structure}
Let $k \in \mathbb{N}$ with $k \ge 2$\COMMENT{NEW}, and let $\delta, \epsilon > 0$.  
There exists an $m_0 = m_0(k, \epsilon)$ such that for all $m' \ge m_0$, any graph $G$ on $n \geq m'$ vertices with $\delta(G) \geq \delta n$ has a $(k, \delta-\epsilon , m )$-partition sequence for some $m' \le m \le k m'$.
\end{lemma}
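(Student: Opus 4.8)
The plan is to build the partition sequence greedily, one level at a time, applying Proposition~\ref{prop:random-partition} repeatedly to obtain each refinement, while keeping careful track of how the minimum degree deteriorates and of the sizes of the parts. First I would set up the iteration: starting from $\P_0 := \{V(G)\}$, suppose we have already constructed an equitable partition $\P_{i-1}$ all of whose parts $V$ have $|V| \geq m'$ (so in particular are large) and such that $G[V]$ has minimum degree at least $(\delta - \epsilon_{i-1})|V|$ for a suitable error term $\epsilon_{i-1}$. For each part $V$ of $\P_{i-1}$ separately, apply Proposition~\ref{prop:random-partition} to the graph $G[V]$ (which has $|V| \geq m' \geq n_0(k)$ vertices provided $m'$, and hence $|V|$, is large enough) to obtain a $(k, \delta - \epsilon_{i-1} - 2|V|^{-1/3})$-partition of $V$. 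Taking the union of these over all $V \in \P_{i-1}$ gives $\P_i$. A $k$-equitable refinement of a $k$-equitable partition is $k^2$-equitable, so $\P_i$ remains equitable; and by construction property (ii) holds at level $i$.

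The second ingredient is bookkeeping on part sizes and error terms. Since each part is split into $k$ (near-)equal pieces at each step, after $i$ levels every part of $\P_i$ has size between roughly $n/k^i - 1$ and $n/k^i + 1$; I would stop the iteration at the first level $\ell$ for which the smallest part drops below $k m'$, which guarantees that every part of $\P_\ell$ has size in the range $[m', k m']$ — this is exactly the conclusion $m' \leq m \leq k m'$ required (with $m := |V|$, noting part sizes differ by at most $1$, so one takes $m$ to be the common value up to the off-by-one in (iii)). For the minimum-degree error, note that all parts at level $j$ have size at least $m'$, so each application of Proposition~\ref{prop:random-partition} costs at most $2(m')^{-1/3}$ in the minimum degree ratio; since $\ell = O(\log_k(n/m'))$ and $\ell \leq \log_k n$ say, the total loss is at most $2\ell (m')^{-1/3}$, which can be made smaller than $\epsilon$ by choosing $m_0 = m_0(k,\epsilon)$ large — here one uses that $\ell (m')^{-1/3} \to 0$ as $m' \to \infty$ for fixed $k$, since $\ell$ grows only logarithmically in $n$ while the decay is polynomial in $m'$. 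This fixes the choice of $m_0$.

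The one genuine subtlety — and the step I expect to need the most care — is that the error term $2|V|^{-1/3}$ depends on $|V|$, which shrinks as we descend the sequence, so the per-step loss is largest at the bottom level where parts are smallest (size as small as $m'$). One must therefore bound the per-step loss uniformly by $2(m')^{-1/3}$ (valid since all relevant parts have size $\geq m'$ throughout, because we stop before any part falls below $k m' \geq m'$), and then sum over the at most $\ell \leq \log_k n$ levels. A mild circularity to avoid: the bound "total loss $< \epsilon$" must hold for \emph{all} $n \geq m'$ simultaneously, but $\ell$ depends on $n$; this is resolved by observing $\ell \leq \log_k n$ is not the right bound to use — instead, since we halt once parts reach size $< km'$ and parts shrink by a factor $\approx k$ per level, we have $k^{\ell-1} \lesssim n/m'$, and the loss $2\ell(m')^{-1/3}$ is then controlled using only that $m' \geq m_0(k,\epsilon)$ once one checks $\ell (m')^{-1/3}$ is bounded by a function of $m'$ alone that tends to $0$ (the logarithmic dependence of $\ell$ on $n/m'$ is absorbed because $(m')^{-1/3} \log(n/m') \leq (m')^{-1/3} \cdot n \cdot$ nothing — more carefully, one uses that for each fixed $k$, $x^{-1/3}\log_k(y/x)$ is uniformly small for $x$ large regardless of $y$ only if $y$ is also controlled). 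The clean fix is to simply phrase the final error as $2(m')^{-1/3}\log_k n$ and note the lemma as stated allows $m_0$ to depend on $k$ and $\epsilon$ but the hypothesis only fixes $m' \geq m_0$ with $n \geq m'$ arbitrary; so one should instead bound $\ell$ by $\log_k(n/m') + 1$ and observe $\ell$ could still be large — hence the correct argument replaces the crude union bound by noting that the losses at successive levels, measured in \emph{relative} terms against the current part size, telescope favourably, or alternatively one strengthens Proposition~\ref{prop:random-partition}'s error to $2|V|^{-1/3}$ and accepts a final loss of $\sum_{j\ge 0} 2(k^j m')^{-1/3} \cdot [\text{number of levels at scale } k^{-j}] = \sum_{j} 2 (k^{j}m')^{-1/3}$, a convergent geometric-type series bounded by $C_k (m')^{-1/3} < \epsilon$. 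This last version is clean and is the route I would write up.
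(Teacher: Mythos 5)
Your final route---iterating Proposition~\ref{prop:random-partition} on each part, keeping the size-dependent loss $2|V|^{-1/3}$ at each level, and summing the resulting geometric series $\sum_j 2(k^j m')^{-1/3} \le C_k (m')^{-1/3} < \epsilon$---is exactly the paper's argument (its error terms $a_j = n^{-1/3} + (n/k)^{-1/3} + \cdots + (n/k^{j-1})^{-1/3}$ are precisely this series, bounded by $(m')^{-1/3}/(k^{1/3}-1)$), and your stopping rule recovers the paper's choice $\ell = \lfloor \log_k(n/m') \rfloor$, $m = \lceil n/k^\ell \rceil$. Note only that no ``strengthening'' of Proposition~\ref{prop:random-partition} is needed, since applied to $G[V]$ it already gives the $2|V|^{-1/3}$ error you use, so your write-up is correct and essentially identical to the paper's proof.
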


\begin{proof}
Take $m_0 \geq \max\{ n_0(k) , 1000/ \eps^{3} \}$, where $n_0$ is the function from Proposition~\ref{prop:random-partition}, and let $m' \geq m_0$.
Let $\ell := \lfloor\log_k(n/m')\rfloor$.
Define $\P_0, \ldots, \P_\ell$ as follows.
Let $\P_0 := \{V(G)\}$.
For $j \in \N$, let $a_{j} := n^{-1/3} + (n/k)^{-1/3} + \cdots + (n/k^{j-1})^{-1/3}$. 
Suppose that for some $1 \le i \le \ell$ we have already chosen $\P_0, \ldots, \P_{i-1}$ such that, for each $1 \leq j \leq i-1$ and each $V \in \P_{j-1}$, $\P_j[V]$ is a $(k, \delta - 2 a_j )$-partition for $G[V]$.
Since $|V|+1 \ge n/k^{i-1} \geq n/k^{\ell-1} \geq m_0$, for each $V \in \P_{i-1}$ we can choose by Proposition~\ref{prop:random-partition} a $(k, \delta - 2 a_i)$-partition for $G[V]$.
Observing that\COMMENT{NEW: Calculation
\begin{align*}
a_{\ell} & = n^{-1/3} + (n/k)^{-1/3} + \cdots + (n/k^{\ell-1})^{-1/3}
	  = n^{-1/3} ( 1 + k^{1/3}+\dots +k^{(\ell-1)/3})\\
	& = n^{-1/3} \frac{k^{\ell/3}-1}{k^{1/3}-1} 
	\le \frac{n^{-1/3} k^{\ell/3}}{k^{1/3}-1} =  \frac {(n/k^{\ell})^{-1/3}} {k^{1/3} - 1} 
\end{align*}
}
\begin{align*}
a_{\ell} & \leq \frac {(n/k^{\ell})^{-1/3}} {k^{1/3} - 1} 
\leq \frac{(m')^{-1/3}}{2^{1/3}-1} \leq \frac{m_0^{-1/3}}{2^{1/3}-1} \le \frac \eps 2 
\end{align*}
completes the proof with $m = \lceil n / k^{\ell} \rceil$.
\end{proof}


\section{Absorbers} \label{sec:absorbers}

Let $F$ be an $r$-regular graph.
Suppose that $G$ is an $F$-divisible graph on $n$ vertices with large minimum degree. 
Let $\P_1, \dots, \P_{\ell}$ be a $(k, \delta, m)$-partition sequence for $G$ given by Lemma~\ref{lma:partition-structure}.
In our proof of Theorem~\ref{thm:regular}, we will choose the partition so that $m$ is bounded (i.e. each $ V \in \P_{\ell}$ has bounded size).
In Section~\ref{sec:partial-decomposition} we will show that $G$ can be decomposed into many copies of $F$ and a leftover graph $H^*$ such that $e(H^*[\P_{\ell}]) = 0 $.
Our aim in this section is to prove the following lemma.
It guarantees the existence of an `absorber' $A^*$ in a dense graph $G$, which can absorb this leftover graph $H^*$ (i.e. $A^* \cup H^*$ has an $F$-decomposition whatever the precise structure of $H^*$).

\begin{lemma} \label{lma:Krabsorber}
Suppose that $n , m , r,f \in \N$ and $\eps > 0 $ with $1/n \ll  1 / m \ll   1/r, 1/f,\eps$.
Let $\delta := 1 - 1/3r + \eps$, and let $q := \lceil n / m\rceil$.
Suppose that $F$ is an $r$-regular graph on $f$ vertices and $G$ is a graph on $n$ vertices.
Let $\P = \{ V_1, \dots, V_{q} \}$ be an equitable partition of $V(G)$ such that, for each $1 \leq i \leq q$, $|V_i| = m \text{ or } m-1$.
Suppose that $\delta( G[\P] ) \ge \delta n $ and $\delta(G[V_i]) \ge  \delta |V_i|$ for each $1 \leq i \le q $.%
\COMMENT{We cannot say `Suppose that $\P$ is a $(q,\delta )$-partition for $G$,' because $(q, \delta)$-partitions need every vertex to see lots of things in every part, not just their own and overall.}
Then $G$ contains an $F$-divisible subgraph~$A^*$ such that\COMMENT{`$A^*$ is $F$-divisible' already follows from (ii) but perhaps it is clearer this way.}
\begin{enumerate}[label={\rm(\roman*)}]
\item $\Delta(A^*[\P]) \le \eps^2 n$ and  $\Delta( A^*[V_i] ) \le  r$ for each $1 \leq i \le q$, and 
\item if $H^*$ is an $F$-divisible graph on $V(G)$ that is edge-disjoint from $A^*$ and has $e ( H^* [ \P ] ) = 0 $, then $A^* \cup H^*$ has an $F$-decomposition.
\end{enumerate}
\end{lemma}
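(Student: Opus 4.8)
\textbf{Proof plan for Lemma~\ref{lma:Krabsorber}.} The absorber $A^*$ will be built from many small \emph{local} absorbers, one for every potential leftover graph inside a single part~$V_i$. Since $|V_i|\le m$ and $m$ is bounded, there are at most $2^{\binom m2}$ possible $F$-divisible graphs that could occur as the leftover on $V_i$; enumerate them as $H_{i,1},\dots,H_{i,p}$ with $p\le 2^{\binom m2}$. The plan is: (1) show that for each individual $F$-divisible graph $H$ on at most $m$ vertices there is an $F$-decomposable ``gadget'' graph $A(H)$ with $H\subseteq A(H)$, with $A(H)-H$ also $F$-decomposable, with $|A(H)|$ bounded in terms of $m,f,r$, and with $A(H)$ having bounded maximum degree (indeed $\Delta(A(H))\le r$ on the ``core'' and only $O(1)$ elsewhere); (2) find edge-disjoint embeddings of all the $q\cdot p$ gadgets $A(H_{i,j})$ inside $G$, rooted so that the copy of $H_{i,j}$ lies on the correct part $V_i$, and so that the union has small maximum degree; (3) take $A^*$ to be the union of these embedded gadgets and verify (i) and (ii).

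\textbf{Step (1): existence of the gadget $A(H)$.} Given $F$-divisible $H$ on vertex set of size $\le m$, I want an $F$-decomposable $A(H)\supseteq H$ with $A(H)-H$ also $F$-decomposable. The standard trick is a ``transformer''/parity argument: first one shows that any two $F$-divisible graphs with the same vertex set can be linked by an $F$-decomposable graph (so that adding it to either yields an $F$-decomposable graph), by repeatedly applying an elementary move that swaps one copy of $F$ for another on a slightly larger vertex set. Concretely, since $F$ is $r$-regular and $H$ is $r$-divisible, one can find an $F$-decomposable graph $D$ (on a bounded number of extra vertices, with bounded degree) such that $H\cup D$ is $F$-decomposable; taking a second such graph $D'$ for the empty graph on $V(H)$ and combining, one arranges that both $A(H)$ and $A(H)-H$ decompose. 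The key quantitative point is that the number of extra vertices and the maximum degree of $A(H)$ off the core can be bounded purely in terms of $m,f,r$ (not $n$), because $H$ itself is bounded. This is essentially a self-contained finite statement about $F$-divisible graphs of bounded size, proved by a local switching argument; I expect this to be the technical heart and the \textbf{main obstacle}, since one must simultaneously control decomposability of $A(H)$, of $A(H)-H$, the vertex count, and the degrees.

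\textbf{Step (2): embedding all gadgets into $G$.} Now apply Lemma~\ref{lma:finding}. For each $1\le i\le q$ and each $1\le j\le p$, form a $\P$-labelled graph $H_{i,j}^{\mathrm{lab}}$ which is a copy of $A(H_{i,j})$ in which the vertices of the core $H_{i,j}$ are given labels from $\{V_i\}$ and all other vertices are labelled $V(G)$. The hypotheses $\delta(G[\P])\ge\delta n$ and $\delta(G[V_i])\ge\delta|V_i|$ with $\delta=1-1/3r+\eps>2/3$ guarantee that common neighbourhoods of sets of up to $d:=\Delta(A(H))=O(1)$ vertices, both inside any $V_i$ and across the partition, have size at least $\eps'$ times the part size, so the minimum-degree hypotheses of Lemma~\ref{lma:finding} are met. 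There are $m\le qp\le q2^{\binom m2}\le\eta n$ (in fact $O(n)$, hence $\le\eta n^2$) labelled graphs, each of bounded size $b:=\max|A(H)|$ and bounded degeneracy $d$, and each vertex $v$ is the root of at most $O(1)\le\eta n$ of them (only the $p$ gadgets for the part containing $v$ can root at $v$). Lemma~\ref{lma:finding} thus yields edge-disjoint embeddings whose union $A^*$ satisfies $\Delta(A^*)\le\eps^2 n$; a slightly more careful bookkeeping (or using Lemma~\ref{ur-finding} directly, noting each part $V_i$ hosts the cores of only its $p=O(1)$ gadgets, each contributing degree $\le r$) gives $\Delta(A^*[V_i])\le r$, while edges of $A^*$ meeting two distinct parts contribute the remaining $\le\eps^2 n$ to $\Delta(A^*[\P])$.

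\textbf{Step (3): verification.} Condition (i) is immediate from Step~(2). For (ii), let $H^*$ be $F$-divisible on $V(G)$, edge-disjoint from $A^*$, with $e(H^*[\P])=0$; then $H^*$ is the vertex-disjoint union of its restrictions $H^*[V_i]$, each of which is an $F$-divisible graph on $\le m$ vertices, hence equals $H_{i,j(i)}$ for some index $j(i)$. The embedded gadget for $H_{i,j(i)}$ together with $H^*[V_i]$ has an $F$-decomposition by Step~(1); all the remaining embedded gadgets $A(H_{i,j})$ with $j\ne j(i)$ are themselves $F$-decomposable; and these pieces are pairwise edge-disjoint and together cover exactly $E(A^*)\cup E(H^*)$. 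Concatenating these decompositions gives an $F$-decomposition of $A^*\cup H^*$, as required. Finally, $A^*$ is $F$-divisible because it is a vertex-disjoint-free but edge-disjoint union of $F$-decomposable graphs $A(H_{i,j})$ (each $r$-divisible), so every degree in $A^*$ is a sum of multiples of $r$, hence divisible by $r=\gcd(F)$, and $e(A^*)$ is a multiple of $e(F)$.
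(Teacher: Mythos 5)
There is a genuine gap in Step (3), and it is exactly the point the paper's proof is built around: the fact that $H^*$ is $F$-divisible with $e(H^*[\P])=0$ does \emph{not} imply that each restriction $H^*[V_i]$ is $F$-divisible. Each $H^*[V_i]$ is $r$-divisible (degrees pass to the parts since $H^*$ is a vertex-disjoint union of the $H^*[V_i]$), but $e(H^*[V_i])$ need not be divisible by $e(F)$ -- only the total edge count is. Concretely, for $F=K_3$ the leftover could be a $C_4$ inside $V_1$ and a $C_5$ inside $V_2$: the union is $K_3$-divisible, but no per-part absorber can exist for the $C_4$ alone, since any $F$-decomposable $A$ with $A\cup C_4$ also $F$-decomposable would force $e(C_4)\equiv 0 \pmod{e(F)}$. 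So your enumeration of only the $F$-divisible graphs on each $V_i$, together with purely part-local gadgets, cannot absorb such leftovers, and the claim ``each $H^*[V_i]$ equals some $H_{i,j(i)}$ in the enumeration'' fails. The paper repairs this with an extra ingredient you are missing: ``edge-movers'' $M^i_j$, built from an $r$-regular graph $Q$ placed in $V_i$ and a graph $\widetilde Q$ placed in $V_{i+1}$ with $e(Q)\equiv a$ and $e(\widetilde Q)\equiv -a \pmod{e(F)}$ (where $a=r$ or $r/2$, cf.\ Proposition~\ref{prop:hcf}), together with an $F$-absorber for $Q\cup\widetilde Q$. Adding suitable numbers of these $Q$'s and $\widetilde Q$'s to $H^*$ shifts the edge-count residues from part to part so that every modified part $H'[V_i]$ becomes $F$-divisible, and only then are the per-part absorbers invoked. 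This also explains the exact form of (i): the only edges of $A^*$ inside the parts are those of the (vertex-disjoint, $r$-regular) $Q$'s and $\widetilde Q$'s, giving $\Delta(A^*[V_i])\le r$, while all absorbers are embedded entirely in $G[\P]$.

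A secondary problem is the way you root the gadgets. An $F$-absorber for a specific graph $H$ on specific vertices of $V_i$ must be attached to exactly those vertices (singleton root labels, $A[V(H)]$ empty) and must not itself contain the edges of $H$; otherwise the statement ``embedded gadget for $H_{i,j(i)}$ together with $H^*[V_i]$ has an $F$-decomposition'' does not follow from Step (1), because the embedded core and the actual leftover sit on different vertices (and an embedded core of an arbitrary $r$-divisible graph on $\le m$ vertices can have degree far exceeding $r$, breaking $\Delta(A^*[V_i])\le r$, besides clashing with the edge-disjointness of $H^*$ from $A^*$). Labelling the core merely by the part $V_i$ is therefore not enough: you need the count $2^{\binom m2}$ to range over labelled graphs on the actual vertex set $V_i$, with each vertex of $V_i$ serving as a root of boundedly many absorbers, and the free vertices and all absorber edges confined to $G[\P]$. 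Your Steps (1) and (2) are otherwise in the spirit of the paper's Lemmas~\ref{lma:abs} and~\ref{lma:finding}, but without the edge-movers and the correct rooting the verification of (ii) (and partly (i)) does not go through.
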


Note that Lemma~\ref{lma:Krabsorber} implies that $A^*$ itself has an $F$-decomposition (by taking $H^*$ to be the empty graph). 
The crucial building blocks for the graph $A^*$ in Lemma~\ref{lma:Krabsorber} are $F$-absorbers.
An \defn{$F$-absorber for a graph $H$} is a graph $A$ such that 
\begin{itemize}
	\item $A$ and $H \cup A$ each have $F$-decompositions;
	\item $A[V(H)]$ is empty.
\end{itemize}

Here, we sketch the proof of Lemma~\ref{lma:Krabsorber}.
The graph $A^*$ given by Lemma~\ref{lma:Krabsorber} will consist of an edge-disjoint union of a set $\mathcal{A}$ of $F$-absorbers and a set $\mathcal{M}$ of `edge-movers'.
These graphs have low degeneracy and will be found using Lemma~\ref{lma:finding}.
The edge-movers will ensure that each $H^*[V_i]$ can be assumed to be $F$-divisible.
Then for each $1 \leq i \leq q$, $\mathcal A$ will contain an $F$-absorber $A_i$ for $H^*[V_i]$.

In the next subsection we explicitly construct an $F$-absorber for a given $F$-divisible graph~$H$ (where we may think of $H$ as one of the possibilities for $H^*[V_i]$).
We will construct this $F$-absorber~$A$ in a series of steps:
$A$ will consist of two `transformers' $T_1$ and $T_2$, where $T_1$ will transform $H$ into a specific graph $L_h$ with $h := e(H)$ and $T_2$ will transform $L_h$ into $p$ vertex-disjoint copies of~$F$, where $p := e(H)/e(F)$.
This latter graph is trivially $F$-decomposable. 
Notice that if an $F$-absorber for $H$ exists, then $H$ is $F$-divisible.
Therefore, for the rest of this section, all graphs $H$ are assumed to be $F$-divisible.

\subsection{An $F$-absorber for a given graph $H$} \label{sec:absstruture}
Given an $r$-regular graph $F$ and two vertex-disjoint graphs $H$ and $H'$, an \defn{$(H,H')_{F}$-transformer} is a graph $T$ such that
\begin{itemize}
	\item $T \cup H$ and $T \cup H'$ each have $F$-decompositions;
	\item $V(H \cup H') \subseteq V(T)$ and $T[V(H \cup H')]$ is empty.
\end{itemize}
Thus if $\emptygraph$ is an empty graph, then an $(H,\emptygraph)_{F}$-transformer is an $F$-absorber for~$H$.
Write $H \sim_{F} H'$ if there exists an $(H,H')_{F}$-transformer.  
The relation $\sim_{F}$ is clearly symmetric.
We now show that it is transitive on collections of vertex-disjoint graphs.

\begin{proposition} \label{relation}
Let $r\in \mathbb{N}$ and let $F$ be an $r$-regular graph.
Suppose that $H$, $H'$ and $H''$ are vertex-disjoint graphs.
Let $T_1$ be an $(H,H')_{F}$-transformer, and let $T_2$ be an $(H',H'')_{F}$-transformer such that $V(T_1) \cap V(T_2) = V(H')$.
Then $T := T_1 \cup H' \cup T_2 $ is an $(H,H'')_{F}$-transformer.
\end{proposition}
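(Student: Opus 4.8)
The plan is to verify the two bullet points in the definition of an $(H,H'')_F$-transformer directly for $T := T_1 \cup H' \cup T_2$. First I would check the structural condition: since $T_1$ is an $(H,H')_F$-transformer we have $V(H\cup H')\subseteq V(T_1)$, and since $T_2$ is an $(H',H'')_F$-transformer we have $V(H'\cup H'')\subseteq V(T_2)$; hence $V(H\cup H'')\subseteq V(T_1)\cup V(T_2)\subseteq V(T)$. For emptiness of $T[V(H\cup H'')]$, note $T[V(H\cup H'')]$ has edges only from $E(T_1)$, $E(H')$ or $E(T_2)$. No edge of $H'$ lies in $T[V(H\cup H'')]$ because $H$, $H'$, $H''$ are vertex-disjoint (so $V(H')\cap V(H\cup H'') = \emptyset$). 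No edge of $T_1$ lies there because $T_1[V(H\cup H')]$ is empty and $T_1$ meets $V(T_2)$ only in $V(H')$, so any edge of $T_1$ inside $V(H\cup H'')$ would have both endpoints in $V(H)\cup(V(T_1)\cap V(H''))\subseteq V(H)\cup V(H')$ — contradiction; symmetrically for $T_2$. (One should be slightly careful here and use $V(T_1)\cap V(T_2)=V(H')$ together with vertex-disjointness of $H,H',H''$ to pin down exactly which vertices of $T$ lie in $V(H\cup H'')$.)

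Next I would establish the decomposition condition, namely that $T\cup H$ and $T\cup H''$ each have $F$-decompositions. Consider $T\cup H = T_1\cup H' \cup T_2 \cup H = (T_1\cup H)\cup(T_2\cup H')$. By hypothesis $T_1\cup H$ has an $F$-decomposition and $T_2\cup H'$ has an $F$-decomposition. The key point is that these two edge sets are \emph{edge-disjoint}: $E(T_1\cup H) \subseteq E(T_1)\cup E(H)$ and $E(T_2\cup H')\subseteq E(T_2)\cup E(H')$, and $E(T_1)\cap E(T_2)=\emptyset$ since $V(T_1)\cap V(T_2)=V(H')$ while $T_1[V(H')]$ and $T_2[V(H')]$ are both empty (as $V(H')\subseteq V(H\cup H')$ and $V(H')\subseteq V(H'\cup H'')$); moreover $H$, $H'$ are vertex-disjoint, and $E(H)\cap E(T_2)=\emptyset=E(H')\cap E(T_1)$ again by vertex-disjointness and the emptiness conditions. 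Hence the union of the two $F$-decompositions is an $F$-decomposition of $T\cup H$. The argument for $T\cup H'' = (T_1\cup H')\cup(T_2\cup H'')$ is identical, using that $T_1\cup H'$ and $T_2\cup H''$ are $F$-decomposable and edge-disjoint.

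The main obstacle — though it is more bookkeeping than genuine difficulty — is making the edge-disjointness claims airtight: one must repeatedly invoke the two facts that (a) $T_1$ and $T_2$ overlap only on $V(H')$, and (b) each transformer is empty on the vertex set of the two graphs it transforms between, in order to conclude that no edge is double-counted when the two $F$-decompositions are combined. Once this is set up carefully the result is immediate, and it gives transitivity of $\sim_F$ on any collection of pairwise vertex-disjoint graphs (applying it with $H''$ empty recovers the statement that a transformer between $H$ and an $F$-absorbable intermediate graph yields an $F$-absorber for $H$).
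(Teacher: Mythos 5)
Your proposal is correct and follows exactly the paper's argument: the proof there is precisely the observation that $T \cup H = (T_1 \cup H) \cup (T_2 \cup H')$ and $T \cup H'' = (T_1 \cup H') \cup (T_2 \cup H'')$ each have $F$-decompositions. The edge-disjointness and emptiness checks you spell out are the routine details the paper leaves implicit, and you have verified them correctly.
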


\begin{proof}
Observe that $T \cup H = (T_1 \cup H) \cup (T_2 \cup  H')$ and $T \cup H'' = (T_1 \cup H') \cup (T_2  \cup H'')$ each have $F$-decompositions. 
\end{proof}

We will show that in fact $H \sim_{F} H'$ for all vertex-disjoint $F$-divisible graphs $H$ and $H'$.
Since the empty graph is $F$-divisible, this in turn implies that every such $H$ has an $F$-absorber.
We will further show that, for each such $H$, we can find an $F$-absorber for $H$ which has low degeneracy (rooted at $V(H)$).

We say that a graph $H'$ is \emph{obtained from a graph $H$ by identifying vertices} if there is a sequence of graphs $H_0, \dots, H_{s}$ and vertices $x_i,y_i \in V(H_i)$ such that 
\begin{enumerate}[label=(\roman*)]
		\item $H_0 = H$ and $H_s = H'$;
		\item $( N_{H_i}(x_i) \cup\{ x_i \} ) \cap ( N_{H_i}(y_i) \cup\{ y_i \} ) = \emptyset$ for all $i$;
		\item for each $0 \le i < s $, $H_{i+1}$ is obtained from $H_{i}$ by identifying the vertices $x_i$ and $y_i$.
\end{enumerate}
Condition (ii) ensures that the identifications do not produce multiple edges.
Note that if $H'$ can be obtained from $H$ by identifying vertices, then there exists a graph homomorphism $\phi: H \rightarrow  H'$ from $H$ to $H'$ that is edge-bijective.
Recall that a graph $H$ is $r$-divisible if $r$ divides $d(v)$ for all $v \in V(H)$.

\begin{fact} \label{can-split}
Let $r \in \mathbb{N}$ and let $H$ be an $r$-divisible graph.
Then there is an $r$-regular graph $H_0$ such that $H$ can be obtained from $H_0$ by identifying vertices.
\end{fact}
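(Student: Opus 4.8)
The plan is to "unzip" each vertex of $H$ repeatedly until every vertex has degree exactly $r$, obtaining the graph $H_0$; identifying back the vertices we split will recover $H$. First I would observe that it suffices to describe a single reduction step. Suppose $H$ is $r$-divisible but not $r$-regular; pick a vertex $v$ with $d_H(v) = kr$ for some $k \ge 2$. Partition the edges at $v$ into $k$ groups $E_1, \dots, E_k$, each of size $r$, with the corresponding neighbour-sets $N_1, \dots, N_k$ (these need not be disjoint as subsets of $V(H)$, but the edge-groups are disjoint). Replace $v$ by $k$ new vertices $v^{(1)}, \dots, v^{(k)}$, where $v^{(j)}$ is joined precisely to the vertices in $N_j$ via the edges of $E_j$. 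Call the resulting graph $H'$. Then $H'$ is still $r$-divisible (every old vertex keeps its degree; each $v^{(j)}$ has degree exactly $r$), and $H$ is obtained from $H'$ by identifying $v^{(1)}$ with $v^{(2)}$, then the result with $v^{(3)}$, and so on — so it remains to check condition~(ii), that at the moment we merge in $v^{(j)}$ its closed neighbourhood is disjoint from the closed neighbourhood of the partially-merged vertex. This is where a small amount of care is needed: if two edge-groups $E_i, E_j$ share a neighbour $w$, then after merging $v^{(i)}$ and $v^{(j)}$ we would create a double edge to $w$. To avoid this I would first \emph{subdivide}: replace every edge of $H$ by a path of length $2$ (inserting a degree-$2$ vertex in the middle). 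Unfortunately this breaks $r$-divisibility when $r \ne 2$, so instead I would use a cleaner device.

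The cleaner approach: iterate the splitting with the groups $E_1, \dots, E_k$ chosen so that the closed neighbourhoods of the new vertices are pairwise disjoint. Concretely, whenever splitting $v$ into $v^{(1)}, \dots, v^{(k)}$ would create a repeated edge upon re-identification — i.e. some neighbour $w$ of $v$ originally joined to $v$ by more than one... but $H$ is simple, so $w$ is joined to $v$ by a single edge, which lies in exactly one group $E_j$; hence $w \in N_j$ for exactly one $j$, and $N_i \cap N_j = \emptyset$ for $i \ne j$ automatically. So condition~(ii) in the definition holds at every step: when we merge $v^{(j)}$ into the vertex obtained from merging $v^{(1)}, \dots, v^{(j-1)}$, the neighbourhoods $N_1 \cup \dots \cup N_{j-1}$ and $N_j$ are disjoint, and neither of the two vertices being merged is adjacent to the other (the new vertices $v^{(i)}$ form an independent set since $H$ has no loops). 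Thus each splitting step is legitimate, and its reverse is a valid sequence of identifications.

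Finally I would set up the induction. Define a potential $\Phi(H) := \sum_{v \in V(H)} (d_H(v)/r - 1)$, a nonnegative integer since $H$ is $r$-divisible, with $\Phi(H) = 0$ iff every non-isolated vertex has degree $r$ (isolated vertices, having degree $0 = r \cdot 0$, should be deleted at the outset, or one notes the statement is about the non-trivial part; alternatively one allows "$r$-regular" to ignore isolated vertices). Each splitting step at a vertex of degree $kr$ replaces a term $k-1$ by $k$ terms equal to $0$, strictly decreasing $\Phi$. So after finitely many steps we reach an $r$-regular $H_0$, and concatenating the identification sequences (in reverse order of the splittings) exhibits $H$ as obtained from $H_0$ by identifying vertices, as required. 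The main obstacle is purely bookkeeping — verifying that condition~(ii) is maintained throughout the concatenated sequence — and this is handled by the observation above that simplicity of $H$ forces the neighbour-sets of the split-off copies to be disjoint.
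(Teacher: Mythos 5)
Your proof is correct and is essentially the paper's argument: the paper's entire proof is the one-line observation that one splits each vertex of degree $sr$ into $s$ new vertices of degree $r$, and your write-up just carries out this splitting carefully (checking that simplicity of $H$ makes the neighbour-sets of the split copies disjoint, so the re-identifications satisfy condition~(ii), plus the isolated-vertex remark). No substantive difference in approach.
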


\begin{proof}
Split each vertex of degree $sr$ in $H$ into $s$ new vertices each of degree $r$.
\end{proof}

Fact~\ref{can-split} and the next lemma together imply that, for every $F$-divisible graph $H'$, there is some $r$-regular graph~$H$ such that $H \sim_{F} H'$.
Recall that the degeneracy of a graph $H'$ rooted at $U \subseteq V(H')$ was defined in Section~\ref{sec:embedding}.

\begin{lemma} \label{lma:regular}
Let $r,f \in \mathbb{N}$ and let $F$ be an $r$-regular graph on $f$ vertices.
Let $H$ be an $r$-regular graph.
Let $H'$ be a copy of a graph obtained from $H$ by identifying vertices.
Suppose that $H$ and $H'$ are vertex-disjoint.
Then $H \sim_{F} H'$.
Moreover, there exists an $(H,H')_{F}$-transformer~$T$ such that the degeneracy of $T$ rooted at $V(H \cup H')$ is at most $3r$ and $|T| \le  f r |H| + |H'| + f e(H)$.
\end{lemma}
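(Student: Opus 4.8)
The plan is to build the transformer $T$ out of two ingredients: a collection of `expanded copies' of $F$ that link $H$ to its identification $H'$ one edge at a time, together with the natural edge-bijective homomorphism $\phi\colon H\to H'$. More precisely, let $e_1,\dots,e_h$ enumerate $E(H)$ with $h:=e(H)$, and let $e_1',\dots,e_h'$ be the corresponding edges of $H'$ under $\phi$ (so $e_j'=\phi(e_j)$). For each $j$ I would introduce a fresh copy $F_j$ of $F$ together with a distinguished edge $x_jy_j\in E(F_j)$, and then \emph{glue} $F_j$ so that the edge $x_jy_j$ is identified with $e_j$ and a second, vertex-disjoint `parallel' copy of the same gadget has its distinguished edge identified with $e_j'$. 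The point is the standard clique/edge-absorber trick: if $A_e$ denotes ``$F$ minus an edge'' attached along $e$, then $A_e\cup\{e\}$ is a copy of $F$ (hence $F$-decomposable), while $A_e$ on its own is not yet decomposable; by pairing up $A_{e_j}$ with $A_{e_j'}$ through a short chain one arranges that the union with either $H$ or $H'$ completes to edge-disjoint copies of $F$. I would take $T$ to be the union over $j$ of these gadgets (with all newly introduced vertices distinct across different $j$, so that $T[V(H\cup H')]$ is empty as required by the definition of transformer), and verify the two decomposition properties by exhibiting the decompositions explicitly: in $T\cup H$ each $e_j$ is supplied by $H$ and completes its gadget; in $T\cup H'$ each $e_j'$ is supplied by $H'$; and the internal edges of $T$ not touched this way are grouped into further whole copies of $F$ built into the gadget by design.

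The second half of the statement asks for quantitative control: degeneracy at most $3r$ rooted at $V(H\cup H')$, and $|T|\le fr|H|+|H'|+fe(H)$. For the size bound I would count: each of the $h=e(H)$ gadgets contributes at most $f$ new vertices for the $A_{e_j}$ part and at most $f$ more for linking, but the link vertices can be shared/reused so that the total new vertex count is $O(f)$ per edge; combined with $|V(H\cup H')|\le |H|+|H'|$ this should give the claimed bound $fr|H|+|H'|+fe(H)$ (the $fr|H|$ term presumably accommodating the $r$ gadget-edges incident to each of the $|H|$ vertices of the regular graph $H$, the $|H'|$ term the vertices of $H'$, and $fe(H)$ the interior vertices of the $h$ copies of $F$ used to pad out each gadget to something $F$-decomposable). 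The precise apportionment I would pin down only after fixing the exact gadget; the inequality is generous enough that a somewhat wasteful construction still fits.

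For the degeneracy bound, the ordering I would use puts $V(H\cup H')$ first (as required, since the root set must come first and it is independent — note $T[V(H\cup H')]$ is empty), then adds the remaining vertices gadget by gadget. Within a single copy of $F$ glued along an edge $e_j$, order the $f-2$ non-root vertices of that copy so each is preceded by at most $r$ of its $F$-neighbours; this is possible because $F$ is $r$-regular hence $r$-degenerate, and once the two endpoints of $e_j$ (which lie in the root set) are already present, every subsequent vertex of that copy has at most $r$ earlier neighbours inside the copy. The linking vertices between the $A_{e_j}$-part and the $A_{e_j'}$-part see at most a bounded number of earlier vertices; keeping the chain short (length $2$ or $3$ copies of $F$ per edge, as in Proposition~\ref{relation}-style composition) keeps this at most $3r$. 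So the worst case is a vertex lying in two overlapping gadget-copies, contributing $r$ from each plus possibly one link edge — bounded by $3r$.

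\textbf{Main obstacle.} The genuine difficulty is not the existence of \emph{some} transformer (a greedy/chain argument gives that cheaply), but simultaneously achieving \emph{both} the degeneracy bound $3r$ \emph{and} the linear size bound $|T|\le fr|H|+|H'|+fe(H)$, while also ensuring the identifications introduce no multiple edges (the analogue of condition (ii) in the definition of ``obtained by identifying vertices''). A naive edge-by-edge absorber is linear in $e(H)$ but may have large degeneracy at shared vertices, or may force many extra vertices; conversely a low-degeneracy construction is tempting to make by spreading everything out, which blows up $|T|$. Threading the needle — choosing gadgets that are $F$-decomposable both with and without the target edge, overlap only at root vertices, reuse link vertices enough to stay linear, yet never stack more than three $r$-regular pieces at any vertex — is the crux, and is presumably exactly what the (omitted) explicit construction in the next subsection is engineered to do, using the graphs $L_h$ and the two-stage $T_1,T_2$ decomposition flagged earlier in Section~\ref{sec:absorbers}.
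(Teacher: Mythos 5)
There is a genuine gap, and you in effect concede it yourself in your ``Main obstacle'' paragraph: the ``short chain'' linking the gadget glued along $e_j$ to the gadget glued along $e_j'=\phi(e_j)$ is never constructed, and constructing it \emph{is} the content of the lemma. Read literally, your plan cannot work: in $T\cup H$ the edge $e_j$ completes its gadget, but what survives is the ``$F$ minus an edge'' gadget at $e_j'$ together with the chain, while in $T\cup H'$ it is the gadget at $e_j$ plus the chain that survives. So one fixed set of auxiliary edges must complete \emph{either} of two different leftovers to an $F$-decomposable graph. Padding the gadget with ``further whole copies of $F$ built into the gadget by design'' does not achieve this: whole copies of $F$ decompose on their own and absorb nothing, and $F$ minus an edge is never itself $F$-decomposable. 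What you actually need is a graph $L$ such that both $L\cup(\text{gadget at }e_j)$ and $L\cup(\text{gadget at }e_j')$ are $F$-decomposable --- i.e.\ another transformer-type object --- so asserting its existence without a construction is circular. Your degeneracy bound of $3r$ and the size bound are likewise stated for an unspecified gadget and cannot be checked.

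The paper's proof resolves exactly this point with two ideas you are missing. First, the gadgets for $e=xy$ and for $\phi(e)$ are not two parallel copies joined by a chain: they \emph{share} the same $f-2$ interior vertices $Z^{e}$. With $E_1$ joining $x,y$ to $Z^{e}$ (as $u,v$ attach to the rest of $F$), $E_2$ the interior edges of $F$ on $Z^e$, and $E_3$ joining $\phi(x),\phi(y)$ to the same $Z^e$, one gets that $H\cup E_1\cup E_2$ and $H'\cup E_2\cup E_3$ each decompose into copies of $F$; the case-dependent leftover is then $E_3$ (when absorbing $H$) or $E_1$ (when absorbing $H'$), and these are unions of stars centred at $\phi(x)$ resp.\ $x$ with \emph{identical} leaf sets. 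Second, each such leaf set splits into $r-1$ independent $r$-sets $N^x_j$, and one attaches to each a copy of $F\setminus x_0$ rooted at $N^x_j$ (the graph $T_2$); since $T_1[\{x\}\cup N^x_j]\cup T^x_j$ and $T_1[\{\phi(x)\}\cup N^x_j]\cup T^x_j$ are both copies of $F$, the same $T_2$ completes whichever of $E_1$, $E_3$ is left over, giving both required decompositions simultaneously. The degeneracy bound $3r$ (interior vertices meet at most $r+2$ edges of $T_1$ plus $2(r-1)$ edges of $T_2$) and the count $|T|=|H|+|H'|+(f-2)e(H)+(f-r-1)(r-1)|H|\le fr|H|+|H'|+fe(H)$ then fall out of this explicit construction; they are not something one can retrofit onto an unspecified gadget.
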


\begin{proof}
Let $uv$ be an edge of $F$ and let $u,v,z_1, \dots, z_{f-2}$ be the vertices of~$F$.
Let $N_F(u) = \{v , z_{a_1}, \dots, z_{a_{r-1}}\}$ and $N_F(v) = \{u , z_{b_1}, \dots, z_{b_{r-1}}\}$.
(The indices $a_i$ and $b_i$ will be fixed throughout the rest of the proof.)

Let $\phi: H \to H'$ be a graph homomorphism from $H$ to $H'$ that is edge-bijective.
Orient the edges of $H$ arbitrarily. 
Then $\phi$ induces an orientation of $H'$.%
\COMMENT{Mental image: Since each edge $xy \in E(H)$ is oriented, we map $u$ to $x$ and $v$ to $y$ and $z_i$ to $z_i^{(xy)}$.}
Throughout the rest of the proof, we view $H$ and $H'$ as oriented graphs and we write $xy$ for the oriented edge from $x$ to~$y$.

For each $e \in E(H)$, let $Z^{e}:= \{ z_1^{(e)}, \dots,  z_{f-2}^{(e)}\}$ be a set of $f-2$ vertices such that $V(H)$, $V(H')$, $Z^e$ and $Z^{e'}$ are disjoint for all distinct $e,e' \in E(H)$. 
Define a graph $T_1$ as follows:
\begin{enumerate}[label=(\roman*)]
\item $V(T_1) := V(H) \cup V(H')  \cup \bigcup_{e \in E(H)} Z^e$;
\item $E_1 := \{ x z_{a_i}^{(xy)}, y z_{b_i}^{(xy)}  : 1 \le i \le r-1$ and $x y \in E(H) \}$;
\item $E_2 := \{ z_i^{(xy)} z_{j}^{(xy)} : z_iz_j \in E(F)$ and $ xy \in E(H)  \}$;
\item $E_3 := \{ \phi(x) z_{a_i}^{(xy)}, \phi(y) z_{b_i}^{(xy)}  : 1 \le i \le r-1$ and $x y \in E(H) \}$;
\item $E(T_1) := E_1 \cup E_2 \cup E_3$.
\end{enumerate}
Note that $T_1 [ V(H \cup H')]$ is empty.
Note also that $H \cup E_1 \cup E_2$ can be decomposed into $e(H)$ copies of $F$, where each copy of $F$ has vertex set $\{x, y\} \cup Z^{(xy)}$ for some edge $xy \in E(H)$.
Similarly, $H' \cup E_2 \cup E_3$ can be decomposed into $e(H)$ copies of $F$.%
\COMMENT{where each $F$ has vertex set $\{\phi(x), \phi(y) \} \cup Z^{(xy)}$ for some edge $xy \in E(H)$.}
In summary, 
\begin{align}
	\text{$H \cup E_1 \cup E_2$ and $H' \cup E_2 \cup E_3$ each have $F$-decompositions.} \label{eqn:regular1}
\end{align}
Note that every vertex $z \in V(T_1) \setminus V(H \cup H')$ satisfies
\begin{align}
	d_{T_1}(z) \le \max \{ r, 1 + (r-1)+1, 2 + (r-2)+2\} = r+2. \label{eqn:regularT1}
\end{align}

We will now construct an additional graph $T_2$ such that both $T_2 \cup E_1$ and $T_2 \cup E_3$ have an $F$-decomposition.
It will then follow that $T_1 \cup T_2$ is an $(H,H')_F$-transformer.
Note that $E_1$ is the edge-disjoint union of $|H|$ stars $K_{1,r(r-1)}$ with centres in $V(H)$.
We will obtain $T_2$ by viewing each star $K_{1,r(r-1)}$ as the union of $r-1$ smaller stars $K_{1,r}$, whose leaves form independent sets in $T_1$, and extending each of the smaller stars to a copy of $F$.

For each $x \in V(H)$, each neighbour~$y$ of~$x$ in~$H$ and each $1 \le j \le r-1$, let $u_j^{(xy)}: = z_{a_j}^{(xy)}$ if the edge between $x$ and $y$ in $H$ is directed toward~$y$; otherwise let $u_j^{(xy)}: = z_{b_j}^{(yx)}$. 
For each $x \in V(H)$ and each $1 \leq j \leq r-1$, let $N_j^x := \{u^{(xy)}_j : y \in N_H(x)\}$.
The $N_j^x$ partition $N_{T_1}(x)$ and each $N_j^x$ forms an independent set in $T_1$.

For each $x \in V(H)$ and each $1 \leq j \leq r-1$, let $W_j^x$ be a set of $f-(r+1)$ new vertices, disjoint from both $V(T_1)$ and the other $W_{j'}^{x'}$.
Fix a vertex $x_0 \in V(F)$.
Define a graph $T^x_j$ on vertex set $V(T^x_j) : = N^x_j \cup  W^x_j$ such that $T^x_j$ is isomorphic to $F \setminus x_0$ and the image of $N_F(x_0)$ is precisely $N^x_j$.
Then the $T^x_j$ are edge-disjoint and, for each $x \in V(H)$ and each $1 \leq j \leq r-1$, both $T_1[\{x\} \cup N_j^x] \cup T^x_j$ and $T_1[\{\phi(x)\} \cup N_j^x] \cup T^x_j$ are copies of~$F$.
Let $T_2 : = \bigcup_{x \in V(H)} \bigcup_{j=1}^{r-1} T^x_j$ and let $T: = T_1 \cup T_2$.
See Figure~\ref{fig:transformer} for an example with $F = C_6$.

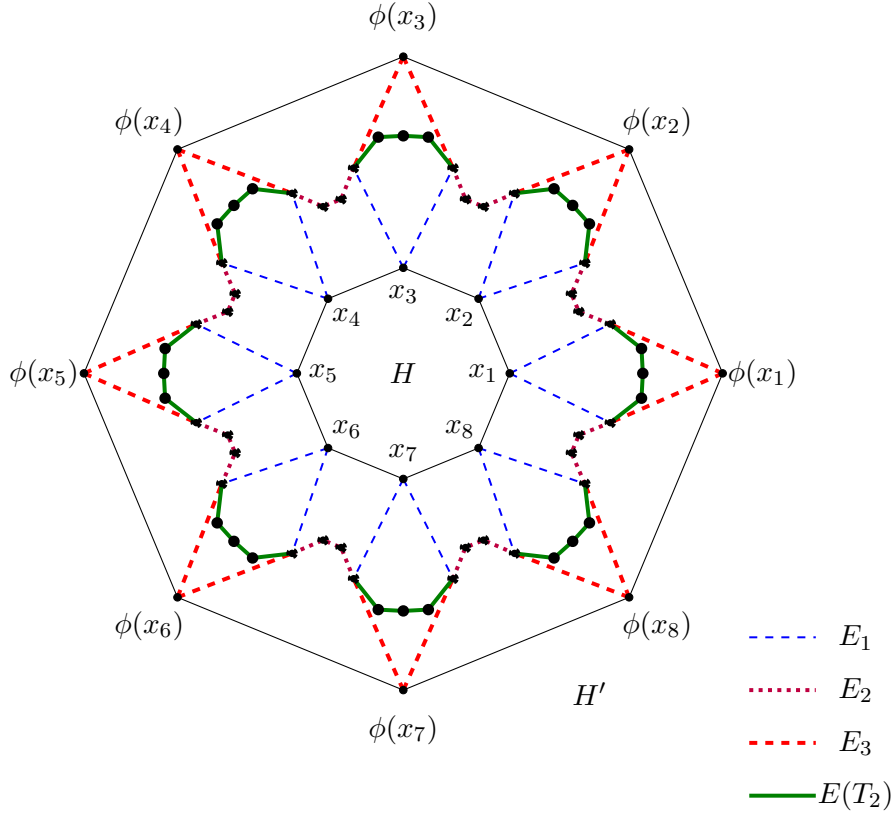
\begin{figure}[t]
\begin{tikzpicture}[scale=0.7]
	\begin{scope}
		\foreach \x in {0,1,...,7}
			{
			\begin{scope}[blue,dashed,line width=0.75pt]
			\draw (\x*45:2) to (\x*45+13.5:4);
			\draw (\x*45:2) to (\x*45-13.5:4);
			\end{scope}
			
			\begin{scope}[red, dashed,line width=1.5pt]
			\draw (\x*45:6) to (\x*45+13.5:4);
			\draw (\x*45:6) to (\x*45-13.5:4);
			\end{scope}
			}

 	\end{scope}

\begin{scope}[line width=1.5pt,black!50!green]
		\foreach \x in {0,1,...,7}
			{
			\draw (\x*45-13.5:4) to (\x*45 -6 :4.5) to (\x*45 :4.5) to (\x*45 +6 :4.5)  to  (\x*45+13.5:4);
			\foreach \y in {-1,0,1}
				{
			\filldraw[black, fill=black] ( {\x*45 + \y*6}  :4.5) circle (2pt);
				}
			}
\end{scope}
	\begin{scope}[line width=1.5pt, purple,dotted]
		\foreach \x in {0,1,...,7}
			{
			\draw (\x*45+13.5:4) to (\x*45 + 6 +13.5 :3.5) to (\x*45 + 2*6 +13.5 :3.5) to (\x*45+31.5:4);
						\foreach \y in {1,2}
				{
			\filldraw[black, fill=black] (\x*45 + \y*6 +13.5 :3.5) circle (2pt);
							}
			\filldraw[black, fill=black] (\x*45 + 13.5 :4) circle (2pt);
			\filldraw[black, fill=black] (\x*45 + 31.5 :4) circle (2pt);
			}
 	\end{scope}
	\begin{scope}
		\foreach \x [count=\y from 1] in {0,1,...,7}
			{\filldraw[fill=black] (\x*45:6) circle (2pt);
			\node  at (\x*45:6.75)  {$\phi(x_{\y})$};
			\draw (\x*45:6)	 to (\y*45:6);
			}
 	\end{scope}

	\begin{scope}
    ] 
		\foreach \x [count=\y from 1] in {0,1,...,7}
			{\filldraw[fill=black] (\x*45:2) circle (2pt);
			\node  at (\x*45:1.5)  {$x_{\y}$};
			\draw (\x*45:2)	 to (\y*45:2);
			}
 	\end{scope}
	\node  at (300:7)  {$H'$};
	\node  at (0,0)  {$H$};

	\draw[blue,dashed,line width=0.75pt] (6.5,-5) to (7.75,-5);
		\node  at (8.5,-5)  {$E_1$};
	
	\begin{scope}[line width=1.5pt]
	\draw[purple,dotted] (6.5,-6) to (7.75,-6);
		\node  at (8.5,-6)  {$E_2$};
	
	\draw[red, dashed] (6.5,-7) to (7.75,-7);
		\node  at (8.5,-7)  {$E_3$};
	
	\draw[black!50!green] (6.5,-8) to (7.75,-8);
		\node  at (8.5,-8)  {$E(T_2)$};
			\end{scope}

\end{tikzpicture}
\caption{An $(H , H')_{C_6}$- transformer, where $H$ and $H'$ are vertex-disjoint copies of $C_8$.
}
\label{fig:transformer}
\end{figure}

We now claim that $T$ is an $(H,H')_F$-transformer. 
Note that $T_2$ is edge-disjoint from $T_1$.
Since $T_2[ V ( H \cup H' ) ]$ is empty, $T[ V ( H \cup H' ) ]$ is empty. 
Note that $T_2 \cup E_1$ has an $F$-decomposition into $(r-1)|H|$ copies of $F$, where each copy of $F$ has vertex set $\{x\} \cup V(T^x_j)$ for some $x \in V(H)$ and some $1 \leq j \le r-1$.
Together with~\eqref{eqn:regular1}, this implies that $T \cup H' = (T_2 \cup E_1) \cup (E_2 \cup E_3 \cup H')$ has an $F$-decomposition.
Similarly $T_2 \cup E_3$ has an $F$-decomposition into $(r-1)|H|$ copies of $F$, where each $F$ has vertex set $\{\phi(x)\} \cup V(T^x_j)$ for some $x \in V(H)$ and some $1 \leq j \le r-1$.
So $T \cup H = (T_2 \cup E_3) \cup (H \cup E_1 \cup E_2)$ also has an $F$-decomposition.
Hence $T$ is indeed an $(H,H')_F$-transformer.

Note that each vertex in $W^x_j$ has degree $r$ in $T$.
By~\eqref{eqn:regularT1}, each vertex $z \in V(T_1) \setminus V(H \cup H')$ has degree at most $r+2 +2(r-1) = 3r$ in $T$.\COMMENT{The 2 is for each end of an edge $xy$, the $r-1$ for the degree of a vertex of $T_1 \setminus (H \cup H')$ in $T_j^x$.}
Therefore, $T$ has degeneracy at most $3r$ rooted at $V(H \cup H')$ and $|T| = |H| + |H'| + (f-2)e(H) + (f-r-1) (r-1) |H| \le  f r |H| + |H'| + f e(H)$.
\end{proof}

We remark that if the girth of $F$ is large, then the degeneracy of the $(H,H')_{F}$-transformer constructed in the proof of Lemma~\ref{lma:regular}, rooted at $V(H \cup H')$, is in fact smaller than $3r$.
We will use this fact, captured by the following lemma, in Section~\ref{sec:cycles}.

\begin{lemma} \label{lma:cycletransformer}
Let $r,f \in \mathbb{N}$ and let $F$ be an $r$-regular graph on $f$ vertices.
Suppose that $F$ contains a vertex which is not contained in any triangle in~$F$. 
Let $H$ be an $r$-regular graph.
Let $H'$ be a copy of a graph obtained from $H$ by identifying vertices.
Suppose that $H$ and $H'$ are vertex-disjoint.
Then $H \sim_{F} H'$.
Moreover, there exists an $(H,H')_{F}$-transformer~$T$ such that 
\begin{enumerate}[label=\rm(\roman*)]
	\item the degeneracy of $T$ rooted at $V(H \cup H')$ is at most $r+1$;
	\item if $F$ contains an edge $uv$ that is not contained in any triangle or cycle of length $4$ in~$F$, then the degeneracy of $T$ rooted at $V(H \cup H')$ is at most $r$.
\end{enumerate}
\end{lemma}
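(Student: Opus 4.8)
\textbf{Proof proposal for Lemma~\ref{lma:cycletransformer}.}

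The plan is to re-examine the construction of the transformer $T = T_1 \cup T_2$ in the proof of Lemma~\ref{lma:regular} and to show that, under the stated girth-type hypotheses on~$F$, the bound $3r$ on the degeneracy rooted at $V(H\cup H')$ can be improved. Recall that the degeneracy estimate $3r$ came from bounding, for a vertex $z \in V(T_1)\setminus V(H\cup H')$, the quantity $d_{T_1}(z) + 2(r-1)$: the $2(r-1)$ accounts for the fact that $z$ may appear as a leaf of up to two stars $N_j^x$ (once as $z_{a_j}^{(xy)}$ and once as $z_{b_{j'}}^{(xy)}$ for the \emph{same} edge $xy$), and in each such extension $z$ gains $r-1$ new neighbours inside the copy of $F$ built on $N_j^x \cup W_j^x$. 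So to improve the bound we need an ordering of $V(T)$, with $V(H\cup H')$ first, in which each such $z$ is preceded by far fewer than $3r$ of its neighbours.

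First I would order the vertices of $W_j^x$ last, after all of $V(T_1)$; each such vertex has degree exactly $r$ in $T$, so it contributes $r$ to the degeneracy and is never an obstruction. Then I would order $V(T_1)$ so that $V(H\cup H')$ comes first, and within $V(T_1)\setminus V(H\cup H')$, the key point is the following: for a fixed edge $xy\in E(H)$, the vertices $\{z_1^{(xy)},\dots,z_{f-2}^{(xy)}\}$ span a copy of $F$ minus the edge $uv$ (via the identification $u\mapsto x$, $v\mapsto y$, $z_i\mapsto z_i^{(xy)}$, together with $E_1$ and $E_2$), and the extra graph $T_2$ attaches, to each $z_i^{(xy)}$, a copy of $F\setminus x_0$. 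Use the hypothesis that $F$ has a vertex $w_0$ not contained in any triangle: choose the vertex of $F$ playing the role of $z_{a_1}=z_{b_1}$ — i.e. arrange the labels $a_i, b_i$ so that the \emph{first} star leaf is always this triangle-free vertex $w_0$. Place all copies of $w_0$ (one per edge of $H$, i.e. the vertices $z_{a_1}^{(xy)}$) early in the ordering of $V(T_1)\setminus V(H\cup H')$, before any other $z_i^{(e)}$ and before any $W$-vertices. When such a copy $z=z_{a_1}^{(xy)}$ is reached, its neighbours are: its neighbours inside the copy of $F$ on $Z^{(xy)}$ (but $w_0$ is triangle-free, so within that copy its $r$ or so neighbours are as-yet unplaced, except for the endpoints $x,y$ which come from $V(H)\cup V(H')$ via $E_1\cup E_3$); plus the $r-1$ new neighbours from the single star extension $T^x_1$ (or $T^{?}_1$) it belongs to. A careful count here should give degeneracy at most $r+1$: roughly, the two edges to $V(H\cup H')$ (from $E_1$ and $E_3$) plus the $r-1$ star-extension neighbours, and the triangle-freeness of $w_0$ guarantees that no additional already-placed $z$-neighbours appear. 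The remaining vertices $z_i^{(e)}$ with $i\neq a_1$ I would place after all the $w_0$-copies but before the $W$-vertices; for these one re-runs the bookkeeping of Lemma~\ref{lma:regular} but now one of the two star-extensions is already "paid for" in the sense that the relevant leaves were processed, trimming the count to $r+1$.

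For part (ii), the hypothesis is stronger: there is an edge $uv$ of $F$ lying in no triangle and no $C_4$. Here I would use precisely this edge $uv$ as the distinguished edge of $F$ in the construction (the edge whose role is played by $xy$ for each $xy\in E(H)$). The absence of triangles on $uv$ and the absence of $4$-cycles through $uv$ together imply that $N_F(u)\setminus\{v\}$ and $N_F(v)\setminus\{u\}$ are disjoint and have no edges between them, and moreover no vertex of $F$ is adjacent to two of them; this is exactly what is needed so that, in the ordering, when we reach a leaf vertex $z_{a_j}^{(xy)}$ it is preceded by at most one of $x,y$ together with at most $r-1$ neighbours from its star extension and no other already-placed internal neighbour — giving degeneracy at most $r$. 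I expect the main obstacle to be the careful combinatorial bookkeeping: one must exhibit a single global ordering of $V(T)$ that simultaneously handles the $E_1$-, $E_2$-, $E_3$-edges of $T_1$ and the $T^x_j$-edges of $T_2$, and verify the degeneracy bound for \emph{every} vertex class (the $W$-vertices, the distinguished triangle-free leaves, the other internal vertices, and the vertices of $H\cup H'$ themselves — though the latter are in $U$ and so impose no constraint). The girth hypotheses are used exactly to rule out the "doubling" of already-placed neighbours that forced the $3r$ bound; translating "no triangle / no $C_4$ through a chosen vertex or edge" into the precise statement "each internal vertex is preceded by at most $r$ (resp.\ $r+1$) of its neighbours" is the crux, and everything else is the same edge-disjoint $F$-decomposition argument as in Lemma~\ref{lma:regular}.
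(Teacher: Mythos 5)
Your overall strategy---reuse the transformer $T=T_1\cup T_2$ from the proof of Lemma~\ref{lma:regular} and improve the degeneracy count by a careful choice of ordering, using the girth hypotheses to kill extra backward neighbours---is the same as the paper's, but the two structural choices that actually make the count work are missing or garbled, so as written the argument does not go through. First, the paper uses the triangle-free vertex as the \emph{apex} $x_0$ of the extensions $T^x_j\cong F\setminus x_0$: then $N_F(x_0)$ is independent, so every leaf's $T_2$-neighbours lie in $W^x_j$, and these are placed last and never contribute to backward degree. You instead try to use the triangle-free vertex $w_0$ as a \emph{leaf}, requiring $z_{a_1}=z_{b_1}=w_0$; this is impossible, since a common neighbour of $u$ and $v$ forms a triangle with the edge $uv$, contradicting the triangle-freeness of $w_0$. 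Second, for part (i) the paper chooses the distinguished edge $uv$ to be incident to $x_0$, hence contained in no triangle, so that $N_F(u)\cap N_F(v)=\emptyset$ (no vertex is a leaf of two stars) and $N_F(u)$ is independent. You make no such choice in case (i), and without it the claimed bound genuinely fails for this construction: if $uv$ lies in a triangle through some $z_i$, then $z_i^{(xy)}$ is joined by $E_1\cup E_3$ to all four roots $x,y,\phi(x),\phi(y)$, so its backward degree is at least $4$, which already exceeds $r+1$ when $r=2$ (the case relevant to cycles), whatever ordering of the internal vertices you take.

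The bookkeeping also contains slips that happen to land on the right numbers: a leaf $z_{a_i}^{(xy)}$ is adjacent to both $x$ (via $E_1$) and $\phi(x)$ (via $E_3$), not to ``at most one of $x,y$''; with the correct apex choice its star-extension neighbours come \emph{after} it in the ordering and must not be counted; and even under the hypothesis of (ii), edges inside $N_F(u)\setminus\{v\}$ or inside $N_F(v)\setminus\{u\}$ are allowed (they create triangles through $u$ or $v$ but not through the edge $uv$), and these contribute up to $r-2$ backward neighbours that your count ignores. Your auxiliary claim that no vertex of $F$ is adjacent to two vertices of $N_F(u)\cup N_F(v)$ is likewise not implied by the absence of triangles and $4$-cycles through $uv$ (such a configuration yields a $C_5$ through $uv$, or a $C_4$ avoiding $uv$), though it is not needed. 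Once the two choices above are made, the verification is short: order $H$, $H'$, then for each $xy\in E(H)$ the leaves $z_{a_1}^{(xy)},\dots,z_{a_{r-1}}^{(xy)},z_{b_1}^{(xy)},\dots,z_{b_{r-1}}^{(xy)}$, then the remaining $z_j^{(xy)}$, then the $W$-vertices; the backward degrees are then $2$ for the $a$-leaves, at most $2+(r-1)=r+1$ for the $b$-leaves in case (i) (at most $2+(r-2)=r$ for all leaves in case (ii)), and at most $r$ for the remaining internal vertices and the $W$-vertices.
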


\begin{proof}
Let $x_0$ be a vertex of $F$ which is not contained in any triangle in~$F$. 
So $N_F(x_0)$ is an independent set in~$F$.
Also, $F$ must contain an edge $uv$  which is not contained in a triangle
(since $r \ge 1$, we can take any edge incident to $x_0$).
So $N_F(u)$ and $N_F(v)$ are disjoint.
Moreover, if $uv$ is not contained in any cycle of length 4, then $N_F(u) \setminus \{v\}$ and $N_F(v) \setminus \{u\}$ are disjoint sets of vertices with no edges between them.

Let $u,v,z_1, \dots, z_{f-2}$ be the vertices of~$F$.
Let $N_F(u) = \{v , z_{a_1}, \dots, z_{a_{r-1}}\}$ and $N_F(v) = \{u , z_{b_1}, \dots, z_{b_{r-1}}\}$.
Let $T$ be the $(H,H')_{F}$-transformer as defined in the proof of Lemma~\ref{lma:regular}
(with $x_0$ playing the role of $x_0$ in the proof of Lemma~\ref{lma:regular}).
To see that the degeneracy of $T$ rooted at $V(H \cup H')$ is as desired, consider the vertices in $H$, $H'$, $T_1 \setminus (H \cup H')$ and $T_2 \setminus T_1$ in that order with the vertices of $T_1 \setminus (H \cup H')$ ordered such that for each edge $xy \in E(H)$, the vertices $z_{a_1}^{(xy)},\dots, z_{a_{r-1}}^{(xy)}$, $z_{b_1}^{(xy)},\dots, z_{b_{r-1}}^{(xy)}$ come before $z_{j}^{(xy)}$ for $j \notin \{a_1, \dots, a_{r-1}, b_1, \dots, b_{r-1} \}$.
\end{proof}

Recall that the relation $\sim_{F}$ is transitive (on vertex-disjoint graphs) by Proposition~\ref{relation}.
By Lemma~\ref{lma:regular}, to show that $H \sim_{F} H'$ it suffices to show that there exists an $r$-regular graph $H_0$ (vertex-disjoint from both $H$ and $H'$) so that we can obtain both $H$ and $H'$ from a copy of $H_0$ by identifying vertices.
In Lemma~\ref{lma:extendedloop} we will construct such an~$H_0$ for $r$-divisible graphs $H$ and $H'$ with the same number of edges.

Fix an edge $uv \in E(F)$.
The following construction will enable us to identify vertices even if they are adjacent.
Given a graph $H$ and an edge~$xy$ of~$H$, the \defn{$F$-expansion of $xy$ via $(u,v)$} is defined as follows.  
Consider a copy $F'$ of $F$ which is vertex-disjoint from~$H$.
Delete $xy$ from $H$ and $uv$ from $F'$ and join $x$ to $u$ and join $y$ to $v$ (see Figure~\ref{fig:expanded-edge}).

If $x \in V(H)$, then $H$ \emph{with a copy of $F$ attached to~$x$ via~$v$} is the graph obtained from $F' \cup H$ by identifying $x$ and $v$ (where as before, $F'$ is a copy of $F$ which is vertex-disjoint from~$H$). 
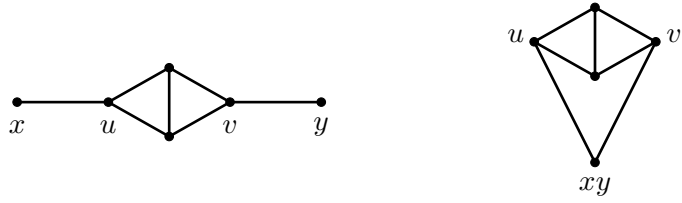
\begin{figure}[t!]
\centering
\begin{tikzpicture}[scale=0.8]
			\filldraw[fill=black] (1,0) circle (2pt);
			\filldraw[fill=black] (-1,0) circle (2pt);
			\filldraw[fill=black] (0,0.57) circle (2pt);
			\filldraw[fill=black] (0,-0.57) circle (2pt);
			\filldraw[fill=black] (2.5,0) circle (2pt);
			\filldraw[fill=black] (-2.5,0) circle (2pt);
			
			\node at (-2.5,-0.4)  {$x$};
			\node at (2.5,-0.4)  {$y$};
			\node at (-1,-0.4)  {$u$};
			\node at (1,-0.4)  {$v$};
			
			\draw[line width=1pt] (-2.5,0) -- (-1,0) --(0,0.57) --(1,0) --(0,-0.57)--(-1,0);
			\draw[line width=1pt] (2.5,0) -- (1,0);
			\draw[line width=1pt] (0,0.57) -- (0,-0.57);

\begin{scope}[shift={(7,1)}]

			\filldraw[fill=black] (1,0) circle (2pt);
			\filldraw[fill=black] (-1,0) circle (2pt);
			\filldraw[fill=black] (0,0.57) circle (2pt);
			\filldraw[fill=black] (0,-0.57) circle (2pt);
			\filldraw[fill=black] (0,-2) circle (2pt);
			
			\node at (0,-2.4)  {$xy$};
			\node at (-1.3,0.1)  {$u$};
			\node at (1.3,0.1)  {$v$};
			
			\draw[line width=1pt] (0,-2) -- (-1,0) --(0,0.57) --(1,0) --(0,-0.57)--(-1,0);
			\draw[line width=1pt] (0,-2) -- (1,0);
			\draw[line width=1pt] (0,0.57) -- (0,-0.57);
\end{scope}			

\end{tikzpicture}

\caption{A $K_4$-expanded edge and a $K_4$-expanded loop.}
\label{fig:expanded-edge}

\end{figure}

\begin{fact} \label{expand}
Let $F$ be an $r$-regular graph and let $uv \in E(F)$.
Suppose that the graph $H'$ is obtained from a graph $H$ by $F$-expanding an edge $xy \in E(H)$ via~$(u,v)$.
Then the graph obtained from $H'$ by identifying $x$ and $v$ is $H$ with a copy of $F$ attached to $x$ via~$v$.\qedhere
\end{fact}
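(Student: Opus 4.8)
Fact~\ref{expand} is essentially a bookkeeping statement, so the plan is to unwind both constructions and check that the resulting graphs agree as labelled graphs.

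\smallskip

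\noindent\textbf{Plan.} First I would write out the $F$-expansion explicitly: starting from $H$ with the edge $xy$, we take a vertex-disjoint copy $F'$ of $F$, delete the edge $uv$ from $F'$ and the edge $xy$ from $H$, and then add the two edges $xu$ and $yv$. Call the resulting graph $H'$. Its vertex set is $V(H) \cup V(F')$ and its edge set is $(E(H)\setminus\{xy\}) \cup (E(F')\setminus\{uv\}) \cup \{xu, yv\}$. Next I would apply the identification of $x$ with $v$ to $H'$: since $x$ and $v$ are non-adjacent in $H'$ (we deleted $uv$ and did not add $xv$) and their neighbourhoods are disjoint (the neighbours of $x$ lie in $V(H)$, those of $v$ lie in $V(F')\setminus\{u\}$, and these sets are disjoint because $F'$ is vertex-disjoint from $H$), this identification is legitimate in the sense of the definition preceding Fact~\ref{can-split}. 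The merged vertex, which we may call $x$, inherits the neighbours of $x$ in $H$ (all of $N_H(x)\setminus\{y\}$ together with $u$, which came from the new edge $xu$) and the neighbours of $v$ in $F'$ (that is $N_{F'}(v)\setminus\{u\}$, together with $y$, from the new edge $yv$).

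\smallskip

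\noindent On the other side, I would spell out ``$H$ with a copy of $F$ attached to $x$ via $v$'': take a vertex-disjoint copy $F'$ of $F$ (the \emph{same} $F'$, up to isomorphism), form $F'\cup H$, and identify $x\in V(H)$ with $v\in V(F')$. The merged vertex has neighbour set $N_H(x)\cup N_{F'}(v)$; since $x$ and $v$ are non-adjacent in $F'\cup H$ and have disjoint neighbourhoods, this is again a legitimate identification. Now I would simply compare the two graphs edge by edge: in both cases the vertex set is $(V(H)\cup V(F'))$ with $x$ and $v$ identified; the edges of $H$ not incident to $xy$ appear in both; the edges of $F'$ not incident to $uv$ appear in both; and the remaining edges are, on the expansion-then-identify side, $\{xu, yv\}$ with $x=v$, i.e.\ the edge from the merged vertex to $u$ and the edge from $y$ to the merged vertex, which are precisely the edge $vu\in E(F')$ and the edge $xy\in E(H)$ as seen in $F'\cup H$ after identifying $x$ with $v$. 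So the two graphs coincide.

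\smallskip

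\noindent\textbf{Main obstacle.} There is no real obstacle here; the only thing requiring a moment's care is checking that the vertex identifications are admissible (condition (ii) in the definition of ``obtained by identifying vertices'', i.e.\ closed neighbourhoods disjoint), and that one correctly tracks which of the deleted/added edges $xy$, $uv$, $xu$, $yv$ becomes which edge after the identification collapses $x$ and $v$ to a single vertex. This is what the sketch above does, and since $F'$ is vertex-disjoint from $H$ throughout, all the required disjointness is immediate. I would present the verification as the short chain of equalities of edge sets indicated above and conclude that the two constructions yield the same graph, which is exactly the assertion of Fact~\ref{expand}.
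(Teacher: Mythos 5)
Your bookkeeping argument is correct, and since the paper states Fact~\ref{expand} without any written proof (treating it as immediate), your explicit verification is exactly the intended argument. One small imprecision: in $H'$ the vertex $x$ is adjacent to $u$ and $v$ is adjacent to $y$, so the claim that ``the neighbours of $x$ lie in $V(H)$'' is not literally true as stated, but the closed neighbourhoods of $x$ and $v$ in $H'$ are still disjoint and the rest of your edge-by-edge comparison goes through unchanged.
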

 
Recall that we have fixed an edge~$uv$ of~$F$.
An \defn{$F$-expanded loop}~$L$ is the $F$-expansion of an edge $xy$ via $(u,v)$ with the vertices $x$ and $y$ identified
(see Figure~\ref{fig:expanded-edge}).  
Write $L_h$ for $h$ vertex-disjoint copies of $L$ with their distinguished vertices identified.
(The edge $uv \in E(F)$ used in $F$-expansions is always the same, so $L_h$ is uniquely defined.)
\begin{lemma} \label{lma:extendedloop}
Let $r,f \in \mathbb{N}$ and let $F$ be an $r$-regular graph on $f$ vertices.
Suppose that $H$ is an $r$-divisible graph with $h := e(H)$, and that $L_h$ is vertex-disjoint from~$H$.
Then $H \sim_{F} L_h$.
Moreover, there exists an $(H,L_h)_F$-transformer $T$ such that 
the degeneracy of $T$ rooted at $V(H \cup L_h)$ is at most $3r$ and $|T|  \le |H| + |L_h| + 7 f^2 r h$.
\end{lemma}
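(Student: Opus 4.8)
The plan is to prove Lemma~\ref{lma:extendedloop} by combining Fact~\ref{can-split}, Lemma~\ref{lma:regular} and Proposition~\ref{relation}. The key observation is that both $H$ and $L_h$ should be realised as images of a single $r$-regular graph under edge-bijective identifications; then transitivity of $\sim_F$ (on vertex-disjoint graphs) finishes the job, and the degeneracy/order bounds on the transformer come from the corresponding bounds in Lemma~\ref{lma:regular} together with Proposition~\ref{relation}.

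First I would apply Fact~\ref{can-split} to obtain an $r$-regular graph $H_0$, vertex-disjoint from $H$ and $L_h$, from which $H$ is obtained by identifying vertices; note $e(H_0)=e(H)=h$ and $|H_0|\le |H|$ (more precisely $|H_0|$ is bounded by a constant times $h$ since $H_0$ is $r$-regular with $h$ edges, so $|H_0| = 2h/r$). By Lemma~\ref{lma:regular} there is an $(H_0,H)_F$-transformer $T_1$ with degeneracy at most $3r$ rooted at $V(H_0\cup H)$ and $|T_1|\le fr|H_0| + |H| + fh$. Next I need the other half: I claim that $L_h$ can also be obtained from $H_0$ by identifying vertices. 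Indeed, take the copy $F'_e$ of $F$ used in the $F$-expanded loop construction for each of the $h$ edges of $H_0$; performing an $F$-expansion of every edge $e=xy$ of $H_0$ via $(u,v)$ yields a graph $H_0'$, and then identifying the two endpoints of each expanded edge and finally identifying all the distinguished vertices across the $h$ expanded edges produces exactly $L_h$ (by definition of $L_h$ as $h$ vertex-disjoint copies of $L$ with distinguished vertices identified). One has to check condition~(ii) in the definition of "obtained by identifying vertices" — that no identification creates a multiple edge — which holds because the expanded loops meet only in the single distinguished vertex and, within one expanded loop, $x$ and $y$ have disjoint closed neighbourhoods after the expansion (the edge $xy$ was deleted). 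So $L_h$ is obtained from $H_0$ by identifying vertices, and Lemma~\ref{lma:regular} gives an $(H_0,L_h)_F$-transformer $T_2$ with degeneracy at most $3r$ rooted at $V(H_0\cup L_h)$ and $|T_2|\le fr|H_0| + |L_h| + f\cdot e(H_0) = fr|H_0| + |L_h| + fh$; here one should arrange $V(T_1)\cap V(T_2) = V(H_0)$ by choosing the new vertices disjointly.

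Now by Proposition~\ref{relation} (using $H$, $H_0$, $L_h$ in the roles of $H$, $H'$, $H''$ — note $H\sim_F H_0$ by symmetry of $\sim_F$), the graph $T := T_1\cup H_0\cup T_2$ is an $(H,L_h)_F$-transformer, so $H\sim_F L_h$. For the degeneracy: order the vertices as $V(H)$, then $V(L_h)$, then $V(H_0)$, then the remaining vertices of $T_1$ in a good order for $T_1$, then the remaining vertices of $T_2$ in a good order for $T_2$. Since $T_1[V(H_0\cup H)]$ and $T_2[V(H_0\cup L_h)]$ are empty, and each interior vertex of $T_i$ has back-degree at most $3r$ in $T_i$ (and no edges go between the interiors of $T_1$ and $T_2$), the degeneracy of $T$ rooted at $V(H\cup L_h)$ is at most $3r$ — though I should double-check the vertices of $H_0$ itself, which are free vertices of $T$: each $v\in V(H_0)$ has degree $r$ in $H_0$ plus contributions from $T_1$ and $T_2$; placing $V(H_0)$ after $V(H)$ and $V(L_h)$, its back-edges are only those into $V(H_0)$ (at most $r-1$, since $H_0$ is $r$-regular and we can order $V(H_0)$ itself degeneracy-style), so this is at most $r \le 3r$. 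Finally, the bound on $|T|$: $|T| = |T_1| + |T_2| - |H_0|$ (identifying on $V(H_0)$), and using $|H_0| = 2h/r \le 2h$, $|L_h| \le 2fh$ (each expanded loop has at most $2f$ vertices), and being generous with constants, $|T| \le |H| + |L_h| + 7f^2rh$ follows.

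The main obstacle I expect is the bookkeeping for the degeneracy bound of the glued transformer $T$, specifically verifying that the vertices of $H_0$ (which become internal free vertices once we glue) can be fitted into the rooted ordering without exceeding degeneracy $3r$; this needs one to exploit that $H_0$ is $r$-regular and that, in each $T_i$, a vertex of $H_0$ plays the role of a root vertex (hence has controlled back-degree). A secondary point requiring care is the precise claim that $L_h$ arises from $H_0$ by a legal sequence of vertex identifications — one must present the $F$-expansions and subsequent identifications in an order for which condition~(ii) (no multiple edges created) is transparently satisfied at every step.
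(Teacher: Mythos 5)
Your reduction via a single $r$-regular graph $H_0$ with $e(H_0)=e(H)=h$ has a genuine gap at the claim ``$L_h$ can also be obtained from $H_0$ by identifying vertices''. Identifying vertices yields an edge-bijective homomorphism, so every identification image of $H_0$ has exactly $h$ edges, whereas $L_h$ has $(e(F)+1)h$ edges (each expanded loop contributes the $e(F)-1$ edges of its copy of $F$ plus the two joining edges). Your own justification betrays the problem: you produce $L_h$ by identifying vertices of the graph you call $H_0'$, namely the $F$-expansion of every edge of $H_0$ --- but $F$-expansion is not a vertex identification (it adds $f-2$ new vertices and replaces one edge by $e(F)+1$ edges), so $L_h$ is an identification image of $H_0'$, not of $H_0$. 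Consequently Lemma~\ref{lma:regular} does not supply the $(H_0,L_h)_F$-transformer $T_2$, and the transitivity step via Proposition~\ref{relation} collapses; no choice of a common identification-ancestor of both $H$ and $L_h$ can exist, so the plan cannot be repaired by bookkeeping alone.

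The paper's proof works one level up to get around exactly this edge-count obstruction. Let $H_{\mathrm{exp}}$ be $H$ with every edge $F$-expanded via $(u,v)$, and let $H_{\mathrm{att}}$ be $H$ with a copy of $F$ attached via $v$ to one endpoint of each edge. By Fact~\ref{expand}, $H_{\mathrm{att}}$ is an identification image of $H_{\mathrm{exp}}$, and so is $L_h$ (identify all original vertices of $H$, which in $H_{\mathrm{exp}}$ are pairwise non-adjacent with disjoint neighbourhoods). Since $H_{\mathrm{exp}}$ is $r$-divisible, Fact~\ref{can-split} gives an $r$-regular $H_0$ with $e(H_0)=(e(F)+1)h$ from which both $H_{\mathrm{att}}$ and $L_h$ are obtained by identifying vertices; Lemma~\ref{lma:regular} then yields transformers $T_1$ for $(H_0,H_{\mathrm{att}})$ and $T_2$ for $(H_0,L_h)$, and Proposition~\ref{relation} glues them into an $(H_{\mathrm{att}},L_h)_F$-transformer. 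The step your outline has no substitute for is the final conversion: one sets $T:=(H_{\mathrm{att}}-H)\cup T_1\cup H_0\cup T_2$ and uses that $H_{\mathrm{att}}-H$ is trivially $F$-decomposable (and meets $V(H\cup L_h)$ only in single attachment vertices) to see that $T$ is an $(H,L_h)_F$-transformer; the degeneracy bound $3r$ and the bound on $|T|$ are then verified for this $T$, with the order $H\cup L_h$, $H_{\mathrm{att}}\setminus H$, $H_0$, then the interiors of $T_1$ and $T_2$.
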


\newcommand{\Hatt}{H_{\text{att}}}
\newcommand{\Hexp}{H_{\text{exp}}}

\begin{proof}
Recall that we have fixed an edge~$uv$ of~$F$.
For each edge $e \in E(H)$, attach a copy of $F$ to one of its endpoints (chosen arbitrarily) via~$v$; call the resulting graph $\Hatt$.
Note that $|\Hatt| = |H| + (f-1)h$ and $e(\Hatt) = \left( e(F) + 1 \right) h$.
Let $\Hexp$ be the graph obtained from $H$ by $F$-expanding every edge in $H$ via~$(u,v)$.
By Fact~\ref{expand}, we can choose $\Hexp$ and $\Hatt$ such that $\Hatt$ can be obtained from $\Hexp$ by identifying vertices.
By Fact~\ref{can-split}, there is an $r$-regular graph $H_0$ such that $\Hexp$ (and so also $\Hatt$) can be obtained from (a copy of) $H_0$ by identifying vertices.

Lemma~\ref{lma:regular} implies that $H_0 \sim_{F} \Hatt$ and that there exists an $(H_0,\Hatt)_F$-transformer $T_1$ such that the degeneracy of $T_1$ rooted at $V(H_0 \cup \Hatt)$ is at most $3r$ and
\begin{align}
|T_1| 
\le f r |H_0| + |\Hatt| +  f e(H_0) .
\label{eqn:expand1}
\end{align}
Furthermore, we can choose $T_1$ such that $V(T_1) \cap V(L_h) = \emptyset$.

In $\Hexp$ the original vertices of $H$ are non-adjacent with disjoint neighbourhoods, so by identifying all original vertices of $H$ we obtain a copy of $L_h$ from $\Hexp$.
Hence $L_h$ can also be obtained from $H_0$ by identifying vertices, so Lemma~\ref{lma:regular} implies that there exists an $(H_0,L_h)_F$-transformer~$T_2$ such that the degeneracy of $T_2$ rooted at $V(H_0 \cup L_h)$ is at most $3r$ and 
\begin{align}
|T_2| 
\le f r |H_0|  + |L_h| +  f e(H_0).
\label{eqn:expand2}
\end{align}
Furthermore, we can choose $T_2$ such that $V(T_1) \cap V(T_2) = V(H_0)$.
So $T_1$ and $T_2$ are edge-disjoint.

By Proposition~\ref{relation}, $T_1 \cup H_0 \cup T_2$ is an $(\Hatt,L_h)_F$-transformer.
Define the graph $T$ to be $(\Hatt - H) \cup T_1 \cup H_0 \cup T_2$. 
Since $\Hatt - H$ trivially has an $F$-decomposition, it follows that $T$ is an $(H,L_h)_F$-transformer.
To see that $T$ has degeneracy at most $3r$ rooted at $V(H \cup L_h)$, consider the vertices in $H \cup L_h$, $\Hatt \setminus H$, $H_0$, $T_1 \setminus ( \Hatt \cup H_0 )$ and $T_2 \setminus ( L_h \cup H_0 )$ in that order. 

Recall that $|\Hatt| = |H| + (f-1)h$ and $e(H_0) = e(\Hatt) = \left( e(F) + 1 \right) h \le rfh$.%
\COMMENT{AL: $e(F) + 1 = rf/2 +1 \le rf$, since $f \ge r+1$.}
Since $H_0$ is $r$-regular, $|H_0| = 2 e(H_0)/r  \le 2 f h$.
By \eqref{eqn:expand1} and \eqref{eqn:expand2},
\begin{align*}
|T| & = | T_1 | + |T_2|  - |H_0| 
	    \le |\Hatt| + |L_h| +  2 f r |H_0| + 2 f e(H_0) \\
     &  \le  |H| + |L_h| + 7f^2 r h.
\end{align*}
This completes the proof of the lemma.
\end{proof}

We can now combine Lemma~\ref{lma:extendedloop} and Proposition~\ref{relation} to show that every $F$-divisible graph~$H$  has an $F$-absorber. 
Recall that $pF$ consists of $p$ vertex-disjoint copies of $F$. 
	
\begin{lemma} \label{lma:abs}
Let $r,f \in \mathbb{N}$ and let $F$ be an $r$-regular graph on $f$ vertices.
Let $H$ be an $F$-divisible graph.
Then there is an $F$-absorber $A$ for $H$ such that the degeneracy of $A$ rooted at $V(H)$ is at most $3r$ and $|A| \le 9 f^2 r |H|^2$.
\end{lemma}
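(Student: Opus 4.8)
The plan is to build the $F$-absorber $A$ for $H$ by composing the transformer from Lemma~\ref{lma:extendedloop} with a second transformer that turns $L_h$ into the trivially $F$-decomposable graph $pF$, where $h := e(H)$ and $p := h/e(F)$ (this is an integer since $H$ is $F$-divisible). First I would apply Lemma~\ref{lma:extendedloop} to obtain an $(H,L_h)_F$-transformer $T_1$ with degeneracy at most $3r$ rooted at $V(H \cup L_h)$ and $|T_1| \le |H| + |L_h| + 7f^2rh$. Next I would produce an $(L_h, pF)_F$-transformer $T_2$. To do this, note that $L_h$ is $r$-divisible (it is an edge-disjoint union of copies of $F$ with some vertices identified, so each vertex has degree a multiple of $r$), so by Fact~\ref{can-split} there is an $r$-regular graph $H_0$ such that $L_h$ is obtained from $H_0$ by identifying vertices; moreover $e(H_0) = e(L_h)$. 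Since $pF$ is also $r$-regular with $e(pF) = pe(F) = h = e(L_h) = e(H_0)$, and since any two $r$-regular graphs with the same number of edges are both obtainable by identifying vertices from a common $r$-regular graph (split every vertex of each into degree-$r$ pieces to get the same number $N := 2h/r$ of isolated edges — wait, more carefully: split into copies of a single edge, giving $h$ disjoint edges; one then needs a common $r$-regular refinement, which is $H_0$ itself works for $L_h$, and similarly $pF$ is obtained from itself), I can instead argue directly: by Lemma~\ref{lma:regular} applied with the $r$-regular graph $H_0$, we get $H_0 \sim_F L_h$ and $H_0 \sim_F pF$ (since $pF$, being $r$-regular, is trivially "obtained from the $r$-regular graph $pF$ by identifying vertices", and one checks $pF$ is also obtainable from $H_0$ by identifying vertices because any $r$-regular graph with $e(H_0)$ edges is). Then Proposition~\ref{relation} gives an $(L_h, pF)_F$-transformer $T_2 := T_1' \cup H_0 \cup T_2'$ of degeneracy $\le 3r$ and controlled order.

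Having $T_1$ (an $(H,L_h)_F$-transformer) and $T_2$ (an $(L_h, pF)_F$-transformer), I would choose them so that $V(T_1) \cap V(T_2) = V(L_h)$, as permitted by the freedom in the constructions, and set $A := T_1 \cup L_h \cup T_2 \cup (pF)$, where the final $pF$ is a fresh vertex-disjoint union of $p$ copies of $F$ edge-disjoint from everything else. By Proposition~\ref{relation}, $T_1 \cup L_h \cup T_2$ is an $(H, pF)_F$-transformer; adjoining the disjoint copies $pF$ (which is trivially $F$-decomposable) and using that an $(H,pF)_F$-transformer $T$ satisfies: $T \cup H$ and $T \cup pF$ each have $F$-decompositions, and $T[V(H)]$ is empty, we conclude that $A = T \cup pF$ has $A[V(H)]$ empty, $A$ has an $F$-decomposition (from the $T \cup pF$ decomposition), and $H \cup A = (T \cup H) \cup pF$ has an $F$-decomposition. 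Hence $A$ is an $F$-absorber for $H$.

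For the degeneracy bound: order $V(A)$ by listing $V(H)$ first, then the vertices of $L_h \setminus H$, then the internal vertices of the transformers $T_1, T_2$ in the degeneracy-witnessing order guaranteed by Lemmas~\ref{lma:extendedloop} and~\ref{lma:regular}, and finally the vertices of the extra $pF$ (each of degree $r$, adjacent only to its own copy). Since the transformer pieces each have degeneracy $\le 3r$ rooted at the appropriate boundary sets, and $L_h$'s vertices have degree $\le$ some constant times $r$ but we only need them to appear after $V(H)$ — here I should be slightly careful, as $L_h$ is not rooted, so its vertices must be counted as free vertices; however $L_h$ itself is $2r$-degenerate-ish, and more to the point the degeneracy of $A$ rooted at $V(H)$ only requires an ordering starting with $V(H)$, and the transformer orderings already place $V(L_h)$ among their roots, so slotting $V(L_h)\setminus V(H)$ in right after $V(H)$ and checking back-degrees works out to $\le 3r$. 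For the order bound: $|L_h| \le fh$ (each of the $h$ expanded loops contributes $\le f$ vertices), $|pF| = ph \cdot$... actually $|pF| = pf \le fh$, and combining with $|T_1| \le |H| + |L_h| + 7f^2rh$ and the analogous bound for $T_2$ (with $|H_0| \le 2fh$, $e(H_0) \le rfh$), a routine calculation gives $|A| \le |H| + Cf^2r h$ for an absolute-ish constant, and since $h = e(H) \le |H|^2$ this is at most $9f^2 r |H|^2$.

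The main obstacle I anticipate is verifying that $pF$ (or more generally the target $r$-regular graph with a prescribed number of edges) can indeed be obtained from the common $r$-regular graph $H_0$ by identifying vertices in the precise sense of Section~\ref{sec:absstruture} — one needs to check that the required edge-bijective homomorphism exists, i.e. that $H_0$ can be taken to be a common "lift". The clean way around this is to avoid insisting on a single $H_0$: instead apply Lemma~\ref{lma:extendedloop} a second time (or Lemma~\ref{lma:regular} directly) to transform $L_h$ to $pF$, exploiting that $pF$ is itself $r$-regular so that $pF \sim_F L_h'$ whenever $L_h'$ is obtained from $pF$ by identifying vertices — but here we want the reverse, $L_h$ obtained from some $r$-regular graph. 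The correct statement to invoke is: both $L_h$ and $pF$ have a common $r$-regular "blow-up preimage" because splitting every vertex of $L_h$ into degree-$r$ stars yields a disjoint union of copies of $F$ (namely $pF$, by construction of the $F$-expanded loop), so in fact $L_h$ is directly obtained from $pF$ by identifying vertices, and Lemma~\ref{lma:regular} applies with $H = pF$, $H' = L_h$ to give $pF \sim_F L_h$ — no auxiliary $H_0$ needed. This is the cleanest route and I would use it; the degeneracy and order bounds then follow directly from Lemma~\ref{lma:regular}'s guarantees, and the final composition and counting are routine.
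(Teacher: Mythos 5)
Your overall skeleton (an $(H,L_h)_F$-transformer $T_1$ from Lemma~\ref{lma:extendedloop}, an $(L_h,pF)_F$-transformer $T_2$, composition via Proposition~\ref{relation}, adjoining a disjoint copy of $pF$, and the final ordering/counting) is exactly the paper's plan. But your construction of $T_2$ has a genuine gap, rooted in a miscount of $e(L_h)$. Each $F$-expanded loop $L$ is a copy of $F$ with the edge $uv$ subdivided (the distinguished vertex is the subdivision vertex), so $e(L_h)=(e(F)+1)h$, not $h$. Since ``obtained by identifying vertices'' requires an edge-bijective homomorphism, no graph can be a common source for both $L_h$ and $pF$ (which has only $p\,e(F)=h$ edges), so the ``common $r$-regular preimage $H_0$'' idea is impossible, and your preferred ``cleanest route'' rests on a false claim: splitting the distinguished vertex of $L_h$ yields $h$ disjoint copies of $F$ with one edge subdivided (for $F=C_\ell$ these are copies of $C_{\ell+1}$), not $pF$, and $L_h$ is not obtained from $pF$ by identifying vertices. (Your parenthetical justification that $L_h$ is ``an edge-disjoint union of copies of $F$ with some vertices identified'' is wrong for the same reason, although the conclusion that $L_h$ is $r$-divisible does hold because $r\mid 2e(H)$.) Consequently Lemma~\ref{lma:regular} cannot be invoked with $H=pF$, $H'=L_h$, and the degeneracy and order bounds you claim for $T_2$ are unsupported.

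The repair is the one you mention only in passing and then discard: apply Lemma~\ref{lma:extendedloop} a \emph{second} time, now with $pF$ playing the role of $H$. Indeed $pF$ is $F$-divisible and $e(pF)=h$, so that lemma directly yields a $(pF,L_h)_F$-transformer (i.e.\ the required $(L_h,pF)_F$-transformer) with degeneracy at most $3r$ rooted at $V(L_h\cup pF)$ and $|T_2|\le |pF|+|L_h|+7f^2rh$; the intermediate attachment of a copy of $F$ to each edge inside that lemma is precisely what reconciles the edge counts $(e(F)+1)h$ on both sides, which is why one cannot bypass it with a single application of Lemma~\ref{lma:regular}. With this $T_2$, choosing $V(T_1)\cap V(T_2)=V(L_h)$, your composition, degeneracy ordering ($H$, then $L_h$ with its distinguished vertex first, then $pF$, then the interior vertices of $T_1$ and $T_2$), and the bound $|A|=|T_1|+|T_2|-|L_h|\le 9f^2r|H|^2$ go through as in the paper.
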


\begin{proof}
Let $h := e(H)$ and let $p := e(H)/e(F)$.
Let $H$, $L_h$ and $p F $ be vertex-disjoint.
By Lemma~\ref{lma:extendedloop}, there exists an $(H,L_h)_F$-transformer $T_1$ such that 
the degeneracy of~$T_1$ rooted at $V(H \cup L_h)$ is at most $3r$ and
\begin{align*}
|T_1|  \le |H| + |L_h| +  7 f^2 r h \le |L_h|+  4 f^2 r |H|^2.
\end{align*}
Similarly by Lemma~\ref{lma:extendedloop}, there exists an $(L_h, p F )_F$-transformer $T_2$ such that the degeneracy of $T_2$ rooted at $V( L_h \cup p F)$ is at most $3r$ and
\begin{align*}
|T_2|  \le |p F| + |L_h| + 7 f^2 r h
	=  p f + h f + 1  + 7 f^2 r h \le 5 f^2 r |H|^2.
\end{align*}
Furthermore, we can choose $T_1$ and $T_2$ such that $V(T_1) \cap V(T_2) = V(L_h)$.
Let $A' : = T_1 \cup L_h \cup T_2$ and let $A:= A' \cup p F$.
By Proposition~\ref{relation}, $A'$ is an $( H , p F)_F$-transformer.
Thus $A$ is an $F$-absorber for $H$ with $|A| = |T_1| + |T_2| - |L_h| \le 9 f^2 r |H|^2$.
To see that the degeneracy of $A$ rooted at $V(H)$ is at most $3r$, consider the vertices in $H$, $L_h$, $p F $ and $T_1 \setminus (H \cup L_h) $  and $T_2 \setminus (p F \cup L_h)$ in that order (with the vertices of $L_h$ ordered such that the distinguished vertex comes first).
\end{proof}

\subsection{Proof of Lemma~\ref{lma:Krabsorber}}

Let $H$ be an $F$-divisible graph and let $\P = \{V_1, \ldots, V_q\}$ be a partition of its vertex set with $e(H[\P]) = 0$. 
(So $H$ is the disjoint union of the $H[V_i]$.)
We would like to absorb $H$ by using Lemma~\ref{lma:abs} to find an $F$-absorber for each graph $H[V_i]$ separately.
However, note that some $H[V_i]$ might not be $F$-divisible, as $e(H[V_i])$ might not be divisible by $e(F)$ for some $1 \leq i \le q$. 
We will use `edge-movers' to fix this problem.
We first make the following simple observation, which will be used in the construction of these edge-movers.

\begin{proposition} \label{prop:hcf}
Let $r \in \mathbb{N}$ and let\COMMENT{$a$ is the highest common factor of the numbers of edges of $r$-regular graphs.}
\begin{align*}
	a := a_r =
	\begin{cases}
	 r	 & \text{if $r$ is odd,}\\
	 r/2 & \text{if $r$ is even.}
	\end{cases}	
\end{align*}
\begin{enumerate}[label=\rm(\roman*)]
	\item Let $H$ be an $r$-divisible graph.
	      Then $e(H)$ is divisible by $a$.
	\item Let $f \in \mathbb{N}$ and let $F$ be an $r$-regular graph on $f$ vertices.
		If $r$ is odd, then let $Q$ be an $r$-regular bipartite graph with each vertex class having size $f+1$.		
		If $r$ is even, then let $Q$ be an $r$-regular graph on $2f+1$ vertices consisting of $r/2$ edge-disjoint Hamilton cycles on $V(Q)$.\COMMENT{Bipartite case: consider $1$-factorization of $K_{f+1,f+1}$. Non-bipartite case: apply Walecki.}
 	  Then $e(Q) \equiv a \mod{ e(F)}$.
\end{enumerate}
\end{proposition}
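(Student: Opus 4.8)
The plan is to prove the two statements separately; both reduce to the handshake identity together with standard facts about regular graphs.

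For (i), I would begin from $2e(H) = \sum_{v \in V(H)} d(v)$. As $H$ is $r$-divisible, every summand on the right is divisible by $r$, so $r \mid 2e(H)$. If $r$ is odd this already gives $a = r \mid e(H)$ since $\gcd(r,2) = 1$; if $r$ is even, writing $r = 2a$ turns $2a \mid 2e(H)$ into $a \mid e(H)$. That is all of~(i).

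For (ii), the first task is to confirm that the graphs $Q$ in the statement exist, using that a simple $r$-regular graph on $f$ vertices forces $f \ge r+1$. When $r$ is odd, $K_{f+1,f+1}$ is $(f+1)$-regular bipartite and $f + 1 \ge r$, so taking the union of any $r$ of the perfect matchings in a $1$-factorization of $K_{f+1,f+1}$ produces the desired $r$-regular bipartite $Q$ with parts of size $f+1$, whence $e(Q) = r(f+1)$. When $r$ is even, $r/2 \le f$, and by Walecki's decomposition of $K_{2f+1}$ into $f$ edge-disjoint Hamilton cycles, the union of $r/2$ of them is an $r$-regular graph $Q$ on $2f+1$ vertices with $e(Q) = (r/2)(2f+1)$.

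It then only remains to compute modulo $e(F) = rf/2$. Since $rf = 2e(F) \equiv 0 \mod{e(F)}$, in the odd case $e(Q) = rf + r \equiv r = a$ and in the even case $e(Q) = rf + r/2 \equiv r/2 = a$, both congruences being modulo $e(F)$; this is exactly~(ii). The argument involves no genuine difficulty: the only points needing care are invoking the existence of a $1$-factorization of $K_{f+1,f+1}$ and of a Hamilton decomposition of $K_{2f+1}$, and checking the inequalities $r \le f+1$ and $r/2 \le f$ that make enough matchings, respectively Hamilton cycles, available.
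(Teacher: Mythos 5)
Your proof is correct and follows essentially the same route as the paper: (i) via the handshake identity $2e(H)=\sum_v d_H(v)$, and (ii) via the computations $e(Q)=rf+r$ (odd case) and $e(Q)=rf+r/2$ (even case) reduced modulo $e(F)=rf/2$. The existence checks you add (1-factorization of $K_{f+1,f+1}$, Walecki's Hamilton decomposition, and the inequalities from $f\ge r+1$) match what the paper only notes in passing, so there is nothing to change.
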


\begin{proof}
(i) holds since $2e(H) = \sum_{v \in V(H)} d_H(v) = p r$ for some $p \in \N$.

To see (ii), note that $e(F) = rf/2$. 
If $r$ is odd,  then $e(Q) = rf + r$; if $r$ is even, then $e(Q) = rf +r/2$. 
\end{proof}

Let $U$ and $V$ be disjoint vertex sets.
Let $r,f \in \mathbb{N}$ and let $F$ be an $r$-regular graph on $f$ vertices.
A \emph{$(U,V)_F$-edge-mover} is a graph~$M$ such that
\begin{enumerate}[label=(\roman*)]
	\item $E(M)$ can be partitioned into $E(Q)$, $E(\widetilde{Q})$ and $E(A)$;
	\item $Q$ is $r$-regular and $V ( Q ) \subseteq U$;
	\item $\widetilde{Q}$ is $r$-regular and $V ( \widetilde{Q} ) \subseteq V$;
	\item $e(Q) \equiv a \mod{ e(F)}$ and $e(\widetilde{Q}) \equiv -a \mod{ e(F) }$, where $a$ is as defined in Proposition~\ref{prop:hcf};
	\item $A$ is an $F$-absorber for $Q \cup \widetilde{Q}$.
\end{enumerate}
Since $A$ is an $F$-absorber for $Q \cup \widetilde Q$, both $M$ and $A$ have $F$-decompositions. 
Roughly speaking, a $(U,V)_F$-edge-mover allows us to move $a \mod{ e(F) }$ edges from $V$ to $U$ (by adding $Q$ and $\widetilde{Q}$ to the existing graph).

We are now ready to prove Lemma~\ref{lma:Krabsorber}.
In the proof, we find the copies of $Q$ and~$\widetilde{Q}$ in $G - G[\P]$, and the $F$-absorbers in $G[\P]$.

\begin{proof}[Proof of Lemma~$\ref{lma:Krabsorber}$]
Let $a$ and $Q$ be as defined in Proposition~\ref{prop:hcf}.
Let $\widetilde{Q} := (f-1) Q$.
Thus $\chi(Q) = \chi(\widetilde{Q}) \le r+1$.
Note that $\delta(G[V_i]) \ge (1 - 1/r + \eps ) |V_i|$ and $1/|V_i| \ll 1/r,1/f$.
So by the Erd\H{o}s--Stone--Simonovits theorem~\cite{Erdos,Simonovits}, for each $1 \leq i < q$, we can find $f$ copies of $Q$ in $G[V_i]$, and, 
for each $1 < i \le q $, we can find $f$ copies of $\widetilde Q$ in $G[V_{i}]$ so that all of these copies are vertex-disjoint.
Call these copies $Q^i_1, \ldots, Q^i_f$ and $\widetilde{Q}^{i}_1, \ldots, \widetilde{Q}^{i}_f$ respectively.

Proposition~\ref{prop:hcf}(ii) implies that $Q^i_j \cup \widetilde{Q}^{i+1}_j$ is $F$-divisible for all $1 \leq i < q$ and all $1 \le j \le f$.
Apply Lemma~\ref{lma:abs} to obtain an $F$-absorber $A^i_j$ for $Q^i_j \cup \widetilde{Q}^{i+1}_j$ such that the degeneracy of $A^i_j$ rooted at $V(Q^i_j \cup \widetilde{Q}^{i+1}_j)$ is at most $3r$ and $|A^i_{j}| \le 9 f^2 r m^2  $ (with room to spare).

Let $H_1, \dots, H_p$ be an enumeration of all $F$-divisible graphs $H$ such that $V(H) \subseteq V_i$ for some $1 \leq i \le q$. 
Since $|V_i| \le m $%
\COMMENT{we have $m-1 \le |V_i|$, but we do not need this fact.}
 for all $1 \leq i \le q$, for each $i$ there are at most $ 2^{ \binom{m}2 } $ many $H_{j'}$ with $V(H_{j'}) \subseteq V_i$.
Thus $p \le 2^{ \binom{m}2 } q$.
For each $1 \leq j' \le p$, apply Lemma~\ref{lma:abs} to obtain an $F$-absorber $A_{j'}$ for $H_{j'}$ such that the degeneracy of $A_{j'}$ rooted at $V(H_{j'})$ is at most $3r$ and $|A_{j'}| \le 9 f^2 r m^2$.

We now find the $F$-absorbers $A^i_{j}$ and $A_{j'}$ in $G[\P]$ as follows.
The number of $F$-absorbers we need to find is $(q-1) f + p$, and each of these $F$-absorbers has order at most~$b: =  9 f^2 r m^2$.
Let $\P_0 := \{V(G)\}$ be the trivial partition of $V(G)$.
Note that we can view each of the $A^i_{j}$ and $A_{j'}$ as a $\P_0$-labelled graph.
(For example, the $\P_0$-labelled graph $A_j^i$ is such that each $v \in V(Q^i_j \cup \widetilde{Q}^{i+1}_j)$ is labelled $\{v\}$ and every other vertex of $A_j^i$ is labelled $V(G)$.) 
Note that each $v \in V(G)$ is a root for at most $1 + 2^{ \binom{m}2 }$ of the $A^i_{j}$ and~$A_{j'}$.
Since $\delta(G[\P]) \ge (1-1/3r +\eps) n$, we have $d_{G[\P]}(S) \ge \eps n $ for any $S \subseteq V(G)$ with $|S| \le 3r$.
Pick $\eta $ with $1/n \ll \eta \ll 1/m$ and apply Lemma~\ref{lma:finding} with $G[\P], 1, 3r, \epsilon^2, \P_0 , A^1_1 , A^1_2 , \dots, A^{q-1}_f, A_1, \dots, A_p$ playing the roles of $G,k,  d, \epsilon, \P, H_1, \dots, H_{m}$. 
We obtain edge-disjoint  embeddings $\phi(A^1_1)$, $\phi (A^1_2)$, $\dots$, $\phi(A^{q-1}_f)$, $\phi(A_{1})$, $\dots$, $\phi(A_{p})$ of $A^1_1$, $A^1_2$, $\dots$, $A^{q-1}_f$, $A_{1}$, $\dots$, $A_{p}$ into $G[\P]$, which are compatible with their labellings and, moreover,
\begin{align}
\Delta\Big( \bigcup_{i = 1}^{q-1} \bigcup_{j=1}^f \phi(A^i_j) \cup \bigcup_{ j'=1}^p \phi(A_{j'}) \Big) \le \eps^2 n.
\label{eqn:DA}
\end{align}

For each $1 \le i < q$ and each $1 \le j \le f$, let $M^i_j := Q^i_j \cup \widetilde{Q}^{i+1}_j \cup \phi(A^i_j)$.
Using Proposition~\ref{prop:hcf} it is easy to check that $M^i_j$ is a $(V_i,V_{i+1})_F$-edge-mover.
Let $M:=\bigcup_{i=1}^{q-1} \bigcup_{j=1}^f M^i_j$, and let $A^* := M \cup \bigcup_{j'=1}^p \phi ( A_{j'} )$.

We now show that $A^*$ has the desired properties.
Since $A^*$ is an edge-disjoint union of $F$-absorbers and edge-movers, $A^*$ is $F$-divisible.
Note that  $A^*[V_1] = \bigcup_{j=1}^f Q_j^1$,  $A^*[V_{q}] = \bigcup_{j=1}^f \widetilde{Q}^{q}_{j}$ and, for each $1 < i < q$, $A^*[V_i] = \bigcup_{j=1}^f Q_j^i \cup \widetilde{Q}^i_{j} $.
Thus $\Delta(A^*[V_i]) =r$ for each $1 \leq i \le q$.
Moreover, $\Delta( A^*[\P] ) \le \eps^2 n $ by~\eqref{eqn:DA}.

Let $H^*$ be an $F$-divisible graph on $V(G)$ that is edge-disjoint from $A^*$ and has $e(H^*[\P]) = 0 $.
First we show that $H^* \cup M$ can be decomposed into a graph $H'$ and a set~$\mathcal{F}$ of edge-disjoint copies of $F$ such that $e(H'[\mathcal{P}]) = 0 $ and for each $1 \le i \le q$, $H'[V_i]$ is $F$-divisible. 
Recall the definition of $a$ from Proposition~\ref{prop:hcf}.
Proposition~\ref{prop:hcf}(i) applied to $H^*[V_{\le i}]$ tells us that, for each $1 \le i \le q$, we have $e(H^*[V_{\le i}]) \equiv - p_i a \mod{ e(F) }$ for some integer $p_i$ with $0 \le p_i < f$. 
Set $p_0 := 0 $.
For each $1 \le i < q$, add $Q^i_1, \dots, Q^i_{p_i}, \widetilde{Q}^{i+1}_1, \dots,  \widetilde{Q}^{i+1}_{p_i}$ to $H^*$ to obtain $H'$.
Since each $Q^i_j \cup \widetilde{Q}^{i+1}_j$ is $F$-divisible, so is $H'$. 
Also, for each $1 \le i < q$,
\begin{align*}
	e(H'[V_i]) & = e(H^*[V_i]) + \sum_{j=1}^{p_i} e(Q^i_{j}) + \sum_{j'=1}^{p_{i-1}} e(\widetilde{Q}^i_{j'})\\
	& \equiv e(H^*[V_i]) + p_i a - p_{i-1} a  \mod{ e(F)}\\
	& \equiv e(H^*[V_i]) - e(H^*[V_{\le i}]) + e(H^*[V_{\le i-1}]) \equiv 0  \mod{ e(F) }.
\end{align*}
Moreover, since $H^*$ is $F$-divisible, 
\begin{align*}
e(H'[V_{q}])  & = e(H^*[V_{q}]) +  \sum_{ j'=1 }^{p_{q-1}} e(\widetilde{Q}^i_{j'})
\equiv e(H^*[V_{q}]) - p_{q -1} a  \mod{ e(F) } \\
& \equiv e(H^*[V_{q}]) + e(H^*[V_{<q}])
\equiv e(H^*) \equiv 0  \mod{ e(F) }.
\end{align*} 
Therefore $H'[V_i]$ is $F$-divisible for each $1 \leq i\le q$.
Note that $M - H'$ can be decomposed into $\phi(A^i_1), \dots, \phi(A^i_{p_i}), M^i_{p_i+1}, \dots, M^i_{f}$ for each $1 \le i < q$, each of which has an $F$-decomposition.
Hence $H^* \cup M$ can be decomposed into a graph $H'$ and a set $\mathcal{F}$ of edge-disjoint copies of  $F$ such that $e(H'[\mathcal{P}]) = 0 $ and for each $1 \le i \le q$, $H'[V_i]$ is $F$-divisible as claimed.

Since each $H'[V_i]$ is $F$-divisible, there exists a $1 \leq j'_i \le p $ such that $A_{j'_i}$ is an $F$-absorber for $H'[V_i]$.
Note that the indices $j'_{i}$ are distinct for different $1 \leq i \le q$.
Therefore $H' \cup  \bigcup_{j'=1}^p \phi ( A_{j'} )$ has an $F$-decomposition $\mathcal{F}'$, so $H^* \cup A^*$ has an $F$-decomposition $\mathcal{F} \cup \mathcal{F}'$.
This completes the proof of the lemma.
\end{proof}

\subsection{A strengthening of Lemma~\ref{lma:Krabsorber} for certain graphs~$F$} \label{sec:absremarks}

Let $F$ be an $r$-regular graph on $f$ vertices.
Define $d_F$ to be the smallest integer $d$ such that for every pair of vertex-disjoint graphs $H$, $H'$ such that $H$ is $r$-regular and $H'$ can be obtained from a copy of~$H$ by identifying vertices, there exists an $(H,H')_{F}$-transformer~$T$ such that the degeneracy of $T$ rooted at $V(H \cup H')$ is at most~$d$.

With this terminology, Lemma~\ref{lma:regular} has the following immediate corollary.%
\COMMENT{Proof for completeness:
Let $H$ be an $r$-regular graph and let $H'$ be a obtained from $H$ by identifying vertices.
Suppose $H$ and $H'$ are vertex-disjoint. 
The result now easily follows from Lemma~\ref{lma:regular}.}

\begin{corollary} \label{cor:regular}
Let $r,f \in \mathbb{N}$ and let $F$ be an $r$-regular graph on $f$ vertices.
Then $d_F \le 3r$.\qedhere
\end{corollary}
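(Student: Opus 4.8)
The plan is to simply unwind the definition of $d_F$ and quote Lemma~\ref{lma:regular}. Recall that $d_F$ is defined as the least $d$ such that, \emph{for every} pair of vertex-disjoint graphs $H$, $H'$ with $H$ being $r$-regular and $H'$ obtainable from a copy of $H$ by identifying vertices, some $(H,H')_F$-transformer $T$ has degeneracy at most $d$ rooted at $V(H\cup H')$. So I would fix an arbitrary such pair $H$, $H'$ and show it admits a transformer of degeneracy at most $3r$.

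First I would observe that the hypotheses on this pair are precisely the hypotheses of Lemma~\ref{lma:regular}: $H$ is $r$-regular, $H'$ is a copy of a graph obtained from $H$ by identifying vertices, and $H$ and $H'$ are vertex-disjoint. Applying Lemma~\ref{lma:regular} therefore yields an $(H,H')_F$-transformer $T$ whose degeneracy rooted at $V(H\cup H')$ is at most $3r$ (the additional bound on $|T|$ supplied by that lemma is not needed here). Since the constant $3r$ produced does not depend on the particular choice of $H$ and $H'$, the value $3r$ is an admissible choice of $d$ in the definition of $d_F$, and hence $d_F\le 3r$, as claimed.

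There is no real obstacle: the only points to verify are that the class of pairs $(H,H')$ quantified over in the definition of $d_F$ coincides verbatim with the class handled by Lemma~\ref{lma:regular}, and that the degeneracy bound obtained there is uniform over this class. Both are immediate from the statement of Lemma~\ref{lma:regular}. Indeed, the quantity $d_F$ is introduced exactly so that this uniform constant can later be replaced by sharper, $F$-specific values — for instance via Lemma~\ref{lma:cycletransformer} when $F$ has large girth — without changing the downstream arguments; the corollary records the crude universal bound that always applies.
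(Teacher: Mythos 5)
Your proposal is correct and is exactly the argument the paper intends: the corollary is an immediate consequence of Lemma~\ref{lma:regular}, since the pairs $(H,H')$ quantified over in the definition of $d_F$ are precisely those handled by that lemma, and the degeneracy bound $3r$ it gives is uniform over all such pairs. Nothing further is needed.
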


Our argument in Section~\ref{sec:absstruture} actually gives the following lemma, which corresponds to Lemma~\ref{lma:abs}.
We omit its proof since it is virtually identical to the proof of Lemma~\ref{lma:abs} (with $d_F$ in place of $3r$).

\begin{lemma} \label{lma:abs2}
Let $r,f \in \mathbb{N}$ and let $F$ be an $r$-regular graph on $f$ vertices.
Let $H$ be an $F$-divisible graph.
Then there is an $F$-absorber $A$ for $H$ such that the degeneracy of $A$ rooted at $V(H)$ is at most $d_F$.
\end{lemma}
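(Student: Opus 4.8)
The plan is to mimic the proof of Lemma~\ref{lma:abs} almost verbatim, substituting the generic parameter $d_F$ wherever the concrete bound $3r$ appeared, and using Lemma~\ref{lma:regular} only through the abstract guarantee that transformers with degeneracy at most $d_F$ exist. First I would recall the chain of reductions already established in Section~\ref{sec:absstruture}: by Fact~\ref{can-split} and the definition of $d_F$, for any two vertex-disjoint $r$-divisible graphs with the same number of edges there is a transformer with degeneracy at most $d_F$ rooted at the union of their vertex sets; consequently Lemma~\ref{lma:extendedloop} holds with the bound $3r$ on the degeneracy of $(H,L_h)_F$-transformers replaced by $d_F$ (its proof uses Lemma~\ref{lma:regular} exactly twice, each time only to produce a transformer between an $r$-regular graph and a graph obtained from it by identifying vertices). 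The size bound $|T|\le |H|+|L_h|+7f^2rh$ in Lemma~\ref{lma:extendedloop} does not depend on the degeneracy estimate and so is unchanged — though for Lemma~\ref{lma:abs2} we do not even need a bound on $|A|$, so I would simply drop that clause.

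Given this, the proof of Lemma~\ref{lma:abs2} proceeds exactly as that of Lemma~\ref{lma:abs}: set $h:=e(H)$ and $p:=e(H)/e(F)$, take $H$, $L_h$ and $pF$ pairwise vertex-disjoint, apply the $d_F$-version of Lemma~\ref{lma:extendedloop} to obtain an $(H,L_h)_F$-transformer $T_1$ of degeneracy at most $d_F$ rooted at $V(H\cup L_h)$, and again to obtain an $(L_h,pF)_F$-transformer $T_2$ of degeneracy at most $d_F$ rooted at $V(L_h\cup pF)$, chosen so that $V(T_1)\cap V(T_2)=V(L_h)$. Then $A':=T_1\cup L_h\cup T_2$ is an $(H,pF)_F$-transformer by Proposition~\ref{relation}, and $A:=A'\cup pF$ is an $F$-absorber for $H$ since $pF$ is trivially $F$-decomposable and $A[V(H)]$ is empty (because both $T_1[V(H)]$ and $T_2[V(H)]$ — the latter vacuously — are empty). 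For the degeneracy bound, one orders the vertices of $A$ rooted at $V(H)$ by listing $V(H)$, then $V(L_h)$ (distinguished vertex first), then $V(pF)$, then $V(T_1)\setminus V(H\cup L_h)$, then $V(T_2)\setminus V(pF\cup L_h)$; along this ordering each new vertex has back-degree at most $d_F$ because the relevant portion is exactly a suborder witnessing the degeneracy of $T_1$ or $T_2$ rooted at the appropriate set.

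I do not expect a genuine obstacle here: the statement is essentially a bookkeeping abstraction of Lemma~\ref{lma:abs}, and indeed the excerpt says its proof is "virtually identical". The only point requiring a word of care is the observation that the intermediate step (the $d_F$-version of Lemma~\ref{lma:extendedloop}) really does go through with $d_F$ in place of $3r$ — this is immediate because the definition of $d_F$ supplies precisely the transformer that each invocation of Lemma~\ref{lma:regular} in the proof of Lemma~\ref{lma:extendedloop} was used to supply, and the transitivity argument via Proposition~\ref{relation} that glues transformers together does not worsen the degeneracy beyond the maximum of the pieces (since a concatenation of degeneracy orderings, with the shared root set placed first, is again a valid degeneracy ordering). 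Since the excerpt explicitly tells us to omit the proof on these grounds, I would state the reduction in one or two sentences and refer to the proof of Lemma~\ref{lma:abs}, noting the single substitution $3r\rightsquigarrow d_F$ and the fact that Lemma~\ref{lma:extendedloop} likewise holds with this substitution.
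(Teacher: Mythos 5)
Your proposal is correct and is exactly the route the paper intends: it omits the proof of Lemma~\ref{lma:abs2} precisely because it is the proof of Lemma~\ref{lma:abs} with the definition of $d_F$ supplying the transformers that Lemma~\ref{lma:regular} supplied (so $3r$ is replaced by $d_F$ in Lemma~\ref{lma:extendedloop} and in the final degeneracy count). The only point you gloss over slightly is that the vertices of $L_h$, $pF$ (and, in the $d_F$-version of Lemma~\ref{lma:extendedloop}, of $\Hatt\setminus H$ and $H_0$) have back-degree at most $r\le d_F$ rather than being covered by the transformer orderings, which is the same implicit step as in the paper's proof of Lemma~\ref{lma:abs}.
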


Furthermore, by replacing Lemma~\ref{lma:abs} with Lemma~\ref{lma:abs2} in the proof of  Lemma~\ref{lma:Krabsorber}, we get the following stronger lemma.
Note that we do not have an explicit bound on the number of vertices of $F$-absorbers~$A$ for~$H$ of degeneracy~$d_F$.
However, there is a function $g$ so that $|A| \le g(|H|)$, and such a bound is all we need to apply Lemma~\ref{lma:finding}.%
\COMMENT{
Let $H$ be an $r$-regular graph and let $H'$ be a obtained from $H$ by identifying vertices.
Suppose $H$ and $H'$ are vertex-disjoint. 
By definition of $d_F$, there exists $(H,H')_{F}$-transformer~$T$.
There exists a function $\theta = \theta_{F}$ such that $|T| \le \theta (|H|)$ (as there are finitely many $H$ and $H'$ once $|H|$ is fixed).
So Lemma~\ref{lma:abs} implies every $F$-divisible graph $H$ has an $F$-absorber $A$ with degeneracy $d_F$ and $|A| \le \theta'(|H|)$, for some function non-decreasing $\theta' = \theta'_{F}$.
\newline
\newline
MORE COMMENT: With hindsight (B: !), this argument would have been cleaner then the explicit bounds in the current proof of Lemma~\ref{lma:abs} but we would rather not change things now anymore.}
\COMMENT{Explanation why the proof of Lemma~\ref{lma:Krabsorber} still works.
Fix $F$.
In proof of Lemma~\ref{lma:Krabsorber}, we need to embeds lots of absorber into~$G$. 
We only ever consider $F$-divisible graphs $H$ of size at most $m$, so all our absorbers have bounded order $\theta'(m)$.
So finding lemma, Lemma~\ref{lma:finding} works. 
\newline
Typically speaking, the hierarchies need to be depending on~$F$, i.e. $1/n \ll_F  1 / m \ll_F   1/r, 1/f,\eps$.
But we can take the hierarchies over all $r$-regular graphs $F$ on $f$ vertices.
}

\begin{lemma} \label{lma:Krabsorber2}
Suppose that $n , m , r,f \in \N$ and $\eps > 0 $ with $1/n \ll  1 / m \ll   1/r, 1/f,\eps$.
Suppose that $F$ is an $r$-regular graph on $f$ vertices. 
Let $\delta := 1 - \min \{1/r,1/d_F \} + \eps$, and let $q := \lceil n / m\rceil$.
Let $G$ be a graph on $n$ vertices.
Let $\P = \{ V_1, \dots, V_{q} \}$ be an equitable partition of $V(G)$ such that, for each $1 \leq i \leq q$, $|V_i| = m \text{ or } m-1$.
Suppose that $\delta( G[\P] ) \ge \delta n $ and $\delta(G[V_i]) \ge  \delta |V_i|$ for each $1 \leq i \le q $.
Then $G$ contains an $F$-divisible subgraph~$A^*$ such that
\begin{enumerate}[label={\rm(\roman*)}]
\item $\Delta(A^*[\P]) \le \eps^2 n$ and  $\Delta( A^*[V_i] ) \le  r$ for each $1 \leq i \le q$, and 
\item if $H^*$ is an $F$-divisible graph on $V(G)$ that is edge-disjoint from $A^*$ and has $e ( H^* [ \P ] ) = 0 $, then $A^* \cup H^*$ has an $F$-decomposition.\qedhere
\end{enumerate}
\end{lemma}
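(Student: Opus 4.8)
The plan is to reprove Lemma~\ref{lma:Krabsorber} essentially verbatim, with the single change that every application of Lemma~\ref{lma:abs} is replaced by an application of Lemma~\ref{lma:abs2}, so that all $F$-absorbers used have degeneracy at most $d_F$ (rather than $3r$) rooted at their root vertices. The one point needing a small adjustment is that Lemma~\ref{lma:abs2} gives no explicit bound on $|A|$; however, as noted before the statement of this lemma, there is a function $g=g_F$ with $|A|\le g(|H|)$ for the absorbers it produces, and in the argument we only ever absorb graphs $H$ of bounded order, namely $H=Q^i_j\cup\widetilde Q^{i+1}_j$ (with at most $2f^2+2f$ vertices) and $H=H_{j'}$ (with at most $m$ vertices). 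Hence all absorbers used have at most $b:=g(2f^2+2f)+g(m)$ vertices, and since $1/n\ll 1/m\ll 1/r,1/f$ we still have $1/n\ll 1/b$, which is all Lemma~\ref{lma:finding} requires.

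Concretely, I would first let $a$ and $Q$ be as in Proposition~\ref{prop:hcf}, set $\widetilde Q:=(f-1)Q$ so that $\chi(Q)=\chi(\widetilde Q)\le r+1$. Since $\delta=1-\min\{1/r,1/d_F\}+\eps=\max\{1-1/r,\,1-1/d_F\}+\eps\ge 1-1/r+\eps$ and $\delta(G[V_i])\ge\delta|V_i|$ with $1/|V_i|\ll 1/r,1/f$, the Erd\H{o}s--Stone--Simonovits theorem~\cite{Erdos,Simonovits} lets us find, exactly as in the proof of Lemma~\ref{lma:Krabsorber}, vertex-disjoint copies $Q^i_1,\dots,Q^i_f$ of $Q$ in $G[V_i]$ for each $1\le i<q$ and vertex-disjoint copies $\widetilde Q^i_1,\dots,\widetilde Q^i_f$ of $\widetilde Q$ in $G[V_i]$ for each $1<i\le q$, chosen so that within each $V_i$ all these copies are disjoint. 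By Proposition~\ref{prop:hcf}(ii) each $Q^i_j\cup\widetilde Q^{i+1}_j$ is $F$-divisible, so Lemma~\ref{lma:abs2} yields an $F$-absorber $A^i_j$ for it with degeneracy at most $d_F$ rooted at $V(Q^i_j\cup\widetilde Q^{i+1}_j)$. Likewise, enumerating all $F$-divisible graphs $H_1,\dots,H_p$ with $V(H_{j'})\subseteq V_i$ for some $i$ (so $p\le 2^{\binom m2}q$), Lemma~\ref{lma:abs2} gives $F$-absorbers $A_{j'}$ for $H_{j'}$ of degeneracy at most $d_F$.

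Next, view all the $A^i_j$ and $A_{j'}$ as $\P_0$-labelled graphs rooted at the appropriate vertices as in the proof of Lemma~\ref{lma:Krabsorber}; each $v\in V(G)$ is a root for at most $1+2^{\binom m2}$ of them, each has at most $b$ vertices, and each has degeneracy at most $d_F$. Since $\delta(G[\P])\ge\delta n\ge(1-1/d_F+\eps)n$, for every $S\subseteq V(G)$ with $|S|\le d_F$ we get $d_{G[\P]}(S)\ge n-|S|(1/d_F-\eps)n\ge d_F\eps n\ge\eps n$. Picking $\eta$ with $1/n\ll\eta\ll 1/m$ (so that the total number $(q-1)f+p$ of absorbers is at most $\eta n^2$ and the root bound $1+2^{\binom m2}$ is at most $\eta n$), apply Lemma~\ref{lma:finding} with $G[\P],\,1,\,d_F,\,\eps^2,\,\P_0$ and the list of absorbers playing the roles of $G,k,d,\eps,\P,H_1,\dots,H_m$ (and $b$ as above) to obtain edge-disjoint embeddings with $\Delta\big(\bigcup_{i,j}\phi(A^i_j)\cup\bigcup_{j'}\phi(A_{j'})\big)\le\eps^2 n$. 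Setting $M^i_j:=Q^i_j\cup\widetilde Q^{i+1}_j\cup\phi(A^i_j)$, $M:=\bigcup_{i<q,\,j\le f}M^i_j$ and $A^*:=M\cup\bigcup_{j'}\phi(A_{j'})$, the verification that $A^*$ is $F$-divisible, that $\Delta(A^*[V_i])=r$ and $\Delta(A^*[\P])\le\eps^2 n$, and that $A^*\cup H^*$ has an $F$-decomposition for every admissible $H^*$ — first absorbing $M$ into $H^*$ via Proposition~\ref{prop:hcf}(i) to make each $H^*[V_i]$ $F$-divisible, then matching each resulting $H'[V_i]$ with a distinct $A_{j'}$ — goes through word for word as in the proof of Lemma~\ref{lma:Krabsorber}.

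I expect essentially no genuine obstacle here: the only place where this argument differs from that of Lemma~\ref{lma:Krabsorber} is the bookkeeping check that Lemma~\ref{lma:finding} still applies with $d=d_F$ in place of $d=3r$. This needs both the degeneracy bound from Lemma~\ref{lma:abs2} and the common-degree bound $d_{G[\P]}(S)\ge\eps n$ for $|S|\le d_F$, which is exactly what the weakened hypothesis $\delta(G[\P])\ge(1-\min\{1/r,1/d_F\}+\eps)n$ was chosen to provide; the separate hypothesis $\delta(G[V_i])\ge(1-1/r+\eps)|V_i|$, also implied by $\delta(G[V_i])\ge\delta|V_i|$, is what lets us find the edge-movers. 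Replacing the explicit size bound $9f^2rm^2$ by the implicit bound $b=b(m,f,r)$ is the only other change, and it is harmless.
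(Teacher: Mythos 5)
Your proposal is correct and is essentially the paper's own argument: the paper obtains Lemma~\ref{lma:Krabsorber2} precisely by rerunning the proof of Lemma~\ref{lma:Krabsorber} with Lemma~\ref{lma:abs2} in place of Lemma~\ref{lma:abs}, using the implicit bound $|A|\le g(|H|)$ (harmless since all absorbed graphs have order at most $m$) and the hypothesis $\delta(G[\P])\ge(1-\min\{1/r,1/d_F\}+\eps)n$ to verify the conditions of Lemma~\ref{lma:finding} with $d=d_F$. Your bookkeeping checks (common neighbourhoods of sets of size at most $d_F$, the Erd\H{o}s--Stone--Simonovits step inside each $V_i$, and the root/size bounds) match what is needed, so there is nothing to add.
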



\section{Parity graphs} \label{sec:parity-graphs}

Let $F$ be an $r$-regular graph, let $x$ be a vertex of $F$, and let $F_x := F[N_F(x)]$.
Let $G$ be an $F$-divisible graph with a $(k,\delta)$-partition $\P = \{V_1, \dots, V_k\}$, and suppose that $G[\P]$ is sparse.
Our aim is to use a small number of edges from $G - G[\P]$ to cover all edges of $G[\P]$ by copies of $F$.
We will do this by, for each $1 \leq i < j \leq k$ and each $v \in V_i$, finding an $F_x$-factor in $N_G(v,V_j)$.
We will then extend each copy of $F_x$ to a copy of $F - x$ using Lemma~\ref{lma:finding}.
Together with the edges incident to $v$, these copies of $F-x$ will form copies of $F$.
An obvious necessary condition for this to work is that each $d_G(v,V_j)$ is divisible by $r$. 
In this section we show that we can find certain structures, which we call parity graphs, that can be used to ensure that this divisibility condition holds.

Let $U$ and $V$ be disjoint vertex sets and let $x, y \in U$.
Let $F$ be an $r$-regular graph.
An \defn{$xy$-shifter with parameters $U, V, F$} is a graph $S$ with $V(S) \subseteq U \cup V$ such that $xy \notin E(S)$ and 
\begin{enumerate}[label=(\roman*)]
  \item $d_S(x,V) \equiv -1 \mod r$, $d_S(y,V) \equiv 1 \mod r$ and, for all $u \in U\setminus \{x,y\}$, $d_S(u, V) \equiv 0 \mod r$;
	\item $S$ has an $F$-decomposition.
\end{enumerate}
Condition (i) allows us to move excess degree (mod $r$) from $x$ to~$y$.%
\COMMENT{Condition (ii) ensures that, if $G-S$ is $F$-decomposable, then $G$ is also $F$-decomposable.}

Let $uv \in E(F)$. 
For a graph $H$ and an edge $xy \in E(H)$, $H$ \emph{with a copy of $F$ glued along $xy$ via $uv$} is a graph obtained from $H$ by adding a copy $F'$ of $F$ that is vertex-disjoint from $H$ and identifying $u$ with $x$ and $v$ with $y$.

\begin{proposition} \label{prop:construct-shifter}
Let $r,f \in \mathbb{N}$ and let $F$ be an $r$-regular graph on $f$ vertices.
Let $U$ and $V$ be disjoint vertex sets with $|U| \ge r+2$ and $|V| \geq \binom {r+1} 2 (f-2)$, and let $x, y \in U$. 
Then there exists an $xy$-shifter~$S$ with parameters $U, V, F$ with $r+2$ vertices in $U$, $\binom {r+1} 2 (f-2)$ vertices in $V$ and degeneracy at most $r$ rooted at $\{x,y\}$. 
\end{proposition}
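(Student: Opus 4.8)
The plan is to build the $xy$-shifter $S$ out of a short "path" of copies of $F$, each copy glued along an edge to the next, so that the net effect on degrees into $V$ telescopes. Concretely, I would take a sequence of $r+2$ vertices $x = w_0, w_1, \dots, w_r, w_{r+1} = y$ in $U$ (this uses exactly $r+2$ vertices of $U$, as required). For each $1 \le i \le r+1$ I would glue a copy $F^{(i)}$ of $F$ along the pair $w_{i-1}w_i$ via a fixed edge $uv \in E(F)$: that is, I identify $u$ with $w_{i-1}$ and $v$ with $w_i$, and place the remaining $f-2$ vertices of $F^{(i)}$ as new vertices inside $V$, choosing these $(f-2)$-element sets to be pairwise disjoint across the $r+1$ copies and disjoint from $\{x,y\}$ and from $U$. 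In total this places $(r+1)(f-2) \le \binom{r+1}{2}(f-2)$ vertices in $V$ (pad with isolated vertices of $V$ if one insists on exactly $\binom{r+1}{2}(f-2)$), and $S$ is by construction the union of $r+1$ edge-disjoint copies of $F$ — wait, I should double-check edge-disjointness, since consecutive glued copies share the vertices $w_{i-1}$ and $w_i$ but not the edge $w_{i-1}w_i$ once we delete... actually in the "glued" construction the edge $uv$ is identified with $w_{i-1}w_i$, so the copies $F^{(i)}$ and $F^{(i+1)}$ both contain the vertex $w_i$ but their edge sets are disjoint because the only vertices they share are $w_i$ (and, for $F^{(i)}$, also $w_{i-1}$; for $F^{(i+1)}$, also $w_{i+1}$), and the internal vertices in $V$ are disjoint, so no edge is repeated. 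Hence $S$ trivially has an $F$-decomposition, giving condition (ii).

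Next I verify the parity condition (i). Fix $1 \le i \le r+1$ and consider the contribution of the single copy $F^{(i)}$ to degrees into $V$. The vertex $w_{i-1}$ plays the role of $u$ in $F^{(i)}$, so it has $d_F(u) - 1 = r-1$ neighbours among the internal vertices of $F^{(i)}$, all of which lie in $V$; similarly $w_i$ contributes $r-1 \equiv -1 \pmod r$ to its degree into $V$ from $F^{(i)}$. No other vertex of $U$ lies on $F^{(i)}$. Summing over all copies: the internal vertex $x = w_0$ lies only on $F^{(1)}$, so $d_S(x,V) \equiv -1 \pmod r$; the vertex $y = w_{r+1}$ lies only on $F^{(r+1)}$, so $d_S(y,V) \equiv -1 \pmod r$ — hmm, that gives $-1$, not $+1$. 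To get $d_S(y,V) \equiv +1$ I should glue the last copy $F^{(r+1)}$ the other way round, identifying $u$ with $w_{r+1} = y$ and $v$ with $w_r$; then $y$ contributes $d_F(u)-1 = r-1 \equiv -1$... that is still $-1$. The cleanest fix: note $-1 \equiv r-1$, and I actually want the total contribution at $y$ to be $\equiv 1 \pmod r$, i.e. $\equiv r+1 \pmod r$; but a vertex lying on a single glued copy always contributes $r-1 \pmod r$. So instead I arrange for $y$ to lie on exactly two of the glued copies, contributing $(r-1)+(r-1) = 2r-2 \equiv -2$, still wrong. The right approach is to make the path from $x$ to $y$ have the correct sign by orienting the glued edges consistently: glue $F^{(i)}$ so that $u \mapsto w_{i-1}$, $v \mapsto w_i$ for every $i$; then each interior vertex $w_i$ ($1 \le i \le r$) lies on $F^{(i)}$ as $v$ and on $F^{(i+1)}$ as $u$, contributing $(r-1)+(r-1) = 2r-2 \equiv -2 \equiv 0 \pmod r$ only when... this does not vanish in general. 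At this point it is clear the interior vertices will \emph{not} automatically have degree $\equiv 0$, so the key remaining idea is to pad: at each interior vertex $w_i$ ($1\le i\le r$) I attach a further copy of $F$ entirely inside $V$ away from the path, using $w_i$ as one vertex — i.e. $w_i$ with a copy of $F$ attached via some vertex — repeated the right number of times so that $w_i$'s total degree into $V$ becomes $\equiv 0 \pmod r$; each such attachment adds $r$ to $d_S(w_i,V)$, so it changes the residue by $0$. That does not help either.

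The genuine resolution — and this is the main obstacle, getting the residues exactly right while keeping the degeneracy at most $r$ rooted at $\{x,y\}$ — is to use the structure "$H$ with a copy of $F$ glued along an edge $xy$" as the single atomic move, applied to a carefully chosen auxiliary graph $H$ on $U$, rather than a path. Take $H$ to be the $r$-regular... no: take $H$ to be a graph on the $r+2$ vertices of $U$ which is $r$-divisible, has $x$ of degree $\equiv -1$, $y$ of degree $\equiv 1$, all other vertices of degree $\equiv 0$ modulo $r$ into $V(S)\setminus U$ once we expand — equivalently, build $S$ by taking, for a suitable multiset of edges $e_1,\dots,e_p$ spanning $U$, copies of $F$ glued along each $e_j$ via $uv$, with all $f-2$ internal vertices placed disjointly in $V$. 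A vertex $w\in U$ then has $d_S(w,V) \equiv (r-1)\cdot(\text{number of glued copies incident to }w) \equiv -\deg_M(w) \pmod r$, where $M$ is the multigraph on $U$ with edge multiset $\{e_1,\dots,e_p\}$. So I just need a multigraph $M$ on $r+2$ vertices with $\deg_M(x)\equiv 1$, $\deg_M(y)\equiv -1$, and $\deg_M(w)\equiv 0\pmod r$ for all other $w$, and with $xy\notin E(M)$ — e.g. take an $x$–$y$ path $x, w_1, \dots$ of length $\equiv 1 \pmod{\,?\,}$ together with, at each interior vertex, enough extra parallel edges (forming short cycles among the $r+2$ vertices, avoiding $xy$) to bring its multiplicity-degree up to a multiple of $r$; since $|U|\ge r+2$ there is room to route these. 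The number of glued copies is then bounded by a function of $r$ only, specifically at most $\binom{r+1}{2}$ copies suffice (the bound $\binom{r+1}{2}(f-2)$ on $|V\cap V(S)|$ is exactly this count times $f-2$), which I would check by bounding $e(M)$: a spanning $x$–$y$ path uses $\le r+1$ edges and raising $r$ vertices each to degree $<2r$ costs $< r\cdot r$ more, comfortably within $\binom{r+1}{2}$ after a small optimisation of the construction. Finally, for the degeneracy bound: order the vertices of $S$ by listing $x, y$ first, then the remaining vertices of $U$, then for each glued copy $F^{(j)}$ its $f-2$ internal vertices; when an internal vertex of $F^{(j)}$ is reached, its earlier neighbours all lie in $F^{(j)}$, so it has at most $\deg_F = r$ earlier neighbours — giving degeneracy $\le r$ rooted at $\{x,y\}$, as needed. (The vertices of $U\setminus\{x,y\}$ themselves, when reached, have no earlier neighbours since all edges of $S$ go into the internal $V$-vertices, so they cost $0$.) The main thing to get right, and where I'd spend the care, is the explicit multigraph $M$ witnessing the residue pattern together with the counting that keeps $|V\cap V(S)| \le \binom{r+1}{2}(f-2)$.
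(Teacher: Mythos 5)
Your final reduction is the right one, and it is essentially the paper's: glue a copy of $F$ (via a fixed edge $uv$) along each edge of an auxiliary graph $M$ on $r+2$ vertices of $U$, note that each glued copy contributes $r-1\equiv -1 \pmod r$ to the $V$-degree of each of its two $U$-endpoints, so you only need $\deg_M(x)\equiv 1$, $\deg_M(y)\equiv -1$ and $\deg_M\equiv 0 \pmod r$ elsewhere with $xy\notin E(M)$, and then order $x,y$, the rest of $U$, and finally the internal vertices copy by copy to get degeneracy at most $r$. However, you never actually produce a valid $M$, and the realisation you sketch does not work as stated. First, parallel edges are not available: two copies of $F$ glued along the same pair of vertices both contain the glued edge, so they are not edge-disjoint (and $S$ would have a multiple edge), which destroys the $F$-decomposition in condition (ii). So $M$ must be a simple graph, and then every vertex of $U\setminus\{x,y\}$ that is used must have degree exactly $r$, $x$ degree $1$ and $y$ degree $r-1$. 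Second, the edge count is not a matter of "a small optimisation": a short double count shows that any such simple $M$ on $r+2$ vertices has at least $r$ vertices of degree $r$ and hence exactly $\binom{r+1}{2}$ edges is the minimum, so your crude "path plus fix-ups" estimate (which overshoots $\binom{r+1}{2}$) leaves no slack — you need the extremal configuration itself. That configuration is exactly what the paper takes: $K_{r+1}$ with one vertex split into $x$ and $y$, i.e.\ $x$ joined to $u_1$, $y$ joined to $u_2,\dots,u_r$, and $u_1,\dots,u_r$ joined completely; gluing a copy of $F$ along each of its $\binom{r+1}{2}$ edges gives $d_S(x,V)=r-1$, $d_S(y,V)=(r-1)^2$, $d_S(u_j,V)=r(r-1)$ and exactly $\binom{r+1}{2}(f-2)$ vertices in $V$.

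One further slip in your degeneracy argument: it is not true that "all edges of $S$ go into the internal $V$-vertices" — $S$ retains the glued edges of $M$ inside $U$, so a vertex of $U\setminus\{x,y\}$ can have up to $\deg_M\le r$ earlier neighbours. The conclusion (degeneracy at most $r$ rooted at $\{x,y\}$) still holds with the correct justification, but as written the claim is false. In short, the skeleton of your argument coincides with the paper's proof, but the concrete construction of the auxiliary graph — the only nontrivial content of the proposition — is missing, and the substitute you describe (a multigraph with parallel edges, with an unverified edge count) would fail conditions (ii) and the stated bound on $|V(S)\cap V|$.
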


\begin{proof}
Pick $r$ distinct vertices $u_1, \dots, u_r$ in~$U\setminus \{x, y\}$. 
We first define a subgraph $S_0$ of $S$ on vertex set $\{x, y, u_1, \ldots, u_r\} \subseteq U$.
Join $x$ to $u_1$, join $y$ to $u_2, \ldots, u_r$ and join $u_1, \ldots, u_r$ completely.
(So if $x$ and $y$ were identified we would obtain a copy of~$K_{r+1}$.)
Thus $d_{S_0}(x) = 1$, $d_{S_0}(y) = r-1$, and $d_{S_0}(u_j) = r$ for $1 \leq j \leq r$.  

Let $uv \in E(F)$.
Let $S$ be the graph obtained from $S_0$ by gluing a copy of $F$ along each edge of $S_0$ via $uv$ such that 
$V(F) \setminus \{u,v\} \subseteq V$ (and these sets are disjoint for different copies).
Then $S$ has an $F$-decomposition, $d_S(x,V) = r-1$, $d_S(y,V) = (r-1)^2$ and $d_S(u_j,V) = r(r-1)$ for each $1 \leq j \leq r$.
Ordering $V(S)$ such that $x$ and $y$ are the first two vertices, and all other vertices in $S_0$ precede those in $S \setminus S_0$, shows that the degeneracy of $S$ is at most $r$. 
\end{proof}

Let $\P = \{V_1, \ldots, V_k\}$ be an equitable partition of a vertex set~$V$.  
An \defn{$F$-parity graph with respect to $\P$} is an $F$-decomposable graph $P$ on $V$ such that, for every $r$-divisible graph $G$ on $V$ that is edge-disjoint from $P$, there is a subgraph $P'$ of $P$ such that
\begin{enumerate}
	\item[(P1)] for each $2 \leq i \leq k$ and each $x \in V_{<i}$, $r$ divides $d_{G\cup P'}(x,V_i)$;
	\item[(P2)] $P - P'$ has an $F$-decomposition.
\end{enumerate}
Next we show that $F$-parity graphs exist.

\begin{proposition} \label{prop:parity-graph}
Let $r,f,k \in \mathbb{N}$ and let $F$ be an $r$-regular graph on $f$ vertices.
Let $\P = \{V_1, \ldots, V_k\}$ be an equitable partition of a vertex set~$V$.  
Let $P_2, \ldots, P_k$ be edge-disjoint graphs on $V$ such that, for each $2 \le i \le k$,
\begin{itemize}
	\item $P_i$ is the edge-disjoint union of $E_i$ and $D_i$;
	\item $E_i$ is the edge-disjoint union of $r-1$ copies of $F$, each with $2$ adjacent vertices in $V_i$ and $f-2$ vertices in $V_{i-1}$;
	\item $D_i$ is the edge-disjoint union of $r-1$ $u_ju_{j+1}$-shifters with parameters $V_{<i}, V_i, F$ for each $1 \leq j < |V_{<i}|$, where $u_1, \ldots, u_{|V_{<i}|}$ is an enumeration of $V_{< i}$.
\end{itemize}
Then $P := P_2 \cup \cdots \cup P_k$ is an $F$-parity graph with respect to~$\P$.
\end{proposition}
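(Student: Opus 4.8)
The $F$-decomposability of $P$ is immediate: each $E_i$ is an edge-disjoint union of copies of $F$, and every shifter occurring in $D_i$ has an $F$-decomposition by definition, so $P_i = E_i \cup D_i$ and hence $P = P_2 \cup \dots \cup P_k$ have $F$-decompositions. The substance is to produce, for a given $r$-divisible graph $G$ on $V$ that is edge-disjoint from $P$, a subgraph $P' \subseteq P$ satisfying (P1) and (P2). The plan is to take $P' := P'_2 \cup \dots \cup P'_k$, where each $P'_i \subseteq P_i$ is the union of some of the $r-1$ copies of $F$ making up $E_i$ together with some whole shifters from $D_i$. Since a copy of $F$ is $r$-regular and a shifter, having an $F$-decomposition, is $r$-divisible, every such $P'_i$ is $r$-divisible; consequently every graph $G^{(i)} := G \cup P'_k \cup \dots \cup P'_i$ is $r$-divisible. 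The other structural point I would use is that $V(E_i) \subseteq V_i \cup V_{i-1}$ and $V(D_i) \subseteq V_{<i} \cup V_i$, so $V(P_i) \subseteq V_{\le i}$, and hence adding $P'_i$ to any graph leaves all degrees into $V_j$ unchanged for every $j > i$.

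I would construct the $P'_i$ by downward induction on $i = k, k-1, \dots, 2$, maintaining the invariant that $G^{(i)}$ is $r$-divisible and $r \mid d_{G^{(i)}}(x, V_j)$ for every $j \ge i$ and every $x \in V_{<j}$. When passing from $i+1$ to $i$, write $\sigma_x \in \{0,\dots,r-1\}$ for the residue of $d_{G^{(i+1)}}(x, V_i)$ modulo $r$, for $x \in V_{<i}$; the goal is to choose $P'_i$ so that these residues all become $0$, after which the invariant is restored (adding $P'_i$ does not touch degrees into $V_j$ for $j>i$). Here the two building blocks of $P_i$ play complementary roles. By property (i) in the definition of a shifter, including $t$ of the $r-1$ copies of the $u_j u_{j+1}$-shifter changes the residue of $d(u_j, V_i)$ by $-t$ and that of $d(u_{j+1}, V_i)$ by $+t$ modulo $r$, and leaves the residue of $d(u, V_i)$ unchanged for every other $u \in V_{<i}$; running through the consecutive pairs $u_1 u_2, u_2 u_3, \dots$ in order therefore lets one transport residue along the enumeration of $V_{<i}$, annihilating $\sigma_{u_1}, \dots, \sigma_{u_{|V_{<i}|-1}}$ one by one (using at most $r-1$ copies at each step) and leaving the last residue equal to $\sum_{x \in V_{<i}} \sigma_x$ modulo $r$. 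On the other hand, including one of the $r-1$ copies of $F$ in $E_i$ affects only the degrees into $V_i$ of its $f-2$ vertices in $V_{i-1}$, and changes $\sum_{x \in V_{<i}} d(x, V_i)$ by exactly $2(r-1) \equiv -2 \pmod r$ (the edge-sum from the two adjacent $V_i$-vertices of that copy to the remaining $f-2$ vertices). So the recipe for $P'_i$ is: first include $e$ copies of $F$ from $E_i$, where $0 \le e \le r-1$ is chosen with $2e \equiv \sum_{x \in V_{<i}} \sigma_x \pmod r$, which makes the residue-sum $\equiv 0$; then use the shifters as above to zero out all the residues, the last one being $\equiv 0$ automatically.

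The one point needing care — and the reason the induction must run from $k$ downwards — is the solvability of $2e \equiv \sum_{x \in V_{<i}} \sigma_x \pmod r$ with $0 \le e \le r-1$. This is automatic when $r$ is odd. When $r$ is even it requires $\sum_{x \in V_{<i}} \sigma_x$, equivalently $e_{G^{(i+1)}}(V_{<i}, V_i)$, to be even. I would deduce this as follows: $r$-divisibility of $G^{(i+1)}$ together with $r$ even forces every degree of $G^{(i+1)}$ to be even, so $e_{G^{(i+1)}}(V_{<i}, V \setminus V_{<i}) = \sum_{x \in V_{<i}} d_{G^{(i+1)}}(x) - 2 e_{G^{(i+1)}}(V_{<i})$ is even; and the induction hypothesis gives $r \mid d_{G^{(i+1)}}(x, V_j)$ for all $j > i$ and $x \in V_{<i}$, so $e_{G^{(i+1)}}\big(V_{<i}, \bigcup_{j > i} V_j\big) = \sum_{x \in V_{<i}} d_{G^{(i+1)}}\big(x, \bigcup_{j > i} V_j\big)$ is a sum of multiples of $r$, hence even. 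Since $V \setminus V_{<i} = V_i \cup \bigcup_{j>i}V_j$ is a disjoint union, subtracting leaves $e_{G^{(i+1)}}(V_{<i}, V_i)$ even, as required.

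Finally, (P2) is routine: $P - P' = \bigcup_{i=2}^{k}(P_i - P'_i)$, and since $P'_i$ consists of entire copies of $F$ from $E_i$ and entire shifters from $D_i$, each $P_i - P'_i$ is the union of the leftover copies of $F$ and the leftover shifters, all of which have $F$-decompositions. Hence $P - P'$ has an $F$-decomposition, and the proof is complete. The only genuine obstacle is the even-$r$ parity condition in the third paragraph; the rest is bookkeeping using $V(P_i)\subseteq V_{\le i}$ and the prescribed shapes of $E_i$ and $D_i$.
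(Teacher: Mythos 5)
Your proof is correct and essentially the same as the paper's: the shifters transport the residues of $d(x,V_i)$ modulo $r$ along the enumeration of $V_{<i}$, the copies of $F$ in $E_i$ are used to make the total residue vanish, and since the leftover $P-P'$ consists of whole copies of $F$ and whole shifters, (P2) is immediate; your downward iteration with the invariant $r\mid d(x,V_j)$ for $j>i$ is just an unrolled form of the paper's induction on $k$ via the induced subgraph on $V_{<k}$. The only divergence is in the one step where the number of $E_i$-copies is chosen: the paper picks it so that the number of edges inside $V_i$ becomes divisible by $r$, whence $e(V_{<i},V_i)\equiv 0\pmod r$ follows from handshaking and $r$-divisibility with no case distinction, while you adjust $e(V_{<i},V_i)$ directly by $-2$ per copy and therefore need, and correctly supply, the extra parity argument when $r$ is even.
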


\begin{proof}
The proof is by induction on~$k$.
If $k=1$, then there is nothing to prove, so assume that $k \geq 2$.
Since each $D_i$ has an $F$-decomposition, so does~$P$.

Let $G$ be an $r$-divisible graph on $V$ that is edge-disjoint from~$P$. 
First we show that there is a subgraph $P'_k$ of~$P$ such that 
\begin{enumerate}[label=(\roman*)]
	\item for each $x \in V_{<k}$, $r$ divides $d_{G\cup P'_k}(x,V_k)$;
	\item $P_k - P'_k$ has an $F$-decomposition.
\end{enumerate}

Suppose that $e_{G}(V_k) \equiv t \mod r$, where $0 < t \leq r$.  
Form a graph~$G_0$ from $G$ by adding $r-t$ of the copies of $F$ from $E_k$ to~$G$.  
Then $2e_{G_0}(V_k) \equiv 0 \mod r$, so
\begin{align}
\sum_{v \in V_{<k}} d_{G_0}(v,V_k) 
& = e_{G_0}(V_{<k}, V_k)
= \sum_{v \in V_k} d_{G_0}(v) - 2e_{G_0}(V_k) 
\equiv 0 \mod r. 
 \label{eqn:G_0}
\end{align}

Let $\ell : = |V_{< k}|$ and let $u_1, \ldots, u_{\ell}$ be the enumeration of $V_{<k}$ used in the definition of $D_k$.  

Let $0 \le t_1 < r$ be such that $d_{G_0}(u_1,V_k) \equiv t_1 \mod r$.  
Add $t_1$ of the $u_1u_2$-shifters in $D_k$ to $G_0$ to obtain $G_{1}$ in which $d_{G_1}(u_1, V_k) \equiv 0 \mod r$ and $d_{G_{1}}(u_i, V_k) \equiv d_{G_{0}}(u_i, V_k) \mod r$ for all $3 \le i \le \ell$. 

Let $0 \le t_2 < r$ be such that $d_{G_1}(u_2,V_k) \equiv t_2 \mod r$.  
Add $t_2$ of the $u_2u_3$-shifters in $D_k$ to $G_{1}$ to obtain $G_{2}$ in which $d_{G_{2}}(u_1, V_k) \equiv d_{G_{2}}(u_2, V_k) \equiv 0 \mod r$ and $d_{G_{2}}(u_i, V_k) \equiv d_{G_{1}}(u_i, V_k) \equiv d_{G_{0}}(u_i, V_k) \mod r$ for all $4 \le i \le \ell$.

Continuing in this way, we eventually obtain $G_{\ell - 1}$ in which $d_{G_{\ell - 1}}(u_i,V_k) \equiv 0 \mod r$ for each $1 \leq i \leq \ell -1$.
Note that 
\begin{align*}
d_{G_{\ell - 1}}(u_{\ell},V_k) & \equiv d_{G_{\ell - 2}}(u_{\ell}, V_k) + d_{G_{\ell - 2}}(u_{\ell-1}, V_k)   \mod r \\
	& \equiv d_{G_0}(u_{\ell}, V_k) + d_{G_{\ell - 3}}(u_{\ell-1}, V_k) + d_{G_{\ell - 3}}(u_{\ell-2}, V_k)  \mod r \\
	& \equiv \sum_{v \in V_{<k}} d_{G_0}(v,V_k) \equiv 0 \mod r,
\end{align*}
where the last equality holds by~\eqref{eqn:G_0}.
Let $P_k' := G_{\ell - 1} - G$; then (i) holds.
Observe also that $P_k - P'_k$ consists of some copies of $F$ from $E_k$ and some shifters from $D_k$, each of which has an $F$-decomposition, so (ii) holds.

Let $G^* : = ( G \cup P'_k) [V_{\le k-1}]$, $P^* : = P_2 \cup \cdots \cup P_{k-1}$ and $\mathcal{P}^* : = \{V_1, \dots, V_{k-1} \}$.
Note that $G^*$ and $P^*$ are edge-disjoint. 
Recall that $G$, $P_k$ and $P_k - P'_k$ are $r$-divisible.
So $G \cup P'_k$ is $r$-divisible. 
Thus (i) implies that $G^*$ is also $r$-divisible.
By the induction hypothesis, $P^*$ is an $F$-parity graph with respect to~$\P^*$.
Therefore, there exists a subgraph $P_0$ of~$P^*$ such that for each $2 \leq i \leq k-1$ and each $x \in V_{<i}$, $r$ divides $d_{G^* \cup P_0}(x,V_i)$ and $P^* - P_0$ has an $F$-decomposition.
Let $P' : = P_0 \cup P'_k$.
Then $P'$ satisfies~(P2).
Note that $(G \cup P')[ V_{<k},V_k ] = (G \cup P'_k)[ V_{<k},V_k ]$ and $(G \cup P')[ V_{<i},V_i ] = (G^* \cup P_0)[ V_{<i},V_i ]$ for all $1 \le i <k$.
Thus $P'$ satisfies~(P1).
Therefore $P$ is an $F$-parity graph with respect to~$\P$. 
\end{proof}

The next lemma finds an $F$-parity graph $P$ as in Proposition~\ref{prop:parity-graph} within a dense graph~$G$ using Lemma~\ref{lma:finding}.

\begin{lemma} \label{lma:find-parity-graph}
Let $r,f \in \mathbb{N}$ and let $F$ be an $r$-regular graph on $f$ vertices.
Let $\gamma >0$.
Then there exists an $n_0 = n_0(k, \gamma , F)$ such that the following holds.
Let $G$ be a graph on $n \geq n_0$ vertices and let $\P = \{V_1, \ldots, V_k\}$ be a $(k, \delta)$-partition for $G$ with $\delta \geq 1-1/r+\gamma$.  
Then $G$ contains an $F$-parity graph $P$ with respect to $\P$ such that $\Delta(P) \leq \gamma n$.
\end{lemma}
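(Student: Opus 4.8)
The plan is to realise inside $G$ the combinatorial structure produced by Proposition~\ref{prop:parity-graph}. By that proposition it suffices to find, pairwise edge-disjointly inside $G$, graphs $P_2,\dots,P_k$ such that for each $2\le i\le k$ we have $P_i=E_i\cup D_i$, where $E_i$ is an edge-disjoint union of $r-1$ copies of $F$ each having two adjacent vertices in $V_i$ and $f-2$ vertices in $V_{i-1}$, and, having fixed an enumeration $u^{(i)}_1,\dots,u^{(i)}_{|V_{<i}|}$ of $V_{<i}$, $D_i$ is an edge-disjoint union of $r-1$ copies of a $u^{(i)}_ju^{(i)}_{j+1}$-shifter with parameters $V_{<i},V_i,F$ for every $1\le j<|V_{<i}|$. (If $k=1$ there is nothing to do: the empty graph is an $F$-parity graph with respect to $\{V_1\}$.) Proposition~\ref{prop:construct-shifter} supplies such shifters as abstract graphs with exactly $r+2$ vertices in the ``$U$-role'' (two of which are the roots $x,y$), exactly $\binom{r+1}2(f-2)$ vertices in the ``$V$-role'', and degeneracy at most $r$ rooted at $\{x,y\}$. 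So every graph we have to locate in $G$ has order bounded in terms of $r$ and $f$, degeneracy at most $r$ (a copy of $F$ is $r$-degenerate, and the shifters have degeneracy at most $r$ rooted at their roots), and the total number of them is $O_{k,F}(n)$; the natural tool is therefore the embedding lemma, Lemma~\ref{lma:finding}.

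Carrying this out, I would first fix an edge $u_0v_0$ of $F$ and, for each $2\le i\le k$, form the $\P$-labelled graphs to be embedded. For $E_i$ take $r-1$ copies of $F$, in each labelling $u_0$ and $v_0$ by $V_i$ and all other vertices by $V_{i-1}$. For $D_i$, for each $1\le j<|V_{<i}|$ take $r-1$ copies of the shifter graph from Proposition~\ref{prop:construct-shifter}, in each labelling $x$ by $\{u^{(i)}_j\}$, $y$ by $\{u^{(i)}_{j+1}\}$, the remaining $r$ vertices of its ``$U$-role'' by $V_1$, and all of its ``$V$-role'' vertices by $V_i$. Let $H_1,\dots,H_m$ be all of these $\P$-labelled graphs; then $m=\sum_{i=2}^k(r-1)|V_{<i}|\le rkn$, each $|H_\ell|\le b:=\max\{f,\,r+2+\binom{r+1}2(f-2)\}$, each $H_\ell$ has degeneracy at most $d:=r$, and each $v\in V(G)$ carries its own singleton label in at most $2rk$ of the $H_\ell$ (it can only be a root in shifters of the $D_i$ with $v\in V_{<i}$, and there it is an endpoint of at most two consecutive pairs). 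Picking $1/n_0\ll\eta\ll\gamma,1/r,1/f,1/k$ and putting $\epsilon:=\gamma$, the key hypothesis of Lemma~\ref{lma:finding} holds because $\P$ is a $(k,\delta)$-partition with $\delta\ge1-1/r+\gamma$: for every part $V_i$ and every $S\subseteq V(G)$ with $|S|\le r$ we get $d_G(S,V_i)\ge|V_i|-\sum_{s\in S}(|V_i|-d_G(s,V_i))\ge(1-r(1-\delta))|V_i|\ge r\gamma|V_i|\ge\epsilon|V_i|$. Lemma~\ref{lma:finding} then returns edge-disjoint embeddings of all the $H_\ell$, compatible with their labellings, whose union $P$ satisfies $\Delta(P)\le\epsilon n=\gamma n$; by construction the images form graphs $E_i$ and $D_i$ exactly as in Proposition~\ref{prop:parity-graph}, so $P$ is the required $F$-parity graph.

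The only subtle point --- and where I expect to have to be careful --- is reconciling the shifters with the rigid labelling regime of Lemma~\ref{lma:finding}: a shifter with parameters $V_{<i},V_i,F$ requires $r+2$ of its vertices to land in $V_{<i}$, but $V_{<i}$ is a union of parts, not a single part of $\P$, so apart from the two roots its $r$ core vertices cannot simply be labelled $V_{<i}$. The remedy is to label them all $V_1$; since $V_1\subseteq V_{<i}$ for every $i\ge2$, an embedded copy still lies in $V_{<i}\cup V_i$ and, crucially, still satisfies $d_S(x,V_i)\equiv-1$, $d_S(y,V_i)\equiv1$ and $d_S(u,V_i)\equiv0\mod{r}$ for its other $U$-role vertices $u$, because these congruences depend only on the abstract shifter and on which of its vertices are placed in $V_i$, not on exactly where in $V_{<i}$ the core vertices go. Everything else is a routine check of the conditions of Lemma~\ref{lma:finding} followed by an appeal to Propositions~\ref{prop:parity-graph} and~\ref{prop:construct-shifter}.
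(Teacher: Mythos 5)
Your proposal is correct and follows essentially the same route as the paper: construct the $\P$-labelled copies of $F$ and the shifters from Proposition~\ref{prop:construct-shifter}, verify the degeneracy, size, root-multiplicity and common-neighbourhood hypotheses, embed everything edge-disjointly with Lemma~\ref{lma:finding}, and invoke Proposition~\ref{prop:parity-graph}. The only difference is that you label the non-root $U$-side vertices of each shifter by $V_1$ where the paper uses $V_{i-1}$; as you correctly note, this is immaterial since either choice places them in $V_{<i}$ and the shifter congruences only depend on which vertices land in $V_i$.
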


\begin{proof}
It is enough to show that we can embed a graph $P$ as described in Proposition~\ref{prop:parity-graph} into $G$ in such a way that the maximum degree of the image of the embedding is not too large.
We will assign labels to the graphs making up $P$ and then check that the conditions of Lemma~\ref{lma:finding} hold.

For each $2 \le i \le k$ and each $1 \le j \le r-1$, let $F_{i,j}'$ be a $\P$-labelled copy of $F$ with $2$ adjacent vertices labelled $V_i$ and $f-2$ vertices labelled $V_{i-1}$.

For each $2 \le i \le k$, let $n_{< i} : = |V_{< i}|$ and let $u^i_1, \dots,  u^i_{n_{<i}}$ be an enumeration of the vertices of $V_{<i}$.
For each $2 \le i \le k$ and each $1 \le j < n_{<i}$, apply Proposition~\ref{prop:construct-shifter} to obtain a $u^i_j u^i_{j+1}$-shifter $S_{i,j}$ with parameters $V_{<i}, V_i, F$ such that $|S_{i,j}| = r+2 +\binom{r+1}2 (f-2)$ and $S_{i,j}$ has degeneracy at most~$r$ rooted at $\{ u^i_j,u^{i}_{j+1} \}$.
We may view $S_{i,j}$ as a $\P$-labelled graph by giving $u^i_j$ the label $\{u^i_j\}$, giving $u^i_{j+1}$ the label $\{u^i_{j+1}\}$, giving $u$ the label $V_i$ for all $u \in V(S_{i,j}) \cap V_i$ and giving $u'$ the label $V_{i-1}$ for all $u' \in ( V(S_{i,j}) \cap V_{<i} ) \setminus \{u^i_j, u^i_{j+1}\}$.
Let $S'_{i,j,1}, \dots, S'_{i,j,r-1}$ be $r-1$ copies of $S_{i,j}$, and let
\begin{align*}
\mathcal{F}: = \bigcup_{i = 2}^k \bigcup_{\ell = 1 }^{r-1} \Big( \{F_{i,\ell}'\} \cup \bigcup_{j = 1}^{ n_{<i}-1} \{ S'_{i,j,\ell} \} \Big) .
\end{align*}
So $\mathcal{F}$ is a family of $\P$-labelled graphs and $|\mathcal{F}| \le k r n$.
For each $F' \in \mathcal{F}$, $|F'| \le r+2 +\binom{r+1}2 (f-2)$ and $F'$ has degeneracy at most $r$.
Furthermore, each $v \in V(G)$ is a root vertex for at most $2 r k$ members of $\mathcal{F}$.
Since $\P$ is a $(k, \delta)$-partition for $G$ with $\delta \geq 1-1/r+\gamma$, we have that $d_G(S,V_i) \ge \gamma |V_i|$ for each $S \subseteq V(G)$ with $|S| \le r$ and each $1 \le i \le k$.
Therefore we can apply Lemma~\ref{lma:finding} to find edge-disjoint embeddings $\phi(F')$ for all $F' \in \mathcal{F}$ in~$G$ in such a way that $\Delta( \bigcup_{F' \in  \mathcal{F}} \phi( F ')) \le \gamma n$.
Take $P : = \bigcup_{F' \in  \mathcal{F}} \phi(F')$.
By Proposition~\ref{prop:parity-graph}, $P$ is an $F$-parity graph with respect to~$\mathcal{P}$.
\end{proof}


\section{Near optimal decompositions} \label{sec:partial-decomposition}

Let $G$ be a dense graph as defined in Theorem~\ref{thm:regular}, and let $\P_1, \dots, \P_{\ell}$ be a $(k, \delta + \epsilon, m)$-partition sequence for $G$.
In Section~\ref{sec:absorbers}, we constructed a graph $A^*$ that can `absorb' any $F$-divisible graph~$H^*$ satisfying $e(H^*[\P_\ell]) = 0$.
Our aim in this section is to show that we can indeed decompose $G$ into edge-disjoint copies of $F$ and such a remainder~$H^*$.
More precisely, in this section, we prove the following lemma, which guarantees the existence of such a `near optimal' $F$-decomposition (in particular note that, as $m$ is bounded, $e(H^*)$ is at most linear in~$n$).

\begin{lemma} \label{lma:do-the-iteration2}
Let $r,f,m,k,\ell \in \mathbb{N}$ and let $\eps, \eta> 0$ with $1/m \ll \eta \ll 1/k \ll \epsilon, 1/r,1/f$.%
\COMMENT{Here, $k^{\ell}(m-1)  < n \le k^{\ell}m$.}
Let $F$ be an $r$-regular graph on $f$ vertices and let $G$ be an $r$-divisible graph.
Let $\delta: = \max \{ \delta^{\eta}_F, 1- 1/(r+1) \}$.
Suppose that $\P_1, \ldots, \P_\ell$ is a $(k, \delta + \epsilon, m)$-partition sequence for $G$.
Then there exists a subgraph $H^*$ of $\bigcup_{V \in \P_\ell} G[V]$ such that $G-H^*$ has an $F$-decomposition.
In particular, if $G$ is $F$-divisible, then so is $H^*$.
\end{lemma}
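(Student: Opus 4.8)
\textbf{Proof strategy for Lemma~\ref{lma:do-the-iteration2}.}

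The plan is to run the iterative procedure described in Section~\ref{sec:sketch}: process the partition sequence $\P_1, \dots, \P_\ell$ level by level, at each stage covering the ``crossing'' edges of the current leftover graph (those not lying inside a single part of the next partition) by edge-disjoint copies of~$F$, using only few edges from inside the parts. After $\ell$ iterations the leftover $H^*$ will have all its edges inside the parts of $\P_\ell$, as required. The key is a single-step lemma of the following shape: given a graph $G_0$ with a $(k,\delta+\epsilon)$-partition $\Q = \{W_1, \dots, W_k\}$, where $G_0$ is $r$-divisible and $\delta(G_0[\Q]) \geq (\delta+\epsilon')|W_1| \cdot k / \dots$ appropriately scaled, there is a set of edge-disjoint copies of~$F$ covering all edges of $G_0[\Q]$, whose union~$H$ satisfies $\Delta(H) \leq \epsilon |W_1| /(2k)$ say, so that $\Q[W_i]$ restricted to $G_0 - H$ stays a good partition and $G_0 - H$ remains $r$-divisible inside each $W_i$ after a small correction. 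I would apply this first to $G$ with $\Q = \P_1$ (scaled to the whole vertex set), then recurse inside each $V \in \P_1$ with $\Q = \P_2[V]$, and so on; since $H$ at each level has maximum degree a small fraction of the part size, the minimum degree conditions degrade by at most $\epsilon$ in total over the $\ell$ levels, and $\ell \leq \log_k n$ is small enough that the hierarchy $\eta \ll 1/k \ll \epsilon$ absorbs the losses.

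The single-step lemma itself is where the real work lies, and I would build it in two sub-steps. \emph{First}, use the $F$-parity graph from Lemma~\ref{lma:find-parity-graph}: remove an $F$-parity graph $P$ with respect to $\Q$ from $G_0$ (this is possible as $\delta \geq 1 - 1/(r+1) \geq 1 - 1/r + \gamma$ is false in general --- actually $1-1/(r+1) < 1-1/r$, so one must instead note $r+1 \geq$ ... hmm, here one uses that the $(k,\delta)$-partition with $\delta \geq 1-1/(r+1)$ still gives $d_G(S,V_i) \geq \gamma|V_i|$ for $|S| \leq r$ via $\delta \geq 1 - 1/(r+1)$ and union bound over $\leq r$ vertices: $d(S,V_i) \geq (1 - r\cdot \frac{1}{r+1})|V_i| = \frac{1}{r+1}|V_i|$), giving a subgraph $P'$ so that after incorporating $P'$ every $d_{G_0 \cup P'}(x, W_j)$ with $x \in W_{<j}$ is divisible by~$r$, while $P - P'$ is $F$-decomposable and removed separately. \emph{Second}, with divisibility in hand, process the bipartite pieces $W_i \leftrightarrow W_j$: for each $i<j$ and each $v \in W_i$, since $d(v, W_j) \equiv 0 \bmod r$ and the approximate-decomposition / fractional-decomposition input ($\delta \geq \delta^\eta_F$) gives us the tools to cover almost all of a dense graph, find an $F_x$-factor in $N_{G_0}(v, W_j)$ (using $\delta \geq 1-1/(r+1)$ to guarantee the relevant minimum degree for an $F_x$-factor via the Hajnal--Szemer\'edi-type threshold, since $F_x = F[N_F(x)]$ has chromatic number $\leq r$), then extend each copy of $F_x$ to a copy of $F-x$ using Lemma~\ref{lma:finding}, with the edge incident to~$v$ closing it into a copy of~$F$. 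Collecting these over all $v$, $i$, $j$ covers $G_0[\Q]$; the edges used inside the parts are controlled because each application of Lemma~\ref{lma:finding} contributes only $O(1/k)$ to the degree.

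The main obstacle I anticipate is the \textbf{bookkeeping of divisibility and the minimum-degree budget across the iteration}. At each level we remove a parity graph, then an $F$-packing, then we must argue that the new leftover inside each part is again $r$-divisible (so the next parity graph can be found) --- this requires carefully tracking that the $F_x$-factor construction, which only touches edges between $W_i$ and $W_j$ and a controlled number inside the parts, leaves degrees inside each part in the right residue class, and that the parity graph $P$ was chosen $r$-divisible so $G_0 - (P-P')$ stays $r$-divisible. Equally delicate is ensuring that the copies of $F-x$ produced at one bipartite pair $(W_i, W_j)$ are edge-disjoint from those at all other pairs and from the parity graphs: this is handled by one global application of Lemma~\ref{lma:finding} per level with all the $F-x$ pieces and parity-graph pieces as the family of $\P$-labelled graphs (labelling the root $v$ by $\{v\}$ and the $F_x$-vertices by their $W_j$), checking (i) bounded size, (ii) degeneracy $\leq r$ or so, (iii) each vertex is a root at most $\eta n$ times --- the last point needs that $v$ serves as a root only for the $\leq k$ bipartite pairs it lies in, times the number of copies of $F-x$ per pair, which is linear but a small fraction. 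Finally, one checks the ``in particular'' clause: $H^* = G - (\text{everything removed})$ is the difference of two $r$-divisible-and-$e(F)$-divisible graphs when $G$ is $F$-divisible, since every copy of~$F$ and every parity/absorber piece removed is $F$-decomposable hence $F$-divisible, so $H^*$ is $F$-divisible.
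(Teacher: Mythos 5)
There is a genuine gap, and it is in the covering step that carries the whole lemma. Your single-step lemma proposes to cover \emph{all} crossing edges of $G_0[\Q]$ directly by, for each $v\in W_i$ and each $j$, finding an $F_x$-factor in $N_{G_0}(v,W_j)$ and extending to a copy of $F$. Quantitatively this cannot work on a dense crossing graph: each copy of $F$ built this way covers exactly $r$ crossing edges (those at the root $v$) while consuming $e(F)-r=r(f-2)/2$ edges \emph{inside} the part $W_j$, so covering the roughly $n^2/2$ crossing edges would require on the order of $n^2(f-2)/4$ edges inside parts, whereas all parts together contain only about $n^2/2k$ edges --- impossible once $k\gg f$. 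This is precisely why the hypothesis $\delta\ge\delta^\eta_F$ is in the statement: the paper's proof first applies an $\eta$-approximate $F$-decomposition to $G[\P]$ (minus a reserved sparse piece), so that the neighbourhood-factor trick only ever has to handle a \emph{sparse} crossing leftover. Your proposal mentions $\delta^\eta_F$ only in passing and never actually invokes the approximate decomposition as the main covering engine, so the edge budget fails. A second, independent obstruction is edge-disjointness of the factors: for a fixed part $V_i$ you need edge-disjoint $K_r$- (or $F_x$-) factors inside $N(x,V_i)$ for \emph{every} $x\in V_{<i}$, and $|V_{<i}|$ is far larger than $|V_i|$; the plain greedy argument (Lemma~\ref{lma:findcliques}) needs $d(y,U)$ small and breaks down here, which is exactly why the paper inserts a sparse random slice $R$ back into the leftover (proof of Lemma~\ref{lma:iterate}) to get small codegrees and then runs a \emph{random} greedy algorithm (Lemma~\ref{lma:sparsefindcliques}, Corollary~\ref{cor:sparseextendtoF}). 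Your ``one global application of Lemma~\ref{lma:finding}'' does not address this: that lemma only embeds free vertices into large common neighbourhoods and cannot produce spanning $F_x$-factors inside the prescribed sets $N(v,W_j)$, which is where the conflicts occur.

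Two further points on the iteration bookkeeping. The paper also needs Lemma~\ref{lma:partial-decomposition} to convert the approximate-decomposition leftover (which may have vertices of degree up to $n$) into one whose crossing part has maximum degree $O(\gamma n)$ before the factor step; your sketch has no analogue of this. And your claim that the degree losses over the $\ell\approx\log_k(n/m)$ levels are absorbed ``because $\ell$ is small'' is not the right mechanism: $\ell$ is unbounded, and the actual reason losses do not accumulate is that each application of the single-step lemma is forbidden from touching the edges reserved for later levels (the graph $G_0:=G-G[\P_1]-G[\P_2]$ in Lemma~\ref{lma:iterate}), so each level's graph suffers only the bounded loss $\eps n/2k^2$ from the immediately preceding level, which is absorbed by the slack between the $(k,\delta+2\eps)$- and $(k,\delta+\eps)$-partition conditions. (Your observation that the parity-graph lemma only really needs $d(S,V_i)\ge \gamma|V_i|$ for $|S|\le r$, which follows from $\delta\ge 1-1/(r+1)$, is correct and matches how it is used; and your argument for the ``in particular'' clause is fine.)
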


Recall that the definition of $\delta^{\eta}_F$ implies that $G$ contains an $\eta$-approximate $F$-decomposition.
We would like the remainder $H^*$ in Lemma~\ref{lma:do-the-iteration2} to contain no edges of $G[\P_\ell]$, but the definition of $\delta^{\eta}_F$ does not guarantee this.
The key idea of the proof of Lemma~\ref{lma:do-the-iteration2} is to proceed via an iterative process, which repeatedly invokes the definition of~$\delta^{\eta}_F$.
More precisely, suppose that we are able to prove the following result:
\begin{itemize}
	\item[ ($\dagger$) ] If $\P$ is a $(k, \delta)$-partition for a graph $G$, then $G[\P]$ can be covered by edge-disjoint copies of $F$ in $G$ which use only a small number of edges from $G - G[\P]$.
\end{itemize}
Suppose that we apply ($\dagger$) with $\P = \P_1$.
We are then left with edges in $G - G[\P_1] = \bigcup_{V \in \P_1} G[V]$.
But since $\P_1, \ldots, \P_\ell$ is a $(k, \delta + \epsilon, m)$ partition sequence and we have used very few edges of $G - G[\P_1]$, we have for each $V \in \P_1$ that $\P_2[V]$ is a $(k, \delta)$-partition of the remaining part of $G[V]$.
So we can apply ($\dagger$) to each part to cover the remaining edges of $G[\P_2]$ by edge-disjoint copies of $F$, using only a few edges from $G - G[\P_2]$.
Continuing in this way, we eventually obtain edge-disjoint copies of $F$ covering all edges of $G - G[\P_{\ell}]$, which implies Lemma~\ref{lma:do-the-iteration2}.
(To avoid our bound on the minimum degree deteriorating in each step, we actually prove a stronger version of~($\dagger$) which gives us more control on the edges we use from $G- G[\mathcal{P}]$.)

The rest of this section is divided into three subsections.  
In Section~\ref{sec:boundmaxdeg}, we show that we can find an approximate $F$-decomposition of $G[\P]$ such that the remainder has low maximum degree (at the cost of using a small number of additional edges from $G - G[\P]$).\COMMENT{14/7 This cannot be taken literally: an $F$-decomposition of a graph cannot use edges that are not in that graph.  Similar comments apply to the chat at the start of some of the sections.}
In Section~\ref{sec:sparse-remainder-cover} we show how such a remainder of low maximum degree can be covered by copies of~$F$.
In Section~\ref{sec:iteration} we give a formal statement of ($\dagger$) and perform the iteration described above.

\subsection{Bounding the maximum degree of the remainder graph} \label{sec:boundmaxdeg}
Consider an $\eta$-approximate $F$-decomposition~$\mathcal{F}$ of $G[\mathcal{P}]$ guaranteed by~the definition of~$\delta^{\eta}_F$.
Let $H$ be the remainder of $G[\P]$ (after removing all the edges of $\mathcal{F}$), and suppose that $d_H(x,V)$ is large for some $V \in \P$ and some $x \in V(G) \setminus V$.
Note that $x$ together with a copy of $K_{r}$ that lies in $N_H(x,V)$ forms a copy of $K_{r+1}$.
Using some additional vertices and edges inside~$V$, we can then extend a spanning subgraph of this copy of $K_{r+1}$ to a copy of~$F$.
So we can reduce $d_H(x,V)$ by finding vertex-disjoint copies of $K_{r}$ lying entirely in $N_H(x,V)$, which are then extended into copies of $F$.
This is formalised in Lemma~\ref{lma:partial-decomposition}.
To find the above copies of $K_r$ we shall use the Hajnal--Szemer\'edi theorem~\cite{HajnalSzemeredi}.

\begin{theorem}[\cite{HajnalSzemeredi}] \label{thm:HSz}
Let $r \in \N$ with $r \ge 2$.
Every graph $G$ on $n$ vertices with $\delta(G) \ge (1-1/r) n$ contains $\lfloor n / r \rfloor$ vertex-disjoint copies of $K_r$.
\end{theorem}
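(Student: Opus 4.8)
This is the Hajnal--Szemer\'edi theorem~\cite{HajnalSzemeredi} on equitable colourings, in its $K_r$-factor form; in the paper one simply quotes it. For a self-contained argument I would first pass to the complement $\overline G$. The hypothesis $\delta(G)\ge(1-1/r)n$ is equivalent to $\Delta(\overline G)=n-1-\delta(G)\le n/r-1$, and since this quantity is an integer, $\Delta(\overline G)\le\lfloor n/r\rfloor-1$. Set $k:=\lfloor n/r\rfloor$. A proper $k$-colouring of $\overline G$ whose colour classes have sizes differing by at most $1$ partitions $V(G)$ into $k$ cliques of sizes $\lfloor n/k\rfloor$ or $\lceil n/k\rceil$; since $n/k\ge r$, taking $r$ vertices from each class yields the required $\lfloor n/r\rfloor$ vertex-disjoint copies of $K_r$ in $G$. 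So it suffices to prove that every graph $H$ with $\Delta(H)\le k-1$ has an equitable $k$-colouring --- precisely the Hajnal--Szemer\'edi theorem, since $k\ge\Delta(H)+1$.

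I would prove this by Szemer\'edi's argument, inducing on $e(H)$. For the inductive step, fix an edge $xy$ and an equitable $k$-colouring $V_1,\dots,V_k$ of $H-xy$; if $x$ and $y$ lie in different classes we are done, so suppose $x,y\in V_1$. Since $V_1$ contains no neighbour of $x$ and $x$ has at most $k-2$ neighbours in $H-xy$, some class $V_2\neq V_1$ contains no neighbour of $x$; move $x$ to $V_2$. This gives a \emph{nearly equitable} colouring of $H$, in which one class ($V_1$) is one vertex too small and one class ($V_2$) one vertex too large. Form the auxiliary digraph on $\{V_1,\dots,V_k\}$ with an arc $V_i\to V_j$ whenever some vertex of $V_i$ has no neighbour in $V_j$: a directed path from $V_2$ to $V_1$ lets one shift a vertex along each arc and so rebalance, completing the step. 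If no such path exists, let $\mathcal{A}$ be the set of classes reachable from $V_2$; then $V_1\notin\mathcal{A}$, and every vertex of every class in $\mathcal{A}$ has a neighbour in $V_1$ and in each class outside $\mathcal{A}$. Analysing the classes outside $\mathcal{A}$ --- their ``terminal'' members and those oversaturated towards $\bigcup\mathcal{A}$ --- and double counting the edges across this cut, one either finds an augmenting sequence of moves or forces a vertex of degree at least $k$ in $H$, contradicting $\Delta(H)\le k-1$.

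The main obstacle is exactly this last dichotomy: that a nearly equitable colouring can always be converted into an equitable one is the whole content of the Hajnal--Szemer\'edi theorem, whereas the complement reduction, the extraction of the cliques, and the induction set-up are routine. (In practice one should also deal separately with the divisibility of $n$ by $k$, but this too is standard.)
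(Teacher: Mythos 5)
The paper gives no proof of this statement—it is quoted directly from \cite{HajnalSzemeredi}—and your treatment is consistent with that: you identify it as the Hajnal--Szemer\'edi theorem, and your complement reduction to the equitable-colouring form (taking $k=\lfloor n/r\rfloor$, so that $\Delta(\overline{G})\le k-1$ and each of the $k$ colour classes of $\overline{G}$ is a clique of $G$ of size at least $r$) is correct. Your sketch of Szemer\'edi's induction is, as you yourself note, not a complete proof of the hard rebalancing dichotomy, but since the paper itself simply cites the theorem, nothing more is required here.
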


\begin{lemma} \label{lma:findcliques}
Let $r,k,n \in \N$ and let $\gamma > 0$ with $1/n \ll \gamma, 1/k, 1/r$. 
Let $H$ be a graph on $n$ vertices.
Let $U,V \subseteq V(H)$ be disjoint with $|V| \ge \lfloor n/k \rfloor$.
Suppose that, for each $x \in U$ and each $y \in V$,
\begin{enumerate}[label=\rm(\roman*)]
	\item $r$ divides $d_H(x,V)$; \label{divides-properly}
	\item $\delta(H[N_H(x,V)]) \geq ( 1 - 1/r ) d_H(x,V) + \gamma |V|$; \label{high-minimum-degree-in-neighbourhood}
	\item $d_H( y , U ) \leq \gamma  |V|/r$. \label{each-not-used-too-much}
\end{enumerate}
Then there is a subgraph $H_V$ of $H[V]$ such that $H[ U,V ] \cup H_V$ has a $K_{r+1}$-decomposition and $\Delta(H_V) \leq \gamma |V|$.
\end{lemma}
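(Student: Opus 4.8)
The plan is to build the $K_{r+1}$-decomposition greedily, handling one vertex $x\in U$ at a time. Every copy of $K_{r+1}$ we use will consist of a vertex $x\in U$ together with an $r$-set of $N_H(x,V)$ spanning a clique; such a copy covers exactly $r$ edges from $x$ into $V$ and contributes the $\binom{r}{2}$ edges of the clique to $H_V$. Since $H_V\subseteq H[V]$, the set $U$ is independent in $H[U,V]\cup H_V$, so every copy of $K_{r+1}$ meets $U$ in at most one vertex; hence distinct vertices of $U$ interact only through the edges they use inside $V$, and it suffices to control this accumulated usage. (If $r=1$ one simply takes $H_V=\emptygraph$, so I assume $r\ge 2$.)

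Enumerate $U=\{x_1,\dots,x_t\}$ and process $x_1,\dots,x_t$ in turn, maintaining a set $\mathcal K$ of pairwise edge-disjoint copies of $K_{r+1}$, initially empty. On reaching $x_i$, let $H_i$ be the graph obtained from $H[N_H(x_i,V)]$ by deleting every edge already lying in some member of $\mathcal K$. I would first check $\delta(H_i)\ge(1-1/r)d_H(x_i,V)$: for $w\in N_H(x_i,V)$, every deleted edge at $w$ lies in a copy of $K_{r+1}$ built for an earlier $x_j$ with $x_jw\in E(H)$, and for each such $x_j$ the vertex $w$ lies in exactly one such copy, contributing $r-1$ edges at $w$ inside $V$; so at most $(r-1)d_H(w,U)\le(r-1)\gamma|V|/r$ edges at $w$ have been deleted, and hypothesis \ref{high-minimum-degree-in-neighbourhood} gives $\delta(H_i)\ge(1-1/r)d_H(x_i,V)+\gamma|V|-(r-1)\gamma|V|/r=(1-1/r)d_H(x_i,V)+\gamma|V|/r$. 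Since $|V(H_i)|=d_H(x_i,V)$, the Hajnal--Szemer\'edi theorem (Theorem~\ref{thm:HSz}) supplies $\lfloor d_H(x_i,V)/r\rfloor$ vertex-disjoint copies of $K_r$ in $H_i$, and by hypothesis \ref{divides-properly} this is exactly $d_H(x_i,V)/r$ copies, which therefore partition $N_H(x_i,V)$. Adjoining $x_i$ to each of these $K_r$'s yields $d_H(x_i,V)/r$ copies of $K_{r+1}$ that are pairwise edge-disjoint (vertex-disjoint away from $x_i$) and edge-disjoint from the earlier members of $\mathcal K$ (their edges inside $V$ survive in $H_i$, their edges at $x_i$ are new); add them to $\mathcal K$.

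Once all of $U$ is processed, let $H_V$ be the union over $\mathcal K$ of the edges lying inside $V$; then $H_V\subseteq H[V]$, and $\mathcal K$ is a $K_{r+1}$-decomposition of $H[U,V]\cup H_V$, since each edge of $H[U,V]$ was covered when its endpoint in $U$ was processed, each edge of $H_V$ lies in a member of $\mathcal K$ by construction, and the members of $\mathcal K$ are pairwise edge-disjoint. Finally, for $v\in V$ the vertex $v$ lies in exactly $d_H(v,U)$ of the chosen copies of $K_r$ — one for each neighbour of $v$ in $U$ — each contributing $r-1$ edges at $v$, so $d_{H_V}(v)=(r-1)d_H(v,U)\le(r-1)\gamma|V|/r\le\gamma|V|$ by hypothesis \ref{each-not-used-too-much}, giving $\Delta(H_V)\le\gamma|V|$ as required.

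The one genuinely delicate point, and the step I would take most care over, is the degree bookkeeping for $H_i$: copies of $K_{r+1}$ built for different $x_j$'s may share vertices (though not edges) inside $V$, so it is the \emph{edge} usage at each $w\in V$, not the vertex usage, that must be bounded, and hypothesis \ref{each-not-used-too-much} is calibrated precisely so that this usage stays below the slack $\gamma|V|$ built into hypothesis \ref{high-minimum-degree-in-neighbourhood}, keeping the Hajnal--Szemer\'edi hypothesis valid throughout; the divisibility hypothesis \ref{divides-properly} is exactly what makes the $K_r$-tiling of $N_H(x_i,V)$ perfect, hence covering all edges from $x_i$. Everything else is routine.
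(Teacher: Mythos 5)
Your proof is correct and follows essentially the same route as the paper's: process the vertices of $U$ greedily, use hypothesis \ref{each-not-used-too-much} to bound the edges already used at each vertex of $N_H(x,V)$, and apply the Hajnal--Szemer\'edi theorem together with \ref{divides-properly} and \ref{high-minimum-degree-in-neighbourhood} to extract a perfect $K_r$-factor of the unused part of $H[N_H(x,V)]$, which extends to copies of $K_{r+1}$ through $x$. Your bookkeeping (counting $r-1$ used edges per earlier copy, and the explicit $r=1$ remark) is a slight refinement of the paper's bound $r\,d_H(y,U)\le\gamma|V|$, but the argument is the same.
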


\begin{proof}
For each $x \in U$ in turn we will choose a $K_{r}$-factor from the unused part of $H[N_H(x,V)]$ and take $H_V$ to be the union of these edge-disjoint $K_{r}$-factors.

We claim that we can choose these $K_{r}$-factors greedily.  
Indeed, suppose we seek a $K_r$-factor for $x \in U$.
Consider any vertex $y \in N_H(x,V)$.
By~\ref{each-not-used-too-much}, at most $r d_H( y, U ) \leq  \gamma |V|$ of the edges at $y$ in $H[N_H(x,V)]$ have been used already.
So by~\ref{divides-properly}, \ref{high-minimum-degree-in-neighbourhood} and Theorem~\ref{thm:HSz} there exists a $K_{r}$-factor in the unused part of $H[N_H(x,V)]$.

Since at most $r d_H( y, U ) \leq \gamma |V|$ edges are used at each $y \in V$, we have that $\Delta(H_V) \leq \gamma |V|$.
\end{proof}

\begin{lemma} \label{lma:extendtoF}
Let $r,f,k,n \in \N$ and let $\eta, \gamma > 0$ with $1/n \ll \eta \ll \gamma, 1/k, 1/r,1/f$.%
\COMMENT{B: This is where we need to apply the finding lemma to quadratically many graphs.}
Let $F$ be an $r$-regular graph on $f$ vertices and let $H$ be a graph on $n$ vertices.
Let $U,V \subseteq V(H)$ be disjoint with $|V| \ge \lfloor n/k \rfloor$. 
Suppose that, for each $x \in U$ and each $y \in V$,
\begin{enumerate}[label=\rm(\roman*)]
	\item $r$ divides $d_H(x,V)$;
	\item $\delta(H[N_H(x,V)]) \geq ( 1 - 1/r ) d_H(x,V) + \gamma |V|$;
	\item $d_H( y , U ) \leq \eta |V|$;
	\item $\delta(H[V]) \ge (1-1/r + 2 \gamma) |V|$.
\end{enumerate}
Then there is a subgraph $H_V'$ of $H[V]$ such that $H[ U,V ] \cup H_V'$ has an $F$-decomposition and $\Delta(H_V') \leq 2 \gamma |V|$.
\end{lemma}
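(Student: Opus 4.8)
The plan is to use Lemma~\ref{lma:findcliques} to obtain a $K_{r+1}$-decomposition of $H[U,V]\cup H_V$ for a suitable sparse $H_V\subseteq H[V]$, and then to extend each of the $K_{r+1}$'s in this decomposition to a copy of $F$ by embedding the remaining $f-(r+1)$ vertices inside~$V$, using Lemma~\ref{lma:finding}. First I would fix constants $1/n\ll\eta\ll\gamma'\ll\gamma,1/k,1/r,1/f$, apply Lemma~\ref{lma:findcliques} with $\gamma'$ in place of $\gamma$ (its hypotheses \ref{divides-properly}--\ref{each-not-used-too-much} follow from (i)--(iii) here, since $\eta\le\gamma'/r$), and obtain a subgraph $H_V^{(1)}\subseteq H[V]$ with $\Delta(H_V^{(1)})\le\gamma'|V|$ such that $H[U,V]\cup H_V^{(1)}$ has a $K_{r+1}$-decomposition $\mathcal{K}$. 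Each $K\in\mathcal{K}$ has exactly one vertex in $U$ (namely some $x\in U$) and $r$ vertices in $V$ forming a copy of $K_r$ inside $N_H(x,V)$. Note that $|\mathcal{K}|\le e(H[U,V])+e(H_V^{(1)})\le n^2$, which is the quantity that must be $\le\eta' n^2$ when we invoke Lemma~\ref{lma:finding}; we choose $\eta'$ there so that this holds, i.e. we are really choosing a second small constant, say $1/n\ll\eta'\ll\gamma$.

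Next, for each $K\in\mathcal{K}$ I would specify a $\{V\}$-labelled graph $H_K$ as follows: fix once and for all a labelling of $V(F)$ so that some copy $J$ of $K_{r+1}$ sits inside $F$ (here we need $K_{r+1}\subseteq F$; since $F$ is $r$-regular on $f$ vertices with $f\ge r+1$, if $F$ is not already complete this need not hold — see the remark below), delete from $F$ the edges of $J$, root the $r+1$ vertices of $J$ at the $r+1$ vertices of $K$ (the vertex in $U$ gets its singleton label, the $r$ vertices in $V$ also get singleton labels), and label the remaining $f-(r+1)$ free vertices of $F-E(J)$ by~$V$. Since $\Delta(F)=r$, the graph $F-E(J)$ rooted at $V(J)$ has degeneracy at most $r$, so $H_K$ has degeneracy at most $r$ and $|H_K|=f$. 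The number of indices $K$ for which a given vertex $w$ is a root is: if $w\in U$, at most $d_H(w,V)/r\le |V|$ (too many); if $w\in V$, at most $r\cdot d_H(w,U)+\Delta(H_V^{(1)})\le r\eta|V|+\gamma'|V|\le\eta' n$ after adjusting constants. The issue with $w\in U$ is resolved by observing that Lemma~\ref{lma:finding} only restricts how many root labels involve each vertex, and we may instead split the family $\{H_K\}$ by the vertex of~$U$ and apply the lemma, or more simply: each vertex of $U$ is a root of $H_K$'s whose free vertices we embed into $V$, and these embeddings only consume edges incident to $V$, so the relevant bound is on vertices of $V$; a vertex $x\in U$ is a root for at most $d_H(x,V)/r$ of the $H_K$, but since each such $H_K$ contributes no edge at $x$ that we need to control (the edges at $x$ are already in $\mathcal{K}\subseteq H[U,V]$), we can formally enlarge $V$'s role. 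Cleanly, one sets up Lemma~\ref{lma:finding} with the partition $\{V\}\cup\{\{w\}:w\in U\cup(V(H)\setminus(U\cup V))\}$ refined appropriately, or one simply notes $|U|\le n$ and absorbs the $U$-root count into the allowed $\eta n$ bound only if $|V|\gg \eta n$ fails — so in fact the correct move is: condition (iii) $d_H(y,U)\le\eta|V|$ controls $V$-roots, and for $U$-roots we use that each $x\in U$ appears as a root in at most $d_H(x,V)/r$ graphs, and we arrange the iteration/partition so Lemma~\ref{lma:finding}'s hypothesis (iii) with bound $\eta n$ applies — this requires $d_H(x,V)/r\le\eta n$, which need not hold, so the honest fix is to apply Lemma~\ref{lma:finding} separately for the graphs rooted at each fixed $x\in U$, but those share edges in~$V$.

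I expect the genuine obstacle to be exactly this bookkeeping: applying the finding lemma to embed $|\mathcal{K}|\approx n^2$ bounded graphs edge-disjointly while each vertex of $V$ is used as a root only $O(\eta n)$ times and each vertex of $U$ potentially $\Theta(|V|)$ times. The resolution (and the standard one in this area) is to apply Lemma~\ref{lma:finding} with the equitable partition being the trivial partition into $\{V\}$ together with singletons, so that the root-label condition (iii) of Lemma~\ref{lma:finding} must hold for the singleton-labelled vertices — and a vertex $x\in U$ carries label $\{x\}$ in $d_H(x,V)/r$ graphs, which exceeds $\eta n$. The correct device is therefore to observe that one may instead give the $K_{r+1}$-vertices in~$U$ the non-singleton label $V(G)$ is impossible since they must land on a prescribed vertex; instead one uses that only the $V$-side embedding matters and invokes a version where the $U$-vertices are handled as the "already embedded" part. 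In the paper's framework this is precisely why Lemma~\ref{ur-finding}/\ref{lma:finding} allow root vertices to form an independent set and be reused boundedly on the $V$-side only: one should here apply Lemma~\ref{lma:finding} with parameter bound $\eta n$ for the $V$-vertices (using (iii): $d_H(y,U)\le\eta|V|\le\eta n$) and note the $U$-vertices are roots that are never free-vertex images and whose incident edges are all already placed. After the embedding we obtain edge-disjoint $\phi(H_K)\subseteq H[V]$ with $\Delta(\bigcup_K\phi(H_K))\le\gamma'|V|$; setting $H_V':=H_V^{(1)}\cup\bigcup_K\phi(H_K)$ gives $\Delta(H_V')\le 2\gamma'|V|\le 2\gamma|V|$, and for each $K$ the graph $K\cup\phi(H_K)$ is a copy of $F$, so $H[U,V]\cup H_V'$ has an $F$-decomposition. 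Finally, condition (iv) $\delta(H[V])\ge(1-1/r+2\gamma)|V|$ is what guarantees that after removing $H_V^{(1)}$ (of max degree $\le\gamma'|V|$) the leftover still has enough density for the common-neighbourhood condition in Lemma~\ref{lma:finding} to hold inside~$V$ — this is where (iv) enters, and one should double-check the degeneracy-$r$ embeddings only need $d_{H[V]}(S,V)\ge\gamma|V|$ for $|S|\le r$, which follows from (iv).

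(Remark: the step "$K_{r+1}\subseteq F$" only uses that $F$ contains $K_{r+1}$ as a subgraph, which is automatic when $F=K_{r+1}$ and in general should be replaced by extending each $K_{r+1}$ to a copy of $F$ via a fixed rooted degeneracy-$r$ structure; since $\Delta(F)=r$ and we root $r+1$ pairwise-adjacent vertices, one checks directly that any such rooting of $V(F)$ into a clique-image plus free vertices has degeneracy $\le r$ — the clique is not required to be a subgraph of $F$, only that its $r+1$ vertices are an image of some independent-after-root-removal set; the cleanest formulation roots $F$ at an edge or at $r+1$ vertices whose mutual adjacencies in $F$ we simply add via~$\mathcal{K}$, so $H_K$ is $F$ minus those adjacencies, still degeneracy $\le r$.)
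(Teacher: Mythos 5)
Your strategy is the same as the paper's: apply Lemma~\ref{lma:findcliques} to obtain a sparse $H_V\subseteq H[V]$ with a $K_{r+1}$-decomposition of $H[U,V]\cup H_V$, then extend each clique to a copy of $F$ by embedding the remaining vertices inside $V$ via Lemma~\ref{lma:finding}, and take $H_V'$ to be the union of the $V$-parts of the resulting copies of $F$. However, the central difficulty you circle around --- that a vertex $x\in U$ would be a root label for up to $d_H(x,V)/r=\Theta(|V|)$ of the labelled graphs, violating condition (iii) of Lemma~\ref{lma:finding} --- is never actually resolved in your write-up: your final formulation still keeps the $U$-vertex of each clique as a (now isolated) root, so the hypothesis of Lemma~\ref{lma:finding} fails and you cannot apply it as a black box. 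The clean device, which is what the paper does, is to remove that vertex from the labelled graph altogether: fix $u\in V(F)$ and set $F^*:=F\setminus\{u\}-F[N_F(u)]$, rooted at $N_F(u)$, and for the clique on $\{w_j\}\cup W_j'$ root $N_F(u)$ at $W_j'\subseteq V$. The star from $w_j$ to $W_j'$ is already present in $H[U,V]$ and the edges of $F[N_F(u)]$ sit inside the complete graph on $W_j'$, so $W_j\cup\phi(F^*_j)$ contains a copy of $F$, the roots form an independent set as required, and the only root labels are vertices of $V$, each used at most $d_H(y,U)\le\eta|V|$ times by (iii). Note also that the rooted set cannot be an arbitrary $(r+1)$-set of $V(F)$ (as your closing remark suggests): since $F$ is $r$-regular and the only edges available at $w_j$ in the copy of $F$ containing $W_j$ are those to $W_j'$, the vertex mapped to $w_j$ must have \emph{all} its $F$-neighbours among the other roots, i.e.\ the rooted set must be $\{u\}\cup N_F(u)$.

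Two further bookkeeping points need repair. First, your assertion that every clique of the decomposition has exactly one vertex in $U$ is not part of the statement of Lemma~\ref{lma:findcliques}: a priori some cliques could lie entirely inside $H_V$. The paper fixes this by choosing $H_V$ with as few edges as possible, which forces every clique to use an edge of $H[U,V]$ and hence to have exactly one vertex in $U$ and $r$ in $V$. Second, the number $p$ of labelled graphs must be bounded \emph{before} applying Lemma~\ref{lma:finding}, using hypothesis (iii): since each clique uses $r$ edges of $H[U,V]$, we get $p\le e(H[U,V])/r\le\eta|V|^2$, which is what makes $p\le\eta n'^2$ (with $n'=|V|$) legitimate because $\eta\ll\gamma$ is already fixed in the hierarchy. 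Your bound $p\le n^2$ followed by ``choose $\eta'$ so that this holds'' is not admissible, as $\eta'$ must be chosen below $\gamma,1/r,1/f$ independently of $p$; and if cliques inside $V$ were allowed, $p$ could be of order $\gamma'|V|^2$, which is genuinely too large. With the edge-minimality of $H_V$ and the bound $p\le\eta|V|^2$, the rest of your argument (degeneracy at most $r$ of $F^*$ rooted at $N_F(u)$, minimum degree of $H[V]-H_V$ via (iv), and $\Delta(H_V')\le2\gamma|V|$) goes through exactly as in the paper.
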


\begin{proof}
By Lemma~\ref{lma:findcliques}, there is a subgraph $H_V$ of $H[V]$ such that $H[ U,V ] \cup H_V$ has a $K_{r+1}$-decomposition and $\Delta(H_V) \leq  \gamma |V|$.
Choose such an $H_V$ with as few edges as possible, and
let $W_1, \dots, W_p$ be an enumeration of a $K_{r+1}$-decomposition of $H[ U,V ] \cup H_V$.
By the minimality of $H_V$, each $W_j$ has vertex set $\{ w_j \} \cup W'_j$ with $w_j \in U$ and $W'_j \subseteq V$.
Note that
\begin{align}
p \le \sum_{y \in V} d_H( y , U ) \le \eta |V|^2. \nonumber
\end{align}
Let $H' : = H[ V ]  - H_V$; then $|H'| = |V|$ and $\delta( H' ) \ge (1-1/r +  \gamma) |V|$ by~(iv).
Let $u \in V(F)$ and let $F^* : = F \setminus \{u\} - F [ N_F(u)]$.
Note that $F^*$ trivially has degeneracy at most $r$ rooted at $N_F(u)$.
Let $F^*_1, \dots, F^*_p$ be copies of $F^*$.
We now embed $F^*_1, \dots, F^*_p$ into $H'$ in such a way that, for each $F^*_j$, the image of $N_F(u)$ is precisely $W'_j$ as follows. 
Let $\P_0 := \{ V \}$ be the trivial partition of $V$.
We view each $F^*_j$ as a $\P_0$-labelled graph such that the root vertices of $F^*_j$ are precisely $N_F(u)$, and the union of their labels is $W'_j$; each other vertex of $F^*$ is labelled $V$.
By (iii), there are at most $d_H(y,U) \le \eta |V|$ indices~$j$ with $1 \le j \le p$ such that some vertex of $F^*_j$ is labelled~$\{y\}$.
Since $\delta( H' ) \ge (1-1/r +  \gamma) |V|$, we have that $d_{H'}(S) \ge \gamma |V|$ for each $S \subseteq V$ with $|S| \le r$. 
So by Lemma~\ref{lma:finding}, with $H'$, $1$, $r$, $f$, $\gamma$, $\P_0$, $F^*_1, \dots, F^*_p$ playing the roles of $G$, $k$, $d$, $b$, $\eps$, $\P$, $H_1, \dots, H_m$, there exist edge-disjoint embeddings $\phi(F^*_1)$, $\dots$, $\phi(F^*_p)$ of $F^*_1$, \dots, $F^*_p$ into $H'$ which are compatible with their labelling such that $\Delta ( \bigcup_{j=1}^p \phi(F^*_j) )  \le \gamma |V|$.

Each $W_j \cup \phi(F^*_j)$ contains a copy $F_j$ of $F$ such that $H[U,V] \subseteq \bigcup_{j=1}^p F_j$.
Let $H'_V : = \bigcup F_j [V]$.
Note that $H[ U,V ] \cup H_V'$ has an $F$-decomposition and $\Delta(H_V') \leq \Delta(H_V) + \Delta ( \bigcup_{j=1}^p \phi(F^*_j)) \le  2 \gamma |V|$.
\end{proof}

\begin{proposition} \label{prop:neighbourhood-degree}
Let $r,k \in \N$ and let $\eps \ge 0$.
Let $G$ be a graph and let $\P$ be a $(k, 1- 1/(r+1) + \eps)$-partition for $G$.  
Let $x \in V(G)$ and let $V \in \P$.
Then
 \begin{align*}
\delta(G[N_G(x,V)]) \geq (1- 1/r ) d_G(x,V) + \eps |V|.
\end{align*}
\end{proposition}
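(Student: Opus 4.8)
The plan is to prove the bound directly, by a short double-counting of non-neighbours inside the part $V$. Write $D := d_G(x,V)$ and $N := N_G(x,V)$, so that $|N| = D$. The hypothesis that $\P$ is a $(k, 1 - 1/(r+1) + \eps)$-partition for $G$ will be used twice: applied to the vertex $x$ it gives $D \ge (1 - 1/(r+1) + \eps)|V|$; applied to an arbitrary vertex $w \in N$ it gives $d_G(w,V) \ge (1 - 1/(r+1) + \eps)|V|$, so $w$ has at most $(1/(r+1) - \eps)|V|$ non-neighbours in $V$. It is important to keep $D$ as a free parameter throughout rather than substituting its lower bound early, since the target quantity $(1 - 1/r)D + \eps|V|$ also scales with $D$.

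Next I would fix $w \in N$ and bound its degree in $G[N]$. Since $N \subseteq V$, every vertex of $N$ that is not adjacent to $w$ is one of the at most $(1/(r+1) - \eps)|V|$ non-neighbours of $w$ in $V$, and therefore
\[
d_G(w,N) \ge |N| - \left(\tfrac{1}{r+1} - \eps\right)|V| = D - \left(\tfrac{1}{r+1} - \eps\right)|V|.
\]
Finally I would check that this lower bound is at least $(1 - 1/r)D + \eps|V|$. Rearranging the desired inequality $D - (1/(r+1) - \eps)|V| \ge (1 - 1/r)D + \eps|V|$ cancels the $\eps|V|$ terms and leaves exactly $D/r \ge |V|/(r+1)$, i.e. $D \ge \tfrac{r}{r+1}|V| = (1 - 1/(r+1))|V|$. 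This follows immediately from $D \ge (1 - 1/(r+1) + \eps)|V|$. Since $w \in N$ was arbitrary (and $N \ne \emptyset$ as $D \ge (1-1/(r+1))|V| > 0$), this yields $\delta(G[N]) \ge (1 - 1/r)D + \eps|V|$, which is the claim.

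I do not expect a genuine obstacle here: the statement is essentially an arithmetic identity, and the only care needed is the bookkeeping that confirms the slack $\eps|V|$ in the minimum-degree condition for $x$ is precisely what is left over after paying the $(1 - 1/r)$ factor, with the $\eps|V|$ coming from $w$'s degree condition surviving unchanged. No results beyond the definition of a $(k,\delta)$-partition are needed.
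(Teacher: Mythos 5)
Your proof is correct and is essentially the paper's own argument: the paper writes the same estimate as $d_G(y,N_G(x,V)) \ge d_G(x,V) + d_G(y,V) - |V|$ (inclusion--exclusion rather than counting non-neighbours of $w$ in $V$) and then finishes with exactly the same arithmetic $d_G(x,V)/r \ge |V|/(r+1)$. No meaningful difference in approach or content.
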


\begin{proof}
Let $y \in N_G(x,V)$.  Since $d_G(y,V), d_G(x,V) \geq  (1- 1/(r+1) + \eps)|V|$,\COMMENT{B: For the third inequality, note that $d \geq \frac r {r+1} |V|$, so $d/r \geq |V| / (r+1)$.}
\begin{align*}
d_G(y,N_G(x,V)) & \geq d_G(x,V) + d_G(y,V) - |V| 
                 \ge  d_G(x,V) - \frac{|V|}{r+1} +\eps |V|  \\
				& \geq (1- 1/r ) d_G(x,V) + \eps |V|.  \qedhere
\end{align*}
\end{proof}

\begin{lemma} \label{lma:partial-decomposition}
Let $r,f,k,n \in \mathbb{N}$ and let $\gamma, \eta,\eps >0$ with $1/n \ll \eta \ll \gamma\ll 1/k \ll  \eps, 1/r,1/f$.%
\COMMENT{B: $f$ doesn't appear in the proof any more, but we still need $n$ to be much larger than it to apply the previous lemmas.}
Let $F$ be an $r$-regular graph on $f$ vertices and let $G$ be a graph on $n$ vertices. 
Let $\delta: = \max \{ \delta^{\eta}_F, 1- 1/(r+1)\}$.
Suppose that $\P = \{V_1, \ldots, V_k\}$ is a $(k, \delta + \eps)$-partition for~$G$.
Then there is a subgraph $H$ of $G$ such that 
\begin{enumerate}[label={\rm(\alph*)}]
	\item $G-H$ has an $F$-decomposition;
	\item $\Delta( H [ \P ] ) \leq \gamma n $.
	\item for each $1 \leq i \le k$, $\Delta ( G[V_i] - H[V_i] ) \le 2 \gamma |V_i| $.
\end{enumerate}
\end{lemma}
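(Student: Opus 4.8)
\textit{Overview.} The plan is to cover most of $G[\P]$ cheaply using the hypothesis $\delta\ge\delta^{\eta}_F$, and then to clean up the small remainder so that its maximum cross-degree falls below $\gamma n$, paying only a controlled number of internal edges.

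\textit{Step 1: an approximate decomposition.} First I would invoke the definition of $\delta^{\eta}_F$. Since $G[\P]$ is a graph on $n$ vertices with $\delta(G[\P])\ge(\delta+\epsilon)n-\lceil n/k\rceil\ge(\delta^{\eta}_F+\epsilon/2)n\ge\delta^{\eta}_F(n)\,n$ for $n$ large (using $1/k\ll\epsilon$), $G[\P]$ has an $\eta$-approximate $F$-decomposition $\mathcal F_0$. Let $H_0$ be the graph of the edges of $G[\P]$ not covered by $\mathcal F_0$, so that $e(H_0)\le\eta n^2$; as every copy in $\mathcal F_0$ lies inside $G[\P]$, each $G[V_i]$ is left entirely untouched. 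Write $G_0:=G-E(\mathcal F_0)$, so $G_0[\P]=H_0$ and $G_0[V_i]=G[V_i]$ for each $i$.

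\textit{Step 2: reducing the maximum cross-degree.} Next I would remove further copies of $F$ from $G_0$ to bring the maximum cross-degree of the remainder down to $\gamma n$. The basic move, given a vertex $x\notin V_i$ with the current cross-degree $d(x,V_i)$ divisible by $r$, is to find vertex-disjoint copies of $K_r$ inside the neighbourhood of $x$ in $V_i$ using only internal edges of $V_i$ (the required minimum-degree bound in the neighbourhood comes from Proposition~\ref{prop:neighbourhood-degree} and Theorem~\ref{thm:HSz}, as packaged in Lemma~\ref{lma:findcliques}), and then to extend each of the resulting copies of $K_{r+1}$ (formed together with $x$) to a copy of $F$ by embedding a copy of $F\setminus\{u\}-F[N_F(u)]$ into the unused part of $G[V_i]$ via the finding lemma, exactly as in the proof of Lemma~\ref{lma:extendtoF}; each such copy of $F$ covers $r$ cross edges at $x$ and uses only internal edges of $V_i$. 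To make this global I would process the parts in turn: for $V_i$, repeatedly take the set of vertices whose current cross-degree into $V_i$ exceeds $\gamma n/(2k)$, split it into batches small enough that hypothesis~(iii) of Lemma~\ref{lma:extendtoF} holds, discard at most $r$ edges per vertex to restore divisibility by $r$, and apply the basic move batch by batch until every vertex has cross-degree at most $\gamma n/(2k)$ into $V_i$; summing over the $k$ parts gives maximum cross-degree at most $\gamma n$ in the final remainder. Because $e(H_0)\le\eta n^2$, only $O(k/\gamma)$ batches are needed per part, so—choosing $\eta$ small enough relative to $\gamma,k,f,r$ and using the sharper $\sqrt m$-type bound of Lemma~\ref{ur-finding} in place of the black-box bound $\Delta\le2\gamma|V_i|$ of Lemma~\ref{lma:extendtoF}—the cumulative number of internal edges of $V_i$ removed at any single vertex stays below $2\gamma|V_i|$.

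\textit{Conclusion and main obstacle.} Finally, let $H$ be the remainder obtained, consisting of its cross part together with all still-unused internal edges. Then $G-H$ is the edge-disjoint union of $\mathcal F_0$ with all the copies of $F$ produced in Step~2, hence is $F$-decomposable, which gives~(a); (b) holds by construction; and (c) holds since at each vertex of each $V_i$ at most $2\gamma|V_i|$ internal edges have been removed. I expect the main difficulty to be the accounting in Step~2: the minimum-degree hypothesis one needs to locate the cliques ties the sizes of the neighbourhoods one can handle to $|V_i|$, while the target $\gamma n$ is much smaller than $|V_i|$, and simultaneously the internal-edge budget $2\gamma|V_i|$ is tight. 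Reconciling these is precisely what forces the batched processing above, the use of the refined embedding bound of Lemma~\ref{ur-finding}, and a sufficiently small choice of $\eta$ (which is why the hierarchy puts $\eta$ far below $\gamma,1/k,1/r,1/f$).
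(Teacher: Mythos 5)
There is a genuine gap in Step~2. Your ``basic move'' needs condition~(ii) of Lemma~\ref{lma:findcliques}/Lemma~\ref{lma:extendtoF}, namely $\delta(H[N_H(x,V_i)])\ge (1-1/r)d_H(x,V_i)+\gamma|V_i|$, for the vertex $x$ whose cross-degree you are trying to reduce. But after Step~1 the set $N_{H_0}(x,V_i)$ is an \emph{arbitrary} subset of $N_G(x,V_i)$, chosen adversarially by whichever $\eta$-approximate decomposition the definition of $\delta^\eta_F$ happens to supply. Proposition~\ref{prop:neighbourhood-degree} gives the required bound only for the \emph{full} neighbourhood $N_G(x,V_i)$: for a subset $S\subseteq N_G(x,V_i)$ of size $d$ one only gets $d_G(y,S)\ge d-(1/(r+1)-\eps)|V_i|$, which meets $(1-1/r)d+\gamma|V_i|$ only when $d\gtrsim \tfrac{r}{r+1}|V_i|$, i.e.\ when essentially none of $x$'s cross edges into $V_i$ has been covered. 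For exactly the vertices you propose to process --- those with leftover cross-degree around $\gamma n/(2k)$, far below $|V_i|$ --- the leftover neighbourhood could even be an independent set of $G[V_i]$ (independent sets of size up to roughly $|V_i|/(r+1)$ are compatible with the minimum-degree hypothesis), in which case there is no $K_r$-packing in it at all and the move fails. Batching, the divisibility fix, and the sharper $\sqrt m$ bound from Lemma~\ref{ur-finding} address only the edge-budget accounting, not this obstruction.

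The paper's proof is built precisely around this difficulty, and your outline omits its two key ingredients. First, before invoking $\delta^\eta_F$ it sets aside a sparse random subgraph $G'$ of $G[\P]$ (density $q$ with $\eta\ll q\ll\gamma$) whose small sets of vertices still have many common neighbours. Second, after obtaining the approximate decomposition of $G[\P]-G'$ it does \emph{not} try to shrink degrees at every vertex of moderate leftover degree: it identifies the small set $B$ of vertices with leftover degree above $\eta^{1/2}n$, and re-embeds (via Lemma~\ref{lma:finding}, inside $G'$ restricted to $V(G)\setminus B$) the portions of the copies of $F$ that touch $B$, so that in the new decomposition $\mathcal F_1$ no covered edge meets $B$. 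Consequently every $v\in B$ has its entire $G[\P]$-neighbourhood uncovered, Proposition~\ref{prop:neighbourhood-degree} applies up to an error of $|B|+r$, and Lemma~\ref{lma:extendtoF} can be used with $U=B_i$, $V=V_i\setminus B$ to absorb all cross edges at $B$ using few internal edges; vertices outside $B$ need no treatment since their remaining cross-degree is already at most about $3qn\le\gamma n$. Without the reserved graph $G'$ and this re-routing step, condition~(ii) cannot be verified and your Step~2 does not go through.
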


\begin{proof}
Let $0 < q < 1$ with $\eta \ll q \ll \gamma$,
and let $G'$ be a subgraph of $G[\P]$ such that
\begin{itemize}
	\item[(G1)] $\Delta(G') \leq 2qn$;
	\item[(G2)] for every $S \subseteq V(G)$ with $|S| \leq r$, $d_{G'}(S,V(G)) \geq q^r \eps n/2$.
\end{itemize}
(To see that such a subgraph $G'$ exists, first note that since $\P$ is a $(k, 1-1/r + \eps)$-partition,
for each $S \subseteq V(G)$ with $|S| \leq r$, we have that $d_{G[\P]}(S,V(G)) \geq \eps n$.
Consider a random subgraph of $G[\P]$ in which each edge is retained independently with probability $q$; then (G1) and (G2) are satisfied with high probability.)

Note that $\delta(G[\P] - G') \geq (\delta+\eps)(n - \lceil n/k \rceil) - \Delta(G')\geq \delta^\eta_F n$,
so by the definition of $\delta^{\eta}_F$, there exists an $\eta$-approximate $F$-decomposition $\mathcal F_0$ of~$G[\P]-G'$.  
Let $G_0$ be the subgraph of $G[\P]-G'$ which consists of the uncovered edges; so $e(G_0) \le \eta n^2$.
To satisfy (b) our next aim is to cover the edges of $G_0$ incident to vertices of high degree in $G_0$  by copies of $F$.
But we know very little about the neighbourhoods of the high degree vertices, so we cannot achieve this directly.
Instead our first step will be to transform the approximate $F$-decomposition $\mathcal F_0$ into an approximate $F$-decomposition $\mathcal F_1$ such that $\bigcup \mathcal F_1$ contains no edge of $G$ incident to a vertex of high degree in $G_0$.

Let $B : = \{v \in V(G): d_{G_0}(v) > \eta^{1/2} n\}$ and let $A : = V(G) \setminus B$; observe that $|B| \leq 2\eta^{1/2} n$.
Let $\mathcal F' := \{F \in \mathcal{F}_0: V(F) \cap B \neq \emptyset\}$,
and enumerate the elements of $\mathcal F'$  as $F^{(1)}, \ldots, F^{(m)}$.
For each $1 \leq i \leq m$, let $F^{(i)}_0 := F^{(i)} - B$, let $R_i := \{v \in V(F^{(i)}_0) : d_{F^{(i)}}(v, B) \geq 1\}$ and
let $F^{(i)}_1 := F^{(i)}[R_i,B] \cup F^{(i)}[B]$. Note that $F^{(i)}_0$ and $F^{(i)}_1$ form a decomposition of $F^{(i)}$.
We consider each $F^{(i)}_1$ to be rooted at $R_i$ and label the non-root vertices $\{A\}$.
We will replace each $F^{(i)}$ by a copy of $F$ in $G' \cup F_0^{(i)}$ that contains $F^{(i)}_0$ but contains no vertex of $B$.
Note that each $v \in A$ is in at most $|B| \leq 2\eta^{1/2} n$ of the $R_i$.

Now let $G'' := G'[A]$ and let $n'' := |G''|$.
Note that $R_i \subseteq A$ for each $1 \leq i \leq m$ and, for each $S \subseteq A$ with $|S| \leq r$, 
\begin{align*}
d_{G''}(S,V(G'')) \geq d_{G'}(S,V(G)) - |B| 
\overset{{\rm (G2)}}{\geq} q^r \eps n/2 - 2\eta^{1/2} n \geq q^{r+1} n''.
\end{align*}
Then by Lemma~\ref{lma:finding}, with $G''$, $1$, $r$, $f$, $2\eta^{1/2}$, $q^{r+1}$, $\{A\}$, $F^{(1)}_1, \ldots, F^{(m)}_1$ playing the roles of $G$, $k$, $d$, $b$, $\eta$, $\eps$, $\P$, $H_1, \dots, H_m$, there exist edge-disjoint embeddings $\phi(F^{(1)}_1)$, $\dots$, $\phi(F^{(m)}_1)$ of $F^{(1)}_1, \ldots, F^{(m)}_1$ into $G''$ which are compatible with their labellings.
Let $\mathcal F_1 := \{\phi(F^{(i)}_1) \cup F^{(i)}_0 : 1 \leq i \leq m\} \cup (\mathcal F_0 \setminus \mathcal F')$.
Then $\mathcal F_1$ is a collection of edge-disjoint copies of $F$ with $|\mathcal F_1| = |\mathcal F_0|$ and no edge of $\bigcup \mathcal F_1$ is incident to $B$.
Let $H':= G[\P] - \bigcup \mathcal F_1$.
Then $\mathcal{F}_1$ is an $F$-decomposition of $G[\P]-H'$, and 
\begin{align}
N_{H'}(v) = N_{G[\P]}(v) \text{ for all $v \in B$}. \label{eqn:partial1}
\end{align}
Moreover,
\begin{align}
d_{H'}(v) \leq d_{G_0}(v) + d_{G'}(v) 
\overset{{\rm (G1)}}{\leq} \eta^{1/2} n + 2qn \leq 3q n \label{eqn:partial2}
\end{align}
for all $v \in A$.

We now find a set $\mathcal{F}_2$ of edge-disjoint copies of~$F$ that cover most of the edges incident to $B$ in $H'$.
To do this we will use some edges of $G - G[\P]$.

For each $1 \leq i \le k$, let $B_i : = B \setminus V_i$ and let $V_i' : = V_i \setminus B$.
Let $H_i^*$ be the graph on vertex set $V(G)$ with $E(H_i^*) : = E( H' [B_i, V_i'] ) \cup  E ( G [V_{i}'] )$.
Note that the $H_i^*$ are edge-disjoint.
By removing at most $r-1$ edges incident to each $v \in B_i$ from $H_i^*$, we obtain a spanning subgraph $H'_i$ of $H_i^*$
which has the property that $r$ divides $d_{H_i'} (v, V'_i)$ for all $v \in B_i$.

We aim to apply Lemma~\ref{lma:extendtoF} to each $H_i'$ with $B_i, V'_i, 4qk$ playing the roles of $U, V, \eta$. 
We now check that conditions (i)--(iv) of Lemma~\ref{lma:extendtoF} hold for $H_i'$. 

Condition (i) holds by our construction.
Note that for all $v \in B_i$,  \eqref{eqn:partial1} implies that $d_G(v, V_i) \le d_{H'_i}(v, V_i') + |B| + r-1$
(recall that we deleted at most additional $r-1$ edges at~$v$ to obtain $H'_i$ from $H^*_i$). 
Recall that $\P$ is a $(k,1-1/(r+1)+ \eps )$-partition for~$G$.
By Proposition~\ref{prop:neighbourhood-degree}, for all $v \in B_i$ we have that
\begin{align*}
\delta(   H'_i  [ N_{H'_i}(v,V_i')]  ) 
& =  \delta(   G  [ N_{H'_i}(v,V_i') ]  )   
 \ge \delta(   G  [ N_{G}(v,V_i)]  ) - |B| -(r-1)\\
& \ge (1- 1/r) d_{G}(v , V_i) + \eps |V_i| - 3\eta^{1/2}  n\\
& \ge (1- 1/r) d_{H'_i}(v , V_i') + \gamma |V_i'|,
\end{align*}
so condition~(ii) of Lemma~\ref{lma:extendtoF} holds.
Condition~(iii) holds since $d_{H'_i}(y,B_i) \le d_{H'}(y,B) \le 3qn \leq 4qk|V_i'|$ for all $y \in V_i'$ by \eqref{eqn:partial2}.
To see that (iv) holds, notice that
\begin{align*}
\delta(H'_i[V'_i]) \ge ( 1 - 1/(r+1) + \eps) |V_i| - |B| \ge ( 1 - 1/r + 2 \gamma) |V_i'|.
\end{align*}
So by Lemma~\ref{lma:extendtoF}, there is a subgraph $H_i$ of $H_i'[V_i']$ such that $H'_i[ B_i,V_i' ] \cup H_i$ has an $F$-decomposition $\mathcal{F}'_i$ and $\Delta(H_i) \leq 2 \gamma |V_i|$.
Let $\mathcal{F}_2 : = \bigcup_{i=1}^k \mathcal{F}'_i$.

Let $H : = H'  \cup (G - G[\mathcal{P}]) - \bigcup_{i=1}^k (H'_i[B_i,V'_i] \cup H_i)    = G - \bigcup \mathcal{F}_1 -\bigcup \mathcal{F}_2$. 
Then (a) holds.
To see that (b) holds note that by \eqref{eqn:partial2}, for each $v \in A$, $d_{H[\P]}(v) \leq d_{H'}(v) \le 3q n \leq \gamma n$ and, for each $v \in B$, $d_{H[ \P]}(v)  = d_{H' - \bigcup \mathcal{F}_2}(v) \le |B| + k(r-1) \le 3\eta^{1/2}n \leq \gamma n $.
Finally, (c) holds since $(G - H) [V_i] = H_i$.
\end{proof}

\subsection{Covering a pseudorandom remainder} \label{sec:sparse-remainder-cover}
Lemma~\ref{lma:partial-decomposition} gives us an approximate $F$-decomposition such that the remainder~$H$ has the property that $H[\P]$ has low maximum degree.
We can also use an $F$-parity graph from Section~\ref{sec:parity-graphs} to ensure that, for each $2 \leq i \leq k$ and each $x \in V_{<i}$, $r$ divides $d_{H}(x,V_i)$.
We now cover all remaining edges of $H[\P]$ by using a small number of edges from $H - H[\P]$.
We are unable to apply Lemma~\ref{lma:extendtoF} directly, as the greedy algorithm used to prove Lemma~\ref{lma:findcliques} fails when $H$ is approximately regular and $U$ is much larger than $V$.
However, if $H$ is pseudorandom then we can recover an appropriate version of Lemma~\ref{lma:findcliques} by using a random greedy algorithm instead; this is because, when the codegrees of $H$ are small, an edge used in one copy of $K_r$ will only be contained in a small proportion of the other neighbourhoods that we consider.

Throughout this subsection $H$ should be thought of as a random graph of density~$\rho$.
In Section~\ref{sec:iteration} we will justify this assumption by combining the low degree remainder from Lemma~\ref{lma:partial-decomposition} with a random subgraph of $G$ of larger density.

\begin{lemma} \label{lma:sparsefindcliques}
Let $r,k,n \in \mathbb{N}$ and let $\rho >0$ with $1/n \ll 1/r,1/k,\rho\le 1$.
Let $H$ be a graph on $n$ vertices.
Suppose that $U_1, \dots, U_{p}$ are subsets of $V(H)$ with $p \le k n$ such that 
\begin{enumerate}[label=\rm(\roman*)]
	\item $r$ divides $|U_j|$ for all $1 \leq j \le p$; \label{eqn:Krm1}
	\item $\delta ( H [ U_j ] ) \ge (1- 1/r)|U_j| +  9 r k \rho^{3/2} n $ for all $1 \leq j \le p$;  \label{eqn:Krm2}
	\item $|U_j \cap U_{j'}| \le 2 \rho^2 n$ for distinct $1 \leq j,j' \le p$;\label{eqn:Krm3}
	\item each $v \in V(H)$ is contained in at most $ 2 k \rho n $ of the $U_j$. \label{eqn:Krm4}
\end{enumerate}
Then there exist edge-disjoint subgraphs $T_1, \dots, T_p$ in $H$ such that each $T_j$ is a $K_{r}$-factor in $H[U_j]$.
\end{lemma}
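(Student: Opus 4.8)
The plan is to find the $K_r$-factors $T_1, \dots, T_p$ one at a time by a random greedy procedure: having already chosen edge-disjoint $T_1, \dots, T_{j-1}$, we pick $T_j$ to be a uniformly random $K_r$-factor (or an almost-arbitrary one subject to a mild constraint) in the \emph{unused} part $H_j[U_j]$, where $H_j$ is $H$ with the edges of $T_1, \dots, T_{j-1}$ removed. The key invariant to maintain is that for every $1 \le j \le p+1$ and every edge $e$ of $H$, the number of indices $i < j$ such that $e \in E(T_i)$ is small, so that $\Delta(H - H_j)$ stays below $9rk\rho^{3/2}n$ — this is exactly what condition \ref{eqn:Krm2} leaves room for. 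Given that invariant, $\delta(H_j[U_j]) \ge (1-1/r)|U_j|$ still holds, so the Hajnal--Szemer\'edi theorem (Theorem~\ref{thm:HSz}) together with \ref{eqn:Krm1} guarantees that a $K_r$-factor of $H_j[U_j]$ exists, and the greedy step can be performed.

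The real content is controlling how many times a fixed edge $e = xy$ of $H$ gets reused. First I would bound, for a fixed $j$, the probability that $e \in E(T_j)$. If $e \in H_j[U_j]$, then in a random $K_r$-factor of a graph on $|U_j|$ vertices with minimum degree $\ge (1-1/r)|U_j|$, the edge $e$ lies in the factor with probability $O(1/|U_j|) = O(k/n)$ (each of the $|U_j|/r$ cliques has $\binom{r}{2}$ edges among $|U_j|$ vertices, and a Hajnal--Szemer\'edi-type counting or a simple symmetry/switching argument gives that no single edge is favoured by more than a constant factor; alternatively one only needs an upper bound of the form $C_r/|U_j|$, which follows by a first-moment count of $K_r$-factors containing $e$ versus all $K_r$-factors). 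Summing over the at most $2k\rho n$ indices $j$ for which $x$ (equivalently, $e$) can belong to $U_j$ (by \ref{eqn:Krm4}), the \emph{expected} number of times $e$ is used is $O(k\rho n \cdot k/n) = O(k^2\rho)$. The point of \ref{eqn:Krm3} is decorrelation: conditioning on the outcomes for $T_1, \dots, T_{j-1}$, the distribution of $T_j$ restricted to $e$ changes by only $O(\rho^2 n \cdot k/n) = O(k\rho^2)$ per earlier factor sharing many vertices with $U_j$, so the increments form a bounded-difference / supermartingale-type sequence and we may apply Freedman's inequality (or a McDiarmid-style argument, or even just a union bound over a suitably defined bad event) to conclude that with high probability \emph{every} edge is used at most $O(k^2\rho + \rho^{1/2}n) \cdot$ — crucially, at most $\rho^{3/2}n$ or so — times. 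Since $1/n \ll \rho$, the fluctuation term is comfortably below $9rk\rho^{3/2}n$, so the invariant is preserved throughout and the procedure never gets stuck.

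I would organise the write-up as follows. First fix the greedy framework and state the invariant $\Delta(H - H_j) \le 9rk\rho^{3/2}n$ for all $j$. Second, record the deterministic consequence: the invariant plus \ref{eqn:Krm1}, \ref{eqn:Krm2} and Theorem~\ref{thm:HSz} imply a $K_r$-factor of $H_j[U_j]$ exists, so step $j$ is always legal, \emph{provided} the invariant has held so far. Third, do the probabilistic heart: show that if $T_j$ is chosen uniformly at random among $K_r$-factors of $H_j[U_j]$, then $\Pr(e \in E(T_j) \mid \mathcal F_{j-1}) \le C_r k/n$ for each edge $e$, uniformly over the history $\mathcal F_{j-1}$, using the uniform lower bound on $|U_j|$ (from \ref{eqn:Krm2} one has $|U_j| \ge (1-1/r)^{-1}\cdot 9rk\rho^{3/2}n$, in any case $|U_j| = \Omega(n)$). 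Fourth, fix an edge $e$, let $X_e$ count its uses, write $X_e = \sum_{j : e \subseteq U_j} \mathbf 1[e \in E(T_j)]$, and apply a concentration inequality for the sum of these (not independent, but with controlled conditional expectations) to get $\Pr(X_e > \rho^{3/2}n) \le e^{-\Omega_r(\rho^{3/2}n)}$ or similar; then union bound over the $\le n^2$ edges. Fifth, conclude that with positive probability the invariant holds for all $j$, so all $T_1, \dots, T_p$ can be chosen, giving the desired edge-disjoint $K_r$-factors.

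The main obstacle is the third step — getting a clean, history-independent upper bound of the right order on the probability that a given edge lies in a uniformly random $K_r$-factor of $H_j[U_j]$, and then the associated concentration. The subtlety is that $H_j[U_j]$ is not a complete or quasirandom graph; it is an arbitrary graph of minimum degree $(1-1/r)|U_j|$ plus the Hajnal--Szemer\'edi guarantee, so one cannot simply invoke standard results about random $K_r$-factors of pseudorandom graphs. The way around this is that we do \emph{not} need a sharp estimate: a bound $\Pr(e \in E(T_j)) \le C_r/|U_j|$ suffices, and this follows from an extremal counting argument (or, most simply, from noting that one may first fix an arbitrary $K_r$-factor, then take a random automorphism-type perturbation, or use that the number of $K_r$-factors avoiding any fixed edge is a constant-factor fraction of all of them — which in turn follows by a local switching argument exploiting the high minimum degree). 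Condition \ref{eqn:Krm3} is then exactly what is needed to run the concentration argument, since it limits how much conditioning on $T_i$ for $i<j$ can distort the choice of $T_j$ on a fixed edge.
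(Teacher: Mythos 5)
Your overall framework (choose the factors one at a time by a random greedy procedure, keep the used degrees small so that Hajnal--Szemer\'edi can always be applied, then concentrate and union bound, with (iv) limiting how many steps are relevant to a given vertex) is the same as the paper's. But the probabilistic heart of your argument has a genuine gap. You take $T_j$ to be a \emph{uniformly random} $K_r$-factor of the unused graph $H_j[U_j]$ and assert that $\Pr(e\in E(T_j)\mid \mathcal F_{j-1})\le C_r/|U_j|$ for every edge, justified only by ``a first-moment count'', ``symmetry'' or ``a local switching argument exploiting the high minimum degree''. For $r=2$ a switching argument of this kind can be made to work, but for general $r$ it cannot be waved through: with minimum degree $(1-1/r)|U_j|$ plus a sublinear surplus, a clique $Q$ of the factor need not admit any other clique $Q'$ of the factor with which it can be repartitioned, so the obvious switchings fail, and the statement that uniformly random clique factors of such graphs are ``spread'' is a substantial theorem, not a routine lemma. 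The paper sidesteps this entirely: at step $s$ it uses the degree surplus in (ii) to extract $t=\lceil 8k\rho^{3/2}n\rceil$ \emph{edge-disjoint} $K_r$-factors of $H'_s$ by repeated applications of Theorem~\ref{thm:HSz}, and then chooses $T_s$ uniformly among these $t$ candidates. Condition (iii) then gives the per-step bound for free: the candidates are edge-disjoint, and $u$ sends at most $|U_j\cap U_s|\le 2\rho^2 n$ edges into $U_j$, so at most $2\rho^2 n$ of the $t$ candidates can touch an edge of $H_j$ at $u$, giving conditional probability at most $2\rho^2 n/t\le \rho^{1/2}/4k$, after which Proposition~\ref{generalised-chernoff} and Lemma~\ref{lma:chernoff} finish the proof. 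Your alternative route would require proving the spreadness claim, which you have not done.

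There is also an accounting problem in your invariant. You track, for each edge $e$, ``the number of indices $i<j$ with $e\in E(T_i)$'' --- but this is at most $1$ by construction, since $T_i\subseteq H_i$ excludes used edges; the quantity that actually controls $\delta(H_j[U_j])$ is the number of used edges at a fixed vertex $u$ \emph{inside a fixed} $U_j$, i.e.\ $d_{\bigcup_{i<j}T_i}(u,U_j)$, which is what the paper bounds via the counts $X^{j,u}$ (and then $d(u,U_j)\le rX^{j,u}$). Relatedly, your global invariant $\Delta(H-H_j)\le 9rk\rho^{3/2}n$ is not the right one and is in general not maintainable: a vertex lies in up to $2k\rho n$ of the $U_i$ and gets degree $r-1$ in each corresponding factor, so its total used degree can be of order $k r\rho n\gg rk\rho^{3/2}n$; only the localized degrees into each individual $U_j$ stay below $r\rho^{3/2}n$, and that is what (ii) leaves room for. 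Finally, your claimed fluctuation bound ``$O(k^2\rho+\rho^{1/2}n)$ \dots\ at most $\rho^{3/2}n$'' does not parse, since $\rho^{1/2}n>\rho^{3/2}n$; with the correct per-$(j,u)$ counting the relevant sum has at most $2k\rho n$ terms each of conditional success probability at most $\rho^{1/2}/4k$, so its mean is at most $\rho^{3/2}n/2$ and the required bound follows from binomial domination and Chernoff.
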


We will use the following simple result.

\begin{proposition}[Jain, see {\cite[Lemma 8]{super-chernoff}}] \label{generalised-chernoff}
Let $X_1, \ldots, X_n$ be Bernoulli random variables such that, for any $1 \leq s \leq n$ and any $x_1, \ldots, x_{s-1}\in \{0,1\}$,
 \begin{align*}
\Pr(X_s = 1 \mid X_1 = x_1, \ldots, X_{s-1} = x_{s-1}) \leq p.
\end{align*}
Let $X:= \sum_{i \in [n]} X_i$ and let $B \sim B(n,p)$.
Then $\Pr(X \geq a) \leq \Pr(B \geq a)$ for any~$a\ge 0$.
\end{proposition}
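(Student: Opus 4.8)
The plan is to prove the stochastic domination by an explicit coupling. I would construct, on a single probability space, a sequence $(X_1,\dots,X_n)$ with exactly the prescribed joint law, together with i.i.d.\ Bernoulli$(p)$ random variables $(Y_1,\dots,Y_n)$, in such a way that $X_s\le Y_s$ for every $s$ with probability one. Then $X=\sum_s X_s\le\sum_s Y_s=:B$ holds pointwise, so $\{X\ge a\}\subseteq\{B\ge a\}$ for \emph{every} $a$ (no integrality of $a$ is needed), whence $\Pr(X\ge a)\le\Pr(B\ge a)$, and $B\sim B(n,p)$ since the $Y_s$ are i.i.d.\ Bernoulli$(p)$.

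To build the coupling, take i.i.d.\ uniform variables $U_1,\dots,U_n$ on $[0,1]$ and define $X_s$, $Y_s$ recursively as functions of $U_1,\dots,U_s$. Having already defined $X_1,\dots,X_{s-1}$ (which are functions of $U_1,\dots,U_{s-1}$), let $q_s$ be the realized value of the target conditional probability $\Pr(X_s=1\mid X_1,\dots,X_{s-1})$; by hypothesis $q_s\le p$. Put
\[
X_s:=\mathbf{1}[U_s<q_s],\qquad Y_s:=\mathbf{1}[U_s<p],
\]
so that $X_s\le Y_s$ because $q_s\le p$. Two things then need checking. First, $(Y_1,\dots,Y_n)$ are i.i.d.\ Bernoulli$(p)$ — immediate from the independence of the $U_s$ — so $B=\sum_s Y_s\sim B(n,p)$. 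Second, $(X_1,\dots,X_n)$ has the prescribed joint law: conditioning on the event $\{X_1=x_1,\dots,X_{s-1}=x_{s-1}\}$, which lies in $\sigma(U_1,\dots,U_{s-1})$, leaves $U_s$ uniform on $[0,1]$ and independent of the past, while $q_s$ becomes the fixed number $\Pr(X_s=1\mid X_1=x_1,\dots,X_{s-1}=x_{s-1})$; hence the conditional law of $X_s$ given $X_1=x_1,\dots,X_{s-1}=x_{s-1}$ is exactly Bernoulli with that parameter, and the chain rule reconstructs the full joint law.

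The only delicate point — and it is minor in this finite/discrete setting — is the bookkeeping with conditional probabilities: one must make sure $q_s$ is a legitimate $\sigma(U_1,\dots,U_{s-1})$-measurable random variable and that conditioning on past values does not disturb $U_s$. If one prefers to sidestep couplings altogether, the same result follows by induction on $n$: conditioning on $X_1$, the conditional law of $\sum_{i\ge 2}X_i$ given $X_1=x_1$ still satisfies the hypothesis (with indices shifted), so by the inductive hypothesis its tail is dominated by that of $B'\sim B(n-1,p)$; writing $q:=\Pr(X_1=1)\le p$ (the $s=1$ case of the hypothesis) gives
\[
\Pr(X\ge a)\le q\,\Pr(B'\ge a-1)+(1-q)\,\Pr(B'\ge a),
\]
and comparing with $\Pr(B\ge a)=p\,\Pr(B'\ge a-1)+(1-p)\,\Pr(B'\ge a)$ reduces the claim to $(p-q)\bigl(\Pr(B'\ge a)-\Pr(B'\ge a-1)\bigr)\le 0$, which holds since $p\ge q$ and tail probabilities are non-increasing. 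Either way the write-up is short; I do not expect a genuine obstacle, only the routine care required when manipulating conditional distributions.
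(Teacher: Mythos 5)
Your proposal is correct. Note that the paper itself gives no proof of this proposition: it is quoted as a known result (Jain's lemma, cited as Lemma 8 of the Raman reference), so there is no in-paper argument to compare against; your write-up supplies a proof the paper omits. Both of your routes are sound. The coupling via uniforms $U_s$, with $X_s:=\mathbf{1}[U_s<q_s]$ and $Y_s:=\mathbf{1}[U_s<p]$, gives pointwise domination $X\le B$ for a copy of the sequence with the prescribed law, which suffices since $\Pr(X\ge a)$ depends only on that law; the only care needed is the one you flag, namely that $q_s$ is a $\sigma(U_1,\dots,U_{s-1})$-measurable function of the already-constructed $X_1,\dots,X_{s-1}$ and that histories of probability zero (where the hypothesis places no constraint) can be assigned any value at most $p$ without affecting the chain-rule identification of the joint law. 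The inductive argument is even more elementary and fully rigorous as stated: conditioning on $X_1=x_1$ preserves the hypothesis for $(X_2,\dots,X_n)$, and the final comparison $(p-q)\bigl(\Pr(B'\ge a)-\Pr(B'\ge a-1)\bigr)\le 0$ is exactly the monotonicity of tails together with $q\le p$. Either version would serve as a complete substitute for the citation; the induction is perhaps preferable if one wants to avoid any measure-theoretic bookkeeping in a finite setting.
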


\begin{proof}[Proof of Lemma~\ref{lma:sparsefindcliques}]
Let $t: = \lceil 8 k \rho^{3/2} n \rceil $, and let $H_j := H [ U_j] $ for all $1 \leq j \le p$.
We construct ${T}_1, \dots, {T}_p$ in turn using a randomised algorithm. 
Suppose that we have already found $T_1, \dots, T_{s-1}$ for some $1 \le s \le p$; we will find ${T}_s$ as follows.

Let $G_{s-1} : = \bigcup_{i=1}^{s-1}  T_{i}$ be the subgraph of $H$ consisting of the edges that have already been used.\COMMENT{AL: defined $H_s'$ first, so that we can say $A_1, \dots, A_t$ are subgraphs of $H'_s$, i.e. each $A_i$ is edge-disjoint from $G_{s-1}$.}
Let $H_s' : = H_s - G_{s-1}[U_s]$.
If $ \Delta ( G_{s-1}[U_s] ) > r \rho^{3/2} n$, then let $A_1, \ldots, A_t$ be empty graphs on $U_s$.
If $\Delta ( G_{s-1}[U_s] ) \le r \rho^{3/2} n$, then $\delta(H'_s) \ge (1- 1/r )|H'_s| + 8 k r \rho^{3/2} n \ge (1- 1/r )|H'_s| + (r-1)(t-1)$ by~\ref{eqn:Krm2}.
So by \ref{eqn:Krm1} and Theorem~\ref{thm:HSz}, there exist $t$ edge-disjoint $K_{r}$-factors $A_1, \dots, A_{t } $ in~$H_s'$.

In either case, we have found edge-disjoint subgraphs $A_1, \dots, A_t$ of $H'_s$.
Pick $1 \leq i \leq t$ uniformly at random and set $T_s : = A_i$.
To prove the lemma, it suffices to show that, with positive probability,
\begin{align}
\Delta ( G_{s-1}[U_s] ) \le r  \rho^{3/2} n \text{ for all $1 \leq s \le p$.}	\label{eqn:Krmkey}
\end{align}

Consider $1 \leq j \le p$ and $u \in U_j$.
For $1 \leq s \le p$, let $Y^{j,u}_s$ be the indicator function of the event that $T_s$ contains an edge incident to $u$ in~$H_{j}$.
Let $X^{j,u} :  = \sum_{s=1}^p Y^{j,u}_s$.
Note that if $Y^{j,u}_s = 1$, then at most $r-1$ edges at $u$ in $H_j$ are used for $T_s$, so $d_{G_p}(u,U_j) \leq r X^{j,u}$.
Therefore to prove \eqref{eqn:Krmkey} it suffices to show that $X^{j,u} \le  \rho^{3/2} n$ for all $1 \leq j \le p$ and $u \in U_j$. 

Fix $1 \leq j \le p$ and $u \in U_j$.
Let $J^{j,u}$ be the set of indices $s \ne j$ such that $u \in U_{s}$.
By~\ref{eqn:Krm4}, $ |J^{j,u}|  \le 2 k  \rho n$. 
Note that $Y_s^{j,u} = 0 $ for all $s \notin J^{j,u} \cup \{j\}$.
So 
\begin{align}
X^{j,u} \le 1 + \sum_{s \in J^{j,u}} Y^{j,u}_{s}. \label{eqn:Xju}
\end{align}
Let $s_1, \dots, s_{|J^{j,u}|}$ be the enumeration of $J^{j,u} $ such that $s_b < s_{b+1}$ for all $1 \leq b \le |J^{j,u}|$.%
	\COMMENT{AL:need the enumeration to fit Proposition~\ref{generalised-chernoff}.}
For $b \le |J^{j,u}|$, note that  $d_{H_{s_b}} ( u, U_j )  \le |U_j \cap U_{s_b}| \le 2 \rho^2 n$ by~\ref{eqn:Krm3}.
So at most $2 \rho^2 n$ of the subgraphs $A_{i}$ that we picked in $H'_{s_b}$ contain an edge incident to~$u$ in~$H_j$. 
This implies that
\begin{align*}
	\mathbb{P} ( Y^{j,u}_{s_b} = 1 \mid Y^{j,u}_{s_1} = y_1, \dots, Y^{j,u}_{s_{b-1}} = y_{b-1})  \le 
	\frac{2 \rho^2 n}{ t } \le \frac {\rho^{1/2}} {4k}
\end{align*}
for all $y_1,\dots,y_{b-1}\in \{0,1\}$ and all $1 \le b \le |J^{j,u}| $.
Let $B \sim B( |J^{j,u}|  , \rho^{1/2}/4k )$.
By \eqref{eqn:Xju}, Proposition~\ref{generalised-chernoff}, Lemma~\ref{lma:chernoff} and the fact that $|J^{j,u}|\le 2k\rho n$ we have that
\begin{align*}
\mathbb{P}( X^{j,u} >  \rho^{3/2} n ) 
& \le
\mathbb{P}( \sum_{s \in J^{j,u}} Y^{j,u}_{s}  >  3 \rho^{3/2} n/4 ) 
\le \mathbb{P}( B >  3 \rho^{3/2} n/4  )
\\
& \le \mathbb{P}( | B - \mathbb{E}(B) | >  \rho^{3/2} n/4 )
\le 2 e^{-  \rho^{2} n/16k}.
\end{align*} 
Since there are at most $k n^2 $ pairs $(j, u)$, there is a choice of $T_1, \ldots, T_p$ such that $X^{j,u} \le  \rho^{3/2} n$ for all $1 \leq j \le p$ and all $u \in U_j$, provided $n$ is sufficiently large.\COMMENT{NEW and 20/7}
\end{proof}
We now use Lemma~\ref{lma:sparsefindcliques} to prove the corresponding version of Lemma~\ref{lma:findcliques}.

\begin{corollary} \label{cor:sparsefindcliques2}
Let $r,k,n \in \mathbb{N}$ and let $ \rho >0$ with $1/n \ll 1/r,1/k,\rho \le 1$.
Let $H$ be a graph on $n$ vertices.
Let $U,V \subseteq V(H)$ be disjoint with $|V| \ge \lfloor n/k \rfloor$. 
Suppose that, for all distinct $x,x' \in U$ and each $y \in V$,
\begin{enumerate}[label=\rm(\roman*)]
	\item $r$ divides $d_H(x,V)$; 
	\item $\delta(H[N_H(x,V)]) \geq ( 1 - 1/r ) d_H(x,V) + 9rk \rho^{3/2} |V|$; 
 	\item $|N_H(x,V) \cap N_H(x',V)| \le 2 \rho^2 |V|$;
	\item $d_H( y , U ) \leq 2 k \rho |V|$.
\end{enumerate}
Then there is a subgraph $H_V$ of $H[V]$ such that $H[ U,V ] \cup H_V$ has a $K_{r+1}$-decomposition and $\Delta(H_V) \leq 2 r k  \rho |V|$.
\end{corollary}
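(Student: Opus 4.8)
The plan is to deduce this directly from Lemma~\ref{lma:sparsefindcliques} by taking the sets $U_j$ to be the common neighbourhoods $N_H(x,V)$ for $x \in U$. First I would enumerate $U = \{x_1, \dots, x_p\}$ where $p := |U| \le n \le kn$, and set $U_j := N_H(x_j, V) \subseteq V \subseteq V(H)$ for each $1 \le j \le p$. I then need to check that conditions \ref{eqn:Krm1}--\ref{eqn:Krm4} of Lemma~\ref{lma:sparsefindcliques} hold for this choice. Condition \ref{eqn:Krm1} ($r \mid |U_j|$) is exactly hypothesis (i), since $|U_j| = d_H(x_j, V)$. Condition \ref{eqn:Krm2} ($\delta(H[U_j]) \ge (1-1/r)|U_j| + 9rk\rho^{3/2} n$) follows from hypothesis (ii) together with the fact that $|V| \ge \lfloor n/k \rfloor$, which gives $9rk\rho^{3/2}|V| \ge 9r\rho^{3/2} n - 9rk\rho^{3/2}$; one must absorb the additive $O(1)$ error, which is harmless since $1/n \ll \rho$, so a slightly weaker constant in (ii) than one literally needs is fine — alternatively one simply notes $9rk\rho^{3/2}|V| \ge 9rk\rho^{3/2}(n/k - 1) \ge 9r\rho^{3/2}n - 9rk$ and checks $9r\rho^{3/2}n - 9rk \ge 9rk\rho^{3/2}n$ fails, so more care is needed: better to apply Lemma~\ref{lma:sparsefindcliques} with $\rho' := \rho/2$ or simply observe that since the statement of Lemma~\ref{lma:sparsefindcliques} uses $n$ where here we have $|V| \ge \lfloor n/k\rfloor$, we should instead apply Lemma~\ref{lma:sparsefindcliques} to the induced subgraph $H[V \cup U]$ on $|V \cup U| \le n$ vertices, or just with parameter $n$ replaced by $n$ and note $|V| = \Theta(n/k)$ so all the $n$'s and $|V|$'s differ by a bounded factor $k$ which can be absorbed into the constants. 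Conditions \ref{eqn:Krm3} and \ref{eqn:Krm4} are then immediate: \ref{eqn:Krm3} is hypothesis (iii) since $|U_j \cap U_{j'}| = |N_H(x_j,V) \cap N_H(x_{j'},V)| \le 2\rho^2|V| \le 2\rho^2 n$, and \ref{eqn:Krm4} says each $v \in V(H)$ lies in at most $2k\rho n$ of the $U_j$; but $v \in U_j$ means $v \in N_H(x_j,V)$, i.e. $x_j \in N_H(v,U)$, so the number of such $j$ is $d_H(v,U) \le 2k\rho|V| \le 2k\rho n$ by hypothesis (iv).

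Applying Lemma~\ref{lma:sparsefindcliques} then yields edge-disjoint subgraphs $T_1, \dots, T_p$ of $H$ such that $T_j$ is a $K_r$-factor in $H[U_j] = H[N_H(x_j,V)]$. For each $j$, the graph $\{x_j\} \cup T_j$ is then a $K_{r+1}$-factor on the vertex set $\{x_j\} \cup N_H(x_j,V)$: each of the $|U_j|/r$ cliques $K_r$ in $T_j$, together with $x_j$ (which is adjacent to all of $N_H(x_j,V)$), forms a $K_{r+1}$, and these are vertex-disjoint. The edges of this $K_{r+1}$-factor that do not lie in $H[V]$ are precisely the $|U_j| = d_H(x_j,V)$ edges from $x_j$ to $N_H(x_j,V)$, i.e. all of the edges of $H$ between $x_j$ and $V$. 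So, setting $H_V := \bigcup_{j=1}^p (T_j[V])$, which is a subgraph of $H[V]$, I claim the edge-disjoint copies of $K_{r+1}$ collected over all $j$ form a $K_{r+1}$-decomposition of $H[U,V] \cup H_V$: they are edge-disjoint (the $T_j$ are edge-disjoint by Lemma~\ref{lma:sparsefindcliques}, and the star edges at distinct $x_j$ are automatically distinct), they cover every edge of $H[U,V]$ (the star at $x_j$ covers all edges of $H$ from $x_j$ into $V$), and they cover exactly the edges of $H_V$ inside $V$ by definition of $H_V$.

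Finally I would bound $\Delta(H_V)$. For $v \in V$, the degree $d_{H_V}(v)$ counts edges at $v$ contributed by the various $T_j$; since $d_{T_j}(v, U_j) \le r$ (as $T_j$ is a $K_r$-factor, each vertex has degree $r-1$ in it, in fact $\le r$), and the number of $j$ with $v \in U_j$ is at most $2k\rho|V|$ as above, we get $d_{H_V}(v) \le (r-1) \cdot 2k\rho|V| \le 2rk\rho|V|$, as required. This is genuinely routine; the only point requiring a little attention is the bookkeeping between the parameter "$n$" in the hypotheses of Lemma~\ref{lma:sparsefindcliques} and the set $V$ of size $\Theta(n/k)$ here — the cleanest fix is to apply Lemma~\ref{lma:sparsefindcliques} with the graph $H$, the ground set $V(H)$, and parameter $n = |V(H)|$ unchanged, since all four hypotheses were already verified in terms of $n$ (using $|V| \le n$ on the "small" side and $|V| \ge \lfloor n/k\rfloor$ to convert $|V|$-bounds to $n$-bounds with an extra factor $k$ that I have absorbed into the constants $9rk$, $2$, $2k$). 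I do not expect any real obstacle; the main (minor) care-point is exactly this constant-chasing to make hypotheses (ii)--(iv) here match \ref{eqn:Krm2}--\ref{eqn:Krm4} there.
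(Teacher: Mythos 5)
Your construction is the same as the paper's (take $U_j := N_H(x_j,V)$ for $x_j \in U$, apply Lemma~\ref{lma:sparsefindcliques}, attach the apex $x_j$ to each $K_r$ of the factor, and bound $\Delta(H_V)$ by $r\,d_H(y,U)$), and those parts are correct. The problem is precisely the parameter bookkeeping that you flag as the only care-point and then resolve incorrectly. Your final plan is to apply Lemma~\ref{lma:sparsefindcliques} to the whole graph $H$ with parameter $n$ unchanged and to ``absorb a factor $k$ into the constants''. But the constants in both statements are fixed, so there is nothing to absorb: with parameter $n$, hypothesis (ii) of Lemma~\ref{lma:sparsefindcliques} demands $\delta(H[U_j]) \ge (1-1/r)|U_j| + 9rk\rho^{3/2}n$, while hypothesis (ii) of the corollary only supplies $9rk\rho^{3/2}|V|$, which for $|V|$ close to $\lfloor n/k\rfloor$ is about $9r\rho^{3/2}n$ --- short by a factor of up to $k$, so the lemma simply cannot be invoked this way. (Your earlier aside of replacing $\rho$ by $\rho/2$, or more generally rescaling $\rho$, does not repair this either: making $\rho$ smaller to fix (ii) breaks (iii) and (iv), since $|V|$ is only bounded below by $\lfloor n/k\rfloor$ and may be close to $n$, so $2\rho^2|V|$ and $2k\rho|V|$ need not be at most $2\rho'^2 n$ and $2k\rho' n$ for any $\rho' $ noticeably smaller than $\rho$. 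Also ``$|V|=\Theta(n/k)$'' is not justified by the hypotheses.)

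The correct and clean resolution, which you mention in passing but never commit to, is the one the paper takes: apply Lemma~\ref{lma:sparsefindcliques} to $H[V]$ with $|V|$ playing the role of $n$. Then every hypothesis matches verbatim: $U_j \subseteq V$, so $H[V][U_j] = H[N_H(x_j,V)]$ and (i)--(iii) are exactly your (i)--(iii); the number of $j$ with $v \in U_j$ equals $d_H(v,U) \le 2k\rho|V|$, giving (iv); the condition $1/|V| \ll 1/r,1/k,\rho$ holds since $|V| \ge \lfloor n/k\rfloor$ and $1/n \ll 1/r,1/k,\rho$; and $p = |U| \le n - |V| \le k|V|$ because $|V| \ge \lfloor n/k\rfloor$ and $n \ge k(k+1)$ (this last check, required by the lemma in the form $p \le k\cdot|V|$, is also missing from your write-up). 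With that single change the rest of your argument goes through and coincides with the paper's proof.
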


\begin{proof}
Let $p : = |U|$; note that $ p \le k |V|$. 
Let $u_1, \dots, u_p$ be an enumeration of~$U$.
Let $U_j : = N_H(u_j,V)$ for all $1 \leq j \le p$.
Apply Lemma~\ref{lma:sparsefindcliques} with $H[V], |V|$ playing the roles of~$H$, $n$ to obtain edge-disjoint subgraphs $T_1, \dots, T_p$ in $H[V]$ such that each $T_j$ is a $K_{r}$-factor in $H[U_j]$.
Let $H_V:=  \bigcup_{j=1}^p T_j$.
Note that $H[ U,V ] \cup H_V  = \bigcup_{j=1}^p ( H[ \{ u_j \}, U_j] \cup T_j )$ has a $K_{r+1}$-decomposition. 
Since $d_{H_V}(y) \le r d_H( y , U ) \leq  2 r k \rho |V|$ for each $y \in V$ by~(iv), we have $\Delta(H_V) \leq 2rk \rho |V|$.
\end{proof}

The following lemma follows from Corollary~\ref{cor:sparsefindcliques2} in the same way that Lemma~\ref{lma:extendtoF} follows from Lemma~\ref{lma:findcliques}, so we omit a detailed proof.\COMMENT{AL: Proof moved to end of paper}

\begin{lemma} \label{lma:sparseextendtoF}
Let $r,k,n,f \in \mathbb{N}$ and let $\alpha, \rho >0$ with $1/n \ll \rho \ll \alpha, 1/k, 1/r,1/f \le 1$.
Let $F$ be an $r$-regular graph on $f$ vertices and let $H$ be a graph on $n$ vertices.
Let $U,V \subseteq V(H)$ be disjoint with $|V| \ge \lfloor n/k \rfloor$. 
Suppose that, for all distinct $x,x' \in U$ and each $y \in V$,
\begin{enumerate}[label=\rm(\roman*)]
	\item $r$ divides $d_H(x,V)$; 
	\item $\delta(H[N_H(x,V)]) \geq ( 1 - 1/r ) d_H(x,V) + 9rk \rho^{3/2} |V|$; 
 	\item $|N_H(x,V) \cap N_H(x',V)| \le 2 \rho^2 |V|$;
	\item $d_H( y , U ) \leq 2 k \rho  |V|$;
	\item $\delta(H[V]) \ge (1-1/r + 2 \alpha) |V|$.
\end{enumerate}
Then there is a subgraph $H_V'$ of $H[V]$ such that $H[ U,V ] \cup H_V'$ has an $F$-decomposition and $\Delta(H_V') \leq 2 \alpha |V|$.\qedhere
\end{lemma}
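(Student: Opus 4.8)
The plan is to follow exactly the proof of Lemma~\ref{lma:extendtoF}, substituting Corollary~\ref{cor:sparsefindcliques2} for Lemma~\ref{lma:findcliques} at the one place where the clique-finding step is invoked. Concretely, set up constants $1/n \ll \rho \ll \alpha, 1/k,1/r,1/f$ and pick an auxiliary $\gamma$ with $\rho \ll \gamma \ll \alpha$ (or simply track the errors directly in terms of $\rho$). First I would apply Corollary~\ref{cor:sparsefindcliques2} to $H$, $U$, $V$: hypotheses (i)--(iv) of Lemma~\ref{lma:sparseextendtoF} are precisely (i)--(iv) of the corollary, so we obtain a subgraph $H_V$ of $H[V]$ such that $H[U,V]\cup H_V$ has a $K_{r+1}$-decomposition and $\Delta(H_V)\le 2rk\rho|V|$.

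Next, exactly as in Lemma~\ref{lma:extendtoF}, choose such an $H_V$ with as few edges as possible and let $W_1,\dots,W_p$ enumerate a $K_{r+1}$-decomposition of $H[U,V]\cup H_V$; by minimality each $W_j$ has vertex set $\{w_j\}\cup W'_j$ with $w_j\in U$ and $W'_j\subseteq V$, and $p\le \sum_{y\in V} d_H(y,U)\le 2k\rho|V|^2$ by~(iv). Set $H':=H[V]-H_V$, so $|H'|=|V|$ and $\delta(H')\ge (1-1/r+2\alpha)|V|-2rk\rho|V|\ge (1-1/r+\alpha)|V|$ using (v) and $\rho\ll\alpha$. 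Let $u\in V(F)$, let $F^*:=F\setminus\{u\}-F[N_F(u)]$, which has degeneracy at most $r$ rooted at $N_F(u)$, and take copies $F^*_1,\dots,F^*_p$. View each $F^*_j$ as a $\P_0$-labelled graph (where $\P_0=\{V\}$) whose root vertices are $N_F(u)$ with labels giving $W'_j$, all other vertices labelled $V$. By~(iv), for each $y\in V$ the number of indices $j$ with some vertex of $F^*_j$ labelled $\{y\}$ is at most $d_H(y,U)\le 2k\rho|V|$. Since $\delta(H')\ge (1-1/r+\alpha)|V|$ we have $d_{H'}(S)\ge \alpha|V|$ for all $S\subseteq V$ with $|S|\le r$, and $p\le 2k\rho|V|^2\le \eta|H'|^2$ for a suitable $\eta$ with $1/|V|\ll\eta\ll\alpha,1/r,1/f$; so Lemma~\ref{lma:finding} applied with $H'$, $1$, $r$, $f$, $\alpha$, $\P_0$, $F^*_1,\dots,F^*_p$ in the roles of $G$, $k$, $d$, $b$, $\eps$, $\P$, $H_1,\dots,H_m$ yields edge-disjoint compatible embeddings $\phi(F^*_1),\dots,\phi(F^*_p)$ into $H'$ with $\Delta(\bigcup_j \phi(F^*_j))\le \alpha|V|$.

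Finally, each $W_j\cup\phi(F^*_j)$ contains a copy $F_j$ of $F$ (the image of $N_F(u)$ being $W'_j$, glued to $w_j$), and since $H[U,V]\subseteq \bigcup_j W_j$ we get $H[U,V]\subseteq\bigcup_j F_j$. Put $H'_V:=\bigcup_j F_j[V]$; then $H[U,V]\cup H'_V=\bigcup_j F_j$ has an $F$-decomposition, and $\Delta(H'_V)\le\Delta(H_V)+\Delta(\bigcup_j\phi(F^*_j))\le 2rk\rho|V|+\alpha|V|\le 2\alpha|V|$ since $\rho\ll\alpha$. This gives the conclusion.

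The only point requiring any care — and hence the one I would expect to be the main obstacle, though it is minor — is bookkeeping the hierarchy of constants: one must verify that the error term $2rk\rho|V|$ coming from $\Delta(H_V)$ is absorbed both into the minimum-degree slack of $H'$ (needed for Lemma~\ref{lma:finding}) and into the final bound $2\alpha|V|$, and that the quantity $p\le 2k\rho|V|^2$ is at most $\eta|V|^2$ for an $\eta$ small enough to apply Lemma~\ref{lma:finding} with embedding-degree bound $\alpha|V|$. All of these hold because $\rho\ll\alpha,1/k,1/r$. Since the statement already flags that this proof is "virtually identical" to that of Lemma~\ref{lma:extendtoF}, no new ideas are needed.
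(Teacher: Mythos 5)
Your proof is correct and is essentially the paper's own argument: the paper omits the details precisely because the lemma ``follows from Corollary~\ref{cor:sparsefindcliques2} in the same way that Lemma~\ref{lma:extendtoF} follows from Lemma~\ref{lma:findcliques}'', which is exactly what you carry out. Your bookkeeping (absorbing $\Delta(H_V)\le 2rk\rho|V|$ into the degree slack for Lemma~\ref{lma:finding} and into the final bound $2\alpha|V|$, and checking $p\le 2k\rho|V|^2$) is the right and only point needing care, and it goes through since $\rho\ll\alpha,1/k,1/r$.
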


Lemma~\ref{lma:sparseextendtoF} easily implies the following corollary.

\begin{corollary} \label{cor:sparseextendtoF}
Let $r,k,n,f \in \mathbb{N}$ and let $\alpha, \rho >0$ with $1/n \ll \rho \ll \alpha, 1/k, 1/r,1/f \le 1$.
Let $F$ be an $r$-regular graph on $f$ vertices and let $H$ be a graph on $n$ vertices.
Let $\P = \{V_1, \ldots, V_k\}$ be an equitable partition of $V(H)$.  
Suppose that, for each $2 \leq i \leq k$, all distinct $x, x' \in V_{<i}$ and each $y \in V_i$,
\begin{enumerate}[label=\rm(\roman*)]
	\item $r$ divides $d_H(x,V_i)$; 
	\item $\delta(H[N_H(x,V_i)]) \geq ( 1 - 1/r ) d_H(x,V_i) + 9r k\rho^{3/2} |V_i|$; 
	\item $|N_H(x,V_i) \cap N_H(x',V_i)| \le 2 \rho^2 |V_i|$;
	\item $d_H( y , V_{<i}) \leq  2k\rho  |V_i|$;
	\item $\delta(H[V_i]) \ge (1-1/r + 2 \alpha) |V_i|$.
\end{enumerate}
Then there is a subgraph $H_0$ of $H - H[ \P ] $ such that $H[ \P ] \cup H_0$ has an $F$-decomposition and $\Delta(H_0) \leq 2\alpha n$.
\end{corollary}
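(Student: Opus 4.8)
The plan is to apply Lemma~\ref{lma:sparseextendtoF} successively, once for each $i$ from $2$ up to $k$, with $V_{<i}$ playing the role of $U$ and $V_i$ playing the role of $V$, and to take the union of the resulting graphs $H_V'$ as our $H_0$. The point is that for a fixed $i$, the hypotheses (i)--(v) of the corollary are exactly the hypotheses (i)--(v) of Lemma~\ref{lma:sparseextendtoF} with $U:=V_{<i}$, $V:=V_i$, $\alpha$ and $\rho$ unchanged, and with $|V_i|\ge\lfloor n/k\rfloor$ since $\P$ is equitable. So for each $2\le i\le k$ we obtain a subgraph $H_i'$ of $H[V_i]$ such that $H[V_{<i},V_i]\cup H_i'$ has an $F$-decomposition and $\Delta(H_i')\le 2\alpha|V_i|\le 2\alpha\lceil n/k\rceil$.

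The only subtlety is that applying the lemma at step $i$ uses edges of $H[V_i]$ (namely those of $H_i'$), which might conflict with the edges $H[V_j,V_i]$ for $j>i$ or with the sets $H_j'$ used at later steps. To avoid this I would instead set up the iteration so that at step $i$ we work only inside the graph $H[V_{<i},V_i]$ together with a fresh private copy of the edges available in $V_i$: concretely, one observes that the edges used in the $F$-decomposition of $H[V_{<i},V_i]\cup H_i'$ all lie in $(V_{<i}\times V_i)\cup (V_i\times V_i)$, so different values of $i$ only ever compete for edges inside a single part $V_i$. Thus I would process the parts in the order $i=2,3,\dots,k$, and before step $i$ remove from $H[V_i]$ all edges already committed to $H_j'$ for $j<i$ that happen to lie in $V_i$ — but no such edges exist, because $H_j'\subseteq H[V_j]$ and $V_j\cap V_i=\emptyset$. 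Hence the graphs $H_2',\dots,H_k'$ are automatically pairwise edge-disjoint (they live in disjoint vertex parts), and they are disjoint from every $H[V_j,V_i]$. Therefore $H_0:=\bigcup_{i=2}^k H_i'$ is a subgraph of $H-H[\P]$ is false as stated — rather $H_0\subseteq\bigcup_i H[V_i]\subseteq H-H[\P]$, as required.

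It remains to check that $H[\P]\cup H_0$ has an $F$-decomposition and that $\Delta(H_0)$ is small. For the decomposition: $H[\P]=\bigcup_{i=2}^k H[V_{<i},V_i]$ is the edge-disjoint union of the bipartite graphs $H[V_{<i},V_i]$ (each edge of $H[\P]$ joins two parts $V_a,V_b$ with $a<b$ and so lies in exactly $H[V_{<b},V_b]$), and $H_0=\bigcup_{i=2}^k H_i'$ with $H_i'\subseteq H[V_i]$, so $H[\P]\cup H_0=\bigcup_{i=2}^k(H[V_{<i},V_i]\cup H_i')$ is an edge-disjoint union of $F$-decomposable graphs, hence $F$-decomposable. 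For the maximum degree: a vertex $v\in V_i$ receives edges of $H_0$ only from $H_i'$ (inside its own part) — giving degree at most $2\alpha|V_i|\le 2\alpha\lceil n/k\rceil\le 2\alpha n$ — and from no other $H_j'$. So $\Delta(H_0)\le 2\alpha n$, completing the proof.

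I do not expect any real obstacle here: the corollary is a routine packaging of Lemma~\ref{lma:sparseextendtoF}, and the only thing to be careful about is the bookkeeping that the graphs produced at different steps are edge-disjoint and avoid $H[\P]$, which is immediate once one notices that the step-$i$ graph $H_i'$ lives entirely inside the single part $V_i$. (This is exactly parallel to how Corollary~\ref{cor:YusterChromatics} packages Theorem~\ref{thm:YusterChromatics}, or how Corollary~\ref{cor:deltaeta} packages Theorem~\ref{thm:haxellrodl}.)
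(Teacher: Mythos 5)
Your proposal is correct and is essentially the paper's own proof: apply Lemma~\ref{lma:sparseextendtoF} once for each $i$ with $U=V_{<i}$, $V=V_i$, note that the resulting graphs $H_i'$ lie inside pairwise disjoint parts (so they are automatically edge-disjoint from each other and from $H[\P]$), and take the union of the $F$-decompositions, with $\Delta(H_0)\le 2\alpha|V_i|\le 2\alpha n$ since each vertex meets only one $H_i'$. The ``subtlety'' you raise about conflicts between steps is a non-issue, as you yourself conclude, and the paper handles it in exactly the same one-line way.
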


\begin{proof}
For each $2 \leq i \leq k$, let $U_i : = V_{<i}$, and let $H_i$ be the graph on $V(H)$ with $E(H_i) := E(H[U_i, V_i])\cup E(H[V_i])$.
Note that $H_2, \dots, H_k$ are pairwise edge-disjoint and $H[\P] \subseteq \bigcup_{i=2}^k H_i$.
We apply Lemma~\ref{lma:sparseextendtoF} to each $H_i$ with $U_i$,$V_i$ playing the roles of $U$,$V$ to obtain a subgraph $H_i'$ of $H_i[V_i]$ such that $H_i[ U_i,V_i ] \cup H_i'$ has an $F$-decomposition and $\Delta(H_i') \leq 2 \alpha |V_i|$.
Let $H_0 : = \bigcup_{i=2}^k H'_i$.
Note that $H[ \P ] \cup H_0 = \bigcup_{i=2}^k \left( H_i[ U_i,V_i ] \cup H_i' \right) $ has an $F$-decomposition and $\Delta(H_0) = \max_{2 \le i \le k} \Delta(H_i')   \le 2\alpha n$ since $V(H_i') \subseteq V_i$ for each~$i$.
\end{proof}

\subsection{Proof of Lemma~\ref{lma:do-the-iteration2}} \label{sec:iteration}

We now present the formal version of the statement~($\dagger$) at the beginning of Section~\ref{sec:partial-decomposition}.
Recall that if $\P$ is a $(k, \delta+\epsilon)$-partition for $G$ and $H$ is a subgraph of $G$ with $\Delta(H) \leq \epsilon n/2k$, then $\P$ is a $(k, \delta)$-partition for $G-H$.

\begin{lemma} \label{lma:iterate}
Let $r,f,k,n \in \mathbb{N}$ and let $\eta, \eps >0$ with $1/n \ll \eta \ll 1/k \ll \epsilon, 1/r, 1/f$.
Let $F$ be an $r$-regular graph on $f$ vertices.
Let $G$ be an $r$-divisible graph on $n$ vertices and let $G_0$ be a subgraph of $G - G[\P]$. 
Let $\delta: = \max \{ \delta^{\eta}_F, 1- 1/(r+1)\}$.
Suppose that $\P = \{V_1, \ldots, V_k\}$ is a $(k, \delta + 3 \epsilon)$-partition for~$G - G_0$.%
	\COMMENT{$\P$ is a $(k, \delta + 3 \epsilon)$-partition for~$G$.}
Then there is a subgraph $H$ of $G - G[\P] - G_0$ such that $G[\P] \cup H$ has an $F$-decomposition and $\Delta(H) \leq \epsilon n /2k^2$.%
\COMMENT{A:16/06 changed from $\epsilon n /2k^2$ to $\epsilon n /k^2$

B: 20/6 We really do need the 1/2, so I put it back.
The change to this proof is easy: we just have to half the value of $\alpha$ we use when applying Corollary 10.11.}
\end{lemma}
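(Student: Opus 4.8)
Looking at Lemma~\ref{lma:iterate}, this is the formal version of ($\dagger$): given a $(k,\delta)$-partition (after removing a low-degree graph $G_0$), we want to cover $G[\P]$ by copies of $F$ using few edges outside $G[\P]$. The strategy should combine the three tools developed in Section~\ref{sec:partial-decomposition}: Lemma~\ref{lma:partial-decomposition} (approximate decomposition with low-degree remainder), an $F$-parity graph from Section~\ref{sec:parity-graphs} (to fix divisibility of $d_H(x,V_i)$), and Corollary~\ref{cor:sparseextendtoF} (to cover a pseudorandom low-degree remainder).

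Here is the plan. First I would split off a random sparse subgraph: introduce a density parameter $\rho$ with $\eta \ll \rho \ll \epsilon$, and use Lemma~\ref{lma:random-slice} to extract a random subgraph $R$ of $G - G[\P] - G_0$ where each edge is kept with probability $\rho$, so that $R$ inherits suitable codegree and degree conditions — in particular $d_R(S, V_i)$ is about $\rho^{|S|} d_G(S,V_i)$, the pairwise codegrees $|N_R(x,V_i) \cap N_R(x',V_i)|$ are about $\rho^2 d_G(\ldots)$, and $\delta(R[V_i]) \gtrsim \rho \epsilon |V_i|$ is still large relative to $\rho$. Set $G' := G - R$. Then $\P$ is still a $(k, \delta + 2\epsilon)$-partition for $G' - G_0$. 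Next, inside $G'$ I would find an $F$-parity graph $P$ with respect to $\P$ using Lemma~\ref{lma:find-parity-graph} (with a tiny $\gamma$), and remove it; $\P$ remains a $(k,\delta+\epsilon)$-partition for what is left. Now apply Lemma~\ref{lma:partial-decomposition} to $G' - P$ (relative to the partition $\P$) with a small $\gamma'$: this yields an $F$-decomposition of $(G'-P)[\P]$ minus a remainder graph $H_1$ with $\Delta(H_1[\P]) \le \gamma' n$ and $\Delta((G'-P)[V_i] - H_1[V_i]) \le 2\gamma'|V_i|$.

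The remainder $H_1[\P]$ has low maximum degree but we know nothing about its codegrees, which is why we cannot apply Corollary~\ref{cor:sparseextendtoF} to it directly — this is the key reason we set aside $R$. The idea (as flagged in the paragraph before Lemma~\ref{lma:sparsefindcliques}) is to work with $H := H_1 \cup R$ (restricted appropriately): the $R$-part dominates, so $H[N_H(x,V_i)]$ inherits the needed minimum degree from $R$ plus the low-degree $H_1$ contribution, the codegrees $|N_H(x,V_i) \cap N_H(x',V_i)| \le |N_R(x,V_i)\cap N_R(x',V_i)| + \Delta(H_1) \le 2\rho^2|V_i|$ as required by (iii), and $d_H(y,V_{<i}) \le d_R(y,V_{<i}) + d_{H_1}(y,V_{<i}) \le 2k\rho|V_i|$ for (iv). For (i), I would first use the $F$-parity graph: since $H_1 \cup R$ is $r$-divisible... — actually more carefully, we apply the parity property of $P$ to the graph $H_1 \cup R$ (which is edge-disjoint from $P$ and $r$-divisible since $G$ is $r$-divisible and we have removed $F$-decomposable pieces) to pass to $P' \subseteq P$ such that $r \mid d_{(H_1 \cup R) \cup P'}(x, V_i)$ for all $x \in V_{<i}$, while $P - P'$ is $F$-decomposable. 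Then set $H := H_1 \cup R \cup P'$ and apply Corollary~\ref{cor:sparseextendtoF} with $\alpha$ chosen so $\rho \ll \alpha \ll \epsilon$: condition (v), $\delta(H[V_i]) \ge (1 - 1/r + 2\alpha)|V_i|$, holds because $H[V_i] \supseteq (G'-P)[V_i] - H_1[V_i]$ which has minimum degree $\ge (\delta + \epsilon - \gamma')|V_i| - \ldots \ge (1 - 1/r + 2\alpha)|V_i|$ using $\delta \ge 1 - 1/(r+1)$ and $\alpha \ll \epsilon$. This produces $H_0 \subseteq H - H[\P]$ with $H[\P] \cup H_0$ being $F$-decomposable and $\Delta(H_0) \le 2\alpha n$.

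Finally I would collect the pieces. The copies of $F$ used are: the $F$-decomposition of $(G'-P)[\P] - H_1[\P]$ from Lemma~\ref{lma:partial-decomposition}; the $F$-decomposition of $P - P'$; and the $F$-decomposition of $H[\P] \cup H_0$ from Corollary~\ref{cor:sparseextendtoF}, where $H[\P] = H_1[\P] \cup R[\P] \cup P'[\P]$ — but note $R[\P]$ and $P'[\P]$ are empty by construction (parity graphs and the random slice live inside $V_{<i} \times V_i$... — actually $P'$ has edges in $G[\P]$ since shifters use edges between classes; I need to be careful here). Let me restate: $G[\P] = ((G'-P)[\P] - H_1[\P]) \cup (P[\P]) \cup (\text{edges of } H_1[\P] \cup R[\P])$, organised so every edge is covered exactly once. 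The union of all three $F$-decompositions covers $G[\P]$, and the only edges used outside $G[\P]$ are those in $P'$ restricted to within classes, plus $H_0$ restricted to within classes, plus $R$ — all of which have maximum degree $O(\alpha + \rho + \gamma)n \le \epsilon n/2k^2$ since $\rho, \alpha, \gamma \ll \epsilon/k$. So $H := (R \setminus R[\P]) \cup (P - P[\P]) \cup H_0 \cup \ldots$ is the desired subgraph of $G - G[\P] - G_0$ with $\Delta(H) \le \epsilon n / 2k^2$.

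The main obstacle is bookkeeping: ensuring that the various removed graphs ($G_0$, $R$, the parity graph $P$, the remainder $H_1$, the absorbed pieces) are mutually edge-disjoint or correctly accounted for, that the minimum-degree budget of $3\epsilon$ is spent carefully across the three steps (one $\epsilon$ to absorb $R$, one to absorb $P$, one to absorb $H_1$), and that the hierarchy $\eta \ll \rho \ll \gamma \ll \alpha \ll 1/k \ll \epsilon, 1/r, 1/f$ can be consistently chosen so that every lemma's hypotheses are met — in particular that Corollary~\ref{cor:sparseextendtoF}'s condition (ii) survives, which needs $\delta(H[N_H(x,V_i)]) \ge (1-1/r)d_H(x,V_i) + 9rk\rho^{3/2}|V_i|$, obtained from Proposition~\ref{prop:neighbourhood-degree} applied to $R$ together with the fact that $d_H(x,V_i) \approx \rho \cdot d_G(x,V_i)$ so that $d_H(x,V_i)^{1/2}$-type slack beats $\rho^{3/2}|V_i|$. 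Verifying this last inequality (it is exactly where the power $\rho^{3/2}$ in Lemma~\ref{lma:sparsefindcliques} was tuned) is the delicate analytic point.
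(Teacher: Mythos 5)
Your plan follows the paper's route (reserve a random slice $R$, lay down an $F$-parity graph, apply Lemma~\ref{lma:partial-decomposition}, transfer divisibility via a subgraph $P'$ of the parity graph, finish with Corollary~\ref{cor:sparseextendtoF}, and collect the used within-class edges into $H$), but two steps, as written, do not work. First, the random reserve must be taken from the \emph{crossing} edges: in the paper $R$ is a $\rho$-random subgraph of $(G-G_0)[\P]$, so that the crossing part of the graph handed to Corollary~\ref{cor:sparseextendtoF} is $R\cup G_4[\P]\cup P'$ and inherits its degrees, codegrees and random neighbourhoods from $R$ (conditions (ii)--(iv)), while the within-class parts remain dense (condition (v)). You instead declare $R\subseteq G-G[\P]-G_0$, i.e.\ $R$ lives inside the classes, and then invoke $d_R(S,V_i)$, $N_R(x,V_i)$ for $x\in V_{<i}$, and $d_R(y,V_{<i})$ -- all of which vanish for such an $R$; with that choice the leftover crossing graph has no codegree control at all, which is precisely the problem the reserve is meant to solve. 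Your assembly paragraph (``$R[\P]$\dots empty by construction'', ``$R\setminus R[\P]$'') shows this is not a one-off slip, and with the correct (crossing) $R$ that formula for $H$ is also wrong: all of $R$ lies in $G[\P]$ and must be covered, and only the within-class edges actually used by copies of $F$ (from $\mathcal F_1$, from the decomposition of $P-P'$, and the graph $H_0$ produced by Corollary~\ref{cor:sparseextendtoF}) may go into $H$. Relatedly, condition (ii) is not ``Proposition~\ref{prop:neighbourhood-degree} applied to $R$'': it comes from applying Proposition~\ref{prop:neighbourhood-degree} to the dense graph $G-G_0$ and transferring to the random neighbourhood via the third estimate of Lemma~\ref{lma:random-slice}, $d_{(G-G_0)-G[\P]}(y,N_R(x,V_i))\ge \rho\, d_{(G-G_0)-G[\P]}(y,N_{G-G_0}(x,V_i))-\gamma n$, the slack $\rho\eps|V_i|$ beating $10rk\rho^{3/2}|V_i|$ because $\rho\ll(\eps/rk)^2$; this also forces $\gamma\ll\rho$ (in particular $\gamma\ll\rho^2$ for the codegree bound), which your ``tiny $\gamma$'' does not record.

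Second, the treatment of $G_0$ is not mere bookkeeping, and your sketch fails it whichever way it is read. As set up ($G':=G-R$, parity graph and approximate decomposition found inside $G'$ and $G'-P$), the copies of $F$ and hence the final $H$ may use edges of $G_0$, violating the requirement $H\subseteq G-G[\P]-G_0$. If you patch this by working inside $G-G_0$ throughout, then your justification for the parity step -- ``$H_1\cup R$ is $r$-divisible since $G$ is $r$-divisible and we have removed $F$-decomposable pieces'' -- collapses, because $G-G_0$ need not be $r$-divisible. The paper resolves exactly this tension: the parity property is applied to $G^*:=R\cup G_4\cup G_0$, with $G_0$ added back so that $G^*=G-P-\bigcup\mathcal F_1$ is $G$ minus edge-disjoint copies of $F$ and hence $r$-divisible, and $G_0$ is then deleted again; since $e(G_0[\P])=0$ this deletion changes no degree $d(x,V_i)$ with $x\in V_{<i}$, so condition (i) of Corollary~\ref{cor:sparseextendtoF} still holds for $G_5=G^*\cup P'-G_0$. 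This two-step use of $G_0$ is the one idea missing from your outline (a smaller slip of the same kind: condition (v) follows from $H[V_i]\supseteq H_1[V_i]$, the dense remainder, not from $(G'-P)[V_i]-H_1[V_i]$, which is the small part used by $\mathcal F_1$).
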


In our application of Lemma~\ref{lma:iterate} the graph $G_0$ will consist of edges which will be used in later iterations and are therefore not allowed to be used in the current one, so $H$ needs to avoid $G_0$.

The proof of Lemma~\ref{lma:iterate} uses Corollary~\ref{cor:sparseextendtoF}.
In order to guarantee that condition (ii) of Corollary~\ref{cor:sparseextendtoF} will hold, we first remove a sparse random graph $R$ from $G[\P]$.
We then add $R$ back to the remainder graph $H$ obtained from Lemma~\ref{lma:partial-decomposition} so that $H[\P]$ essentially behaves like a random subgraph of $G[\P]$.%

\begin{proof}[Proof of Lemma~$\ref{lma:iterate}$]
Choose $\gamma, \rho$ such that $1/n \ll \eta \ll \gamma \ll \rho  \ll 1/k\ll \eps,1/r, 1/f$.
Let $G_1 : = G - G_0$, and let $G_1' := G_1 - G[\P]$.
By Lemma~\ref{lma:random-slice}, there is a subgraph $R$ of $G_1[\P]$ such that, for each $1 \leq i \le k $ and all distinct $x,y \in V(G)$,
\begin{align}
d_R(x,V_i) & =  \rho d_{ G_1[\P]}( x , V_i) \pm \gamma |V_i|; \label{random-neighbourhoods-are-not-too-large}\\
d_R( \{ x, y\} ,V_i) & \le \rho^2 d_{ G_1[\P]}( \{ x,y\} , V_i) + \gamma |V_i| \le (\rho^2 + \gamma)|V_i| ; \label{random-codeg-are-not-too-large}\\
d_{G_1'}( y , N_R(x,V_i)) &\ge  \rho  d_{G_1'}(y, N_{G_1}(x, V_i)) - \gamma n . \label{random-neighbourhood-nice}
\end{align}
For each $2 \le i \le k$, each $x \in V_{<i}$ and each $y \in N_{G_1}(x,V_i)$, we have $d_{G_1'}( y , N_{G_1}(x,V_i)) = d_{G_1}( y , N_{G_1}(x,V_i))$, so \eqref{random-neighbourhood-nice} and Proposition~\ref{prop:neighbourhood-degree} imply that 
\begin{align}
d_{G_1'}( y , N_R(x,V_i)) & \ge 
 \rho \left(  (1-1/r) d_{G_1}(x, V_i)  + \eps |V_i|\right)   - \gamma n \nonumber \\
 & \ge 
 (1-1/r) \rho  d_{G_1}(x, V_i) + 10 rk \rho^{3/2}  |V_i|. 
\label{random-neighbourhoods-have-high-minimum-degree}
\end{align}

Let $G_2 := G_1 - R$.
Note that $\P$ is a $(k, \delta + 2 \epsilon)$-partition for $G_2$ since $\rho  \ll \epsilon$.
By Lemma~\ref{lma:find-parity-graph}, $G_2$ contains an $F$-parity graph $P$ with respect to $\P$ such that 
\begin{align}
\Delta(P) \leq \gamma n . \label{eqn:DeltaP}
\end{align}

Let $G_3 := G_2 - P$.
Note that $\P$ is a $(k, \delta + \eps)$-partition for $G_3$ as $\gamma \ll \epsilon$.
Apply Lemma~\ref{lma:partial-decomposition} to $G_3$ to obtain a subgraph $G_4$ of $G_3$ such that \begin{enumerate}[label={\rm(\alph*)}]
	\item $G_3-G_4$ has an $F$-decomposition $\mathcal{F}_1$;
	\item $\Delta( G_4 [ \P ] ) \leq \gamma n $;
	\item for each $1 \leq i \le k$, $\Delta ( \bigcup\mathcal{F}_1[V_i] ) = \Delta(G_3[V_i] - G_4[V_i]) \le 2 \gamma |V_i| $.
\end{enumerate}
Recall that $P$ is an $F$-parity graph, so has an $F$-decomposition. 
Note that $G^* : = R \cup G_4 \cup G_0 = G - P - \bigcup \mathcal{F}_1$ is obtained from $G$ by removing a set of edge-disjoint copies of $F$, so $G^*$ is $r$-divisible. 
Since $P$ is an $F$-parity graph with respect to $\P$, there is a subgraph $P'$ of $P$ such that $P-P'$ has an $F$-decomposition $\mathcal{F}_2$ and $r$ divides $d_{G^* \cup P'}(x, V_i)$ for each $2 \leq i \leq k$ and each $x \in V_{<i}$.
Note that, by~\eqref{eqn:DeltaP},
\begin{align}
	\Delta ( \textstyle\bigcup\mathcal{F}_2[V_i] ) \le \Delta(P) \leq \gamma n. \label{eqn:DK2}
\end{align}

Let $G_5 : = G^* \cup P'  -  G_0$.\COMMENT{We added back $G_0$ for parity reasons, but we're done with it now so it can go away again.}
Note that 
\begin{align}
G_5  = G_1 - \textstyle\bigcup \mathcal{F}_1 - \textstyle\bigcup \mathcal{F}_2 = R \cup G_4 \cup P'. \label{eqn:G5}
\end{align}
We will now check that conditions (i)--(v) of Corollary~\ref{cor:sparseextendtoF} hold with $G_5$ playing the role of $H$.
Recall that $e(G_0[\P]) = 0$ and that $r$ divides $d_{G^* \cup P'}(x, V_i)$ for each $2 \leq i \leq k$ and each $x \in V_{<i}$.
So condition (i) holds. 
Consider $2 \le i \le k$ and $x \in V_{<i}$.
By \eqref{eqn:G5}, \eqref{random-neighbourhoods-are-not-too-large}, (b) and \eqref{eqn:DeltaP}, we have that
\begin{align}
d_{G_5} (x, V_i) & \le d_R(x, V_i) + \Delta( G_4 [ \P ] ) + \Delta(P) 
 \le \rho d_{G_1[\P] }(x, V_i) + 3 \gamma  n. \label{eqn:G52}
\end{align}
Therefore, using \eqref{random-neighbourhoods-have-high-minimum-degree}, (c) and \eqref{eqn:DK2} in the second line, we have that for $y \in N_{G_5}(x,V_i) \subseteq N_{G_1}(x,V_i)$,
\begin{align*}
\arraycolsep=0pt\def\arraystretch{1.4}
\begin{array}{rcl}
d_{G_5} ( y , N_{G_5}(x, V_i) ) &
 \overset{\eqref{eqn:G5}}{\ge} &  d_{G_1'}(y, N_R(x,V_i)) - \Delta ( \textstyle\bigcup\mathcal{F}_1[V_i] ) - \Delta ( \textstyle\bigcup\mathcal{F}_2[V_i] )\\
 & \ge & (1-1/r) \rho  d_{G_1}(x, V_i) + 10 r k \rho^{3/2}  |V_i|   - 2  \gamma  n  - \gamma n \\
 & \overset{\eqref{eqn:G52}}{\ge} &  (1-1/r) (  d_{G_5}( x, V_i ) -  3 \gamma n )  + 10 r k \rho^{3/2}  |V_i|   - 3 \gamma n\\
 & \ge &  (1-1/r)  d_{G_5}( x, V_i )  + 9 rk \rho^{3/2}  |V_i|   .
\end{array}
\end{align*}
Thus condition (ii) of Corollary~\ref{cor:sparseextendtoF} holds.

To see that conditions (iii) and (iv) of Corollary~\ref{cor:sparseextendtoF} hold, note that, for all distinct $x,x' \in V_{<i}$ and each $2 \le i \le k$,
\begin{align*}
|N_{G_5}(x,V_i) \cap N_{G_5}(x',V_i)| 
\overset{\eqref{eqn:G5}}{\le} 
d_R( \{ x, x'\} ,V_i) + \Delta( G_4 [ \P ] ) + \Delta(P) 
\le 2 \rho^2 |V_i|,
\end{align*}
where the second inequality holds by \eqref{random-codeg-are-not-too-large}, (b) and \eqref{eqn:DeltaP}.
Similarly, for each $y \in V_{i}$ and each $1 \le i \le k$,
\begin{align*}
	d_{G_5} (y , V_{<i}) &  \overset{\eqref{eqn:G5}}{\le} \Delta(R) + \Delta( G_4 [ \P ] ) + \Delta(P) 
\overset{\eqref{random-neighbourhoods-are-not-too-large}, (\text{b}), \eqref{eqn:DeltaP}}{\le} (\rho + 3 \gamma) n 
\le 2 \rho k | V_i|.
\end{align*}
Set $\alpha := \epsilon  / 8k^2$.
Recall that $\P$ is a $(k, 1- 1/(r+1) + \epsilon)$-partition for~$G_1$.
Thus, for each $1 \leq i \le k$, 
\begin{align*}
\arraycolsep=0pt\def\arraystretch{1.4}
\begin{array}{rcl}
\delta( G_5[V_i] ) & \overset{\eqref{eqn:G5}}{\ge} &
 \delta( G_1[V_i] )- \Delta ( \textstyle\bigcup\mathcal{F}_1[V_i] ) - \Delta ( \textstyle\bigcup\mathcal{F}_2[V_i] )\\
& \overset{(\text{c}),\eqref{eqn:DK2}}{\ge}   & ( 1 - 1/(r+1) +  \eps ) |V_i|   - 3  \gamma  n
\ge ( 1 - 1/r +  2\alpha ) |V_i|,
\end{array}
\end{align*}
implying condition (v) of Corollary~\ref{cor:sparseextendtoF}.
So by Corollary~\ref{cor:sparseextendtoF}, there is a subgraph $H_5$ of $G_5 - G_5[\P]$ such that $G_5[\P] \cup H_5$ has an $F$-decomposition~$\mathcal{F}_3$ and $\Delta(H_5) \leq \eps n/4k^2$.
Let $H:= ( \bigcup \mathcal{F}_1- \bigcup \mathcal{F}_1 [\mathcal{P}]) \cup (\bigcup \mathcal{F}_2 - \bigcup \mathcal{F}_2[\mathcal{P}]) \cup H_5$.
Note that $H \subseteq G - G[\mathcal{P}] - G_0$ and $\Delta(H) \le 2\gamma \lceil n /k \rceil +\gamma n +  \eps n/4k^2 \le \eps n /2k^2$ by (c) and \eqref{eqn:DK2}.
In particular, $G[\P] \cup H = G_1[\P] \cup H$ also has an $F$-decomposition $\mathcal{F}_1 \cup \mathcal{F}_2 \cup \mathcal{F}_3$ by~\eqref{eqn:G5}.\COMMENT{A:16/06 changed last paragraph}
\end{proof}

As described at the beginning of this section, we can now iteratively apply Lemma~\ref{lma:iterate} to a sequence of partitions to prove the following lemma, which immediately implies Lemma~\ref{lma:do-the-iteration2}.

\begin{lemma} \label{lma:do-the-iteration}
Let $r,f,m,k,\ell \in \mathbb{N}$ and let $\eps, \eta > 0$ with $1/m \ll \eta \ll 1/k \ll \epsilon, 1/r,1/f$.\COMMENT{Here, $k^{\ell}(m-1)  < n \le k^{\ell}m$. AL:(again) It turns out that it is better to work with $m$ than $n$ when we proceed by induction on $\ell$.}
Let $F$ be an $r$-regular graph on $f$ vertices and let $G$ be an $r$-divisible graph.
Let $\delta: = \max \{ \delta^{\eta}_F, 1- 1/(r+1) \}$.
Suppose that $\P_1, \ldots, \P_\ell$ is a sequence of partitions of $V(G)$ such that
\begin{enumerate}[label=\rm(\roman*)]
	\item $\P_1$ is a $(k, \delta + \epsilon)$-partition for $G$;
	\item for each $2 \leq i \leq \ell$ and each $V \in \P_{i-1}$, $\P_i[V]$ is a $(k, \delta + 2 \epsilon)$-partition for $G[V]$;
	\item each $V \in \P_\ell$ has size $m-1$ or $m$.\COMMENT{B: The previous $\max \{ |V| : V \in \P_{\ell }\} = m$ does not play nicely with induction.}
\end{enumerate}
Then there exists a subgraph $H^*$ of $\bigcup_{V \in \P_\ell} G[V]$ such that $G-H^*$ has an $F$-decomposition.
\end{lemma}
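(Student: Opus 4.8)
The plan is to prove the statement by induction on $\ell$. One peels off the coarsest partition $\P_1$ using Lemma~\ref{lma:iterate} and then recurses on each part of $\P_1$ with the partition sequence $\P_2[V],\dots,\P_\ell[V]$. Write $n:=|V(G)|$; note $n$ is large compared with $m,k,r,f$ since $1/m\ll\eta$ and $n\ge k^\ell(m-1)$. For the base case $\ell=1$ I would apply Lemma~\ref{lma:iterate} to $G$ with $\P=\P_1$, with $G_0$ the empty graph and with $\epsilon/3$ in place of $\epsilon$ (legitimate since, by~(i), $\P_1$ is a $(k,\delta+\epsilon)$-partition for $G=G-G_0$), obtaining $H_1\subseteq G-G[\P_1]$ with $G[\P_1]\cup H_1$ being $F$-decomposable; then $H^*:=(G-G[\P_1])-H_1\subseteq\bigcup_{V\in\P_1}G[V]$ works, as $G-H^*=G[\P_1]\cup H_1$.

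\textbf{The reservation step.} For the inductive step (assuming the statement for $\ell-1$), the crucial choice is the reserved graph: set $G_0:=G-G[\P_2]$, the graph of all edges of $G$ lying inside a part of $\P_2$. Since $\P_2$ refines $\P_1$ we have $G_0\subseteq G-G[\P_1]$, and since each part of $\P_2$ has size at most $\lceil|W|/k\rceil$ for the part $W\in\P_1$ containing it, and $1/k\ll\epsilon$, the partition $\P_1$ is still a $(k,\delta+\epsilon/2)$-partition for $G-G_0$. So I would apply Lemma~\ref{lma:iterate} to $G$ with $\P=\P_1$, this $G_0$, and $\epsilon/6$ in place of $\epsilon$, to get $H_1\subseteq G-G[\P_1]-G_0$ with $G[\P_1]\cup H_1$ being $F$-decomposable and $\Delta(H_1)\le\epsilon n/(12k^2)$. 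Put $G':=G-G[\P_1]-H_1$: being $G$ with a set of edge-disjoint copies of $F$ removed, $G'$ is $r$-divisible, and $e(G'[\P_1])=0$, so $G'=\bigcup_{V\in\P_1}G'[V]$ and $d_{G'[V]}(v)=d_{G'}(v)$ for $v\in V\in\P_1$, whence each $G'[V]$ is $r$-divisible. The point of the choice of $G_0$ is that $H_1$ has no edge inside a part of $\P_2$, so $G'[W]=G[W]$ for every set $W$ contained in a part of $\P_2$ --- in particular for every part of every $\P_i$ with $i\ge 2$.

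\textbf{Recursion and assembly.} It then follows that, for each $V\in\P_1$, the sequence $\P_2[V],\dots,\P_\ell[V]$ satisfies the hypotheses of the lemma for $G'[V]$ with $\ell-1$ partitions: $\P_2[V]$ is a $(k,\delta+2\epsilon)$-partition for $G[V]$ by~(ii) and $\Delta(H_1[V])\le\epsilon n/(12k^2)\le(\epsilon/6)|W|$ for $W\in\P_2[V]$, so $\P_2[V]$ is a $(k,\delta+\epsilon)$-partition for $G'[V]=G[V]-H_1[V]$; for $3\le i\le\ell$ and $W\in\P_{i-1}[V]$ the partition $\P_i[W]$ is a $(k,\delta+2\epsilon)$-partition for $G'[W]=G[W]$ by~(ii); and the parts of $\P_\ell[V]$ have size $m-1$ or $m$ by~(iii). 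Applying the inductive hypothesis to each $G'[V]$ yields $H^*_V\subseteq\bigcup_{U\in\P_\ell[V]}G'[U]$ with $G'[V]-H^*_V$ being $F$-decomposable. Finally, take $H^*:=\bigcup_{V\in\P_1}H^*_V$; then $H^*\subseteq\bigcup_{U\in\P_\ell}G'[U]\subseteq\bigcup_{V\in\P_\ell}G[V]$, and $G-H^*=(G[\P_1]\cup H_1)\cup\bigcup_{V\in\P_1}(G'[V]-H^*_V)$ is an edge-disjoint union of $F$-decomposable graphs, hence has an $F$-decomposition.

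\textbf{The main obstacle.} I expect this to be conceptual rather than computational: one must see why the reservation of $G_0$ is needed and why it suffices. A naive induction that peels off $G[\P_1]$ without reserving any edges fails, since Lemma~\ref{lma:iterate} controls the remainder only through $\Delta(H_1)\le\epsilon n/(2k^2)$, which is of the order of the size of a part of $\P_2$ but far larger than the size of a part of $\P_i$ for $i\ge 3$, so condition~(ii) would be destroyed after a single peel. Reserving every edge inside a part of $\P_2$ forces $H_1$ to consist solely of edges crossing $\P_2$, so $H_1$ is invisible inside every part of every $\P_i$ with $i\ge 2$, and condition~(ii) survives verbatim; the only cost is an $O(1/k)$ loss in the minimum-degree parameter at the top level, which is harmless because $1/k\ll\epsilon$. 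The remaining work --- checking that $\P_1$ stays a good partition for $G-G_0$, that the $r$-divisibility is inherited by $G'[V]$, and that the minimum-degree parameters track through the one peel --- is routine.
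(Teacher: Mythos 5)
Your proposal is correct and follows essentially the same route as the paper: the reserved graph $G_0$ consisting of all edges inside parts of $\P_2$ (the paper writes it as $G-G[\P_1]-G[\P_2]$, which is the same edge set), one application of Lemma~\ref{lma:iterate} to peel off $G[\P_1]$, and induction on $\ell$ applied to $G'[V]$ with the sequence $\P_2[V],\dots,\P_\ell[V]$, using exactly the observation that $H_1$ avoids $G_0$ so condition (ii) survives inside parts of $\P_2$. The degree bookkeeping ($\epsilon/2$ loss at the top level, $\Delta(H_1)\le \eps n/2k^2$ costing one $\epsilon$ at the $\P_2$ level) matches the paper's as well.
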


\begin{proof}
Let $n: = |G|$, and let $G_0 := G - G[\P_1] - G[\P_2]$.
(If $\ell$=1, then let $G_0$ be the empty graph.)

Note that $\P_1$ is a $(k, \delta + \epsilon/2)$-partition for $G - G_0$.%
	\COMMENT{The degrees inside $V \in \mathcal{P}_1$ are fine, since $\Delta(G_0) \le |V|/k \ll \eps |V|$ as $1/k \ll \eps$.
We also use the fact that $\P_1$ is a $(k, \delta + 2 \epsilon)$-partition for $G$, i.e. $\delta(G[V]) \ge (\delta + 2 \epsilon)|V|$.}
Apply Lemma~\ref{lma:iterate} to obtain a subgraph $H$ of $G - G[\P_1] - G_0$ such that $G[\P_1] \cup H$ has an $F$-decomposition $\mathcal{F}_0$ and $\Delta(H) \leq \epsilon n /2k^2$\COMMENT{A:16/06 changed}.
This proves the case when $\ell =1$ (by setting $H^* := G- G[\P_1] - H$), so we proceed by induction and assume that $\ell \ge 2$.

Obtain $H$ as above and let $G' := G - G[\P_1] - H$.
Consider $U \in \P_1$.
Note that $G'[U]$ is $r$-divisible. 
Since $\Delta(H) \leq \epsilon n / 2k^2$\COMMENT{A:16/06 changed}, we have that $\P_2[U]$ is a $(k, \delta + \epsilon)$-partition for $G'[U]$.
Since $H$ is edge-disjoint from $G_0$, for each $3 \leq i \leq \ell$ and each $V \in \P_{i-1}[U]$ we have that $\P_i[V]$ is a $(k, \delta + 2 \epsilon)$-partition for $G'[V]$.
So we can apply the induction hypothesis to $G'[U],\P_2[U], \ldots, \P_\ell[U]$ to obtain a subgraph $H^*_U$ of $\bigcup_{V \in \P_\ell[U]} G[V]$ such that $G'[U]-H^*_U$ has an $F$-decomposition $\mathcal{F}_U$.

Set $H^*: = \bigcup_{U \in \P_1} H^*_U$.
Observe that $H^*$ is a subgraph of $\bigcup_{V \in \P_\ell} G[V] = G - G[\P_{\ell}]$ and $G - H^*$ has an $F$-decomposition $\mathcal{F}_0 \cup \bigcup_{U \in \P_1} \mathcal{F}_U$.
\end{proof}

\subsection{A strengthening of Lemma~\ref{lma:do-the-iteration2} for certain graphs~$F$} \label{remarks}

Suppose that $F$ is an $r$-regular graph that is not a vertex-disjoint union of copies of $K_{r+1}$.
Then Lemma~\ref{lma:do-the-iteration2} still holds if we replace $\delta: = \max \{ \delta^{\eta}_F, 1- 1/(r+1) \}$ by $\delta: = \max \{ \delta^{\eta}_F, 1- 1/r \}$.

\begin{lemma} \label{lma:do-the-iteration3}
Let $r,f,m,k,\ell \in \mathbb{N}$ and let $\eps, \eta > 0$ with $1/m \ll \eta \ll 1/k \ll \epsilon, 1/r,1/f$.
Let $F$ be an $r$-regular graph on $f$ vertices such that $F$ is not a vertex-disjoint union of copies of $K_{r+1}$.
Let $G$ be an $r$-divisible graph.
Let $\delta: = \max \{ \delta^{\eta}_F, 1- 1/r \}$.
Suppose that $\P_1, \ldots, \P_\ell$ is a $(k, \delta + \epsilon, m)$-partition sequence for $G$.
Then there exists a subgraph $H^*$ of $\bigcup_{V \in \P_\ell} G[V]$ such that $G-H^*$ has an $F$-decomposition.%
In particular, if $G$ is $F$-divisible, then so is $H^*$.
\end{lemma}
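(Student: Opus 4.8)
The plan is to re-run the proof of Lemma~\ref{lma:do-the-iteration} (equivalently Lemma~\ref{lma:do-the-iteration2}) essentially verbatim, with the sole change that the clique-finding steps now only need a minimum degree of $1-1/r$ inside the relevant neighbourhoods rather than $1-1/(r+1)$, because $F$ is not a disjoint union of copies of $K_{r+1}$. The only place in that whole chain of arguments where the stronger hypothesis $\delta\ge\max\{\delta^\eta_F,1-1/(r+1)\}$ was actually used is in bounding the minimum degree inside neighbourhoods $N_H(x,V_i)$: via Proposition~\ref{prop:neighbourhood-degree} one needs $\P$ to be a $(k,1-1/(r+1)+\eps)$-partition in order to guarantee $\delta(H[N_H(x,V_i)])\ge(1-1/r)d_H(x,V_i)+\eps|V_i|$, which then feeds Lemma~\ref{lma:findcliques}/Lemma~\ref{lma:extendtoF} (and their pseudorandom analogues Lemma~\ref{lma:sparsefindcliques}, Corollary~\ref{cor:sparseextendtoF}) and the Hajnal--Szemer\'edi theorem to pull out $K_r$-factors that are then grown into copies of $F$. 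So the task reduces to showing that, when $F\ne sK_{r+1}$, one can cover $H[\P]$ by copies of $F$ using a $(k,1-1/r+\eps)$-partition instead.

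First I would isolate the key structural fact: since $F$ is $r$-regular but not a disjoint union of $K_{r+1}$'s, $F$ has a vertex $u$ whose neighbourhood $N_F(u)$ is \emph{not} a clique; equivalently $F_u:=F[N_F(u)]$ is an $r$-vertex graph that is not complete. The point is that in the step where we extended a copy of $K_{r+1}$ (namely $x$ together with a $K_r$ in $N_H(x,V_i)$) into a copy of $F$, we only ever needed a copy of $F_u$ inside $N_H(x,V_i)$, not a full $K_r$; we used the full $K_r$ merely because $K_r$ contains every $r$-vertex graph as a subgraph. But a Hajnal--Szemer\'edi type $K_r$-factor is overkill: what we need is a spanning collection of vertex-disjoint copies of $F_u$ in $H[N_H(x,V_i)]$. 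Since $F_u$ is $r$-partite (it has $r$ vertices, hence chromatic number at most $r$, and being non-complete on $r$ vertices it in fact has an independent set of size $2$, so $\chi(F_u)\le r-1$ if we are careful, but even $\chi(F_u)\le r$ suffices here) one can invoke the Hajnal--Szemer\'edi theorem together with the blow-up/embedding machinery — or more simply, since $F_u$ has $r$ vertices and is $(r-1)$-partite when non-complete — to get that $\delta(H[N_H(x,V_i)])\ge(1-1/r)d_H(x,V_i)+\gamma|V_i|$ already guarantees an $F_u$-factor. Concretely: partition $N_H(x,V_i)$ into $\lfloor d_H(x,V_i)/r\rfloor$ parts of size $r$ via Hajnal--Szemer\'edi applied with threshold $1-1/r$ to get a $K_r$-factor, then delete edges to leave $F_u$ in each part; this is exactly what the original proof does, but now the minimum-degree requirement is $1-1/r$ rather than $1-1/(r+1)$.

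Concretely I would restate and reprove the chain:
(1) a variant of Lemma~\ref{lma:extendtoF} (and Lemma~\ref{lma:sparseextendtoF}) in which hypothesis (iv) is weakened to $\delta(H[V])\ge(1-1/r+2\gamma)|V|$ is already what is written — the only place $1/(r+1)$ entered was through Proposition~\ref{prop:neighbourhood-degree} feeding hypothesis (ii);
(2) a variant of Proposition~\ref{prop:neighbourhood-degree} is not needed at all, because in the iteration we directly verify hypothesis (ii) of Corollary~\ref{cor:sparseextendtoF} from the $(k,1-1/r+\eps)$-partition structure using only that, for $y\in N_H(x,V_i)$, $d_H(y,N_H(x,V_i))\ge d_H(x,V_i)+d_H(y,V_i)-|V_i|\ge d_H(x,V_i)-|V_i|/r+\eps|V_i|\ge(1-1/r)d_H(x,V_i)+\eps|V_i|$ — wait, this last inequality requires $d_H(x,V_i)\ge|V_i|(r-1)/r$, which follows from $d_H(x,V_i)\ge(1-1/r)|V_i|$ — but that is precisely the $(k,1-1/r)$-partition bound, not the $(k,1-1/(r+1))$ one;
(3) re-run Lemma~\ref{lma:partial-decomposition}, Lemma~\ref{lma:iterate}, and Lemma~\ref{lma:do-the-iteration} with $\delta=\max\{\delta^\eta_F,1-1/r\}$ and with every invocation of "$K_r$-factor, then delete down to $F_u$" replaced by "$F_u$-factor obtained as just described". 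The book-keeping (the random slicing in Lemma~\ref{lma:iterate}, the parity graphs, the maximum-degree accounting through the $\ell$ iterations) is identical.

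The main obstacle, and the only place requiring genuine care, is verifying that the relaxed neighbourhood minimum-degree bound $(1-1/r)d_H(x,V_i)+\gamma|V_i|$ really does still suffice to extract an $F_u$-factor (equivalently, to run the random-greedy argument of Lemma~\ref{lma:sparsefindcliques}) — one must check that in Lemma~\ref{lma:sparsefindcliques} the Hajnal--Szemer\'edi application with parameter $r$ still produces the required $t$ edge-disjoint $F_u$-factors when only the threshold $1-1/r$ on $\delta(H'_s)$ is available, which it does because Theorem~\ref{thm:HSz} gives a $K_r$-factor at exactly threshold $1-1/r$, and each $K_r$ contains $F_u$. The rest is a routine transcription, and I would state explicitly at the start of the section that the hierarchies and all lemmas of Section~\ref{sec:partial-decomposition} hold with this substitution, then give the (short) modified proof of Lemma~\ref{lma:do-the-iteration3} by pointing to exactly these changes.
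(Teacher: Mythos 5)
There is a genuine gap, and it sits exactly at the point you flagged with ``wait'' in step (2). From a $(k,1-1/r+\eps)$-partition you cannot verify hypothesis (ii) of Lemma~\ref{lma:findcliques}/Lemma~\ref{lma:extendtoF} (or of Corollary~\ref{cor:sparseextendtoF}) with the coefficient $1-1/r$. Your chain of inequalities requires
$d_H(x,V_i)-|V_i|/r\ge (1-1/r)d_H(x,V_i)$, which after rearranging is $d_H(x,V_i)\ge |V_i|$, not $d_H(x,V_i)\ge (1-1/r)|V_i|$ as you claim; at $d_H(x,V_i)=(1-1/r)|V_i|$ the inequality fails (compare $r(r-2)$ with $(r-1)^2$). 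What the weaker partition actually gives, redoing the computation of Proposition~\ref{prop:neighbourhood-degree} with threshold $1-1/r$, is only
$\delta(H[N_H(x,V_i)])\ge\bigl(1-\tfrac{1}{r-1}\bigr)d_H(x,V_i)+\eps|V_i|$,
which is strictly below the Hajnal--Szemer\'edi threshold for a $K_r$-factor in $N_H(x,V_i)$. Consequently your ``concrete'' route --- apply Theorem~\ref{thm:HSz} with parameter $r$ to get a $K_r$-factor and then delete edges down to $F_u$ --- cannot be carried out: the whole reason the original proof needs $1-1/(r+1)$ is precisely to make the $K_r$-factor step work, and you have not removed that need, only asserted (via the faulty inequality) that it is still met.

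The fact you correctly isolate, namely that $F_u=F[N_F(u)]$ is a non-complete graph on $r$ vertices and hence $\chi(F_u)\le r-1$, is exactly what must carry the load --- but you then discard it (``even $\chi(F_u)\le r$ suffices here''), which is the wrong turn. The repair, which is how the paper argues, is to change the conclusion of the clique-finding lemmas from ``$K_r$-factor in $N_H(x,V_i)$'' to ``$F_u$-factor in $N_H(x,V_i)$'', weaken hypothesis (ii) throughout (Lemmas~\ref{lma:findcliques}, \ref{lma:extendtoF}, \ref{lma:sparsefindcliques}, Corollaries~\ref{cor:sparsefindcliques2} and \ref{cor:sparseextendtoF}) to the coefficient $1-1/(r-1)$, and replace Theorem~\ref{thm:HSz} by the Alon--Yuster theorem (Theorem~\ref{thm:alon-yuster}): minimum degree $(1-1/\chi(F_u)+\gamma)n'\le(1-1/(r-1)+\gamma)n'$ yields $\lfloor n'/r\rfloor$ vertex-disjoint copies of $F_u$, which is a perfect $F_u$-factor since $r$ divides $d_H(x,V_i)$. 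Each copy of $F_u$ together with $x$ then extends to a copy of $F$ exactly as before, and the rest of the iteration (parity graphs, random slicing, degree bookkeeping) is indeed unchanged, as you say. So the skeleton of your plan is right, but the step that makes the weaker threshold $1-1/r$ sufficient is missing from your write-up and is replaced by an incorrect verification.
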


We now sketch a proof of Lemma~\ref{lma:do-the-iteration3} obtained by modifying the proof of Lemma~\ref{lma:do-the-iteration2}.
Note that the application of Theorem~\ref{thm:HSz} (in the proof of Lemma~\ref{lma:findcliques}) is the only point in
the proof of Lemma~\ref{lma:do-the-iteration2} where we need that $\delta\ge 1-1/(r+1)$ (rather than $\delta\ge 1-1/r$).
Since $F$ is not a vertex-disjoint union of copies of $K_{r+1}$, there exists a vertex $x$ in $F$ such that $F_x: =F[N_F(x)]$ is not complete.
Note that $\chi(F_x) \le r-1$ as $|F_x| = r$ and $F_x \ne K_{r}$.
Suppose that $H$, $x$ and $V$ are as described at the beginning of Section~\ref{sec:boundmaxdeg}.
Then it suffices to find an $F_x$-factor in $N_H(x,V)$ (rather than a $K_r$-factor).
So we can replace Theorem~\ref{thm:HSz} by the following result.

\begin{theorem}[Alon and Yuster~\cite{AlonYuster}] \label{thm:alon-yuster}
For every graph $F$ and every $\eps >0$, there exists an $n_0 = n_0(\eps,F)$ such that every graph $G$ on $n \ge n_0$ vertices with $\delta(G) \ge (1 - 1/ \chi(F) + \eps ) n $ contains $\lfloor |G| / |F| \rfloor$ vertex-disjoint copies of $F$.
\end{theorem}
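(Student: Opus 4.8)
The plan is to deduce this from Szemer\'edi's Regularity Lemma together with the Hajnal--Szemer\'edi theorem (Theorem~\ref{thm:HSz}), following Alon and Yuster's original strategy. We may assume $F$ has an edge, so $\chi:=\chi(F)\ge 2$; write $f:=|F|$ and fix a proper $\chi$-colouring of $F$ with colour classes $C_1,\dots,C_\chi$, where $|C_j|=a_j$ and $\sum_j a_j=f$. Choose constants $1/n_0\ll\varepsilon'\ll\beta\ll d\ll\varepsilon,1/f$. First I would apply the degree form of the Regularity Lemma to $G$ to obtain a partition $V_0\cup V_1\cup\dots\cup V_M$ with $|V_0|\le\varepsilon' n$ and $|V_1|=\dots=|V_M|=:N$, together with a spanning subgraph $G'\subseteq G$ with $d_{G'}(v)\ge d_G(v)-(d+2\varepsilon')n$ for all $v$, in which $G'$ has no edges inside clusters or meeting $V_0$, and every pair $(V_i,V_j)$ with $ij$ an edge of the reduced graph $R$ (on vertex set $[M]$) is $\varepsilon'$-regular of density at least $d$, while the remaining pairs carry no $G'$-edges. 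Since for $v\in V_i$ every $G'$-neighbour of $v$ lies in a cluster $V_j$ with $ij\in E(R)$, we get $d_R(i)\ge d_{G'}(v)/N\ge(1-1/\chi+\varepsilon/2)M$ for every $i$; in particular $\delta(R)\ge(1-1/\chi)M$.

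Next I would apply the Hajnal--Szemer\'edi theorem (Theorem~\ref{thm:HSz}, with $\chi$ in the role of $r$) to $R$, obtaining $\lfloor M/\chi\rfloor$ vertex-disjoint copies of $K_\chi$, which together cover all but fewer than $\chi$ clusters. For a single such clique, on clusters $W_1,\dots,W_\chi$ say, I would tile $W_1\cup\dots\cup W_\chi$ greedily by copies of $F$ in \emph{rotation blocks} of $\chi$ copies: letting $\sigma_0,\dots,\sigma_{\chi-1}$ be the cyclic rotations of $[\chi]$, the $(r{+}1)$-st copy of a block is embedded with colour class $C_j$ mapped into the currently unused part of $W_{\sigma_r(j)}$. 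A pair $(W_i,W_j)$ restricted to subsets of relative size at least $\beta$ is $(\varepsilon'/\beta)$-regular of density at least $d-\varepsilon'$, so by the standard Counting Lemma for regular partitions there is always such a copy of $F$ to embed as long as every cluster retains at least $\beta N$ unused vertices. A full rotation block removes exactly $\sum_j a_j=f$ vertices from each of $W_1,\dots,W_\chi$, so the clusters shrink at equal rates and the process runs for $\lfloor(1-\beta)N/f\rfloor$ blocks, leaving at most $2\beta N$ uncovered vertices in each cluster. Carrying this out for every clique of the $K_\chi$-tiling of $R$ yields vertex-disjoint copies of $F$ in $G$ missing at most $|V_0|+\chi N+2\beta M N\le\gamma n$ vertices, where $\gamma\to0$ as $\beta\to0$.

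At this point the essential content is established: $G$ contains vertex-disjoint copies of $F$ covering all but a set $W$ of at most $\gamma n$ vertices. Reducing this defect from $\gamma n$ to the claimed $|F|-1$ is the real obstacle, and it cannot be achieved by iterating the above, since each pass of the Regularity Lemma only tiles a $(1-\Omega(1))$-fraction of its input and the input never drops below $\Theta(n)$ vertices. Here I would follow Alon and Yuster's more careful route: arrange the regularity partition and the clique tiling so that the only clusters not used up are $V_0$ and the $<\chi$ clusters omitted by the $K_\chi$-tiling of $R$ (so the remaining set $W$ has size $O(\varepsilon' n)$), set aside a small random reservoir $U$ at the outset so that $d_G(v,U)\ge(1-1/\chi+\varepsilon/2)|U|$ for every $v\in V(G)$, and then eliminate $W$ together with a cleanup of $U$ by a direct extension argument: each $w\in W$ has $(1-1/\chi+\varepsilon)n$ neighbours in the already-tiled part, so one reroutes $w$ into a copy of $F$ by locating, via the Counting Lemma, an appropriately rooted copy of $F\setminus\{u\}$ around $w$. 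Making this endgame go through uniformly --- in particular for $\chi=2$, where a leftover vertex's neighbourhood need not itself have large relative minimum degree --- is the delicate point; I would present the $o(n)$-defect version in full and import the final sharpening from~\cite{AlonYuster}.
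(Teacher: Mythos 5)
You should first note that the paper does not prove this statement at all: Theorem~\ref{thm:alon-yuster} is imported from~\cite{AlonYuster} and used as a black box (as a drop-in replacement for Theorem~\ref{thm:HSz} in the proof sketch of Lemma~\ref{lma:do-the-iteration3}), so the only question is whether your argument is a self-contained proof. The first two thirds of it are fine and standard: the degree form of the Regularity Lemma, the minimum degree of the reduced graph $R$, the application of Theorem~\ref{thm:HSz} to $R$, and the rotation-block tiling of each $\chi$-tuple of clusters correctly give vertex-disjoint copies of $F$ covering all but $\gamma n$ vertices with $\gamma\to 0$. But the theorem asks for $\lfloor n/|F|\rfloor$ copies, i.e.\ at most $|F|-1$ uncovered vertices, and the paper needs that full strength (it is used to produce spanning $F_x$-factors inside neighbourhoods). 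Passing from defect $\gamma n$ to defect $<|F|$ is precisely the hard content of~\cite{AlonYuster}, and your plan to ``import the final sharpening'' from that very paper makes the write-up a citation, not a proof.

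Moreover, the endgame you do sketch would fail as stated. A leftover vertex $w$ cannot be ``rerouted into a copy of $F$'' by locating a rooted copy of $F\setminus\{u\}$ among its neighbours, because those neighbours already lie in copies of $F$ of the current tiling; any repair must dismantle existing copies and retile their vertices, i.e.\ it needs a genuine exchange or absorption mechanism, and with linearly many leftover vertices a greedy version destroys linearly many copies without preserving the degree conditions needed to rebuild them. The reservoir $U$ only pushes the problem down one scale: after absorbing $W$ into $U$ you must tile the remainder of $U$ \emph{exactly}, which is the original problem again, and, as you yourself observe, iterating the almost-tiling never drives the defect below a constant fraction of the current vertex set. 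A complete proof needs one of the known finishing devices: the original Alon--Yuster argument (make each $\chi$-tuple of clusters super-regular, redistribute the exceptional vertices and fix the divisibility of cluster sizes within each tuple, then tile balanced super-regular tuples perfectly), a blow-up-lemma-type argument, or an absorbing structure reserved before the almost-tiling. As it stands, your proposal establishes only the $o(n)$-defect version.
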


The proof of Lemma~\ref{lma:do-the-iteration3} is otherwise the same as that of Lemma~\ref{lma:do-the-iteration2}.


\section{Proof of Theorem~\ref{thm:regular}} \label{sec:main-theorem}

We can now complete the proof of Theorem~\ref{thm:regular}.

\begin{proof}[Proof of Theorem~$\ref{thm:regular}$]
Without loss of generality we may assume that $\eps \ll 1/r,1/f$.
Choose $k,m', n_0 \in \mathbb{N}$ and $\eta > 0$ such that $1/n_0 \ll 1/m' \ll \eta \ll 1/k \ll  \eps \ll 1/r, 1/f $,%
\COMMENT{Here we need $1/k \ll \eps$ because we need to bound $\delta(G_1)$ from below.}
 and let $\eps' : = \eps/3$.
Let $G$ be an $F$-divisible graph on $n \ge n_0$ vertices with $\delta(G) \geq (\delta + 3 \eps') n$.
By Lemma~\ref{lma:partition-structure}, there is  a $(k, \delta + 2 \eps', m)$-partition sequence $\P_1, \ldots, \P_\ell$ for $G$ such that $m' \le m \le k m'$.
Let $G_1 := G[\P_1]$, and let $G_{\ell+1} := G-G[\P_\ell]$.
Note that $\delta(G_1[\mathcal{P}_{\ell}]) \geq \delta(G_1) \ge (\delta + 2 \eps') ( n - \lceil  n/k \rceil)  \ge (\delta + \eps') n$%
\COMMENT{For $v \in V \in \mathcal{P}_1$, $d_{G_1}(v) = \sum_{U \in \mathcal{P}_1 \setminus \{V\}} d_{G_1}(v,U) \ge  (\delta + 2 \eps') ( n - |V|) \ge (\delta + 2 \eps') ( n - \lceil  n/k \rceil) $.}
	as $1/k \ll \eps'\ll 1$.
Since $\delta \geq 1-1/3r$, we can apply Lemma~\ref{lma:Krabsorber} to $G_1 \cup G_{\ell+1}$ (with $\P_{\ell}, (\eps'/2k)^{1/2}$ playing the roles of $\P, \eps$) to obtain an $F$-divisible subgraph $A^*$ of $G_1 \cup G_{\ell+1}$ such that
\begin{enumerate}[label={\rm(\roman*)}]
\item $\Delta(A^*[\P_{\ell}]) \le \eps' n /2 k$ and  $\Delta( A^*[V] ) \le  r$ for each $V \in \P_{\ell}$, and 
\item if $H^*$ is an $F$-divisible graph on $V(G)$ that is edge-disjoint from $A^*$ and has $e ( H^* [ \P_{\ell} ] ) = 0 $, then $A^* \cup H^*$ has an $F$-decomposition.
\end{enumerate}

Let $G' := G - A^*$; then $G'$ is $F$-divisible. 
Note that for each $V \in \mathcal{P}_1$ and each $v \in V(G)$, we have $d_{G'}(v,V) \ge d_{G}(v,V) - \Delta(A) \ge (\delta + \eps') |V|$. 
So $\P_1$ is a $(k, \delta+\eps')$-partition for~$G'$. 
Note that $\Delta(A^* - A^*[ \mathcal{P}_{1} ]) \le r$ by~(i), so $\P_1, \ldots, \P_{\ell}$ is a $(k, \delta+\eps',m)$-partition sequence for $G'$.%
	\COMMENT{For more details (but long): For each $2 \le j \le \ell$, each $V \in \mathcal{P}_{i-1}$, each $U \in \mathcal{P}_i[V]$
and each $v \in V$, we have $d_{G'}(v,U) \ge d_{G}(v,U) - r \ge (\delta + \eps') |V|$ as $r \le \eps' m \le \eps' |V|$.
	Thus for each $2 \le i \le \ell$ and each $V \in \mathcal{P}_{i-1}$, $\mathcal{P}_{i}[V]$ is a $(k, \delta + \eps')$-partition for $G'[V]$.}

Apply Lemma~\ref{lma:do-the-iteration2} to obtain an $F$-divisible subgraph $H$ of $\bigcup_{V \in \P_\ell} G'[V]$ such that $G' - H$ has an $F$-decomposition.
But now $A^* \cup H$ has an $F$-decomposition by~(ii).
\end{proof}

Note that all of our arguments can be carried out in polynomial time, and all probabilistic arguments give the desired structure with sufficiently high probability.
Haxell and R\"odl's original proof of Theorem~\ref{thm:haxellrodl} gave a polynomial time algorithm for converting a fractional decomposition to an approximate decomposition, and Kierstead, Kostochka, Mydlarz and Szemer\'edi \cite{HajnalSzemerediAlgo} found an alternative proof of Theorem~\ref{thm:HSz} which gave a polynomial time algorithm for finding $K_r$-factors.%
\COMMENT{Algo para 1}

We can actually obtain a stronger version of Theorem~$\ref{thm:regular}$ which can be applied to obtain better bounds for certain graphs~$F$.
It involves the parameter $d_F$ introduced in Section~\ref{sec:absremarks} that measures the degeneracy of the most efficient transformer for~$F$.
The proof of Theorem~\ref{thm:regularstrong} is the same as that of Theorem~\ref{thm:regular} except that we replace Lemma~\ref{lma:Krabsorber} with Lemma~\ref{lma:Krabsorber2} and Lemma~\ref{lma:do-the-iteration2} with Lemma~\ref{lma:do-the-iteration3} (if $F$ is not a vertex-disjoint union of copies of $K_{r+1}$).

\begin{theorem} \label{thm:regularstrong}
Let $F$ be an $r$-regular graph.
Then for all $\epsilon > 0$, there exists an $n_0 = n_0(\epsilon,F)$ and an $\eta: = \eta(\eps, F) $ such that every $F$-divisible graph $G$ on $n\geq n_0$ vertices with $\delta(G) \geq (\delta+\epsilon)n$, where
\begin{align*}
\delta : = &
\begin{cases}
\max \{ \delta^{\eta}_F ,  1-\frac1{d_F}, 1 - \frac1{r+1} \}
& \text{ if $F$ is a vertex-disjoint union of copies of $K_{r+1}$,}\\ 
\max \{ \delta^{\eta}_F ,  1-\frac1{d_F}, 1 - \frac1{r} \} & \text{otherwise,} 
\end{cases}
\end{align*}
has an $F$-decomposition.
\end{theorem}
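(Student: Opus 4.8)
The plan is to run the proof of Theorem~\ref{thm:regular} essentially verbatim, substituting the strengthened lemmas at the two places where they enter. First, we may assume $\eps\ll 1/r,1/f$ and fix a constant hierarchy $1/n_0\ll 1/m'\ll\eta\ll 1/k\ll\eps\ll 1/r,1/f$, and set $\eps':=\eps/3$. Given an $F$-divisible graph $G$ on $n\ge n_0$ vertices with $\delta(G)\ge(\delta+3\eps')n$, apply Lemma~\ref{lma:partition-structure} to obtain a $(k,\delta+2\eps',m)$-partition sequence $\P_1,\ldots,\P_\ell$ for $G$ with $m'\le m\le km'$. As in the proof of Theorem~\ref{thm:regular}, writing $G_1:=G[\P_1]$ and $G_{\ell+1}:=G-G[\P_\ell]$, one checks $\delta(G_1[\P_\ell])\ge\delta(G_1)\ge(\delta+\eps')n$ and $\delta(G_1[V])\ge(\delta+\eps')|V|$ for each $V\in\P_\ell$.

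Next, since the definition of $\delta$ guarantees $\delta\ge 1-\min\{1/r,1/d_F\}$ in both branches of the dichotomy (using $d_F\ge r$; recall also $d_F\le 3r$ by Corollary~\ref{cor:regular}), apply Lemma~\ref{lma:Krabsorber2} to $G_1\cup G_{\ell+1}$, with $\P_\ell$ and $(\eps'/2k)^{1/2}$ playing the roles of $\P$ and $\eps$, to obtain an $F$-divisible absorbing subgraph $A^*$ with $\Delta(A^*[\P_\ell])\le\eps'n/2k$, $\Delta(A^*[V])\le r$ for each $V\in\P_\ell$, and such that $A^*\cup H^*$ has an $F$-decomposition whenever $H^*$ is $F$-divisible, edge-disjoint from $A^*$, and satisfies $e(H^*[\P_\ell])=0$. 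Then set $G':=G-A^*$, which is again $F$-divisible; since $\Delta(A^*[\P_\ell])\le\eps'n/2k$ and $\Delta(A^*-A^*[\P_1])\le r\le\eps'm$, the sequence $\P_1,\ldots,\P_\ell$ is a $(k,\delta+\eps',m)$-partition sequence for $G'$.

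Finally, apply the appropriate near-optimal decomposition result to $G'$: Lemma~\ref{lma:do-the-iteration3} if $F$ is not a vertex-disjoint union of copies of $K_{r+1}$, whose hypothesis $\delta\ge\max\{\delta^\eta_F,1-1/r\}$ is precisely the ``otherwise'' branch of the definition of $\delta$; and Lemma~\ref{lma:do-the-iteration2} otherwise, whose hypothesis $\delta\ge\max\{\delta^\eta_F,1-1/(r+1)\}$ is the first branch. Either way we obtain an $F$-divisible subgraph $H\subseteq\bigcup_{V\in\P_\ell}G'[V]$ such that $G'-H$ has an $F$-decomposition. Since $e(H[\P_\ell])=0$ and $H$ is edge-disjoint from $A^*$, property (ii) of Lemma~\ref{lma:Krabsorber2} gives an $F$-decomposition of $A^*\cup H$; together with the $F$-decomposition of $G'-H$ this yields an $F$-decomposition of $G=(G'-H)\cup(A^*\cup H)$, completing the proof. (Recall that Lemma~\ref{lma:do-the-iteration3} is itself obtained by replacing the Hajnal--Szemer\'edi theorem with the Alon--Yuster theorem in the proof of Lemma~\ref{lma:do-the-iteration2}, and that Lemma~\ref{lma:Krabsorber2} follows from Lemma~\ref{lma:Krabsorber} by using Lemma~\ref{lma:abs2} in place of Lemma~\ref{lma:abs}.)

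The only point genuinely requiring care --- and hence the main obstacle in the sense of this plan --- is the minimum-degree bookkeeping: one must verify that the single threshold $\delta$ in the statement simultaneously dominates the requirement $1-\min\{1/r,1/d_F\}$ of Lemma~\ref{lma:Krabsorber2} and the requirement ($\max\{\delta^\eta_F,1-1/(r+1)\}$ or $\max\{\delta^\eta_F,1-1/r\}$) of the relevant iteration lemma, in \emph{each} case of the dichotomy, and that the losses $3\eps'$, $r/m$, $(\eps'/2k)^{1/2}$ incurred in passing from $G$ to $G_1\cup G_{\ell+1}$, to $G'$, and down the partition sequence are all absorbed by the slack in $\delta(G)\ge(\delta+3\eps')n$. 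This is routine given the constant hierarchy, exactly as in the proof of Theorem~\ref{thm:regular}; no new combinatorial ideas are needed beyond the strengthened lemmas.
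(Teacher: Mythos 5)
Your proposal is correct and follows essentially the same route as the paper, which proves Theorem~\ref{thm:regularstrong} by rerunning the proof of Theorem~\ref{thm:regular} with Lemma~\ref{lma:Krabsorber2} in place of Lemma~\ref{lma:Krabsorber} and Lemma~\ref{lma:do-the-iteration3} in place of Lemma~\ref{lma:do-the-iteration2} when $F$ is not a vertex-disjoint union of copies of $K_{r+1}$. Your degree-threshold bookkeeping (that $\delta$ dominates the hypotheses of both substituted lemmas in each branch) is exactly the verification the paper leaves implicit.
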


Our proof of Theorem~\ref{thm:regularstrong} can also be carried out in polynomial time, since the $F$-factor guaranteed in Theorem~\ref{thm:alon-yuster} can be obtained in polynomial time (see the discussion after \cite[Theorem 2.6]{Survey}).%
\COMMENT{Algo para 2}

Note that Theorem~\ref{thm:regularstrong} implies Theorem~\ref{thm:regular} since Corollary~\ref{cor:regular} states that $d_F \le 3r$ for any $r$-regular graph~$F$.
However for some graphs $F$ one can obtain much better bounds on $d_F$, yielding improved overall bounds.
We illustrate this for the case of cycles and complete bipartite graphs in Section~\ref{sec:cycles}.



\section{Decompositions into cycles and bipartite graphs} \label{sec:cycles}

In this section we consider $C_{\ell}$-decompositions and deduce Theorem~\ref{thm:cycles} from Theorem~\ref{thm:regularstrong}.
For even $\ell$, the bounds in Theorem~\ref{thm:cycles} are asymptotically best possible.
We now describe the construction giving the lower bound in the following two propositions.

\begin{proposition} \label{cycle-lower-bounds}
 Let $\ell \in \mathbb{N}$ with $\ell \geq 3$ and $\ell \ne 4$, and let
\begin{align*}
	\delta: =
  \begin{cases}
     1/2  & \quad \text{if } \ell \geq 6 \text{ is even}; \\
    \frac \ell {2(\ell-1)} & \quad \text{if } \ell \text{ is odd}.
  \end{cases}
 \end{align*}
Then there are infinitely many $C_\ell$-divisible graphs $G$ with $\delta(G) \geq \delta |G| -1$ that are not $C_\ell$-decomposable.	
\end{proposition}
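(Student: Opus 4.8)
The plan is to exhibit, for each admissible $\ell$, an explicit family of regular (or near-regular) graphs $G$ whose edge set cannot be partitioned into $\ell$-cycles for a counting reason, and then to remove a bounded number of vertices to force $C_\ell$-divisibility. The natural candidate, in the spirit of Proposition~\ref{prop:extremal}, is a blow-up of a small graph $B$ whose vertices are replaced by cliques (the \emph{internal} part), so that every copy of $C_\ell$ is forced to use at least some number of internal edges, while the total number of internal edges is too small. For even $\ell\ge 6$ the relevant small graph is $K_2$: take $G$ to be the union of two cliques $K_h$ joined completely (i.e. $K_{2h}$ minus nothing — actually the complete bipartite double of two cliques), of degree just below $|G|/2$; more precisely one takes $G$ to be a suitable blow-up realising minimum degree $\lceil|G|/2\rceil-1$. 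The point is that in such a graph every copy of $C_\ell$ must alternate between the two sides except that, being even and of length $\ge 6$, it can be chosen to lie on the bipartite part — so here the obstruction is instead that a perfectly balanced bipartite graph between two cliques of \emph{odd} order cannot be $C_\ell$-decomposed for parity/divisibility reasons, and one pushes the order of the cliques to make $e(G)$ not quite work out. For odd $\ell$, the extremal graph should be a blow-up of $K_{\ell-1}$ (or of a graph closely related to the one in Proposition~\ref{prop:extremal} with $r=2$), giving $\delta\approx \ell/(2(\ell-1))$: every $C_\ell$ in a blow-up of $K_{\ell-1}$ must repeat a part, hence use an internal edge, and the number of internal edges is a vanishing fraction of $e(G)$.

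Concretely I would proceed as follows. First, fix the small graph $B=B(\ell)$: $B=K_2$ for even $\ell\ge 6$ (with a twist, see below), and $B=K_{\ell-1}$ for odd $\ell$. Second, choose the blow-up parameter $h$ from an appropriate residue class (depending on $\ell$) so that $G:=B[K_h]$ — each vertex of $B$ replaced by $K_h$, each edge of $B$ by a complete bipartite graph — is $d$-regular with $d=(h-1)+(\deg_B-1)\cdot 0+\dots$, i.e. $d=(|B|-1)h-1$ when $B$ is complete, and verify $e(C_\ell)=\ell\mid e(G)$ and $\gcd(G)=2\mid \gcd(C_\ell)=2$, so that $G$ is $C_\ell$-divisible; the congruence conditions on $h$ are exactly analogous to those in Proposition~\ref{prop:extremal} (``$\ell$ divides $d$'' etc.) and there are infinitely many such $h$. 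Third, define an edge of $G$ to be \emph{internal} if it lies inside one of the copies of $K_h$, and count: $I:=|B|\binom h2$. Fourth — the key combinatorial step — show that every copy of $C_\ell$ in $G$ uses at least $c_\ell$ internal edges, where $c_\ell$ is a positive constant: since $C_\ell$ has $\ell$ vertices and $B$ has strictly fewer than $\ell$ vertices (for odd $\ell$, $|B|=\ell-1$; for even $\ell\ge 6$ with $B=K_2$, certainly $\ell>2$), the cycle must visit some part at least twice, and consecutive vertices in the same part contribute an internal edge — a careful count along the cycle gives $c_\ell\ge \lceil \ell/|B|\rceil -1\ge 1$. Fifth, conclude that the number of edge-disjoint copies of $C_\ell$ is at most $I/c_\ell$, and check by a direct computation (mirroring the ones hidden in the \COMMENT blocks of Proposition~\ref{prop:extremal}) that $I/c_\ell < e(G)/\ell$, so no $C_\ell$-decomposition exists. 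Finally, record that $\delta(G)=d=\delta|G|-1$ by the choice of $d$, possibly after deleting $O(1)$ vertices or adding a small complete-multipartite gadget (as in the odd case of Proposition~\ref{prop:extremal}, where one appends a set $W$ of new dominating vertices) to hit the exact floor $\lceil\delta|G|\rceil-1$.

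The main obstacle I anticipate is getting the constant in step four sharp enough that the inequality $I/c_\ell<e(G)/\ell$ in step five actually holds — that is, proving that $C_\ell$ genuinely \emph{cannot avoid} using the claimed minimum number of internal edges, and that this minimum is large relative to $|B|$. For odd $\ell$ and $B=K_{\ell-1}$ this is clean: a closed walk of length $\ell$ on $\ell-1$ vertices repeats exactly one vertex-part in the simplest case, giving one internal edge, and one must verify $\binom h2\cdot(\ell-1) < e(G)/\ell=\binom{\ell-1}2 h\big((\ell-2)h-?\big)/\ell$, which reduces to an elementary polynomial inequality in $h$ holding for all large $h$ in the chosen residue class. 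The even case $\ell\ge 6$ is the subtle one: with $B=K_2$ a $C_\ell$ that stays bipartite uses \emph{zero} internal edges, so the naive internal-edge count fails and one must instead argue via the divisibility of the balanced bipartite part — i.e. choose the two clique sizes so that the bipartite graph $K_h$–to–$K_h$ between them has $e$ not divisible by $\ell$ after removing whatever internal-edge structure forces, or more simply adjust the construction (e.g. make the two sides of slightly different sizes, or use $K_3$ minus an edge as $B$) so that a counting obstruction reappears; pinning down the exact construction that simultaneously achieves $\delta(G)=\lceil|G|/2\rceil-1$ and non-decomposability is where the real care lies, and I would expect to lift it verbatim from \cite{Schlund} or the references therein.
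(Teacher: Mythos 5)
There is a genuine gap in both cases. For even $\ell\ge 6$ you never arrive at a construction: you first suggest two cliques ``joined completely'', then a balanced bipartite graph between two cliques, and finally concede that you would ``lift it verbatim'' from the literature. The paper's example is much simpler and relies on a different kind of obstruction than internal-edge counting: take $G$ to be the \emph{disjoint} union of two cliques of order $n$ with $n\equiv \ell+1 \pmod{2\ell}$. Then $n-1$ is even and $\ell\mid n(n-1)$ but $\ell\nmid\binom n2$, so $G$ is $C_\ell$-divisible and $\delta(G)=|G|/2-1$, yet every copy of $C_\ell$ lies inside one component and neither component is $C_\ell$-divisible; divisibility is a global condition while decomposability localises to components. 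None of your connected candidates produces such an obstruction, and this disconnected idea is absent from your proposal.

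For odd $\ell$ your primary construction fails outright: blowing up each vertex of $K_{\ell-1}$ into a clique $K_h$ (with edges of $K_{\ell-1}$ becoming complete bipartite graphs) yields the complete graph $K_{(\ell-1)h}$, whose minimum degree is $|G|-1$ rather than about $\tfrac{\ell}{2(\ell-1)}|G|$, and which \emph{is} $C_\ell$-decomposable for suitable orders, so it cannot serve as a counterexample. Moreover your key counting step is invalid: if a copy of $C_\ell$ has two vertices in the same part, those two vertices need not be consecutive on the cycle, so pigeonhole (``$\ell$ vertices in $\ell-1$ parts'') does not force an internal edge; in a blow-up of $K_{\ell-1}$ by independent sets a $C_\ell$ can avoid the parts' interiors entirely. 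The correct construction is the blow-up of the \emph{bipartite} graph $K_{\ell-1,\ell-1}$ by cliques of odd order $n$ with $\ell\mid n$: there every vertex has degree $\ell n-1=\tfrac{\ell|G|}{2(\ell-1)}-1$, the graph is $C_\ell$-divisible, and each copy of $C_\ell$ must use an internal edge for a parity reason (an odd cycle admits no homomorphism into a bipartite graph), not a pigeonhole reason; since the number of internal edges is $(\ell-1)n(n-1)<e(G)/\ell$, no decomposition exists. Your hedge towards Proposition~\ref{prop:extremal} with $r=2$ only recovers the case $\ell=3$ (where $K_4-M=K_{2,2}$); for general odd $\ell$ the bipartite base graph and the parity argument are the missing ingredients.
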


Note that the case $ \ell =3$ describes an extremal example for the triangle decomposition conjecture of Nash-Williams.

\begin{proof}

  \emph{Case $\ell \geq 6$ even.}
     Let $n$ be such that $n \equiv \ell + 1  \mod{2\ell}$.
     So $n-1$ is even and $\ell$ divides $n(n-1)$ but not $\binom{n}2$.\COMMENT{
If $\ell$ divides $\binom{n}2$, then $2\ell$ divides $n(n-1)$.
On the other hand, note that $ n(n-1) \equiv (\ell+1)\ell \equiv \ell \mod{2 \ell}$ as $\ell$ is even.}	 Let $G$ be the vertex-disjoint union of two cliques of order~$n$.
	 Then $G$ is $C_{\ell}$-divisible with $\delta (G) = |G|/2 - 1$, but neither connected component is itself $C_\ell$-divisible, so $G$ cannot be $C_\ell$-decomposable.
  
  \emph{Case $\ell$ odd.}
  Let $G$ be the graph obtained from $K_{\ell-1,\ell-1}$ by blowing up each vertex to a clique of odd order $n$ such that $\ell$ divides~$n$.
  The degree of each vertex is $\ell n-1$, which is even, and the total number of edges is $(\ell-1)n(\ell n-1)$, which is divisible by~$\ell$.
  We have that $\delta(G) = \ell n - 1 = \frac{\ell |G|}{2(\ell - 1)}   -1$, but each copy of $C_\ell$ in $G$ contains an edge of one of the copies of $K_n$, and the number of such edges is only $(\ell-1)n(n-1) < e(G)/\ell$, so $G$ cannot be $C_\ell$-decomposable.
\end{proof}

The existence of special constructions for the case $\ell=4$ is perhaps surprising, and was first observed by Kahn and Winkler (see~\cite{YusterBip}), who showed that there exist infinitely
many $C_4$-divisible graphs $G$ with $\delta(G) \geq \lfloor 3|G|/5 \rfloor - 1$ that are not $C_4$-decomposable.  A construction matching the asymptotic upper bound from
Theorem~\ref{thm:cycles} was found by Taylor~\cite{Taylor}. This construction also generalizes to $K_{r,r}$ for even~$r$.

\begin{proposition} \label{c4-lower-bounds}
Let $r \in \mathbb N$ be even.
Then there are infinitely many $K_{r,r}$-divisible graphs $G$ with $\delta(G) \geq \lfloor 2 |G| /3 \rfloor  - r$ that are not $K_{r,r}$-decomposable.	
\end{proposition}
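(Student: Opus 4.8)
The plan is to generalise the construction behind Proposition~\ref{prop:extremal} (the even case) and the $\ell=4$ construction in Proposition~\ref{cycle-lower-bounds} to $K_{r,r}$ for even~$r$. The basic idea is to take a bipartite-type blow-up in which the copies of $K_{r,r}$ are forced to use a fixed positive fraction of their edges from a small ``internal'' set, and then count. Concretely, I would start from $K_{3}$ with one edge subdivided, or more simply work with the following: let $G^*$ be obtained from a suitable small bipartite-like template graph $J$ on, say, $3$ or $6$ vertices by blowing up each vertex $v$ into a clique $K_{h_v}$ of size proportional to $h$, choosing the sizes and the template so that (a) every vertex of $G^*$ has degree exactly $\lfloor 2|G^*|/3\rfloor - r$ (or rather so that $\delta(G^*)$ matches the claimed bound), and (b) $G^*$ is $K_{r,r}$-divisible. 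For divisibility we need $e(K_{r,r}) = r^2$ to divide $e(G^*)$ and $\gcd(K_{r,r}) = r$ to divide $\gcd(G^*)$; since $r$ is even these will be arranged by choosing $h$ in a fixed residue class modulo a bounded number (a straightforward number-theoretic adjustment, analogous to the choice $h := (sr+1)(r+1)$ in Proposition~\ref{prop:extremal}).

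The heart of the argument is the counting obstruction. Call an edge of $G^*$ \emph{internal} if it lies inside one of the blown-up cliques $K_{h_v}$, and let $I$ be the number of internal edges; since internal edges only live inside the $O(1)$ cliques, $I = O(h^2)$ with an explicitly computable leading constant. The key structural claim is that \emph{every} copy of $K_{r,r}$ in $G^*$ must contain at least $c_r$ internal edges, for some $c_r > 0$ depending only on $r$ (for $r=2$, $c_2=1$). The reason is that $K_{r,r}$ has independence number $r$ and $2r$ vertices, so in a blow-up of a template $J$ of bounded clique number the two sides of any embedded $K_{r,r}$ cannot each be spread over too many blobs; a pigeonhole/Ramsey-type argument on which template-vertex each of the $2r$ vertices maps to forces some blob to receive at least two vertices lying on the \emph{same} side or on opposite sides in a way that produces an internal edge. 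I would make this precise by choosing the template $J$ to be triangle-free bipartite (e.g. a path or $C_6$), so that any $K_{r,r}$ projects to a homomorphic image in $J$ which, being bipartite-to-bipartite, is constrained enough to force the collision. Once the claim is established, the number of edge-disjoint copies of $K_{r,r}$ in $G^*$ is at most $I/c_r$, and by choosing the template and the ratios of the $h_v$ appropriately one arranges $I/c_r < e(G^*)/r^2$, exactly as in the displayed computation in Proposition~\ref{prop:extremal}. This strict inequality rules out a $K_{r,r}$-decomposition.

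The steps, in order: (1) fix the template graph $J$ and the blow-up ratios; (2) verify the degree is $\lfloor 2|G^*|/3 \rfloor - r$ (or adjust $J$ and the ratios until the minimum degree equals this value — this pins down which $J$ to use); (3) choose $h$ in a bounded residue class to make $G^*$ $K_{r,r}$-divisible, giving infinitely many valid orders $n = |G^*|$; (4) prove the ``$\geq c_r$ internal edges per copy'' lemma via the independence-number/pigeonhole argument; (5) count: $\nu_{K_{r,r}}(G^*) \le I/c_r < e(G^*)/r^2$, so no decomposition exists. I expect step~(4), the combinatorial lemma forcing internal edges, to be the main obstacle: for $K_3$ (Proposition~\ref{prop:extremal}) and $C_4$ it is essentially a one-line pigeonhole observation, but for general even $r$ one must be careful that the chosen template genuinely forces a collision producing an internal edge for \emph{every} embedding, and that the constant $c_r$ is large enough relative to $I$ and $e(G^*)$ for the final strict inequality to hold — this may require tuning the blow-up ratios $h_v$ rather than taking them all equal. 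Steps (2) and (3) are routine arithmetic of the same flavour as the proof of Proposition~\ref{prop:extremal} and I would not belabour them. Since the proposition only claims the bound $\delta(G) \ge \lfloor 2|G|/3\rfloor - r$ (a slightly weaker ``$-r$'' rather than ``$-1$''), there is room to spare in the degree computation, which gives flexibility in step~(2).
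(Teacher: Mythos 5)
Your step (4) --- the claim that \emph{every} copy of $K_{r,r}$ in a blow-up construction must contain at least $c_r\ge 1$ internal edges --- is the heart of your plan, and it is false for a structural reason that cannot be repaired at this density. $K_{r,r}$ is bipartite: if two blobs of the blow-up are joined by a dense bipartite graph, a copy of $K_{r,r}$ embeds with one side entirely in one blob and the other side entirely in the other, using no internal edges at all. Your pigeonhole argument only forces two of the $2r$ vertices into a common blob; since the independence number of $K_{r,r}$ is $r\ge 2$, those two vertices may lie on the same side and are then non-adjacent, so no internal edge is produced. (This is exactly where the clique case of Proposition~\ref{prop:extremal} differs: there any collision inside a blob yields an internal edge.) To force every copy to meet the internal edges you would need every bipartite graph between two blobs to be $K_{r,r}$-free, which by K\H{o}v\'ari--S\'os--Tur\'an caps it at $O(n^{2-1/r})$ edges --- incompatible with minimum degree about $2n/3$, where the graph between two linear-sized parts must have quadratically many edges. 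So no counting obstruction of the internal-edges type exists here; note also that Proposition~\ref{cycle-lower-bounds} explicitly excludes $\ell=4$, so there is no $C_4$ counting construction of this kind to generalise (and already for $r=2$ the extremal example contains many copies of $C_4$ with no internal edge).

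The paper's proof uses an obstruction of a different kind: a divisibility (mod $r$) invariant rather than a deficiency count. Take $V_1,V_2,V_3$ with $|V_1|=2r^2m+r$, $|V_2|=2r^2m+1$, $|V_3|=2r^2m-r$, and let $G$ be the union of a clique on $V_1$, a clique on $V_3$ and the complete bipartite graph between $V_1\cup V_3$ and $V_2$; one checks $G$ is $K_{r,r}$-divisible with $\delta(G)=\lfloor 2|G|/3\rfloor-r$. Copies of $K_{r,r}$ with no edge inside the cliques are perfectly possible; instead one shows that every copy $F$ of $K_{r,r}$ satisfies $r\mid e(F[V_1])$: if both sides of $F$ meet $V_1$, then neither side meets $V_3$ (there are no $V_1$--$V_3$ edges) and at most one side meets the independent set $V_2$, so one whole side lies in $V_1$ and $e(F[V_1])$ is $r$ times the number of vertices of the other side in $V_1$. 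But $e(G[V_1])=(r^2m+r/2)(2r^2m+r-1)\equiv r/2\cdot(-1)\not\equiv 0\pmod r$ because $r$ is even, so the edges inside $V_1$ cannot be exactly covered by edge-disjoint copies of $K_{r,r}$. If you want to salvage your write-up, replace steps (4)--(5) by an invariant of this type (edges of a copy inside a distinguished part, counted mod $r$) rather than an upper bound on the number of edge-disjoint copies.
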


\begin{proof}
We will construct one such graph for each $m \in \mathbb{N}$.
Let $V_1,V_2, V_3$ be disjoint vertex sets such that $|V_1| = 2r^2 m +r$, $|V_2| = 2r^2 m +1$, $|V_3| = 2r^2 m -r$.
Let $G$ be the graph on vertex set $V_1 \cup V_2 \cup  V_3$ consisting only of a clique on~$V_1$, a clique on~$V_3$ and a complete bipartite graph with vertex classes $V_1 \cup V_3 , V_2$. 
Thus $n: = |G| = 6 r^2 m +1$.
Note that $d(x) \in \{ 4 r^2 m +r, 4 r^2 m, 4 r ^2 m -r\}$ for all vertices~$x$.
The total number of edges is %
\COMMENT{$(2r^2m+r)(2r^2m+r-1)/2+(2r^2m-r)(2r^2m-r-1)/2+(4r^2 m)(2r^2 m +1)$}
\begin{align*}
	\binom{2r^2 m +r}{2} + \binom{2r^2 m -r}{2} + (4r^2 m)(2r^2 m +1) = (12 r^2 m^2 +2m +1) r^2. 
\end{align*}
Thus $G$ is $K_{r,r}$-divisible and $\delta(G) = \lfloor 2n/3 \rfloor -r$.

Now let $F$ be any copy of $K_{r,r}$ in $G$.
We claim that $e(F[V_1])$ is divisible by $r$.
Let $\{A,B\}$ be the natural bipartition of $F$.
If $e(F[V_1]) = 0$ then we are done, so assume that $A \cap V_1$ and $B \cap V_1$ are both non-empty.
Then $A \cap V_3$ and $B \cap V_3$ must both be empty, as there are no edges from $V_1$ to $V_3$ in $G$.
Since $V_2$ is an independent set in $G$, $A \cap V_2$ and $B \cap V_2$ cannot both be non-empty, so by relabelling if necessary we may assume that $A \cap V_2$ is empty; that is, $A \subseteq V_1$.
Hence $e(F[V_1]) = r|B \cap V_1|$, which is divisible by $r$.
But $e(G[V_1]) = (r^2 m + r/2)(2 r^2 m+r-1)$, which is not divisible by $r$ as the second factor is coprime to $r$ and the first factor is not divisible by $r$.
So $G[V_1]$ cannot be covered by any set of edge-disjoint copies of $F$ in $G$.
\end{proof}

We prove Theorem~$\ref{thm:cycles}$ using Theorem~\ref{thm:regularstrong}.
Recall the definition of $d_{C_{\ell}}$ in Section~\ref{sec:absremarks}.
We now bound $d_{C_{\ell}}$ above for $\ell \ge 3$.

\begin{lemma} \label{lma:cycle}
For $\ell \in \mathbb{N}$ with $\ell \ge 3$,
\begin{align*}
	d_{C_\ell} & \le 
	\begin{cases}
		 4 &\text{if $\ell = 3$,}\\
		 3 & \text{if $\ell =  4$,}\\
		 2 & \text{if $\ell \ge  5$.}
	\end{cases}
\end{align*}
\end{lemma}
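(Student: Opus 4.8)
The plan is to unpack the definition of $d_{C_\ell}$ and show, for each residue class of $\ell$, that there is an $(H,H')_{C_\ell}$-transformer $T$ of the required rooted degeneracy, where $H$ is $2$-regular and $H'$ is obtained from a copy of $H$ by identifying vertices. Since $C_\ell$ is $2$-regular, Corollary~\ref{cor:regular} already gives $d_{C_\ell}\le 6$, so the content here is the improvement. The natural tool is Lemma~\ref{lma:cycletransformer}: if $C_\ell$ has a vertex not contained in any triangle, then there is an $(H,H')_{C_\ell}$-transformer rooted-degeneracy at most $r+1 = 3$; and if $C_\ell$ has an edge $uv$ not in any triangle or $4$-cycle, the bound improves to $r = 2$.

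First I would dispose of the easy cases. For $\ell\ge 5$, every vertex of $C_\ell$ lies in no triangle (the girth is $\ell\ge 5>3$), and moreover any edge $uv$ lies in no triangle and no $4$-cycle (girth $>4$). So Lemma~\ref{lma:cycletransformer}(ii) applies and gives $d_{C_\ell}\le r = 2$. For $\ell = 4$, the girth is $4$, so $C_4$ has no triangle at all; every vertex avoids triangles, and Lemma~\ref{lma:cycletransformer}(i) applies, giving $d_{C_4}\le r+1 = 3$. Note part (ii) is not available for $\ell = 4$ since every edge of $C_4$ lies in a $4$-cycle (namely $C_4$ itself), which is consistent with the claimed bound of $3$ rather than $2$.

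The case $\ell = 3$ is the one where $C_3 = K_3$ contains triangles through every vertex and edge, so Lemma~\ref{lma:cycletransformer} gives nothing and we must fall back on the general bound. Here I would simply invoke Corollary~\ref{cor:regular} (equivalently Lemma~\ref{lma:regular}): for an $r$-regular $F$ we have $d_F\le 3r$, and with $r = 2$ this yields $d_{C_3}\le 6$ — but we want the sharper bound $d_{C_3}\le 4$. To get $4$, I would re-examine the transformer constructed in the proof of Lemma~\ref{lma:regular} specialised to $F = C_3$, tracking where the constant $3r$ came from: it was $r+2$ (the degree bound \eqref{eqn:regularT1} on internal vertices of $T_1$) plus $2(r-1)$ (the extra contribution from the stars $T^x_j$). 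For $F = C_3$ we have $f = 3$, so $F\setminus x_0$ is a single edge ($K_{1,1}$) and the sets $W^x_j$ of new vertices have size $f-(r+1) = 0$; thus $T^x_j = T_1[\{x\}\cup N^x_j]$ already, i.e. $T_2$ adds essentially no new structure. Recomputing the degeneracy ordering — vertices of $H\cup H'$, then $Z$-vertices of $T_1\setminus(H\cup H')$, then the (empty) $W$-vertices — and checking that each internal vertex $z_i^{(xy)}$ of $T_1$ has back-degree at most $r+2 = 4$ and each later vertex at most $4$, gives $d_{C_3}\le 4$.

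The main obstacle is the $\ell = 3$ bound: it is the only case that does not drop straight out of Lemma~\ref{lma:cycletransformer}, and it requires going back into the explicit transformer construction of Lemma~\ref{lma:regular} and verifying that for the degenerate parameters $f = 3$, $r = 2$ the rooted degeneracy is in fact $r+2 = 4$ rather than the generic $3r = 6$. Everything else is a direct citation of Lemma~\ref{lma:cycletransformer} together with the elementary girth facts for $C_\ell$ with $\ell\ge 4$. I would present the proof as three short paragraphs — $\ell\ge 5$ via part (ii), $\ell = 4$ via part (i), $\ell = 3$ via a careful reading of the construction in Lemma~\ref{lma:regular} — and keep the $\ell = 3$ calculation to the minimal degeneracy-ordering check.
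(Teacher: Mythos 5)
Your treatment of $\ell\ge 4$ is correct and identical to the paper's: $C_4$ has a vertex in no triangle, so Lemma~\ref{lma:cycletransformer}(i) gives $d_{C_4}\le 3$, and for $\ell\ge 5$ every edge lies in no triangle or $4$-cycle, so part (ii) gives $d_{C_\ell}\le 2$.

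The case $\ell=3$, however, contains a genuine gap. Your claim that for $F=C_3$ the graph $T^x_j$ equals $T_1[\{x\}\cup N^x_j]$, so that ``$T_2$ adds essentially no new structure'', is false: $T^x_j$ is a copy of $F\setminus x_0$, i.e.\ the single edge joining the two vertices of $N^x_j$, and this edge is not in $T_1$ (it is needed to close the triangles that absorb $E_3$, resp.\ $E_1$). Concretely, for $C_3$ each edge $e=\{x,y\}$ of $H$ has one internal vertex $z_1^{(e)}$, adjacent in $T_1$ to the four roots $x,y,\phi(x),\phi(y)$, and $T_2$ adds, for each vertex $x$ of $H$, the edge between the two internal vertices $z_1^{(e)},z_1^{(e')}$ of the two $H$-edges $e,e'$ at $x$. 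Thus on the internal vertices $T_2$ is exactly the line graph of the $2$-regular graph $H$, i.e.\ a disjoint union of cycles, so every internal vertex has degree $4+2=6$ in $T$, and in any ordering (roots first) the last internal vertex of each such cycle has back-degree $6$. Hence the construction of Lemma~\ref{lma:regular} specialised to $C_3$ certifies only $d_{C_3}\le 6=3r$, not $4$; your recomputation simply ignores the $T_2$ edges. The paper closes this case differently: it first reduces to the case where $H$ is a single cycle $x_1\cdots x_s$ (taking vertex-disjoint transformers for the cycles of $H$), and then builds a bespoke $(H,H')_{C_3}$-transformer with three new vertices $u_i,v_i,w_i$ per $x_i$ and explicit triangle decompositions of $H\cup E_1\cup E_2$ and $E_3\cup E_4$ (and symmetrically for $H'$), for which the ordering $H$, $H'$, $\{u_i\}$, $\{v_i,w_i\}$ gives rooted degeneracy $4$. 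Some construction of this kind (or another new idea) is needed; the generic transformer cannot be massaged into the bound $d_{C_3}\le 4$.
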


\begin{proof}
Lemma~\ref{lma:cycletransformer} implies that the lemma holds for $\ell \ge 4$.
So we may assume that $\ell = 3$.
Let $H$ be a $2$-regular graph and let $H'$ be obtained from $H$ by identifying vertices.
Suppose that $H$ and $H'$ are vertex-disjoint. 
Recall that an $(H,H')_{C_3}$-transformer~$T$ is a graph such that
\begin{itemize}
	\item $T \cup H$ and $T \cup H'$ each have $C_3$-decompositions;
	\item $V(H \cup H') \subseteq V(T)$ and $T[V(H \cup H')]$ is empty.
\end{itemize}
To show that $d_{C_3} \le 4$, it suffices to show that there exists an $(H,H')_{C_3}$-transformer~$T$ such that the degeneracy of $T$ rooted at $V(H \cup H')$ is at most~$4$.

Let $\phi: H \to H'$ be a graph homomorphism from $H$ to $H'$ that is edge-bijective.
Note that $H$ is a union of vertex-disjoint cycles $C_{s_1}, \dots, C_{s_p}$. 
So $H'$ decomposes into $\phi(C_{s_1}), \dots, \phi(C_{s_p})$.
Suppose that, for each $1 \le j \le p$, there exists a $(C_{s_j}, \phi( C_{s_j} ) )_{C_3}$-transformer~$T_j$ such that the degeneracy of $T_j$ rooted at $V( C_{s_j} \cup \phi( C_{s_j} ))$ is at most~$4$.
We further choose the $T_j$ such that $V(T_j) \cap V(H \cup H') = V( C_{s_j} \cup \phi( C_{s_j} ))$ and $V(T_j) \cap V(T_{j'}) \subseteq V(H \cup H')$ for all $ j \ne j'$.
In particular, the $T_j$ are edge-disjoint.
Let $T: = \bigcup_{1 \le j \le p} T_j$.
Then $T$ is an $(H,H')_{C_3}$-transformer such that the degeneracy of $T$ rooted at $V(H \cup H')$ is at most~$4$.\COMMENT{To see the degeneracy, consider the vertices in $H$, $H'$, $T_1 \setminus (H \cup H')$, \dots, $T_p \setminus (H \cup H')$, where the vertices in $T_j \setminus (H \cup H')$ are ordered in the `obvious' way.}
Therefore, we may assume that $H$ is a cycle $x_1x_2 \dots x_s x_1$.

Let $ \{ u_i, v_i, w_i : 1 \le i \le s \}$ be a set of $3s$ vertices disjoint from $V(H \cup H')$.
Define a graph $T$ as follows:
\begin{enumerate}[label=(\roman*)]
\item $V(T) := V(H) \cup V(H')  \cup \{ u_i, v_i, w_i : 1 \le i \le s \}$;
\item $E_1 := \{ x_i u_i, x_i v_i , x_i  w_i , x_i u_{i+1} : 1 \le i \le s \}$;
\item $E_2 := \{ v_i w_i : 1 \le i \le s \}$;
\item $E_3 := \{ u_i v_i, w_iu_{i+1} : 1 \le i \le s \}$;
\item $E_4 := \{ \phi(x_i) u_i, \phi(x_i) v_i , \phi(x_i)  w_i , \phi(x_i) u_{i+1} : 1 \le i \le s \}$;
\item $E(T) := E_1 \cup E_2 \cup E_3 \cup E_4$.
\end{enumerate}
Here the indices are considered modulo~$s$.
Note that $T [ V(H \cup H')]$ is empty.

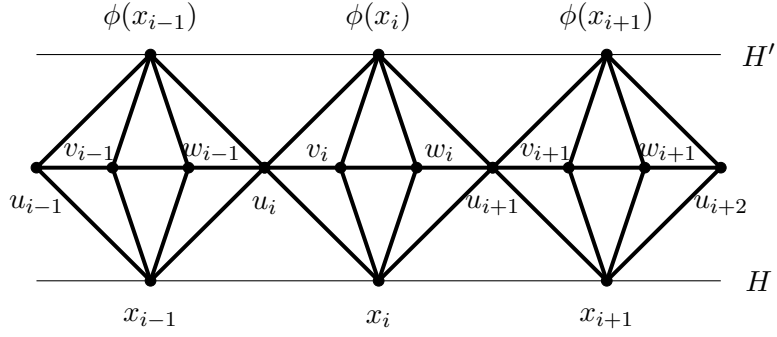
\begin{figure}[t]
\begin{tikzpicture}

	\begin{scope}
		\foreach \x in {0,1,...,2}
			{
			\filldraw[fill=black] (3*\x,0) circle (2pt);
			\filldraw[fill=black] (3*\x,3) circle (2pt);
			\foreach \y in {0,1,2,3}{			
			\filldraw[fill=black] (3*\x-1.5+1*\y,1.5) circle (2pt);
			\draw[line width=1.5pt]  (3*\x,0) to (3*\x-1.5+1*\y,1.5) to (3*\x,3);}		
			}
 	\end{scope}

\draw[line width=1.5pt] (-1.5,1.5) to (7.5,1.5);
\draw (-1.5,0) to (7.5,0);
\draw (-1.5,3) to (7.5,3);

\node  at (0,-0.5)  {$ x_{i-1} $};
\node  at (3,-0.5)  {$ x_{i} $};
\node  at (6,-0.5)  {$ x_{i+1} $};
\node  at (8,0)  {$ H $};

\node  at (0,3.5)  {$ \phi(x_{i-1}) $};
\node  at (3,3.5)  {$ \phi(x_{i}) $};
\node  at (6,3.5)  {$ \phi(x_{i+1}) $};
\node  at (8,3)  {$ H' $};

\node  at (-1.5,1)  {$ u_{i-1} $};
\node  at (1.5,1)  {$ u_{i} $};
\node  at (4.5,1)  {$ u_{i+1} $};
\node  at (7.5,1)  {$ u_{i+2} $};

\node  at (-0.8,1.7)  {$ v_{i-1} $};
\node  at (2.2,1.7)  {$ v_{i} $};
\node  at (5.2,1.7)  {$ v_{i+1} $};

\node  at (0.8,1.7)  {$ w_{i-1} $};
\node  at (3.8,1.7)  {$ w_{i} $};
\node  at (6.8,1.7)  {$ w_{i+1} $};

\end{tikzpicture}
\caption{A $(H , H')_{C_3}$- transformer $T$.}
\end{figure}

Note also that $H \cup E_1 \cup E_2$ can be decomposed into $2s$ copies of $C_3$, where each $C_3$ has vertex set either $\{x_i,x_{i+1}, u_{i+1}\}$ or $\{x_i, v_i, w_i\}$ for some $1 \le i \le s$.
Note also that $E_3 \cup E_4$ can be decomposed into $2s$ copies of $C_3$, where each $C_3$ has vertex set either $\{\phi(x_i),u_i,v_i\}$ or $\{\phi(x_i), w_i, u_{i+1}\}$ for some $1 \le i \le s$.
Thus $H \cup T$ has a $C_3$-decomposition. 
Similarly, $H' \cup T$ has a $C_3$-decomposition. 
Therefore $T$ is an $(H,H')_{C_3}$-transformer.
To see that the degeneracy of $T$ rooted at $V(H) \cup V(H')$ is at most $4$, consider the vertices in $H$, $H'$, $\{u_i : 1 \le i \le s\}$,  $\{v_i, w_i : 1 \le i \le s\}$ in that order.
This completes the proof of the lemma.
\end{proof}

We now prove Theorem~\ref{thm:cycles}.

\begin{proof}[Proof of Theorem~\ref{thm:cycles}]
We first prove (i). So let $\ell\ge 4$ be even.
By Theorem~\ref{thm:regularstrong} and Lemma~\ref{lma:cycle}, it suffices to show that $\lim_{\eta \to 0} \delta^\eta_{C_\ell} \leq \delta$.
But $\delta^{\eta}_{C_\ell} = 0$ for all $\eta > 0$ since $C_\ell$ is bipartite.
Indeed, it follows from the Erd\H{o}s--Simonovits--Stone theorem~\cite{ErdosSimonovits, ErdosStone} that we can obtain an $\eta$-approximate
$C_\ell$-decomposition greedily (since the Tur\'an density of bipartite graphs is 0).

To prove (ii), let $\ell\ge 3$ be odd. Note that $\lim_{\eta \to 0} \delta^\eta_{C_\ell}\ge 1/2$ (consider e.g.~$K_{n,n}$).
Moreover, $\lim_{\eta \to 0} \delta^\eta_{C_3}\ge 3/4$, see e.g.~Yuster~\cite{YusterFractKr}. Thus the first part
of Theorem~\ref{thm:cycles}(ii) follows from Theorem~\ref{thm:regularstrong} and Lemma~\ref{lma:cycle}.
The `moreover part' follows then from Corollary~\ref{cor:chromatic} and Theorem~\ref{thm:Dross}.
\end{proof}

If $F$ is an $r$-regular bipartite graph, then Theorem~\ref{thm:regularstrong} implies the following result,
which applies for instance to the complete bipartite graph $K_{r,r}$.%

\begin{corollary} \label{thm:bipartite}
Let $F$ be an $r$-regular bipartite graph.
Then for each $\epsilon > 0$, there is an $n_0 = n_0(\epsilon, F)$ such that every $F$-divisible graph $G$ on $n \geq n_0$ vertices with $\delta(G) \geq (1-1/(r+1) + \epsilon) n$ has an $F$-decomposition.
\end{corollary}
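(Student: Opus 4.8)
The plan is to apply Theorem~\ref{thm:regularstrong} with the $r$-regular bipartite graph $F$, so the task reduces to bounding the three quantities appearing in the definition of $\delta$ in that theorem. First I would observe that, since $F$ is bipartite, its Tur\'an density is $0$; hence by the Erd\H{o}s--Simonovits--Stone theorem one can greedily extract copies of $F$ from any graph of positive edge density, which gives an $\eta$-approximate $F$-decomposition of any sufficiently large graph on $n$ vertices whenever $\delta(G)\ge \gamma n$ for fixed $\gamma>0$. Consequently $\delta^\eta_F = 0$ for every $\eta>0$ (indeed $\lim_{\eta\to 0}\delta^\eta_F = 0$), so the term $\delta^\eta_F$ in Theorem~\ref{thm:regularstrong} contributes nothing.

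Next I would handle the term $1-1/d_F$. By Corollary~\ref{cor:regular} we have $d_F \le 3r$ in general, but for bipartite $F$ one can do much better using Lemma~\ref{lma:cycletransformer}: any vertex $x$ of $F$ lies in a colour class of the bipartition, so $N_F(x)$ is contained in the other class and is therefore an independent set, i.e.\ $x$ is not contained in any triangle of $F$. Hence Lemma~\ref{lma:cycletransformer}(i) applies and gives $d_F \le r+1$, so $1-1/d_F \le 1-1/(r+1)$.

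Finally, $F$ is bipartite with $r\ge 1$, so $F$ contains an edge and is certainly not a vertex-disjoint union of copies of $K_{r+1}$ (a $K_{r+1}$ has chromatic number $r+1 \ge 3$ once $r\ge 2$, and is not bipartite; for $r=1$ the statement is trivial anyway, or one checks $K_2$ separately). Thus the relevant case of Theorem~\ref{thm:regularstrong} is the `otherwise' branch, where $\delta = \max\{\delta^\eta_F,\ 1-1/d_F,\ 1-1/r\}$. Plugging in $\delta^\eta_F = 0$ and $1-1/d_F \le 1-1/(r+1) \le 1-1/r$, we get $\delta \le 1-1/r$. This is slightly \emph{stronger} than the claimed bound $1-1/(r+1)$, so in particular $\delta(G)\ge (1-1/(r+1)+\epsilon)n \ge (\delta+\epsilon)n$ and Theorem~\ref{thm:regularstrong} yields the $F$-decomposition. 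The only mild subtlety is checking the edge/triangle-freeness condition needed to invoke Lemma~\ref{lma:cycletransformer}, and confirming that $F$ falls outside the $K_{r+1}$-union exceptional class; neither is a real obstacle. (One could even state the cleaner bound $1-1/r$, but $1-1/(r+1)$ matches the extremal construction of Proposition~\ref{c4-lower-bounds} for $K_{r,r}$ and is what is quoted.)
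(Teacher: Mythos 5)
Your route is the paper's own: show $\delta^\eta_F=0$ because $F$ is bipartite (greedy approximate decomposition via Erd\H{o}s--Stone), bound $d_F\le r+1$ via Lemma~\ref{lma:cycletransformer}(i) (every vertex of a bipartite $F$ is triangle-free), and feed this into Theorem~\ref{thm:regularstrong}; the handling of the $K_{r+1}$-union case and of $r=1$ is also fine.

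However, one step is backwards: you write $1-1/d_F \le 1-1/(r+1) \le 1-1/r$ and conclude $\delta\le 1-1/r$, calling this ``slightly stronger'' than $1-1/(r+1)$. Since $1/(r+1)<1/r$, in fact $1-1/(r+1) > 1-1/r$, so the chain is inverted and the claim $\delta\le 1-1/r$ does not follow (with $d_F=r+1$ the term $1-1/d_F$ already equals $1-1/(r+1)$, which dominates $1-1/r$). Indeed the ``cleaner bound $1-1/r$'' you suggest in the final parenthesis is false: for $F=K_{2,2}=C_4$ it would give threshold $1/2+\epsilon$, contradicting Proposition~\ref{c4-lower-bounds} (and Theorem~\ref{thm:cycles}(i)), which show the correct threshold is $2/3$. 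The corollary itself is unaffected, because the correct conclusion from your bounds is $\delta\le\max\{0,\,1-1/d_F,\,1-1/r\}\le 1-1/(r+1)$ (and likewise in the $K_{r+1}$-union branch), so the hypothesis $\delta(G)\ge(1-1/(r+1)+\epsilon)n\ge(\delta+\epsilon)n$ still lets Theorem~\ref{thm:regularstrong} apply; just delete the claimed strengthening.
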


\begin{proof}
As observed in the proof of Theorem~\ref{thm:cycles}(i), $\delta^{\eta}_F = 0$ for all $\eta > 0$ since $F$ is bipartite.
Since $d_F \le r+1$ by Lemma~\ref{lma:cycletransformer}, the result now follows from Theorem~\ref{thm:regularstrong}.
\end{proof}

\section*{Acknowledgements}

We would like to thank Amelia Taylor for the extremal example for $C_4$-decompositions in Proposition~\ref{c4-lower-bounds}, and the referee, Stefan Glock, John Lapinskas and Amelia Taylor for their helpful comments.



\end{document}